\newcommand{\Pro}{\mbox{Proj}}
\definecolor{apricot}{rgb}{0.98, 0.81, 0.69}
\definecolor{babyblue}{rgb}{0.54, 0.81, 0.94}
\definecolor{american}{rgb}{1, 0.1, 0.24}
\newcommand{\jott}{j}
\newcommand{\Jott}{J}
\newcommand{\nsH}{{\bf H}}
\newcommand{\nsV}{{\bf V}}
\newcommand{\YY}{\mathbb{Y}}
\theoremstyle{plain}
\newtheorem{theorem}{Theorem}[section]
\newtheorem{lemma}[theorem]{Lemma}
\newtheorem{corollary}[theorem]{Corollary}
\newtheorem{proposition}[theorem]{Proposition}
\newtheorem{claim}[theorem]{Claim}
\theoremstyle{definition}
\newtheorem{definition}[theorem]{Definition}
\newtheorem{assumption}[theorem]{Assumption}
\theoremstyle{remark}
\newtheorem{remark}[theorem]{Remark}
\newcounter{lil1}
\newenvironment{step}
{\begin{list} { \bf Step (\Roman{lil1})}
	{ \usecounter{lil1}
		\setlength{\leftmargin}{0.0cm}
		\setlength{\topsep}{0.2cm}
		\setlength{\itemsep}{0.0cm}
		\setlength{\parsep}{0.1cm}
		\setlength{\itemindent}{0.8cm}
		\setlength{\parskip}{0.0cm}}}
{\end{list}}
\newcounter{stepcounter}
\newenvironment{stepproof}
{\begin{list} { \bf Step (\alph{stepcounter}) }
	{ \usecounter{stepcounter}
		\setlength{\leftmargin}{0.0cm}
		\setlength{\topsep}{0.2cm}
		\setlength{\itemsep}{0.0cm}
		\setlength{\parsep}{0.1cm}
		\setlength{\itemindent}{0.8cm}
		\setlength{\parskip}{0.0cm}}}
{\end{list}}
\newcounter{lil2}
\newcounter{michicounter}
\newenvironment{numref}
{\begin{list} {(\alph{michicounter})}
	{ \usecounter{michicounter} %\alph{michicounter}
		\setlength{\leftmargin}{0.4cm}
		\setlength{\topsep}{0.0cm}
		\setlength{\itemsep}{0.0cm}
		\setlength{\parsep}{0.1cm}
		\setlength{\itemindent}{0.1cm}
		\setlength{\parskip}{0.2cm}}}
{\end{list}}
\newcounter{counteins}
\newcounter{lil33}
\newcounter{lil1q}
\newcommand{\f}[2]{\frac{#1}{#2}}
\renewcommand{\div}{\operatorname{div}}
\newcommand{\eps}{\varepsilon}
\newcommand{\MA}{{ \mathfrak{A}}}
\newcommand{\CX}{\mathcal{X}}
\newcommand{\di}{\mbox{div}}
\renewcommand{\v}{{\bf v}}
\newcommand{\ckk}{c_\kappa}
\newcommand{\operator}{\mathcal{V}^\kappa_{\MA,W}}
\newcommand{\osperator}{\widehat{\mathcal{V}}^\kappa_{\MA,W}}
\newcommand{\ud}{\mathrm{d}}
\newcommand{\ST}{\mathscr{V}}
\newcommand{\ck}{c_\kappa}
\newcommand{\nk}{n_\kappa}
\newcommand{\ukk}{\bu_\kappa}
\newcommand{\kk}{\kappa}
\newcommand{\buk}{\bar u_\kk}
\newcommand{\uk}{n_\kk}
\newcommand{\bu}{\mathbf{u}}
\newcommand{\bv}{\mathbf{v}}
\newcommand{\bo}{\mathcal{O}}
\newcommand{\red}[1]{{\color{red} #1}}
\newcommand{\abss}[1]{\left\|#1\right\|}
\newcommand{\abs}[1]{\left\vert#1\right\vert}
\newcommand{\Div}{\mbox{div}}
\newcommand{\baray}{\begin{array}{rcl}}
\newcommand{\earay}{\end{array}}
\newcommand{\barray}{\begin{array}{rcl}}
\newcommand{\earray}{\end{array}}
\newcommand\dela[1]{}
\newcommand{\bcase}{\begin{cases}}
\newcommand{\ecase}{\end{cases}}
\newcommand\BX{\mathbb{X}}
\newcommand{\DeltaA}{A}
\newcommand\del[1]{}
\newcommand\del[1]{}
\def\eps{\varepsilon}
\newcommand{\Law}{\mbox{Law}}
\newcommand{\lk}{\left}
\newcommand{\lqq}{\lefteqn}
\newcommand{\rk}{\right}
\newcommand{\la}{\langle}
\newcommand{\ra}{\rangle}
\newcommand{\ep} {\varepsilon }
\newcommand{\CO}{{{ \mathcal O }}}
\newcommand{\CH}{{{ \mathcal H }}}
\newcommand{\CG}{{{ \mathcal G }}}
\newcommand{\CV}{{{ \mathcal V }}}
\newcommand{\CB}{{{ \mathcal B }}}
\newcommand{\CM}{{{ \mathcal M }}}
\newcommand{\BF}{{{ \mathbb{F} }}}
\newcommand{\CF}{{{ \mathcal F }}}
\newcommand{\CN}{{{ \mathcal N }}}
\newcommand{\CL}{{{ \mathcal L }}}
\newcommand{\RR}{{\mathbb{R}}}
\newcommand{\DD}{\mathbb{D}}
\newcommand{\NN}{\mathbb{N}}
\newcommand{\kl}{\ell}
\newcommand{\spaceX}{{{L}_{t}^2(L_x^2)}}
\newcommand{\spaceXF}{{{L}_{t}^2(L^2_x(\CO))}}
\newcommand{\PP}{{\mathbb{P}}}
\newcommand{\EE}{ \mathbb{E} }
\newcommand{\DEQS}{\begin{eqnarray*} }
\newcommand{\EEQS}{\end{eqnarray*} }
\newcommand{\DEQSZ}{\begin{eqnarray} }
\newcommand{\EEQSZ}{\end{eqnarray} }
\newcommand{\DEQ}{\begin{eqnarray}}
\newcommand{\EEQ}{\end{eqnarray}}
\newcommand{\Dcal} {{\mathcal D}}
\newcommand{\Fcal} {{\mathcal F}}
\newcommand{\Mcal} {{\mathcal M}}
\newcommand{\Ocal} {{\mathcal O}}
\newcommand{\Vcal} {{\mathcal V}}
\newcommand{\Xcal} {{\mathcal X}}
\newcommand{\Afrak} {{\mathfrak A}}
\newcommand{\R}{\mathbb{R}}
\newcommand{\N}{\mathbb{N}}
\renewcommand{\P}{\mathbb{P}}
\newcommand{\Eb}{\mathbb{E}}
\newcommand{\X}{\mathbb{X}}
\newcommand\dhookrightarrow{\mathrel{\ThisStyle{\abovebaseline[-.6\LMex]{%
  \ensurestackMath{\stackanchor[.15\LMex]{\SavedStyle\hookrightarrow}{%
  \SavedStyle\hookrightarrow}}}}}}
\begin{document}

 \numberwithin{equation}{section}

\title[Stochastic Chemotaxis-Stokes system]{A Meta Theorem for nonlinear stochastic coupled systems: Application
to stochastic  chemotaxis-Stokes  porous media}
\author[E. Hausenblas]{Erika Hausenblas}
\address{Montanuniversit\"{a}t Leoben\\
Department Mathematik und Informationstechnologie\\
Franz Josef Stra{\ss}e 18\\
8700 Leoben\\
Austria}
\email{erika.hausenblas@unileoben.ac.at}
\author[B.\  Jidjou Moghomye]{Boris Jidjou Moghomye}
\address{Montanuniversit\"{a}t Leoben\\
Department Mathematik und Informationstechnologie\\
Franz Josef Stra{\ss}e 18\\
8700 Leoben\\
Austria}
\email{boris.jidjou-moghomye@unileoben.ac.at}
%\author[M. H\"ogele]{Michael H\"ogele}
%\email{ma.hoegele@uniandes.edu.co}
\date{\today}
\begin{abstract}
The purpose of the paper is twofold.
Firstly, we want to present a Meta Theorem to show the existence of a martingale solution for coupled systems of non-linear stochastic differential equations.
The idea is first to split the system by rewriting the non-linear part in a linear part acting on a given process $\xi$. This is done in such a way that the fixpoint with respect to $\xi$ would be the solution. However, to show the well posedness of the {\sl linearized} system, one needs a cut-off argument. Under which conditions one can handle  the limits of the cut-off parameter to get in the end a martingale solution of the original system is given in the Meta-Theorem.
This approached was used in \cite{Hausenblas:2023ab} to show the existence of a martingale solution of a chemical activator-inhibitor
system.

Secondly, we want to verify the full applicability of the Meta Theorem by  showing the existence of a martingale solution of a highly nonlinear chemotaxis system with underlying fluid dynamic. In particular, we investigate
 the coupled system
\begin{align*}
\label{aa}
\begin{cases}
d {n} +\bu\cdot \nabla ndt= \lk[ r_n\Delta   |n|^{q-1}n- \chi \Div( n\nabla c) \rk]\, dt+g_{\gamma_1}(n) \circ dW_1(t),
\\
d{c}+\bu\cdot\nabla cdt  =\lk[r_c \Delta c  -\zeta c+ \beta n \rk]\, dt+g_{\gamma_2}(c) \circ dW_2(t),
\\
d \bu +\nabla Pdt=\left[r_\bu\Delta \bu
+n\star\Phi \right]dt+ \sigma_{\gamma_3}\, dW_3(t), %\mbox{and Navier Stokes (Dirchlet boundary condition}
\\
\nabla \cdot \bu =0,
\end{cases}
\end{align*}
with initial condition $(n_0,c_0,\bu_0)$ on a filtered probability space  $\mathfrak{A}$ and $W_1$, $W_2$, and $W_3$ be three  independent time homogeneous spatial Wiener processes over $\mathfrak{A}$. Here $n$ is the cell density,  $c$ is the concentration of the chemical signal, and $\bu$ is a vector field over $\mathcal{O}$.
The positive  terms $r_n$ and $r_c$  are the diffusivity of the  cells and chemoattractant, respectively,
the positive value  %$D_u$  is the diffusivity of the cells,
 $\chi$ is the chemotactic sensitivity.%,  $\alpha\ge0$ is the so-called damping constant.

\end{abstract}

\keywords{stochastic Schauder-Tychonoff type theorem, nonlinear stochastic partial differential equation, stochastic systems of cross diffusion, flows in porous media, stochastic chemotaxis.}
\subjclass[2010]{Primary 35K57, 60H15; Secondary 37N25, 47H10, 76S05, 92C15}

\thanks{  Boris Jidjou Moghomye was supported by the Austrian Science Foundation (FWF), Project P 32295,
%, Hausenblas and H\"ogele were supported by the \"OAD. 
Moreover we would like to thank Paul Razafamandimby, City University Dublin, Ireland, for his inspiring discussions and helpful remarks.
}

\maketitle

\section{Introduction}

Cross-diffusion systems are a type of coupled partial differential equations system that arises in many applications involving multi-species transport. In these systems, the diffusion coefficients of different species are coupled. In particular, the diffusion of one variable may depend on the gradients of another variable. Cross-diffusion models have been used to study a wide range of phenomena, including pattern formation in biology,  see \cite{gurtin},
 and chemistry, phase separation in materials science, and flow in porous media.
Sometimes, these systems are even coupled with the Navier-Stokes describing an underlying fluid.
Cross diffusion coupled with fluid dynamics
arises e.g.\ in fluid dynamics of mixtures, electro-chemistry, cell biology, and biofilm modelling.
For more details, we refer to  \cite{crossdiff03,crossdiff01,crossdiff02}.
\del{
Here are a few references that provide more information on cross-diffusion systems:

Murray, J. D. Mathematical Biology I: An Introduction, 3rd edition. Springer, 2002.

Gurtin, M. E., & MacCamy, R. C. On the diffusion of biological populations. Mathematical Biosciences, 33(1-2), 35-49, 1977.

Chen, X., & Liu, C. Global existence and uniqueness of solutions to a cross-diffusion system. Archive for Rational Mechanics and Analysis, 147(4), 299-340, 1999.

Garcke, H., & Nestler, B. Nonlinear PDE models in image processing and computer vision. Springer, 2014.

Mimura, M., & Yamaguti, M. Singular limit of a nonlinear parabolic equation for a reversible reaction system. Journal of Differential Equations, 61(3), 336-362, 1986.}
\del{Coupled nonlinear systems of partial differential equations, like reaction-diffusion systems, with e.g.\ cross-diffusion systems, are fundamental modeling tools for
mathematical biology with applications to ecology, population dynamics, pattern formation,
enzymatic reactions and chemotaxis, see \cite{crossdiff03,crossdiff01,crossdiff02}.
These systems of equations arise in many application areas, such as fluid dynamics of mixtures, electrochemistry, cell biology, and biofilm modeling.
\del{Systems with cross-diffusion occur if the gradient in the concentration of one species induces a flux of another species.}}
However, often these systems are not monotone or do not satisfy the maximum principle such that
standard techniques do not work, and one has to search for more sophisticated methods, especially if
a stochastic term is involved.
Here, often, one shows first the existence of a local solution. This solution is globalised using Galerkin method or another approximate scheme and showing that the
approximate solution is uniformly bounded by an entropy functional. 
This has the disadvantage, that for each equation one has to design an approximate scheme, such that the entropy functional is preserved.
Our methods has the advantage that it does not involve an approximating scheme.
In this work, we first present a  Meta Theorem by which nonlinear stochastic strongly coupled
parabolic systems can be solved. To demonstrate its applicability, we apply the Meta theorem to a chemotaxis system in porous media coupled with the Navier Stokes and show the existence of a martingale solution.

There are several recent works tackling systems of stochastic partial differential equations being coupled by nonlinearity and/or
cross-diffusion. Here, let us mention the work of Dahirwal et al.\ \cite{Dhariwal}, where the authors study cross-diffusion systems.
In addition, the first named author investigated in cooperation with different systems appearing in physical and biological applications, see \cite{paul,chemo1,chemo2}.

The rest of the paper is organized as follow. In Section 2, we state and prove a meta theorem for an existence result of  nonlinear coupled systems and in Section 3, we prove the existence of a martingale solution to a stochastic porous media chemotaxis-fluid by verifying that the hypothesis of meta theorem obtained in section 2 are satisfied.

%some words about cross diffusion, i.e. coupled systems arising in this zusammenhang.
%Chemotaxis is a system of cross diffusion

\section{The main Abstract result} %Meta Theorem}

 Let $\Ocal\subset \R^d$ be an open domain, $d\ge 1$,  $J\in \mathbb{N}$ and
 $ E,E_1,...,E_\Jott\subset\Dcal^{\prime}(\Ocal)$ be the Hilbert spaces and $ U,U_1,...,U_J\subset\Dcal^{\prime}(\Ocal)$ be the Banach spaces\footnote{Here, $\Dcal^\prime(\Ocal)$ denotes the space of Schwartz distributions on $\Ocal$, that is, the topological dual space of smooth functions with compact support $\Dcal(\Ocal)=C_0^\infty(\Ocal)$.}, such that
 $$  E\hookrightarrow U,\text{ and } E_\jott\hookrightarrow U_j,\qquad j=1,...,J.$$
Let $\mathbb{X}\subset\{ \eta:[0,T]\to U \}$ be a Banach function space.

Let $\Afrak=(\Omega,\Fcal,\mathbb{F},\P)$ be a filtered probability space
with filtration $\mathbb{F}=(\Fcal_{t})_{t\in [0,T]}$ satisfying the usual conditions. Let $H$ be a separable Hilbert space and let $(W_j(t))_{t\in [0,T]}$, $j=0,...,J$ be a cylindrical Wiener process
%\footnote{That is, a $Q$-Wiener process, see e.g. \cite{DaPrZa:2nd} for this notion.}
in $H$,
such that $W_j$ has the representation
$$
W(t)=\sum_{i\in\mathbb{I}}  \psi_i\beta^j_i(t),\quad t\in [0,T],
$$
where $\{\psi_i:i\in \mathbb{I}\}$ is a complete orthonormal system in $H$, $\mathbb{I}$ a suitably chosen countable index set, and $\{\beta^j_i:i\in\mathbb{I}\}$ a family of independent real-valued standard Brownian motions on $[0,T]$ modeled in $\Afrak=(\Omega,\Fcal,\mathbb{F},\P)$. Due to \cite[Proposition 4.7, p. 85]{DaPrZa:2nd}, this representation does not pose a restriction.

Here, we are interested in   a solution  $(w,v^1,...,v^J)$ of the following system of  It\^o stochastic partial differential equations (SPDEs)
\DEQSZ\label{spdes1}
\lk\{ \barray
dw(t) &=&\lk(\DeltaA_0 w(t)+ F_0(w(t),v^1(t),\cdots,v^\Jott(t))\rk)\, dt +\Sigma_0(w(t))\,dW_0(t),
\\
dv^\jott(t) &=&\lk(\DeltaA_\jott v^\jott(t)+ F_\jott(w(t),v^{\jott}(t),v^{\jott-1}(t),\cdots,v^\Jott(t))\rk)\, dt +\Sigma_\jott(v^\jott(t))\,dW_\jott(t),
\\
&&\quad \jott=1,\ldots ,\Jott,
\\
w(0)&=&w_0,
\\
v^\jott(0)&=&v^\jott_0,\quad \jott=1,\ldots ,\Jott,
\earray\rk.
\EEQSZ
where the equalities hold respectively in $U_j$, $j=0,1,...,J$.

We shall also assume that $\DeltaA_0:D(\DeltaA_0)\subset E_0\to E_0$ is a possibly nonlinear and measurable (single-valued) operator,
$\DeltaA_\jott:D(\DeltaA_\jott)\subset E_\jott\to E_\jott$, $\jott=1,\ldots,\Jott$, are  linear  (single-valued) operators. The mappings $\Sigma_\jott$, $\jott=0,\ldots,\Jott$, are densely defined %Lipschitz
mappings from $E_\jott$ into $L_{HS}(H;E_\jott)$ specified later on\footnote{For a separable Hilbert space $H$ and a any Hilbert space $E_j$ defined $L_{HS}(H;E_j)$ the Hilbert-Schmidt operators from $H$ to $E_j$.} and
$$F_0:E\times E_1 \times \cdots \times E_\Jott\longrightarrow U, \ \text{ and } \ F_\jott:E\times E_j \times \cdots \times E_\Jott\longrightarrow U_j,\quad j=1,...J,$$  are nonlinear mapping. 	Later on we will show in our examples that cross diffusions systems or reaction-diffusion equations will be covered.

 For clarity, we define the notion of martingale solution to the system (\ref{spdes1}).
\begin{definition}\label{Def:mart-sol}
Fix $T>0$ and initial conditions $\mathfrak{w}_0=(w_0,v^1_0,\ldots,v^\Jott _0)$.	A \textnormal{martingale solution} or \textnormal{weak solution} to the problem
	(\ref{spdes1})
	is given by the tuple
	$$
	\left(\Omega ,{{\mathcal{F}}},\mathbb{P},{\mathbb{F}},
	\mathbb{W} %=(W_0,W_1,\ldots, W_\Jott )
	, \mathfrak{w}%=(w,v^1,\ldots,v^\Jott )
	\right)
	$$
	such that
	\begin{itemize}
		\item  $\mathfrak{A}:=(\Omega ,{{\mathcal{F}}},{\mathbb{F}},\mathbb{P})$ is a complete filtered
		probability space with a filtration ${\mathbb{F}}=\{{{\mathcal{F}}}_t:t\in [0,T]\}$ satisfying the usual conditions,
		\item the vector-valued Wiener process
		$\mathbb{W}$ is given by $\mathbb{W} =(W_0,W_1,\ldots, W_\Jott )$, where $W_0,W_1,$ $\ldots, W_\Jott $  are mutually independent  cylindrical Wiener processes on $H$ over the probability space
		$\mathfrak{A}$, and  %(\Omega ,{{\mathcal{F}}},{\mathbb{F}},\mathbb{P})$,
		\item the tuple $\mathfrak{w}=(w,v^1,\ldots,v^\Jott )$ is such that  $w:[0,T] \times \Omega \rightarrow U$, and $v^\jott:[0,T] \times \Omega \rightarrow
		U_\jott$, $\jott=1,\ldots,\Jott $
		are $\BF$--progressively  measurable processes such that $\PP$-a.s., $$(w,v^1,\ldots,v^\Jott )\in C([0,T];U)\times C([0,T];U_1)\times\cdot\cdot\cdot\times C([0,T];U_J), $$
		and satisfy  for all $t\in [0,T]$ the following  stochastic integral equations $\PP$--a.s.
		%  and satisfy
		\begin{equation}\label{eq:nStratonovich}
			w(t)=w_0+\int_0^t (A_0w(s)+F(w(s),v^1(s),\ldots,v^\Jott (s)\,  )\,ds+\int_0^t \Sigma_0 % g_{\gamma_1}
			(w(s)) dW_1(s),\ \text{in } U,
		\end{equation}
		and for  $\jott =1,\ldots, \Jott $,
		\begin{equation}\label{eq:nStratonovich2}
			v^\jott(t)=v^\jott_0+\int_0^t (A_\jott  v^\jott(s)+F_\jott (w(s),v^\jott(s),v^{\jott+1}(s),\ldots,v^\Jott (s)\,  )\,ds+\int_0^t \Sigma_j %g_{\gamma_\jott }
			(v^\jott(s)) dW_\jott (s),\ \text{in } U_j.
		\end{equation}
	\end{itemize}
\end{definition}
The main objective of this section is to prove an existence result for a solution of the abstract stochastic coupled system (\ref{spdes1}) in the sense of Definition \ref{Def:mart-sol}. A way to find a solution to this nonlinear system  is using a Schauder fix point argument. In this way,  for a certain fixed $m\ge1$, we define the collection of processes
\begin{equation}\label{eq:MMdef}
	\begin{aligned}  \Mcal_{\Afrak}^{m}(\X)
		:= & \Big\{ \xi:\Omega\times[0,T]\times \CO\to \mathbb{R}\;\colon\;\\
		&\qquad\text{\ensuremath{\xi} is \ensuremath{\mathbb{F}}-progressively measurable}\;\text{and}\;\Eb|\xi|_{\X}^{m}<\infty\Big\}
	\end{aligned}
\end{equation}
equipped with the norm
$$
|\xi|^m_{\Mcal_{\Afrak}^{m}(\X)}:=\Eb|\xi|_{\X}^{m},\quad\xi\in\Mcal_{\Afrak}^{m}(\X),
$$
where $$\X:=L^m(0,T;X)\subset\{ \eta: [0,T]\to U \}$$
 with $X$ be a given UMD Banach space.

 In order to overcome some difficulties caused by the non linearities of $F_j$, we first  introduce a family of the cut-off functions. Let $\phi \in C^{\infty}(\RR)$ %\Dcal(\mathbb{R})$\footnote{Mean the Schwartz space.}
be a smooth cut-off function that satisfies
$$
\phi(x) \bcase =0, &\mbox{ if } |x|\ge 2,
\\ \in [0,1], &\mbox{ if } 1<|x|<2,
\\=1, &\mbox{ if } |x|\le 1,
\ecase
$$
and let
\begin{equation}
\phi_\kk(x):= \phi(x/\kk), \ x\in\RR,\,\kk \in \mathbb{N}. \label{2.5}
\end{equation}
For  $\jott=1,\ldots, \Jott $, let  $\YY_\jott$ be a function space, e.g.\ $\YY_\jott=L^{m_\jott}(0,T;Y_\jott)$   or $\YY _\jott=C(0,T;Y_\jott)$,  with $1\leq m_j\leq \infty$ and  $Y_\jott$ being some given  UMD Banach spaces of type $2$. Let $h_\jott  :\YY _\jott\to\RR_0^+$ be given by $$h_\jott (\eta_j,t)=\|\eta_j \mathds{1}_{[0,t)}\|_{\YY _\jott},\ \
  \eta_j\in \YY _\jott.$$
Let
$$
\Theta^\jott_\kappa(\eta_j,t):= \phi_\kappa(h_\jott  (\eta_j,t  ))\ \text{ and } \ \Xi^\jott_\kappa(t):=\prod_{i=j}^\Jott \Theta^i_\kappa(\eta_j,t),\quad t\ge 0.
$$
For any $\xi\in \Mcal_{\Afrak}^{m}(\X)$ and $\kappa\in \mathbb{N}$, we consider on the  fixed filtered  probability space $\mathfrak{A}=(\Omega,\CF,(\CF_t)_{t\in[0,T]},\PP)$ given above, the following system

\begin{align}
\label{spdeskappa} \tag{{\arabic{equation}$_\kappa$}} %\tag*{[foo]}_\kappa
&\lk\{\barray
dw_\kappa(t) &=&\lk(\DeltaA_0 w_\kappa(t)+   \Xi^1_\kappa(t) \cdot F_0(\xi(t),v^1(t),\cdots,v^\Jott (t)) %F_0(\xi(t),w_\kappa(t))
\,\rk)\, dt +\Sigma_0(w_\kappa(t))\,dW_0(t),\ \text{in } U,
\\
dv^\jott_\kk(t) &=&\lk(\DeltaA_\jott   v^\jott_\kk(t)+  \Xi^\jott_\kappa(t)\cdot F_\jott  (\xi(t),v_\kk^{\jott}(t),v_\kk^{\jott+1}(t),\cdots,v_\kk^\Jott (t))\rk)\, dt +\Sigma_j(v_\kk^\jott(t))\,dW_\jott(t),\ \text{in } U_j,
\\
&&\quad \jott=1,\ldots ,\Jott ,
\\
w(0)&=&w_0,
\\
v^\jott(0)&=&v^\jott_0,\quad \jott=1,\ldots ,\Jott .
%w(0)&=&w_0,
\earray\rk.
\end{align}
%\EEQSZ

Before we present the statement of the main abstract result, we  outline shortly the idea.  We implicitly assume that the equation \eqref{spdeskappa} is well posed. Let us assume that the cut-off functions are chosen in such a way that for any cut-off parameter $\kappa\in\NN$,
there exists a bounded subset $\mathcal{X}_\MA^\kappa$ of $\Mcal_{\MA}^m(\X)$
such that for all $\xi\in\mathcal{X}_\MA^\kappa$
there exists in the stochastic sense  a unique strong solution  $(w_\kappa, v^1_\kappa,...,v^J_\kappa)$ to the system \eqref{spdeskappa} such that $w_\kappa\in \mathcal{X}_\MA^\kappa$.
If we want to underline the dependence of $w_\kappa$ or $v^\jott_\kappa$ on $\xi$, we will write $w_\kappa(\xi)$ or $v^\jott_\kappa(\xi)$, respectively. 
we have to provide a bounded convex subset of $\Mcal_{\Afrak}^{m}(\X)$, such that the operator $\Vcal_{\Afrak,W,\eta }$ will map this set into itself.
Here, it is essential to characterize the set in such a way that  one can
 transfer the definition to the set of probability measures.  
This can be done in the following they. For each $\kappa\in\NN$,
we have to find two measurable functions $\Phi_\kappa:\DD(0,T;U)\to\RR$ and $\Psi_\kappa:\DD(0,T;U)\to\RR\cup\{\infty\}$ such that for any bounded closed interval $I$ the set $\Psi_\kappa^{\leftarrow}(I)$\footnote{For a measurable function $f:X\to\RR$, $f^{\leftarrow}$ denotes the preimage 
% generalized inverse \cblue{(why avoid the notion of preimage, no need for the fancy $x^{\leftarrow}$ ??)} 
given by $\{ x\in X: f(x)\in A\}$ for all $A\in\CB(I)$.} is closed in $\mathbb{X}$.
Let us now define for any   $R>0$  
a subset 
 $\mathcal{X}^\kappa_{\MA}(R)$ of $\mathcal{M}_\MA^m(\mathbb{X})$ by
 \DEQSZ\label{characteriseK}
\mathcal{X}^\kappa_{\MA}(R)&:=&\lk\{ \xi\in \mathcal{M}_\MA^m(\mathbb{X}):\EE\Phi_\kappa(\xi)\le R^m \mbox{ and } \PP\lk(\Psi_\kappa(\xi)<\infty\rk)=1\rk\}.
 \EEQSZ
The set $\mathcal{X}^\kappa_{\MA}(R)$ has to be chosen in such a way that for all $R>0$  the system \eqref{spdeskappa} has  a unique strong solution (in the stochastic sense) $w\in\Mcal_{\MA}^m(\X)$ for all $\xi\in\Mcal_{\MA}^m(\X)\cap \mathcal{X}^\kappa_{\MA}(R)$.

In the next, on the fixed filtered  probability space $\mathfrak{A}=(\Omega,\CF,(\CF_t)_{t\in[0,T]},\PP)$ given above, for a vector valued Wiener process $\mathbb{W}=(W_0,W_1,\ldots,W_\Jott)$, we then define the operator
\DEQS
\Vcal^k_{\Afrak,\mathbb{W}}:\mathcal{X}_\MA^\kappa(R)&\to&\mathcal{M}_{\MA}^m(\X)
\\ \xi &\mapsto &\Vcal^k_{\Afrak,\mathbb{W}}(\xi):=w_\kappa(\xi),
\EEQS
%for $\xi\in\Mcal_{\MA}^m(\X)$ by $$\Vcal(\xi):=w,$$
where $w_\kappa(\xi)$ should be the first component of the solution $(w_\kappa(\xi), v^1_\kappa(\xi),...,v^J_\kappa(\xi))$   of  system \eqref{spdeskappa}.
Secondly, we have to find a sequence $\{R_\kappa:\kappa\in\NN\}$, such that we have $\mathcal{X}^{\kappa-1}_{\MA}(R_{\kappa-1})\subset \mathcal{X}^{\kappa}_{\MA}(R_\kappa)$ and 
$\mathcal{X}^{\kappa}_{\MA}(R_\kappa)$ is invariant for $\Vcal^k_{\Afrak,\mathbb{W}}$ for any $\kappa\ge 1$.
For simplicity let us put $ \mathcal{X}^{\kappa}_{\MA}:=\mathcal{X}^{\kappa}_{\MA}(R_\kappa)$.

Let us assume that $w_\kappa$ is a fixed point of the system \eqref{spdeskappa} on a new filtered  probability space $\mathfrak{A}_\kappa=(\Omega_\kappa,\CF_\kappa,(\CF^\kappa_t)_{t\in[0,T]},\PP_\kappa)$ with respect to the new Wiener process  $\mathbb{W}_\kappa=(W^\kappa_0,W^\kappa_1,\ldots,W^\kappa_\Jott)$,. Then, on the time interval
$[0,\tau^\ast_\kappa)$, where
\begin{align}\label{defintau}
&\tau^\ast_\kappa:=\min_{1\le \jott \le \Jott }\tau^\jott_\kappa\quad\mbox{with}\quad
\tau^\jott_\kappa=\tau^\jott_\kappa(v^\jott_\kappa)
:=\inf_{t>0} \{ h_\jott(v^\jott_\kappa,t)\ge \kappa\},\,\, \jott=1,\ldots,\Jott,
\end{align}
the tuple $\mathfrak{w}_\kappa=(w_\kappa,v_\kappa^1(w_\kappa),\ldots,v^\Jott_\kappa(w_\kappa))$ solves the original system \eqref{spdes1}. This means that, in addition to the new probability space and the new Wiener process, $\{(\mathfrak{w}_\kappa,\tau_\kappa^\ast)\}_{\kappa\geq 0}$ is a family of local martingale solution of problem  \eqref{spdes1}.  The next step is to extend the local solution  to a global solution. This will be done by  finding some Lyapunov functional
$$
\Lambda:\YY _1\times\cdots\times \YY _\Jott\longrightarrow \RR^+_0
$$
such that there exists some $p>0$ and $C>0$ with
$$
\EE \lk|\Lambda(v^1(w_\kappa),v^2(w_\kappa),\ldots, v^\Jott (w_\kappa))\rk|^p\le C,\quad \forall\,\kappa\in\NN.
$$
Here, it is important, that the inner part of the cut-off function is bounded by the functional. In particular, we need that there exists some $\delta>0$ such that we have
%\DEQSZ\label{thiscase1}
%\EE\, h_0  (w_\kappa,T)^\delta\le C\,\EE\, \Lambda(w_\kappa,v^1(w_\kappa),v^2(w_\kappa),\ldots, v^\Jott (w_\kappa)),\quad \forall\, \kappa\in\NN,
%\EEQSZ
%and
\DEQSZ\label{thiscase}
\EE\, h_\jott  (v^\jott_\kappa,T)^\delta\le C\EE\, \Lambda(w_\kappa,v^1(w_\kappa),v^2(w_\kappa),\ldots, v^\Jott (w_\kappa)),\quad \forall\,\kappa\in\NN, \, \text{  and  } \jott=1,\ldots \Jott .
\EEQSZ
If  \eqref{thiscase} is satisfied, then one can show by the Chebyscheff inequality that
the probability of the set
$$ \Omega_\kappa:=\{\omega\in \Omega:\tau^\ast_\kappa\le  T\}
$$ converges to zero as $\kappa$ tends to infinity. This implies that
$$\PP(\liminf_{\kappa\to \infty }\Omega_\kappa)\leq \lim_{\kappa\to \infty}\P(\Omega_\kappa)=0,$$
where
\begin{equation}
\liminf_{\kappa\to\infty}\Omega_\kappa=\cup_{\kappa=0}^\infty\cap_{j=\kappa}^\infty\Omega_j.\notag
\end{equation}
Hence,  we know that the set  $\Omega_0:=\Omega\setminus\liminf_{k}\Omega_\kappa$
has probability one  and
the system with cut-off coincides on $\Omega_0$ with the original equation.
In this way, the existence of a global martingale solution  is proven.

\medskip

We are  now ready to formulate our main theorem.

%\todo{initial data have to be random. we have to modify it}
\begin{theorem}[Meta-Theorem]\label{meta}

	\del{Given an
		\begin{itemize}
			\item underlying space $\mathbb{X}=L^p(0,T;E)$,
			\item  possible cut--off $(\phi_\kappa)_{\kappa\in\NN}$ being continuous w.r.t. $\mathbb{X}$,
			i.e. given a sequence $\{\xi_n:n\in\NN\}$ such that $\xi_n\to \xi$ in  $\mathbb{X}$, then we have
			$\phi_\kappa(\xi_n)\to \phi_\kappa(\xi)$.
			\item a subspace $\mathcal{X}_\kappa$ of $\CM^p( \xi:[0,T]\times \Omega\to \mathcal{D}, \quad \mbox{$\xi$ is progressively measurabel
				and $\xi\in\mathbb{X}$} \}$
			\item   and an operator $\operator _\kappa$ with
	\end{itemize}}
	%
	%The operator $\sigma $ maps
	\bigskip
	
Let $m>1$ be given. Let us  assume that %for some fixed $m\in \mathbb{N}$ and 
for  any $R>0$ and $\kappa\in \mathbb{N}$, there exists a
bounded and convex  set %\begin{equation*}
\DEQSZ \label{boundedset} 	
\Xcal^\kappa_{\Afrak}(R):=\left\{ \xi \in \CM^m_\MA(\BX): \EE \Phi^\kappa(\xi)\leq R,\, \PP\lk(\Psi_\kappa(\xi)<\infty\rk)=1\right\}
\EEQSZ
%\end{equation*}
  in $\CM^m_\MA(\BX)$, such that 
for all  $\xi \in \CM^m_\MA(\BX)$ the system (\ref{spdeskappa}) has a unique probabilistic strong solution $$\mathfrak{w}_\kappa(\xi)=({w}_\kappa(\xi),{v}^1_\kappa(\xi),\dots,{v}_\kappa^\Jott(\xi))\in \X\times\X_1\times...\times \X_J, \ \mathbb{P}\text{-a.s},$$
where $\X_j\subset\{ \eta: [0,T]\to U_j \}$, $j=1,...,J$,  are given Banach spaces. In addition we assume that for any $R>0$ we have 
$\Xcal^{\kappa-1}_{\Afrak}(R)\subset \Xcal^\kappa_{\Afrak}(R)$.

Finally, we assume that there exists some initial condition given as follows:
there exists some Banach spaces $E_0$ and $E_\jott$, $\jott=1,\ldots, \Jott$ and measurable functions $g_0:E_0\to \RR$,  $g_\jott:E_\jott\to \RR$, $\jott=1,\ldots,\Jott$, such that 
\begin{align}\label{initcond}
\EE g_0(w_0),\EE g_1(v^1_0),\ldots,\EE g(v^\Jott_0)<\infty.
\end{align}

We then consider the following  operator
\DEQS
\Vcal^\kappa_{\Afrak,\mathbb{W}}:\CM^m_\MA(\BX)&\longrightarrow&\Mcal_{\Afrak}^{m}(\X)
\\ \xi &\longmapsto &\Vcal^\kappa_{\Afrak,\mathbb{W}}(\xi):=w_\kappa(\xi),
\EEQS
where $w_\kappa(\xi)$ is  the  first component of $({w}_\kappa(\xi),{v}^1_\kappa(\xi),\dots,{v}_\kappa^\Jott(\xi))$ solution of  system \eqref{spdeskappa}.

\bigskip

If for any $\kappa\in \mathbb{N}$, 
there exists %a positive real number $R_\kk$  and 
two measurable functions $\Phi_\kappa: \X\to \mathbb{R}$ and $\Psi_\kappa:\X\to\RR\cup\{\infty\}$ such that
\newcounter{counter_flag}
	\begin{numref} %enumerate}
%	\item $\X\hookrightarrow L^2(0,T;E_0)$;
	\item \label{metawellposed}
for any $R>0$ 	the operator $\Vcal^\kappa_{\Afrak,\mathbb{W}}$ is well posed on  $\Xcal^\kappa_{\Afrak}(R)$;  in particular,
for all $\xi \in \Xcal^\kappa_{\Afrak}(R_\kk)$, there exists a unique strong solution $\mathfrak{w}_\kappa=(w_\kappa(\xi), v^1_\kappa(\xi),...,v^J_\kappa(\xi))$ of system \eqref{spdeskappa} exists, with $\Vcal^\kappa_{\Afrak,\mathbb{W}}(\xi)=w_\kappa$,  and	$w_\kappa\in \Mcal_{\Afrak}^{m}(\X)$.
			\del{$\Mcal_{\Afrak}^{m}(\X)$.
		for any $\xi\in \CX_\kappa$, there exists a tuple of processes $$
		\mathfrak{w}_\kappa(\xi)=({w}_\kappa(\xi),({v}^1_\kappa)(\xi),\dots,({v}_\kappa^\Jott)(\xi))
		$$ solving the system
		\eqref{spdeskappa}.}
	\item \label{metainvariant}
	for any $\kappa\in\NN$, there exists a number $R_\kappa>0$ such that the operator $\Vcal^\kappa_{\Afrak,\mathbb{W}}$   maps $\Xcal^\kappa_{\Afrak}(R)$ into itself for any $R\ge R_\kappa$;
	\del{\item
		let us define the operator
		the operator $\Vcal^\kappa_{\Afrak,\mathbb{W}}$ by where $\tilde w$ is a solution to
		\eqref{spdes}, where we replaced $\xi$ by $\xi\mathds{1}_{[0,\tau^0_\kappa(\xi))}$,
		and $v^k$ by  $v^k\mathds{1}_{[0,\tau^l_\kappa(v^k))}$.
		Then, if $\{\xi_n:n\in\NN\}$ is a sequence converging to $\xi$ in $\Mcal_{\Afrak}^{m}(\X)$, then
		the sequence $\{\osperator(\xi_n):n\in\NN\}$ converges to $\osperator(\xi)$ in $\Mcal_{\Afrak}^{m}(\X)$.
		In addition we assume that there exists $p_1,\ldots,p_\Jott\ge 1$ such that if $\{\xi_n:n\in\NN\}$ is a sequence converging to $\xi$ in $\Mcal_{\Afrak}^{m}(\X)$, then
		$v^\jott_\kappa(\xi_n)$ converges to $v^\jott_\kappa(\xi)$ on $L^{p_\jott}(\Omega;\YY _\jott)$.}
	
	\item \label{metacontinuity}
for any $R\ge R_\kappa$, 	the operator $\Vcal^\kappa_{\Afrak,\mathbb{W}}$ restricted to $\Xcal^\kappa_{\Afrak}(R)$ is %on $\Omega_\kappa:=\{\omega\in\Omega: \tau_\kappa(w_\kappa),\ldots,\tau_\kappa$
	continuous on $\Mcal_{\Afrak}^{m}(\X)$.
	%In addition, we assume that there exists $p_1,\ldots,p_\Jott\ge 1$ such that if $\{\xi_n:n\in\NN\}$ is a sequence converging to $\xi$ in $\Mcal_{\Afrak}^{m}(\X)$, then
%	$v^\jott_\kappa(\xi_n)$ converges to $v^\jott_\kappa(\xi)$ in $L^{p_\jott}(\Omega;\YY_\jott)$ for $\jott=1,\ldots,\Jott$;
	
	\del{In addition,
		there exists Banach spaces $\YY _1,\ldots, \YY _K$ such that if a sequence $\{\xi_n:n\in\NN\}\subset \mathcal{X}_\kappa$ converges to $\xi$ then for all uniformly in $v^n_\jott(\xi_n)  \to v_\jott (\xi) $ in $\mathcal{M}^{m_\jott  }(\YY _\jott  )$.}

	\item \label{metacompact}
	for any $R\ge R_\kappa$, 	there exists a number $m_0>0$ and a space $\BX'$ such that $\BX'\hookrightarrow\BX$ compactly  and  there exists a $C_0>0$ with
	$\EE\|w_\kappa(\xi)\|_{\BX'}^{m_0}\le C_0$ for all $\xi\in\Xcal^\kappa_{\Afrak}(R)$;
	\item \label{metauniformintegrability}
	for any $R\ge R_\kappa$, 	there exists a number $m_1>m$    and   a constant $C_1>0$ such that
	$\EE\|w_\kappa(\xi)\|_{\BX}^{m_1}\le C_1$ for all $\xi\in\Xcal^\kappa_{\Afrak}(R)$;
	\item \label{metacontinuityw}
	for any $R\ge R_\kappa$,  we have $\mathbb{P}\text{-a.s}$, $({w}_\kappa(\xi),{v}^1_\kappa(\xi),\dots,{v}_\kappa^\Jott(\xi))\in C([0,T];U)\times C([0,T];U_1)\times\dots\times  C([0,T];U_J), $ for all $\xi\in \Xcal^\kappa_{\Afrak}(R)$.
%	\item \label{metacontinuityv}%\new{nn}
%	there exists a constant  $C>0$ and there exists $\gamma_1,\ldots,\gamma_J>1$ such that for all $\xi\in\mathcal{X}_\kappa$ we have
%	$$ \EE\|v_j(\xi)\|_{\mathbb{X}_j}^{\gamma_j}\le C,
%	$$
%and for any sequence $\{\xi_n\}_{n\in\NN}\subset \mathcal{X}_\kappa $ with $\xi_n\to \xi\in \mathcal{X}_\kappa$ we have
%	$$
	% \lim_{n\to\infty} \, \EE\|v_j(\xi_n)-v_j(\xi)\|^{\gamma_j}_{\mathbb{X}_j}=0.
	 %$$
%\setcounter{counter_flag=michicounter}

 	\end{numref}
\medskip
Let us assume that for any $\kappa\in\NN$, there exists 
a probability space $\mathfrak{A}_\kappa$,  a Wiener process $\mathbb{W}_\kappa$ defined on $\mathfrak{A}_\kappa$, and an element $w_\kappa\in \Xcal^\kappa_{\Afrak_\kappa}(R_\kk)$,
% and an invariant subspace $\mathcal{X}_\MA^\kappa$ of a type given in \eqref{boundedset}, 	
such that for all initial conditions being $\CF_0$-measurable we have $\Vcal^\kappa_{\Afrak_\kappa}(w_\kappa)=w_\kappa$.
Let us assume that the condition on the initial data are preserved, i.e. at time  $\tau^\ast_\kappa$ (defined in \eqref{defintau}) the random variable $w_\kappa(\tau^\ast_\kappa),v^1_\kappa(\tau^\ast_\kappa),\ldots ,v^1_\kappa(\tau^\ast_\kappa)$ satisfy condition \eqref{initcond}.

\medskip

If for   the  corresponding solution 
$$
		\mathfrak{w}_\kappa=({w}_\kappa,{v}^1_\kappa,\dots,{v}_\kappa^\Jott),
		$$
		of the system \eqref{spdeskappa},
		there exists $m_j>1$, $j=1,\ldots,J$, and a constant $C(T)>0$ (both independent of $\kappa$)
		such that
\DEQSZ\label{metauniform}
		 \max_{\jott =1,\ldots,J}\tilde{\EE}\left[ h^{m_\jott }_\jott(v^\jott_\kappa,t )\right]\le C(T),\quad t\in[0,T],\;\kappa\in\NN,
		\EEQSZ
	then there exists a global martingale solution to the problem
	(\ref{spdes1}) in the sense of Definition \ref{Def:mart-sol}.

\del{	\begin{numref}

\setcounter{michicounter}{ 6} %counter_flag}

	%let $\sigma:\{1,\ldots,L\}\to \{0,1\}^L$. Then, the mapping

		\item \label{metaub}
		let $w_\kappa\in\Xcal^\kappa_{\Afrak}(R_\kk)$ be a fix point of $\Vcal^\kappa_{\tilde{\Afrak},\tilde{\mathbb{W}}}$ and let $$
		\mathfrak{w}_\kappa=({w}_\kappa,{v}^1_\kappa,\dots,{v}_\kappa^\Jott)
		$$
		be the corresponding tuple  solving the system
		\eqref{spdeskappa}.
		There exists $m_j>1$, $j=1,\ldots,J$, and a constant $C(T)>0$ such that
		$$
		 \max_{\jott =1,\ldots,J}\tilde{\EE}\left[ h^{m_\jott }_\jott(v^\jott_\kappa,t )\right]\le C(T),\quad t\in[0,T],\;\kappa\in\NN;
		$$
		\del{\item \label{metacutoffcontinuity}
			The inner part of the cut-off function  satisfy the following condition:
			given a sequence $\{\xi_n:n\in\NN\}$ such that $\xi_n\to \xi$ in  $\mathbb{X}$,  we have
			$h_\jott(v^\jott_\kappa(\xi_n),t)\to h(v^\jott_\kappa(\xi),t)$. %, where $w_\kappa^\jott(\xi)=\operator(\xi_n)$ and $w_\kappa=\operator(\xi)$\
		}
	\end{numref}}
		
\end{theorem}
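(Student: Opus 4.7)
The plan is to combine a stochastic Schauder--Tychonoff fixed point at each cut-off level with a Chebyshev estimate on the blow-up time and then pass to the limit $\kappa\to\infty$ via tightness, following precisely the outline preceding the statement.

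\emph{Stage 1 (fixed point for the cut-off system).} Fix $\kappa\in\NN$ and $R\ge R_\kappa$. Hypothesis (\ref{metawellposed}) makes $\Vcal^\kappa_{\Afrak,\mathbb{W}}$ well defined on the bounded convex set $\Xcal^\kappa_\Afrak(R)$; (\ref{metainvariant}) makes it a self-map of that set; (\ref{metacontinuity}) gives continuity in $\Mcal^m_\Afrak(\X)$; (\ref{metacompact}) delivers tightness of the image through the compact embedding $\BX'\hookrightarrow\BX$; and (\ref{metauniformintegrability}) yields uniform $m$-integrability. Since $\Mcal^m_\Afrak(\X)$ is not locally compact, I would pass to the push-forward mapping on laws on $\X$, check that this operator is continuous, convex-valued, and has relatively compact image, and then apply the stochastic Schauder--Tychonoff theorem in the spirit of \cite{Hausenblas:2023ab}. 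This produces, as already recorded in the statement, a filtered probability space $\Afrak_\kappa$ with Wiener process $\mathbb{W}_\kappa$ and a fixed point $w_\kappa\in\Xcal^\kappa_{\Afrak_\kappa}(R_\kappa)$. Together with the $v^\jott_\kappa$, this is a strong solution $\mathfrak{w}_\kappa$ of (\ref{spdeskappa}) on $\Afrak_\kappa$; hypothesis (\ref{metacontinuityw}) guarantees the continuous-path regularity required by Definition \ref{Def:mart-sol}.

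\emph{Stage 2 (localisation and Chebyshev).} Define $\tau^\ast_\kappa$ as in (\ref{defintau}). By the definition of the cut-off multipliers $\Xi^\jott_\kappa$ we have $\Xi^\jott_\kappa\equiv 1$ on $[0,\tau^\ast_\kappa)$, so $\mathfrak{w}_\kappa$ actually solves the original system (\ref{spdes1}) on this random interval, yielding a local martingale solution $(\mathfrak{w}_\kappa,\tau^\ast_\kappa)$. To globalise, set $\Omega_\kappa:=\{\tau^\ast_\kappa\le T\}$: on this event there is some $\jott$ with $h_\jott(v^\jott_\kappa,T)\ge\kappa$. Using (\ref{metauniform}) and Chebyshev's inequality,
\[
\PP_\kappa(\Omega_\kappa)\;\le\;\sum_{\jott=1}^\Jott\frac{\tilde{\EE}\lk[h_\jott^{m_\jott}(v^\jott_\kappa,T)\rk]}{\kappa^{m_\jott}}\;\le\;\frac{\Jott\,C(T)}{\kappa^{\min_\jott m_\jott}}\longrightarrow 0,
\]
so $\PP(\liminf_\kappa\Omega_\kappa)=0$ and on $\Omega_0:=\Omega\setminus\liminf_\kappa\Omega_\kappa$ the cut-off and original systems eventually coincide.

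\emph{Stage 3 (passage to the limit and identification).} I would then consider the joint laws of $(\mathfrak{w}_\kappa,\mathbb{W}_\kappa)$ on the Polish space $C([0,T];U)\times\prod_{\jott=1}^\Jott C([0,T];U_\jott)$ augmented by a suitable path space for the Wiener components. Tightness of the marginals of $w_\kappa$ and $v^\jott_\kappa$ follows from (\ref{metacompact}) via an Aubin--Lions argument, while (\ref{metacontinuityw}) gives tightness on the continuous path spaces and the Wiener marginals are automatically tight. By Jakubowski--Skorokhod one obtains, on a new probability space $\tilde\Afrak$, a.s.-convergent copies $\tilde{\mathfrak{w}}_\kappa\to\tilde{\mathfrak{w}}$ and $\tilde{\mathbb{W}}_\kappa\to\tilde{\mathbb{W}}$. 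Passing to the limit in (\ref{eq:nStratonovich})--(\ref{eq:nStratonovich2}) then identifies $\tilde{\mathfrak{w}}$ as a martingale solution of (\ref{spdes1}): the drift is handled by continuity of $F_\jott$ together with Vitali convergence, legitimate thanks to the higher-moment bound (\ref{metauniformintegrability}); the stochastic integral is handled by a standard Gy\"ongy--Krylov / martingale-representation argument using joint convergence with $\tilde{\mathbb{W}}_\kappa$. Preservation of the initial data (\ref{initcond}) and the bound $\PP_\kappa(\Omega_\kappa)\to 0$ ensure that the limit equation holds on all of $[0,T]$. The main obstacle is exactly this last identification: simultaneously passing to the limit in the nonlinear drift and in the stochastic integral while the driving Wiener process is itself changing with $\kappa$, and doing so consistently with the moving stopping time $\tau^\ast_\kappa$; it is here that (\ref{metauniformintegrability}) is indispensable to upgrade Skorokhod convergence to $L^m$-convergence and (\ref{metauniform}) is indispensable as the only hypothesis giving uniform-in-$\kappa$ control over the blow-up probability.
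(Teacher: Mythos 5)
Your Stages 1 and 2 follow the paper's plan reasonably faithfully (though you compress the time-discretisation via the shifted Haar projection $\Pro_\ell$ and the inductive construction of the piecewise fixed point $\tilde w^\ast_{\kappa,\ell_j,2^{\ell_j}}$, which the paper needs in order to make the Schauder--Tychonoff argument consistent with the filtration and the underlying Wiener process — these are nontrivial technical steps, not glossable, but they are not where you go wrong). The essential gap is in your Stage 3.

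You propose to send $\kappa\to\infty$ via tightness of the laws of $(\mathfrak{w}_\kappa,\mathbb{W}_\kappa)$, a Jakubowski--Skorokhod representation, and passage to the limit in the stochastic integral equations. This cannot work under the hypotheses of Theorem~\ref{meta}. The compactness estimate~\eqref{metacompact} ($\EE\|w_\kappa(\xi)\|_{\BX'}^{m_0}\le C_0$) and the higher-moment bound~\eqref{metauniformintegrability} ($\EE\|w_\kappa(\xi)\|_{\BX}^{m_1}\le C_1$) are stated only "for any $R\ge R_\kappa$", and both constants $C_0, C_1$ are permitted to depend on $\kappa$; since $R_\kappa$ grows, there is no uniform-in-$\kappa$ bound available. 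The only hypothesis that is uniform in $\kappa$ is~\eqref{metauniform}, which controls $\tilde{\EE}[h_\jott^{m_\jott}(v^\jott_\kappa,t)]$ — a bound on the arguments of the cut-offs, not a compactness estimate on the processes $w_\kappa$. Therefore tightness of $\{\mathrm{Law}(\mathfrak{w}_\kappa)\}_\kappa$ is simply not guaranteed, and your Skorokhod step cannot be justified. There is also a side issue: your $\liminf_\kappa \Omega_\kappa$ is an intersection over events living on \emph{different} probability spaces $\Afrak_\kappa$, so it is not even a well-defined event until these spaces have been glued into a single one.

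The paper avoids all of this with a construction rather than a limit. Its second step builds a sequence of processes $\bar{\mathfrak{w}}_\kappa$ on an increasing chain of product probability spaces $\bar{\Afrak}_\kappa$, each extending the previous one past the accumulated stopping time $\bar\tau_{\kappa-1}$ by running a fresh local solution from the new $\Afrak_\kappa$ (with shifted, glued Wiener processes $\bar W^\kappa_j$), and filling in after $\bar\tau_\kappa$ with a linear stochastic evolution. The consistency $\bar{\mathfrak{w}}_{\kappa-1}\big|_{[0,\bar\tau_{\kappa-1})}=\bar{\mathfrak{w}}_\kappa\big|_{[0,\bar\tau_{\kappa-1})}$ then lets one define a single process $\bar{\mathfrak{w}}$ on the countable product space $\bar\Afrak$ piecewise in $t$. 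The sets $\bar\Omega_\kappa:=\{\bar\tau_\kappa\ge T\}$ are increasing, and~\eqref{metauniform} plus Markov's inequality show $\bar\PP(\bar\Omega_\kappa)\to 1$, giving a global solution a.s. No limit of stochastic integrals, no identification of nonlinear terms after Skorokhod, and no uniform-in-$\kappa$ moment bounds on $w_\kappa$ are required. This is exactly why~\eqref{metauniform} is the only $\kappa$-uniform hypothesis and why your reading of~\eqref{metauniformintegrability} as "indispensable to upgrade Skorokhod convergence to $L^m$-convergence" across $\kappa$ misattributes its role: it is used only within a fixed $\kappa$ for the Vitali argument inside Stage~1.
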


\begin{remark}
	Showing the uniform bound for the cut-off functions, %i.e.\ assumption \ref{metaub}, 
	it is essential that
	to find a Lyapunov type functional $\Lambda$, such that $h_\jott(v^\jott_\kappa,T)$ are bounded by the Lyapunov functional.
	In particular, one has to show that
	$$
	\EE h^{m_j}_\jott(v^\jott_\kappa(\xi),T)\le \EE \Lambda( \mathfrak{w}_\kappa(\xi)),\quad j=1,\ldots, J,  %\le C.
	$$
	and
	$$
	\EE \Lambda( \mathfrak{w}_\kappa(\xi))  \le C.
	$$
\end{remark}
Now, we give a proof of Theorem \ref{meta}.
\begin{proof}
	As outlined before, the proof of the Theorem \ref{meta} consists of several steps.	Part of the steps we have already introduced in the first paragraphs, as well as part of the ingredients, like the cut-off function $\phi$, the mappings $h$ and $\Theta$, the system \eqref{spdeskappa},  the operator $\mathcal{V}^\kappa_{\MA,W}$, and the corresponding bounded and convex sets   $\mathcal{X}_\kappa$ on which the operator $\mathcal{V}^\kappa_{\MA,W}$ acting on.
	\begin{step}

		\item
		In this step we will show that for any $\kappa\in\NN$ there exists a fix point and that the fix point solves the system given in \eqref{spdes1} on a new probability space.
		In particular, one can get for each $\kappa\in\NN$ a local martingale solution.
		\begin{claim}\label{schauder}
			For any $\kappa\in\NN$, there exists a probability space
			$\tilde{\mathfrak{A}}_\kk =(\tilde \Omega_\kk ,\tilde{\CF}_\kk ,\tilde{\mathbb{F}}_\kk ,\tilde{\PP}_\kk ),
			$
			a vector valued Wiener process
			$\tilde{\mathbb{W}}^\kk=(\tilde W_0^\kappa,\tilde W_1^\kappa,\ldots,\tilde W^\kappa_\Jott)
			$
			being defined on  $\tilde {\mathfrak{A}}_\kk $, and a tuple of random variable
			$$\tilde{\mathfrak{w}}_\kappa:=(\tilde w_{\kappa},\tilde v^\jott_\kappa,\ldots ,\tilde v^\Jott_\kappa)\in C([0,T];U)\times C([0,T];U_1)\times \cdots\times C([0,T];U_J), \ \tilde{\PP}_\kk-a.s.
			$$
			solving the system \eqref{spdeskappa} $\tilde{\PP}_\kk$-a.s.
		\end{claim}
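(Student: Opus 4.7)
The plan is to carry out a stochastic Schauder--Tychonoff argument applied to the operator $\Vcal^\kappa_{\Afrak,\mathbb{W}}$ restricted to the bounded, convex, invariant set $\Xcal^\kappa_{\Afrak}(R_\kappa)$. Since this set is not compact in $\Mcal^m_{\Afrak}(\BX)$ (compactness is only available at the level of laws), we will necessarily have to enlarge the underlying probability space, which is precisely why the conclusion asserts the existence of a new probability space $\tilde{\Afrak}_\kappa$ rather than a fixed point on $\Afrak$ itself.

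Concretely, I would first fix any starting point $\xi^{(0)}\in\Xcal^\kappa_{\Afrak}(R_\kappa)$ and define the Picard iterates $\xi^{(n+1)}:=\Vcal^\kappa_{\Afrak,\mathbb{W}}(\xi^{(n)})$. By \eqref{metainvariant} the iterates remain in $\Xcal^\kappa_{\Afrak}(R_\kappa)$, and \eqref{metawellposed} furnishes the associated tuple $\mathfrak{w}^{(n)}=(w_\kappa(\xi^{(n)}),v^1_\kappa(\xi^{(n)}),\ldots,v^\Jott_\kappa(\xi^{(n)}))$ with $w_\kappa(\xi^{(n)})=\xi^{(n+1)}$. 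The compact embedding $\BX'\hookrightarrow\BX$ from \eqref{metacompact}, combined with the uniform integrability \eqref{metauniformintegrability} and the continuous-paths statement \eqref{metacontinuityw}, would yield tightness of the laws of $\xi^{(n)}$ on $\BX$ as well as on $C([0,T];U)$ via a standard compactness lemma; analogous uniform bounds for the $v^\jott_\kappa(\xi^{(n)})$ marginals (which follow, after the cut-off is applied, from the linearity of the $\jott$-th equation and standard energy estimates) would provide tightness in $C([0,T];U_\jott)$. Combined with the trivially Radon laws of $\mathbb{W}$, this produces tightness of the joint law of $(\xi^{(n)},\mathfrak{w}^{(n)},\mathbb{W})$ on an appropriate Polish product space.

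Next, by Prokhorov's theorem together with a Jakubowski--Skorokhod representation, along a subsequence one obtains a new filtered probability space $\tilde{\Afrak}_\kappa=(\tilde\Omega_\kappa,\tilde{\Fcal}_\kappa,\tilde{\BF}_\kappa,\tilde{\PP}_\kappa)$, equidistributed copies $(\tilde\xi^{(n)},\tilde{\mathfrak{w}}^{(n)},\tilde{\mathbb{W}}^{(n)})$, and an almost-sure limit $(\tilde\xi,\tilde{\mathfrak{w}}_\kappa,\tilde{\mathbb{W}}^\kappa)$ with $\tilde{\mathfrak{w}}_\kappa=(\tilde w_\kappa,\tilde v^1_\kappa,\ldots,\tilde v^\Jott_\kappa)\in C([0,T];U)\times C([0,T];U_1)\times\cdots\times C([0,T];U_\Jott)$. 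A martingale characterization argument together with the standard Skorokhod-type continuity of It\^o integrals (or the Debussche--Brze\'zniak identification method) will then show that $\tilde{\mathbb{W}}^\kappa$ is a cylindrical Wiener process adapted to $\tilde{\BF}_\kappa$ and that the cut-off system \eqref{spdeskappa} persists in the limit, with $\tilde{\mathfrak{w}}_\kappa$ driven by $\tilde{\mathbb{W}}^\kappa$. Finally, the continuity hypothesis \eqref{metacontinuity}, reread on the new probability space through the equidistribution, will force $\tilde w_\kappa=\tilde\xi$ in the limit, which is precisely the fixed-point relation required by the claim.

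The main obstacle will be the limit passage in the nonlinear drift $\Xi^\jott_\kappa(t)\cdot F_\jott(\xi^{(n)},v^\jott_\kappa(\xi^{(n)}),\ldots)$ and in the stochastic integrals $\int_0^\cdot \Sigma_\jott(v^\jott_\kappa(\xi^{(n)}))\,dW_\jott$. The cut-off factors $\Xi^\jott_\kappa$ are essential here: they localize $F_\jott$ to a bounded region on which the continuity \eqref{metacontinuity} can be invoked, whereas without them the highly nonlinear drifts could blow up along the approximating sequence. For the stochastic integrals, one must combine the uniform moment bound \eqref{metauniformintegrability} with a Skorokhod-type continuity of the It\^o map with respect to almost-sure convergence of integrand and integrator. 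One must also verify that the inner functional $h_\jott(\tilde v^\jott_\kappa,t)$ is continuous under the a.s.\ convergence on $\tilde{\Afrak}_\kappa$ so that $\Xi^\jott_\kappa(\tilde\xi^{(n)})\to\Xi^\jott_\kappa(\tilde\xi)$, which is exactly where the continuous-paths hypothesis \eqref{metacontinuityw} enters crucially.
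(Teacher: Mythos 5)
Your opening sentence correctly names the Schauder--Tychonoff strategy and the need to enlarge the probability space, but your actual execution via Picard iterates does not work, and the gap is not cosmetic.

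The central problem is this: assumption (c) only gives \emph{continuity} of $\Vcal^\kappa_{\Afrak,\mathbb{W}}$ on $\Xcal^\kappa_{\Afrak}(R)$, not a contraction, so the iterates $\xi^{(n+1)}=\Vcal^\kappa_{\Afrak,\mathbb{W}}(\xi^{(n)})$ have no reason to converge. Tightness lets you extract a subsequence $n_k$ with $\tilde\xi^{(n_k)}\to\tilde\xi$ a.s.\ on a Skorokhod space, and you correctly note $\tilde w_\kappa(\tilde\xi^{(n_k)})=\tilde\xi^{(n_k+1)}$; but $\{\tilde\xi^{(n_k+1)}\}_k$ is a \emph{different} subsequence of the orbit and has no reason to converge to the same limit $\tilde\xi$. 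So even if you can identify the limit of the left-hand side as $\Vcal^\kappa_{\tilde{\Afrak}_\kappa,\tilde{\mathbb{W}}}(\tilde\xi)$, you cannot conclude it equals $\tilde\xi$. This is precisely why Schauder--Tychonoff is an existence theorem and not an iterative one: compactness plus continuity gives a fixed point of the map, not a limit of the orbit of an arbitrary starting point.

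The paper avoids this by applying Schauder--Tychonoff \emph{at the level of laws}, not to the Picard orbit. It first replaces $\Vcal^\kappa_{\Afrak,\mathbb{W}}$ by the Haar-projected operators $\Vcal^{\kappa,\ell}_{\Afrak,\mathbb{W}}=\Pro_\ell\Vcal^\kappa_{\Afrak,\mathbb{W}}$, lifts these to operators $\mathscr{V}^{\kappa,\ell}$ on $\mathscr{M}_1(\BX)$ acting on the convex tight set $\mathscr{X}_\kappa$, verifies compactness, continuity and invariance, and obtains a fixed-point \emph{measure} $\mathscr{P}^\ast_{\kappa,\ell}$ from the classical Schauder--Tychonoff theorem. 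There is then a second subtlety you also do not address: after a Skorokhod representation the process $w^\ast_{\kappa,\ell_j}$ has law $\mathscr{P}^\ast_{\kappa,\ell_j}$ but is not automatically a \emph{pathwise} fixed point of $\Vcal^{\kappa,\ell_j}_{\Afrak^\ast,\mathbb{W}}$ on the new space, because the Skorokhod coupling destroys the original adapted structure. This is the reason the piecewise-constant Haar discretization was introduced in the first place: the paper runs an induction over the dyadic grid, replacing the process interval by interval with $\Vcal^{\kappa,\ell_j}_{\Afrak^\ast,\mathbb{W}}$ applied to the previous piece, which preserves the law and produces a process $\tilde w^\ast_{\kappa,\ell_j,2^{\ell_j}}$ that \emph{is} a genuine fixed point. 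Only then does a four-term decomposition $I+II+III+IV$ (using the uniform integrability, the projection convergence of Remark~\ref{projection}, and the continuity (c)) let one pass $\ell_j\to\infty$ and conclude $\Vcal^\kappa_{\Afrak^\ast,\mathbb{W}}(w^\ast_\kappa)=w^\ast_\kappa$. None of the discretization, the lift to $\mathscr{M}_1(\BX)$, or the grid-by-grid reconstruction appears in your proposal, and without them the fixed-point relation is not obtained.
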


		\begin{proof}[Proof of Claim \ref{schauder}]
			The proof is closed to the prove of the stochastic Schauder Tychonoff Theorem given in \cite{Hausenblas:2023ab}.
			In \cite{Hausenblas:2023aa} a refined version with a L\'evy process is given. However, our main difference is that we have only to proof the assumption for the existing fixpoint for one process, i.e.\ $w_\kappa$. Here, it is essential that only a dependence from equation for $v_\kappa^\jott$ to $v_\kappa^{\jott-1}$ exists.
%						\todoe{muss man noch durcharbeiten}

		%	Let us start with the proof. Observe, the parameter $\kappa\in\NN$ is fixed, but for simplicity, we omit the index.
			%Let us fix $\kappa$.
		%	Observe further, the probability space		$\Afrak$, the Wiener process $\mathbb{W}$, and the initial data $w^{(0)}\in\Xcal^\kappa(\Afrak)$ are given.
			\begin{stepproof}
				
				\del{Then,
					we have a $\tilde \PP$-a.s. the fixed point
					$$
					\mathcal{V}_{\kk ,\tilde{\MA}_\kappa}(\tilde u_{\kappa}(t))=(\tilde u_{\kappa}(t))
					$$
					for every $t\in[0,T]$.
					Due to the construction of
					$\mathcal{V}_{\scriptstyle \tilde{\MA}}$, the process $\tilde u_{\kappa}$
					solves the system \eqref{spdeskappa} \marginpar{refs eq:cutoffuu/eq:cutoffvv gibts nicht}
					over the stochastic basis $\tilde{\mathfrak{A}}_\kk $ with the Wiener noise $\tilde W^\kk $.}

%\item
				
\item  \label{proofsteppeins} In this step we will approximate the operator  $	\Vcal^\kappa_{\Afrak,\mathbb{W}}$ via a time discretisation. To this aim, in a similar way as in \cite{brz2013}, for $\kl\in \mathbb{N}$ we introduce a dyadic time grid $$\pi:=\lk\{\frac{iT}{2^{\kl}}; \ \ i=0,1,...,2^\kl-1\rk\},$$
and define  the shifted  Haar projection $\Pro_\kl $ of order $\ell$ by  for $f\in  \X$ such that $f(0)$ exists,
\begin{equation*}\label{hatdefined} (\Pro_\kl f)(s):=\bcase f(0), \ \text{if } \ s\in [0,\frac{T}{2^\ell})\\
	\frac {2^\ell}{T} \int_{\frac{iT}{2^{\kl}}}^{\frac{(i+1)T}{2^{\kl}}} f(r-\frac{T}{2^{\ell}})\: dr, \text{ if } \ s\in [\frac{iT}{2^{\kl}},\frac{(i+1)T}{2^{\kl}}),  \ i=1,2,3..., 2^\kl-1.\ecase
\end{equation*}
Note, that if $f\in \CM_\MA^{m}(\X)$ is such that $f(0)$ exists, then $\Pro_\kl f$ is a
progressively measurable, piecewise  constant,  $X$-valued
process.
\begin{remark}\label{projection}
We recall  in  \cite[Section B]{brz2013}, that for  all $\ell\in \mathbb{N}$, %we have for the projection
	\begin{enumerate}[leftmargin=0cm,itemindent=.5cm,parsep=5pt,labelwidth=\itemindent,labelsep=0.0cm,align=left,topsep=0pt, label={{(\alph{*})  }}]
		\item $\Pro_
		\kl$ is a bounded linear operator from $\X$ into $\X$. In fact $$\|\Pro_
		\kl f\|_{\X}\le \|f\|_{\X},\quad \forall f\in \X.
		$$
		\item if $K$ is a bounded compact subset of $\X$,
		then for all $\ep>0$ there exists a $\kl_\eps\in\NN$ such that
		$$\|\Pro_
		\kl  f - f\|_{\X}\le \ep,\quad f\in K,\quad \forall \kl\ge\kl_\eps.
		$$
		\item $\Pro_
		\kl$ is convergence in  $\X$. $$	\Pro_
		\kl f \longrightarrow f \quad \text{as}\quad  \kl \longrightarrow  \infty\quad  \text{ in }\quad \X.$$
	\end{enumerate}
\end{remark}

It follows that for all $\ep>0$ there exists a $\kl_\eps\in\NN$ such that
$$\EE\|\Pro_
\kl\Vcal^\kappa_{\Afrak,\mathbb{W}} (\xi) - \Vcal^\kappa_{\Afrak,\mathbb{W}}(\xi)\|_{\X}\le \ep,\quad \forall \, \xi\in \Xcal^\kappa_{\Afrak}(R_\kk)\quad\forall \kl\ge\kl_\eps.
$$
Indeed let us fix $\eps>0$. Due to the assumption (\ref{metacompact}), we know that there exists $C_0>0$ such that $$\EE\| \Vcal^\kappa_{\Afrak,\mathbb{W}} (\xi)\|_{\X}^{m_0}\leq C_0,\quad \forall \xi\in \Xcal^\kappa_{\Afrak}(R_\kk).$$
Let us consider  $m'_0 >0$ is such that $\frac{1}{m_0}+\frac{1}{m'_0}=1$ and $R_\eps>0$ such that
$$ 2C\lk(\frac{C_0 }{R^{m_0}_\ep}\rk)^\frac 1{m'_0}\,
\lk(C_0\rk)^\frac 1{m_0}\leq \frac{\eps}{2},$$
where $C>0$ is a constant coming from the continue embedding of $\X'$ into $\X$. We then consider the set
$$K_\eps=\left\{\xi \in \X: \|\xi\|_{\X'}\leq R_\ep\right\}.$$
Since $\BX'\hookrightarrow\hookrightarrow\BX$ compactly, the set $K_\eps$ is a compact set of $\X$. Hence, invoking the  Chebycheff inequality we derive that
\begin{align*}
\lk(\PP\lk( \Vcal^\kappa_{\Afrak,\mathbb{W}}  (\xi) \not\in K_\ep\rk) \rk)^\frac 1{m'_0}\,
\lk(\EE\| \Vcal^\kappa_{\Afrak,\mathbb{W}} (\xi)\|_{\X}^{m_0}\rk)^\frac 1{m_0}&\leq C\lk(\PP\lk( \| \Vcal^\kappa_{\Afrak,\mathbb{W}} (\xi)\|_{\X'}> R_\ep\rk) \rk)^\frac 1{m'_0}\,
\lk(\EE\| \Vcal^\kappa_{\Afrak,\mathbb{W}} (\xi)\|_{\X'}^{m_0}\rk)^\frac 1{m_0}\\
&\leq C\lk(\frac{\EE \| \Vcal^\kappa_{\Afrak,\mathbb{W}} (\xi)\|^{m_0}_{\X'} }{R^{m_0}_\ep}\rk)^\frac 1{m'_0}\,
\lk(C_0\rk)^\frac 1{m_0}.\\
&\leq C\lk(\frac{C_0 }{R^{m_0}_\ep}\rk)^\frac 1{m'_0}\,
\lk(C_0\rk)^\frac 1{m_0}.
\end{align*}
Let $\ell_\eps\in\NN$ large such that
$$
\|\Pro_
\kl  f- f\|_{\X} \le \frac \ep2,\quad \forall f\in K_{\ep} ,\text{and for all }  \kl\ge \ell_\eps.
$$
Using the first item of Remark \ref{projection}, we note that for $\ell\geq \ell_\eps$
\begin{align*}
\EE\| \Pro_
\kl \Vcal^\kappa_{\Afrak,\mathbb{W}} (\xi) - \Vcal^\kappa_{\Afrak,\mathbb{W}} (\xi)\|_{\X}  &\leq \EE1_{\Vcal^\kappa_{\Afrak,\mathbb{W}} (\xi)\in K_\ep}\| \Pro_
\kl \Vcal^\kappa_{\Afrak,\mathbb{W}} (\xi) - \Vcal^\kappa_{\Afrak,\mathbb{W}} (\xi)\|_{\X}\\
&\qquad +\EE 1_{\Vcal^\kappa_{\Afrak,\mathbb{W}} (\xi)\not \in K_\ep}\lk(\|\Pro_
\kl \Vcal^\kappa_{\Afrak,\mathbb{W}}  (\xi) \|_{\X}+\| \Vcal^\kappa_{\Afrak,\mathbb{W}} (\xi)\|_{\X}\rk)\\
&\leq \frac \ep2+2\EE 1_{\Vcal^\kappa_{\Afrak,\mathbb{W}} (\xi)\not \in K_\ep}\| \Vcal^\kappa_{\Afrak,\mathbb{W}} (\xi)\|_{\X}\\
&\leq \frac \ep2+2C\lk(\EE 1_{\Vcal^\kappa_{\Afrak,W}(\xi)\not \in K_\ep}^{m'_0}\rk)^\frac 1{m'_0}\,
\lk(\EE\| \Vcal^\kappa_{\Afrak,\mathbb{W}} (\xi)\|_{\X'}^{m_0}\rk)^\frac 1{m_0}\\
&\leq \frac \ep2+2C\lk(\PP\lk( \Vcal^\kappa_{\Afrak,\mathbb{W}}  (\xi) \not\in K_\ep\rk) \rk)^\frac 1{m'_0}\,
\lk(\EE\| \Vcal^\kappa_{\Afrak,\mathbb{W}} (\xi)\|_{\X}^{m_0}\rk)^\frac 1{m_0}\\
&\leq \frac \ep2+2C\lk(\frac{C_0 }{R^{m_0}_\ep}\rk)^\frac 1{m'_0}\,
\lk(C_0\rk)^\frac 1{m_0}\\
&\leq \ep,\qquad\qquad\qquad\qquad\qquad\qquad\qquad\qquad\forall\xi\in \Xcal^\kappa_{\Afrak}(R_\kk).
\end{align*}

%Let also $K_0:=\hat P_n(K)$, where $K$ is the compact subset

Finally, let us define the operators
$$
\Vcal^{\kappa,\ell}_{\Afrak,\mathbb{W}}(\xi):=   \Pro_\ell\Vcal^\kappa_{\Afrak,\mathbb{W}}(\xi),\quad \xi\in \Xcal^\kappa_{\Afrak}(R_\kk).
$$
%and
%$$
%v^{j,\kl}_\kappa (\xi):=   \Pro_\ell v^{j}_\kappa(\xi),\quad \xi\in \mathcal{X}_\kappa.
%$$

\item

For any $\ell  \in\NN$ the family of operators $\{\Vcal^{\kappa,\ell}_{\Afrak,\mathbb{W}}:\kl\in\NN\}$ %and $\mathcal{R}$
induces  a family of operators $\{\mathscr{V}^{\kappa,\kl}:\kl\in\NN\}$ on the set of Borel probability measures on $\X$ denoted by $\mathscr{M}_1(\X)$\footnote{
	Let $(E,d)$ be a metric space.  Let $\mathscr{M}_1(E)$ be the set of Borel probability measures on the metric space $(X,d)$. Note, that the KyFun metric is defined  in the following (see \cite[Theorem 11.3.1]{dudley2002}): For $\nu,\mu\in\mathscr{M}_1(X)$,
	$$d_{K}(\mu,\nu):=\inf\{\alpha>0: \mu(A)\le \nu(A_\alpha)+\alpha \mbox{ and }
	\nu(A)\le \mu(A_\alpha)+\alpha \mbox{ for all } A\in\CB(X)\}.
	$$
	Here $A_\alpha:=\{ x\in X: d(x,A)<\alpha\}$. The Ky Fan and Prohorov metric and the Prohorov metric characterizes weak convergence (see \cite[Theorem 11.3.3]{dudley2002}).
	Note that if $(S,d)$ is a separable complete metric space, then the set of all laws on $S$ defined on $\CB(S)$ is complete for the Prohorov or KyFan metric $d_K$, see \cite[Corollary 11.5.5]{dudley2002}.}. The construction of $\mathscr{V}^{\kappa,\kl}$ is done in the following way. Firstly, let us observe that due to assumption  (\ref{metacontinuityw}), for all $\xi\in\Xcal^\kappa_{\Afrak}(R_\kk)$ the process $\Vcal^{\kappa,\ell}_{\Afrak,\mathbb{W}}(\xi)$ belongs $
\PP$-a.s. to  $\DD(0,T,U)$. In fact, due to the point (\ref{metacontinuityw}),  $\Vcal^{\kappa,\ell}_{\Afrak,\mathbb{W}}(\cdot)$ maps $\xi$ into $C(0,T;U)$ and the projection $\Pro_\kl$ projects $C(0,T;U)$ onto $\DD(0,T,U)$. Hence,  let us define the subset of probability measures $\mathscr{X}_\kk$ given by
\DEQSZ\label{defc}
\mathscr{X}_\kk&:=&\lk\{ \mathscr{Q}\in \mathscr{M}_1(\X\cap  \DD(0,T;U)): \int_{\X}\Psi(x)d\mathscr{Q}\le R_\kk\rk\}.
\EEQSZ
Let $\mathscr{Q}$ be a  probability measure belonging to $\mathscr{X}_\kk$.
% \mathscr{M}_1(\spaceXF\cap  \DD(0,T;L^2(\CO))$. % such that
Then, by the Skorohod theorem,  there exists a
probability space  $\mathfrak{A}_0=(\Omega_0,\CF^0,\PP_0)$ and a random variable
$\xi:\Omega_0\to \X\cap \DD(0,T;U)$ such that the law of $\xi$ coincides with  $\mathscr{Q}$.
In particular,
the probability measure $\mathscr{P}_\xi:\CB(\X)\to[0,1]$ induced by $\xi$ and given by
$$
\mathscr{P}_\xi:\CB(\X)\ni A\mapsto \PP_0\lk( \lk\{ \omega:\xi(\omega)\in A\rk\}\rk)
$$
coincides with the probability measure $\mathscr{Q}$.
Since $\PP$-a.s., $\xi\in\DD(0,T;U)$ we know, that for any $t\in[0,T]$, $\xi(t)$ is a $U$--valued random variable over $\mathfrak{A}_0$.
In particular, the mapping $\xi:\Omega_0\times [0,T]\to U$ is well defined.
Let
$$
\CF^0_t:=\sigma \lk( \lk\{\,\xi (s),\;\colon\; 0\le s\le t \, \rk\}\cup\tilde \CN_0\rk),\quad t\in [0,T],
$$
where $\CN_0$ denotes the zero sets of ${\mathfrak{A}}_0$.
Let us put $\mathfrak{A}_0:=(\Omega_0,\CF^0,(\CF_t^0)_{t\in[0,T]},\PP_0)$.

Next, we have to construct the $\Jott+1$-tuple of Wiener processes and extend the probability space $\mathfrak{A}_0$.
Then, we consider the filtered probability space   $\mathfrak{A}_1=(\Omega_1,\CF^1,(\CF_t)_{t\in[0,T]},\PP_1)$ where on the  cylindrical Wiener processes $\mathbb{W}=(W_0,\ldots,W_\Jott)$  is defined, and let $\mathfrak{A}_{\mathscr{Q}}$ the product probability space of $\mathfrak{A}_0$ and $\mathfrak{A}_1$.
In particular, we put
\DEQS
\Omega_\mathscr{Q} =  \Omega_0\times \Omega_1,
&\quad &
\CF_\mathscr{Q} = \CF^0\otimes \CF^1,
\\
\CF^\mathscr{Q}_t = \CF^0_t\otimes \CF^1_t,
\quad &
\mbox{and}&\quad \PP_\mathscr{Q} = \PP_0\times \PP_1.
\EEQS				
Here, we know that $\xi$ at time $t$ is independent of the increments of the Wiener process after time t. In particular, the
integral w.r.t. the Wiener process is an Ito integral.
By the choice of $\mathscr{X}_\kk$, we know that  $\xi\in\Xcal^\kappa_{\mathfrak{A}_\mathscr{Q}}(R_\kk)$ and therefore,  $\Vcal_{\mathfrak{A}_\mathscr{Q},\mathbb{W}}^{\kk,\kl}(\xi)$ is well defined with $$\mathfrak{A}_\mathscr{Q}=(\Omega_\mathscr{Q},\CF_\mathscr{Q}, (\CF^\mathscr{Q}_t )_{t\in [0,T]}, \PP_\mathscr{Q} ).$$ Let us define a   family of   operators
$
\{\ST^{\kk,\kl} :\kl\in\NN\}$ acting on $\mathscr{M}_1(\X)$ and
having the same law as  $\Vcal_{\mathfrak{A}_\mathscr{Q},\mathbb{W}}^{\kk,\kl}(\xi)$.
To be more precise,  the mapping $$\ST^{\kk,\kl}: \mathscr{M}_1(\X)\longrightarrow \mathscr{M}_1(\X)$$ maps
the probability measure $\mathscr{P}_\xi:\CB(\X)\to[0,1]$ induced by $\xi$ (the random variable $\xi$ is obtained via a given probability measure $\mathscr{Q}\in \mathscr{M}_1(\X)$) to the probability measure $\mathscr{P}_{\Vcal_{\mathfrak{A}_\mathscr{Q},\mathbb{W}}^{\kk,\kl}(\xi)}:\CB(\X)\to[0,1]$ given by
$$
\mathscr{P}_{\Vcal_{\mathfrak{A}_\mathscr{Q},\mathbb{W}}^{\kk,\kl}(\xi)}(A):=\PP_\mathscr{Q}\lk( \lk\{ \omega\in\Omega_\mathscr{Q}: \Vcal_{\mathfrak{A}_\mathscr{Q},\mathbb{W}}^{\kk,\kl}(\xi(\omega))\in A\rk\}\rk), \ \forall A\in\CB(\X).
$$
Note, since $\X$ is a separable Banach space,
the metric space of probability measures over $\X$ equipped with the
Prohorov metric   is separable and complete.
\del{
	The set $\mathcal{C}_R $ corresponds to\footnote{We denote the set of all probability measure on a Banach space $E$ by $\mathscr{M}_1(E)$ {\sl topology}}
	\DEQSZ\label{defc}
	\lqq{\mathscr{C}_R:=
		\lk\{ P\in \mathscr{M}_1(\spaceXF\cap\mathbb{U}): \phantom{P\lk(\Bigg|\rk)}\rk.}
	\\ &&\notag
	\lk.P\lk(\{\xi\in \spaceXF:
	\int_{\spaceX} \|\xi\|^2_{\spaceX} dP\le R\rk)=1
	\quad\rk\}
	\EEQSZ}

The following points are valid:
\begin{enumerate}
\item For a fixed $\kk\in \mathbb{N}$,  due to the fact that $\Vcal^\kappa_{\Afrak_{\mathscr{Q}},\mathbb{W}}$ restricted to $\Xcal^\kappa_{\mathfrak{A}_\mathscr{Q}}(R_\kk)$ is continuous (see point (\ref{metacontinuity})) and the fact that the family $\{\mathscr{V}^{\kk,\kl}, \kl\in \mathbb{N}\}$ is uniformly continuous,  the family of operator $\{\text{Proj}_\ell, \kl\in \mathbb{N}\}$  restricted to $\mathscr{X}_\kappa$ is uniformly continuous on $\mathscr{M}_1(\X) $ in the Prohorov metric. This point follows from \cite[Lemma 11.7.1]{dudley2002}.
\item $\mathscr{V}^{\kk,\kl}$ restricted to $\mathscr{X}_\kappa $ is compact on $\mathscr{M}_1(\X) $.
In particular, it maps bounded sets into compact sets.
In fact, we have to show that for all $\kappa\in\NN$ and $\ep>0$ there exists a compact subset in $\CB(\X)$ such that
$$
\mathscr{P}_ {\Vcal_{\mathfrak{A}_\mathscr{Q},\mathbb{W}}^{\kk,\kl}(\xi)}\lk (\mathbb{X}\setminus K_\ep\rk)\le \ep, \quad \forall \mathscr{Q}= \mathscr{P}_\xi\in \mathscr{K}_\kappa\quad \mbox{and}\quad \PP_ {\Vcal_{\mathfrak{A}_\mathscr{Q},\mathbb{W}}^{\kk,\kl}(\xi)}:=\mathscr{V}^{\kk,\kl}(\mathscr{P}_\xi).
$$
Indeed, from assumption  \eqref{metacompact} we know that there exists a constant  $C>0$ such that
$$
\EE\|\Vcal_{\mathfrak{A}_\mathscr{Q},\mathbb{W}}^{\kk,\kl}(\xi)\|_{\mathbb{X}'}^{m_0}\le C,\quad \forall \xi\in\Xcal^\kappa_{\mathfrak{A}_\mathscr{Q}}(R_\kk).
$$
Let $\tilde R>0$ so large that
$$
\frac {C}{\tilde R^{m_0}}\le \ep
$$
and let $K_\ep:=\{f\in\mathbb{X}:\|x\|_{\mathbb{X}'}\le \tilde{R}\}$.
Note, we know by the Chebyscheff inequality that
we
have
$$\mathscr{P}_{ \Vcal_{\mathfrak{A}_\mathscr{Q},\mathbb{W}}^{\kk,\kl}(\xi)}\lk (\mathbb{X}\setminus K_\ep\rk)
=
\mathscr{P}_{ \Vcal_{\mathfrak{A}_\mathscr{Q},\mathbb{W}}^{\kk,\kl}(\xi)}\lk (\overline{ \lk\{
f\in\mathbb{X}:\|x\|_{\mathbb{X}'} \ge \tilde{R}\rk\}}\rk)
\le \epsilon$$
for all $\mathscr{P}_\xi\in \mathscr{K}_\kappa$ where $\mathscr{P}_{ \Vcal_{\mathfrak{A}_\mathscr{Q},\mathbb{W}}^{\kk,\kl}(\xi)}:=\mathscr{V}^{\kk,\kl}(\mathscr{P}_\xi)$.
Since $\mathbb{X}'\hookrightarrow \mathbb{X}$ compactly,
we have proven the tightness and the Prohorov theorem gives compactness on $\mathscr{M}_1(\X)$.

\item  $\mathscr{X}_\kappa$  is a convex subset of $\mathscr{M}_1(\X)$ and is invariant under $\mathscr{V}^{\kk,\kl}$. In fact the convexity follows by direct calculations and the invariance follows from the assumption (\ref{metainvariant}). Let  us consider the probability measure $ \mathscr{Q}\in \mathscr{M}_1(\X)$. Then there exists a filtered probability space $\mathfrak{A}_\mathscr{Q}$, the Wiener process $\mathbb{W}$ and we can construct  a bounded set $\Xcal^\kappa_{\mathfrak{A}_\mathscr{Q}}(R_\kk)$ such that  $\Vcal_{\mathfrak{A}_\mathscr{Q},\mathbb{W}}^{\kk,\kl}$  maps  $\Xcal^\kappa_{\mathfrak{A}_\mathscr{Q}}(R_\kk)$  into itself.  Since $\|\Pro_
\kl \xi\|_\X\le \|\xi\|_\X,$ we then know that $\mathscr{X}_\kappa$  is invariant under  $\mathscr{V}^{\kk,\kl}$.
\end{enumerate}
In particular, the mapping $\mathscr{V}^{\kk,\kl}$ restricted to $\mathscr{X}_\kappa$
satisfies the assumption of the Schauder-Tychonov Theorem (see \cite[Theorem 1.13]{Granas}).
Hence, for any $\kl\in\NN$ there exists a probability measure $\mathscr{P}^\ast_{\kk,\kl}$  such that $\mathscr{V}^{\kk,\kl}(\mathscr{P}^\ast_{\kk,\kl})=\mathscr{P}^\ast_{\kk,\kl}$.

\item

Note, for a fixed $\kk\in\NN$,  since the estimate on $\X'$ is uniform, the set
$$
\lk\{ \mathscr{P}^\ast_{\kk,\kl}\ : \  \kl\in\NN\rk\}
$$				
is tight. Hence,  there exists a subsequence
$\{\kl_j :j\in\NN\}$ and a Borel probability measure $\mathscr{P}^\ast_\kk$ such that
$ \mathscr{P}_{\kk,\kl_j}\to 	 \mathscr{P}_\kk^\ast$, as $j\to\infty$.	 By the Skorokhod lemma \cite[Theorem 4.30]{Kallenberg},
there exists a probability space $\mathfrak{A}^\ast_0=(\Omega^\ast_0,\CF^{\ast}_0,\PP^\ast_0)$ and a  sequence of $\X$-valued random variables $\{ {w}^\ast_{\kappa,\kl_j }:j\in\NN\}$
and ${w}^\ast_\kk$ where
the random variable $w^\ast_\kk:\Omega^\ast_0\to \X$ has the  law $\mathscr{P}^\ast_\kk$  and
\DEQSZ\label{equallaw1}
\text{Law}({w}^\ast_{\kappa ,\kl_j})=
\mathscr{P}_{\kappa ,\kl_j}^\ast,\quad j\in\NN,
%\quad\text{Law}(w^\ast)=\mathscr{P}^{\ast},
\EEQSZ
and such that
\begin{equation}
{w}^\ast_{\kappa,\kl_j }\to{w}^\ast_\kk\quad \mbox{ as $j\to \infty$ } \,\,{\P}^\ast_0\text{-a.s.}
\text{ on } \X.\label{2.11}
\end{equation}

% and let us put  $\CG^\ast_t:=\vee_{j\in \NN}	 \CG^{\ast,n_j}_t$.
%Since the grid are nested, we know $\CG^{\ast,n_j}_t\subset  \CG^{\ast,n_{j+1}}_t$, and the intersection  is well defined.

Next, we have  to construct  the Wiener process.
% since we have only the processes $w^\ast$ and $w_{\kappa}$ defined on our probability space $\mathfrak{A}^\ast_0$.
Let us first consider the  filtered probability space
$$
\mathfrak{A}_1=\lk(\Omega_1,\PP_1,\CF_1,(\CG^1_t)_{t\in[0,T]}\rk),
$$
where on the  cylindrical Wiener processes $\mathbb{W}=(W_0,\ldots,W_\Jott)$  is defined. 			In particular, we put
\DEQS
\Omega ^\ast=  \Omega_0^\ast\times \Omega^\ast_1,
\quad \CF^\ast =   \CF_0^\ast\otimes \CF_1^\ast,
\quad \mbox{and}\quad \PP^\ast =   \PP^\ast_0\otimes \PP_1.
\EEQS
and extend  the random variables $\mathbb{W}$, ${w}^\ast_{\kappa ,\kl_j}$  on $(\Omega^\ast, \mathcal{F}^\ast,\mathbb{P}^\ast)$ as follows for all $t\in [0.T]$.
\begin{align*}
&\mathbb{W}(t,\omega_0,\omega_1)=\mathbb{W}(t,\omega_1), \qquad\forall\  (\omega_0,\omega_1)\in \Omega^\ast,\\
&{w}^\ast_{\kappa ,\kl_j}(\omega_0,\omega_1)={w}^\ast_{\kappa ,\kl_j}(\omega_0), \qquad\forall\  (\omega_0,\omega_1)\in \Omega^\ast,\\
&{w}^\ast_{\kappa}(\omega_0,\omega_1)={w}^\ast_{\kappa}(\omega_0), \qquad\forall\  (\omega_0,\omega_1)\in \Omega^\ast.
\end{align*}
Moreover, let us introduce 	
 the filtration $\CG^\ast =(\mathcal{G}_t^{\ast})_{t\in[0,T]}$  given by
$$
\CG^{\ast}_t:=\sigma \lk( \lk\{\,(w _{\kk,\kl_j}^\ast(s), w^\ast_\kk(s), \mathbb{W}(s)) \;\colon\; 0\le s\le t \ , \ j\in\NN
\rk\}\cup\tilde \CN_0\rk),\quad t\in [0,T],
$$
where $\CN_0$ denotes the zero sets of $(\Omega^\ast, \mathcal{F}^\ast,\mathbb{P}^\ast)$.  Let us set  $\mathfrak{A}^\ast:=(\Omega^\ast, \mathcal{F}^\ast,\CG^\ast , \mathbb{P}^\ast)$ and recall that by the definition of the filtration $\CG^\ast $, the  process $ \mathbb{W}$ is Wiener process over $\mathfrak{A}^\ast$.

In addition, $\Xcal^\kappa_{\mathfrak{A}^\ast}(R_\kk)$ can be defined, and also the operators ${\Vcal^\kappa_{\mathfrak{A}^\ast,\mathbb{W}}}$ and its  projection $\Vcal^{\kappa,\ell}_{\mathfrak{A}^\ast,\mathbb{W}}$.

				\item \label{proofsteppzwei}

We start by remarking that although  $\mathscr{V}^{\kk,\kl_j}(\mathscr{P}^\ast_{\kk,\kl_j})=\mathscr{P}^\ast_{\kk,\kl_j}$, the process $w^\ast_{\kappa,\ell_j }$, does not have any guaranties  to satisfy
$$\PP^\ast\lk(\lk\{
{\Vcal^{\kappa,\kl_j}_{\mathfrak{A}^\ast,\mathbb{W}}} ( w_{\kk,\kl_j}^\ast )(s) =  w^\ast_{\kappa ,\kl_j}(s)\rk\}\rk)=1 \quad \mbox{for} \quad 0\le s\le T.$$
In this step, we  will construct a fixed point denoted by $\tilde  w^\ast_{\kk,\kl_j, 2^{\ell_j}}$  to the operator $\Vcal^{\kappa,\kl_j}_{\mathfrak{A}^\ast,\mathbb{W}}$  and that the probability measure will not change. We will construct this new process by induction. To start with let us note that since  $\mathscr{V}^{\kk,\kl_j}(\mathscr{P}^\ast_{\kk,\kl_j})=\mathscr{P}^\ast_{\kk,\kl_j}$, then  by the equality (\ref{equallaw1}) and the definition of $\mathscr{X}_\kk$ and $\Xcal^\kappa_{\mathfrak{A}^\ast}(R_\kk)$, the process $w^\ast_{\kappa,\ell_j }\in\Xcal^\kappa_{\mathfrak{A}^\ast}(R_\kk)$ and hence,
$\Vcal^{\kappa}_{\mathfrak{A}^\ast,\mathbb{W}}(w^\ast_{\kappa,\ell_j })$
are well defined. Indeed for $\xi=w^\ast_{\kappa,\ell_j }$, by the hypothesis of Meta theorem, considering the filtered probability space $\mathfrak{A}^\ast$, we can solve the system (\ref{spdeskappa}) to obtain the $J+1$-couples $(\Vcal^{\kappa}_{\mathfrak{A}^\ast,\mathbb{W}}(w^\ast_{\kappa,\ell_j }),{v}^1_\kappa(w^\ast_{\kappa,\ell_j }),\dots,{v}_\kappa^\Jott(w^\ast_{\kappa,\ell_j }))$ such that by assumption  (\ref{metacontinuityw}), $\mathbb{P}^\ast\text{-a.s},$ $$(\Vcal^{\kappa}_{\mathfrak{A}^\ast,\mathbb{W}}(w^\ast_{\kappa,\ell_j }),{v}^1_\kappa(w^\ast_{\kappa,\ell_j }),\dots,{v}_\kappa^\Jott(w^\ast_{\kappa,\ell_j }))\in C(0,T;U)\times C(0,T;U_1)\times\dots\times  C(0,T;U_J).$$
We also note that since $\ST^{\kk,\kl_j}(\mathscr{P}^\ast_{\kappa ,\ell_j})=\mathscr{P}^\ast_{\kappa,\ell_j }$, we have  $$\Law(\Vcal^{\kappa,\ell_j}_{\mathfrak{A}^\ast,\mathbb{W}}( w^\ast_{\kappa,\ell_j }))=\Law(w^\ast_{\kappa,\ell_j}).$$

Now, we  construct here a fixed point  to the operator $
\Vcal^{\kappa,\ell_j}_{\mathfrak{A}^\ast,\mathbb{W}}$. To start with, by induction  let
$$\tilde w^\ast_{\kappa,\kl_j ,0}(s) =w^\ast_{\kappa ,\kl_j}(s),\qquad s\in [0,T],$$
and
\DEQSZ\label{erstegl}
\tilde w^\ast_{\kappa,\kl_j ,1}(s) &:=& \begin{cases}
	w^\ast_{\kappa ,\kl_j}(s)& \mbox{ if } 0\le s<t^{\kl_j}_1,
	\\
	\lk({\Vcal^{\kappa,\kl_j}_{\mathfrak{A}^\ast,\mathbb{W}}}(w^\ast_{\kappa ,\kl_j})\rk)(s)& \mbox{ if } t^{\kl_j}_1\le s\le T.
\end{cases}
\EEQSZ
Clearly, in the time interval $[0,t_1^{\kl_j})$, the law is the same and by the fact that  the operator  $\Vcal^{\kappa,\kl_j}_{\mathfrak{A}^\ast,\mathbb{W}}$ is invariant with respect to the measure $\PP^\ast$, we will not change the law on the time interval $[t_1^{\kl_j},T]$.

Next, let \DEQSZ
\tilde w^\ast_{\kappa,\kl_j ,2}(s) &:=& \begin{cases}
	\tilde w^\ast_{\kappa ,\kl_j,1}(s)& \mbox{ if } 0\le s<t^{\kl_j}_2,
	\\
	\lk({\Vcal^{\kappa,\kl_j}_{\mathfrak{A}^\ast,\mathbb{W}}}(\tilde w^\ast_{\kappa ,\kl_j,1})\rk)(s)& \mbox{ if } t^{\kl_j}_2\le s\le T.
\end{cases}
\EEQSZ
Similar like before, by the fact that  in the time interval $[0,t_2^{\kl_j})$, the law is the same and since  the operator  $\Vcal^{\kappa,\kl_j}_{\mathfrak{A}^\ast,\mathbb{W}}$ is invariant with respect to the measure $\PP^\ast$, we will not change the law on the time interval $[t_2^{\kl_j},T]$.

Now, having defined $\tilde w^\ast_{\kappa ,\kl_j,i}$,  $i=1,2,3,...,2^{\ell_j}-1$, let
\DEQSZ
\tilde w^\ast_{\kappa,\kl_j ,i+1}(s) &:=& \begin{cases}
\tilde	w^\ast_{\kappa ,\kl_j,i}(s)& \mbox{ if } 0\le s<t^{\kl_j}_{i+1},
	\\
	\lk({\Vcal^{\kappa,\kl_j}_{\mathfrak{A}^\ast,\mathbb{W}}}(\tilde w^\ast_{\kappa ,\kl_j,i})\rk)(s)& \mbox{ if } t^{\kl_j}_{i+1}\le s\le T.
\end{cases}
\EEQSZ

Our claim is now, that the process $\tilde w_{\kk,\kl_j,2^{\ell_j}}^\ast$ satisfies %$\PP$-a.s.
\DEQSZ\label{isasolution}
\PP^\ast\lk(\lk\{
{\Vcal^{\kappa,\kl_j}_{\mathfrak{A}^\ast,\mathbb{W}}} (\tilde w_{\kk,\kl_j,2^{\ell_j}}^\ast )(s) =\tilde  w^\ast_{\kappa ,\kl_j,2^{\ell_j}}(s)\rk\}\rk)&=&1 \quad \mbox{for} \quad 0\le s\le T.
\EEQSZ
In fact, we have $\PP^\ast$-a.s. by definition we have,
$$
{\Vcal^{\kappa,\kl_j}_{\mathfrak{A}^\ast,\mathbb{W}}}\lk(\tilde  w^\ast_{\kk,\kl_j, 2^{\ell_j}}\rk)(s)=
{\Vcal^{\kappa,\kl_j}_{\mathfrak{A}^\ast,\mathbb{W}}}\lk(\tilde  w^\ast_{\kappa,\kl_j ,0}\rk)(s), \quad 0\le s<t^{\ell_j}_1.
$$
%due to the fact that we just start in the first time interval.
By the definition of $\Pro_{\kl_j}$ on $[0,t_1^{\kl_j})$
the process on $[0,t_{1}^{\kl_j})$ is defined by the initial data.
%At time $t_1$,
In particular, we have  $\PP^\ast$-a.s.
$$
{\Vcal^{\kappa,\kl_j}_{\mathfrak{A}^\ast,\mathbb{W}}}\lk(\tilde  w^\ast_{\kk,\kl_j, 2^{\ell_j}}\rk)(s)
=%\tilde w^\ast_{l_1}(s)=w^\ast_{\kappa }(s)=
w^\ast_0, \quad \mbox{for} \quad 0\le s<t^{\kl_j}_1.
$$
On the other side, we have $$\tilde  w^\ast_{\kk,\kl_j, 2^{\ell_j}}(s)= w^\ast_{\kk,\kl_j}(s)=w^\ast_0\ \text{ for } \ 0\le s<t^{\kl_j}_1.$$
At time $t^{\kl_j}_1$, we have by \eqref{erstegl}
$$
{\Vcal^{\kappa,\kl_j}_{\mathfrak{A}^\ast,\mathbb{W}}}\lk(\tilde  w^\ast_{\kk,\kl_j, 2^{\ell_j}}\rk)(t_1^{\ell_j})
%{\mathcal{T}}_{\kappa }\lk(\tilde  w^\ast_{\kappa ,2^{l_j}},%\tilde r^\ast_{l_j,n,2^{\iota_j}}\rk)(t_1)
={\Vcal^{\kappa,\kl_j}_{\mathfrak{A}^\ast,\mathbb{W}}}\lk(\tilde  w^\ast_{\kk,\kl_j}\rk)(t_1^{\ell_j})=\tilde w^\ast_{\kappa,\ell_j ,1}(t^{\ell_j}_1).
$$
However, by definition we have $$\tilde  w^\ast_{\kk,\kl_j, 2^{\ell_j}}(t_1^{\ell_j})= \tilde w^\ast_{\kappa,\ell_j ,1}(t^{\ell_j}_1).$$
Now, we  analyse what is happening at the next time interval $[t^{\ell_j}_1,t^{\ell_j}_2)$. Note that, here  the process is constant and equals   $\PP^\ast$-a.s. the value at $t^{\ell_j}_1$, i.e.
$$
{\Vcal^{\kappa,\kl_j}_{\mathfrak{A}^\ast,\mathbb{W}}}\lk(\tilde  w^\ast_{\kk,\kl_j, 2^{\ell_j}}\rk)(s)%{\mathcal{T}}_{\kappa }\lk(w^\ast_{\kappa ,2^{\kappa }}\rk)(s)
=\tilde w^\ast_{\kappa,\ell_j ,1}(s),  \quad \mbox{for} \quad t^{\ell_j}_1\le s<t^{\ell_j}_2.
$$
Note, also that   $\PP^\ast$-a.s. we have $\tilde  w^\ast_{\kk,\kl_j, 2^{\ell_j}}(s)= \tilde w^\ast_{\kappa,\ell_j ,1}(s)$ for $ t^{\ell_j}_1\le s<t^{\ell_j}_2$, and, hence
$$
\PP^\ast\lk( {\Vcal^{\kappa,\kl_j}_{\mathfrak{A}^\ast,\mathbb{W}}} (\tilde w_{\kk,\kl_j,2^{\ell_j}}^\ast )(s) =\tilde  w^\ast_{\kappa ,\kl_j,2^{\ell_j}}(s)\rk) =1,  \quad \mbox{for} \quad t^{\ell_j}_1\le s<t^{\ell_j}_2.
$$
Next we analyse what happens in $t^{\ell_j}_2$. We note that by  definition  we have
$$
{\Vcal^{\kappa,\kl_j}_{\mathfrak{A}^\ast,\mathbb{W}}}\lk(\tilde  w^\ast_{\kk,\kl_j, 2^{\ell_j}}\rk)(t_2^{\ell_j})
%{\mathcal{T}}_{\kappa }\lk(\tilde  w^\ast_{\kappa ,2^{l_j}},%\tilde r^\ast_{l_j,n,2^{\iota_j}}\rk)(t_1)
={\Vcal^{\kappa,\kl_j}_{\mathfrak{A}^\ast,\mathbb{W}}}\lk(\tilde  w^\ast_{\kk,\kl_j,1}\rk)(t_2^{\ell_j})=\tilde w^\ast_{\kappa,\ell_j ,2}(t^{\ell_j}_2),
$$
and $\tilde  w^\ast_{\kk,\kl_j, 2^{\ell_j}}(t_2^{\ell_j})= \tilde w^\ast_{\kappa,\ell_j ,2}(t^{\ell_j}_2)$.

Now, by induction let us assume that in $[0,t^{\ell_j}_i)$ we have shown that
\DEQSZ\label{inductionstart}
\PP^\ast\lk( {\Vcal^{\kappa,\kl_j}_{\mathfrak{A}^\ast,\mathbb{W}}} (\tilde w_{\kk,\kl_j,2^{\ell_j}}^\ast )(s) =\tilde  w^\ast_{\kappa ,\kl_j,2^{\ell_j}}(s)\rk) =1 \quad \mbox{for} \quad 0\le s\le t^{\ell_j}_i
.
\EEQSZ
Then,  since by definition, on $t_i^{\ell_j}\le s<t_{i+1}^{\ell_j}$
$$
{\Vcal^{\kappa,\kl_j}_{\mathfrak{A}^\ast,\mathbb{W}}}\lk(\tilde  w^\ast_{\kk,\kl_j, 2^{\ell_j}}\rk)(t_i^{\ell_j})
%{\mathcal{T}}_{\kappa }\lk(\tilde  w^\ast_{\kappa ,2^{l_j}},%\tilde r^\ast_{l_j,n,2^{\iota_j}}\rk)(t_1)
={\Vcal^{\kappa,\kl_j}_{\mathfrak{A}^\ast,\mathbb{W}}}\lk(\tilde  w^\ast_{\kk,\kl_j,i-1}\rk)(t_i^{\ell_j})=\tilde w^\ast_{\kappa,\ell_j ,i}(t^{\ell_j}_i),
$$
the conclusion follows by the fact that
% $\PP$-a.s. the identity
$\tilde  w^\ast_{\kk,\kl_j, 2^{\ell_j}}(t_i^{\ell_j})= \tilde w^\ast_{\kappa,\ell_j ,i}(t^{\ell_j}_i)$.
% Let $\tilde w^\ast_{\kappa }:=w^\ast_{\kappa ,\infty}$.

\item

Next, we verify the following statements with the goal to pass to the limit. We know due to the construction and the properties of the projection that
$$\text{Law}(\tilde w^\ast_{\kk,\kl_j, 2^{\ell_j}})=\text{Law}({w}^\ast_{\kappa ,\kl_j}).$$
 Then, by assumption (\ref{metauniformintegrability}) we note that
	\begin{itemize}
		\item there exists a constant $C>0$ such that $ \sup_{j\in{\mathbb{N}}}  \EE^\ast \left[ \| \tilde w^\ast_{\kk,\kl_j, 2^{\ell_j}}
		\|^{ m_1}_{\mathbb{X}}\right]\le C$ and
		\item  for any $r\in (m,m_1)$ we have
		$$\lim_{j\to \infty}{{\mathbb{E}^\ast }}\left[   \| \tilde w^\ast_{\kk,\kl_j, 2^{\ell_j}}- w_\kk^{\ast} \|_{\mathbb{X}}^r\right] = 0. $$
	\end{itemize}
	Indeed, since $\{  {w}^\ast_{\kappa,\kl_j }\}_{j\in\NN}\subset\Xcal^\kappa_{\mathfrak{A}^\ast}(R_\kk)$ and $\Xcal^\kappa_{\mathfrak{A}^\ast}(R_\kk)$ is bounded in $\X$,
	we can conclude due to the equality of law that
	\[
	\EE^\ast \lk\|\tilde w^\ast_{\kk,\kl_j, 2^{\ell_j}}\rk\|_{\X}^{r}={\Eb}^\ast \|{w}^\ast_{\kappa,\kl_j }\|_{\X}^{r},
	\]
	for any $r\in (m, m_1]$,
	so that we get by assumption (\ref{metauniformintegrability}) that
	\[
	\sup_{j}{\Eb}^\ast \|\tilde w^\ast_{\kk,\kl_j, 2^{\ell_j}}\|_{\X}^{ r}=	\sup_{j}{\Eb}^\ast \|{w}^\ast_{\kappa,\kl_j }\|_{\X}^{ r}\le CR,\qquad \forall r\in [1,m_0],
	\]
	%where $C>0$ is a constant such that $\|\cdot\|_{\X}\le C\|\cdot\|_{\X^\prime}$.
	%(verstehe ich nicht)
	Hence, we know that $\{\| w^\ast_{\kk,\kl_j}\|_{\X}^{r}\}$
	is uniformly integrable for any $r\in (m,m_1]$ w.r.t. the probability measure $\P^\ast$.
	Moreover, by the convergence (\ref{2.11}), 	we get by the Vitali convergence theorem that
	\begin{equation*}
		\lim_{j\to\infty}{\Eb}^\ast\left\| {w}^\ast_{\kappa,\kl_j }-{w}_\kk^\ast\right\|_{\X}^{r}=0\label{eq:strong-v}
	\end{equation*}
	for any $r\in(m,m_1]$. By the equality of the laws, we then infer that
	\begin{equation*}
	\lim_{j\to\infty}{\Eb}^\ast\left\| \tilde w^\ast_{\kk,\kl_j, 2^{\ell_j}}-{w}_\kk^\ast\right\|_{\X}^{r}=0.
\end{equation*}	

\item
In this step we show that $w_\kk^\ast=\Vcal^\kappa_{\Afrak^\ast,\mathbb{W}}(w_\kk^\ast)$ and derive that over $\MA^\ast$,  the random variable  $(w_\kk^\ast, v_\kk^1(w_\kk^\ast),...,v^J(w_\kk^\ast))$ together with the Wiener process $\mathbb{W}$  is indeed a martingale solution to \eqref{spdeskappa} where $v_\kk^1(w_\kk^\ast),...,v_\kk^J(w_\kk^\ast)$ is obtained via $\Vcal^\kappa_{\Afrak^\ast,\mathbb{W}}(w_\kk^\ast)$.
We have the following decomposition for any $j\in \mathbb{N}$.
\DEQS
\lqq{w_\kk^\ast-\Vcal^\kappa_{\Afrak^\ast,\mathbb{W}}(w_\kk^\ast)}&&
\\
&=&\underbrace{w_\kk^\ast-\tilde  w^\ast_{\kk,\kl_j, 2^{\ell_j}}}_{:=I}
+
\underbrace{ \tilde  w^\ast_{\kk,\kl_j, 2^{\ell_j}}-{\Vcal^{\kappa,\kl_j}_{\mathfrak{A}^\ast,\mathbb{W}}}( \tilde  w^\ast_{\kk,\kl_j, 2^{\ell_j}})}_{=:II}+
\underbrace{{\Vcal^{\kappa,\kl_j}_{\mathfrak{A}^\ast,\mathbb{W}}}(\tilde  w^\ast_{\kk,\kl_j, 2^{\ell_j}})-{\Vcal^{\kappa,\kl_j}_{\mathfrak{A}^\ast,\mathbb{W}}}(w_\kk^\ast)}_{=:III}
+
\underbrace{{\Vcal^{\kappa,\kl_j}_{\mathfrak{A}^\ast,\mathbb{W}}}(w_\kk^\ast)-\Vcal^\kappa_{\Afrak^\ast,\mathbb{W}}(w_\kk^\ast)}_{=:IV}.
\EEQS
Now, we analyse $I$, $II$, $III$, and $IV$.
%Here the idenityt it is essential.

Note, due to the convergence we know
$$\EE\| w_\kk^\ast-\tilde w^\ast_{\kk,\kl_j, 2^{\ell_j}}\|_{\mathbb{X}}^{m_1}\le \frac \ep 3.
$$
Next, to tackle II, we know due to the well posedness and we have by the step before
$$
\Vcal^{\kappa,\kl_j}_{\mathfrak{A}^\ast,\mathbb{W}}( \tilde w^\ast_{\kk,\kl_j, 2^{\ell_j}})=\tilde w^\ast_{\kk,\kl_j, 2^{\ell_j}}.
$$
To tackle III, due to the continuity of the operator
$\Vcal^{\kappa,\kl_j}_{\mathfrak{A}^\ast,\mathbb{W}}$ we know that there exists  a function $\phi$ with $\lim_{x\to 0}\phi(x)=0$, such that
$$
\EE \lk\| \Vcal^{\kappa,\kl_j}_{\mathfrak{A}^\ast,\mathbb{W}}( \tilde w^\ast_{\kk,\kl_j, 2^{\ell_j}})-\Vcal^{\kappa,\kl_j}_{\mathfrak{A}^\ast,\mathbb{W}}(w_\kk^\ast)\rk\|_{\mathbb{X}}^m
\le C\phi \lk(\EE\lk\| \tilde w^\ast_{\kk,\kl_j, 2^{\ell_j}}-w_\kk^\ast\rk\|_{\mathbb{X}}^{m_1}\rk)^{\frac 1m_1}
$$
Finally, since $\Vcal^{\kappa,\kl_j}_{\mathfrak{A}^\ast,\mathbb{W}}= \Pro_{\kl_j}{\Vcal^{\kappa}_{\mathfrak{A}^\ast,\mathbb{W}}}$ and the  convergence of the projection $\Pro_{\kl_j}$, the difference
$$
\EE  \lk\|\Vcal^{\kappa,\kl_j}_{\mathfrak{A}^\ast,\mathbb{W}}(w_\kk^\ast)-\Vcal^{\kappa,\kl_j}_{\mathfrak{A}^\ast,\mathbb{W}}(w_\kk^\ast)\rk\|_{\mathbb{X}}^m
$$
tends to zero (see remark \ref{projection}).
Finally, IV tends to zero, due to the continuity of the operator $\Vcal^{\kappa}_{\mathfrak{A}^\ast,\mathbb{W}}$.

As a consequence, we have
\[\Vcal^{\kappa}_{\mathfrak{A}^\ast,\mathbb{W}}(w_\kk^\ast)=w_\kk^\ast,\quad {\P}^\ast \text{-a.s.}\]
As seen above, $w_\kk^\ast\in\Xcal^\kappa_{\mathfrak{A}^\ast}(R_\kk)$,
so that by the assumption (\ref{metacontinuityw}),
$$(\Vcal^{\kappa}_{\mathfrak{A}^\ast,\mathbb{W}} (w_\kk^\ast),v^1(w_\kk^\ast),\ldots,v^\Jott(w_\kk^\ast) )\in C([0,T];U)\times C([0,T];U_1)\cdot\cdot\cdot\times C([0,T];U_J), $$
Hence for
all $t\in[0,T]$, ${\P^\ast}$-a.s.
$$(w_\kk^\ast,v^1(w_\kk^\ast),\ldots,v^\Jott(w_\kk^\ast) )(t)=(\Vcal^{\kappa}_{\mathfrak{A}^\ast,\mathbb{W}}(w_\kk^\ast),v^1(w_\kk^\ast),\ldots,v^\Jott(w_\kk^\ast) )(t),$$
and by construction, we see that $(w_\kk^\ast,v^1(w_\kk^\ast),\ldots,v^\Jott(w_\kk^\ast) )$ solves
the system \eqref{spdeskappa}
on $\Afrak^\ast$ and the proof is complete.
			\end{stepproof}
		\end{proof}

		\item \label{proofsteppacht}
		\label{ss: glueing}{\bf Independent gluing together of the local solutions and Extension to a global  solution:}
		In this step,  we will show that
		there exists a martingale solution of system \eqref{spdes1}.
		That is, there exists a filtered probability space
		$\bar{\mathfrak{A}} := (\bar\Omega, \bar{\mathcal{F}}, (\bar{\mathcal{F}}_t)_{t\geq 0}, \bar{\mathbb{P}})$,
		% such that for $(u_0, v_0)$ satisfying \eqref{initialinner}
		% with resp
		a vector--valued Wiener process $\bar{\mathbb{W}}=(\bar{W}_0, \bar{W}_1,\ldots,\bar{W}_\Jott)$ over $\bar{\mathfrak{A}}$,
		and a tuple  of processes $\bar{\mathfrak{w}}=(\bar{w},\bar{v}_1,\ldots,\bar{v}_\Jott)$ being a strong solution
		of the system \eqref{spdes1}  over $\bar{\mathfrak{A}}$.

		To do so, we construct a  family of solutions  $\{\bar{ \mathfrak{w}}_{\kappa} :\kappa\in\NN\}$   following the solution to the  original problem until a certain stopping time ${\bar \tau}_\kappa(\bar{ \mathfrak{w}}_\kappa)$ (being defined later on in \eqref{stopp_time}). In particular,  we will introduce for each $\kappa\in\NN$ a new tuple  of processes $\bar{ \mathfrak{w}}_{\kappa} $ following the original  system up to the stopping time ${\bar \tau}_\kappa(\bar{ \mathfrak{w}}_\kappa)$. Besides, we will have
		$$
		\bar{ \mathfrak{w}}_{\kappa}\big|_{\big[0,{\bar \tau}_\kappa(\bar{ \mathfrak{w}}_\kappa)\big)}=\bar{ \mathfrak{w}}_{\kappa +1}\big|_{\big[0,{\bar \tau}_\kappa(\bar{ \mathfrak{w}}_\kappa)\big )}.
		$$

		\newcommand{\zahl}{1}
		
		\medskip
		Let us start with $\kappa =1$. From the steps before, we know there exists a martingale solution consisting of a probability space $\mathfrak{A}_\zahl =(\Omega_\zahl ,\CF_\zahl ,\mathbb{F}_\zahl ,\PP_\zahl )$, an  independent vector valued  Wiener process $\mathbb{W}_\zahl=(W^\zahl_0, W^\zahl_1 ,\ldots,W^\zahl_\Jott)$ defined over  $\mathfrak{A}_\zahl $, and a process $\mathfrak{w}_\zahl =(w_1,v_1^1,...,v_1^J)$ solving $\PP_\zahl $--a.s.\ the system
		%\bar{ \mathfrak{w}}
		%
			\DEQSZ\label{e:u1n}
		\\
		\nonumber\lk\{\barray
		dw_\zahl(t) &=&\lk(\DeltaA_0 w_\zahl(t)+  F_0(w_\zahl(t),v^1_1(t),\cdots,v_1^\Jott (t)) %F_0(\xi(t),w_\zahl(t))
		\cdot \Xi^0_1(t) \,\rk)\, dt +\Sigma_0(w_\zahl(t))\,dW_0(t),
		\\
		dv^\jott_\zahl(t) &=&\lk(\DeltaA_\jott   v^\jott_\zahl(t)+ F_\jott  (w_\zahl (t),v_\zahl^{\jott}(t),v_\zahl^{\jott+1}(t),\cdots,v_\zahl^\Jott (t))\cdot  \Xi^\jott_1(t)\rk)\, dt +\Sigma_\jott(v_\zahl^\jott(t))\,dW_\jott(t),
		\\
		&&\quad \jott=1,\ldots ,\Jott ,
		\\
		w(0)&=&w_0,
		\\
		v^\jott(0)&=&v^\jott_0,\quad \jott=1,\ldots ,\Jott .
		%w(0)&=&w_0,
		\earray\rk.
		\EEQSZ
		\del{ \DEQSZ\label{e:u1n}
			\lk\{ \barray  du_1(t) &=&A u_1(t) + F(u_1(t))\Theta_\zahl(u_1(t),u_1(t),t)+ \sigma u_1(t)\,d {W}^1(t),
			\\
			u_1(0)&=&u_0.
			\earray\rk.\EEQSZ}
		Let us define now the stopping times
		%$$
%		\tau_\zahl^0(w_\zahl):=\inf \{s\ge 0\;\colon\; h_0(w_\zahl,s) \ge \zahl \}$$
	%	and
		% \quad \mbox{and}\quad
		$$
		\tau_\zahl^\ast(\mathfrak{w}_\zahl):=\inf \{s\ge 0\;\colon\; \max_{1\le j\le J}h_j(v^j_\zahl,s) \ge \zahl \}\wedge T,
		$$
		on $\mathfrak{A}_1$.
		Observe, on the time
		interval $[0,\tau_\zahl^\ast(\mathfrak{w}_\zahl)))$, the process ${ \mathfrak{w}}_\zahl$ solves the  original system given in
		\eqref{spdes1}. %To extend $(\bar u_1,\bar v_1)$ to the whole interval  $[0,T]$,
		Now, we define a new  vector valued process $\bar{ \mathfrak{w}}_\zahl$  following $\mathfrak{w}_\zahl$  on $[0,\tau_\zahl^\ast(\mathfrak{w}_\zahl) )$ and extend this
		process
		to the whole interval $[0,T]$ in the following way.
		First, we put $$\overline{\mathfrak{A}}_1:=\mathfrak{A}_1\ \text{ and } \ \bar {W}_\jott^1:={W}_\jott^1,\ \jott=0,1,2,\ldots,\Jott,$$
		and let us introduce the processes $y_1$, $z^1_\zahl,\ldots, z^\Jott_\zahl$ %, $\eta_j\in L^2(\CO)$, $j=1,2$, $\sigma\ge 0$,
		being a strong solution over $\overline{\mathfrak{A}}_\zahl$ to
		\DEQ\label{eq1}
		\\
		\nonumber\lk\{\barray
		dy_\zahl(t) &=&\DeltaA_0 y_\zahl(t) +\Sigma_0(y_\zahl(t))\,d(\theta_{\tau^\ast_1( \mathfrak{w}_1)}\bar{W}^1_0)(t),\quad t\ge 0
		\\
		dz^\jott_\zahl(t) &=&\DeltaA_\jott   z^\jott_\zahl(t)\, dt +\Sigma(z_\zahl^\jott(t))\,d(\theta_{\tau^\ast_1( \mathfrak{w}_1)}\bar{W}^1_\jott)(t),
		\quad \jott=1,\ldots ,\Jott ,\quad t\ge 0,
		\\
		y_\zahl(0)&=&w( \tau^\ast_\zahl( \mathfrak{w}_\zahl)),
		\quad
		z^\jott_\zahl(0)=v_\zahl^\jott(\tau^\ast_1( \mathfrak{w}_\zahl)),\quad \jott=1,\ldots ,\Jott .
		%w(0)&=&w_0,
		\earray\rk.
		\EEQ
		Here, $\theta_\sigma$ is the shift operator which maps ${\bar{W}}_\jott(t)$ to ${\bar{W}}_\jott(t+\sigma)-\bar{W}_\jott(\sigma)$.
	Following the idea of  standard Theorems (see e.g.\ \cite[Theorem 2.5.1]{BDPR2016} or Theorem 4.2.4 \cite[p. 91]{weiroeckner}) the existence of a unique solution $y_\zahl$ over $\overline{\mathfrak{A}}_1$ to the first equation is obtained. Also following   \cite[Theorem 6.12 ]{DaPrZa:2nd}, % or Theorem 4.2.4 \cite[p. 91]{liuroeckner})
		the solutions to $(z^1_\zahl,z^2_\zahl,\ldots,z^\Jott_\zahl)$ are obtain by the fact that the operators $A_j$ are linear.
		Let us denote by $\mathfrak{y}_1$ the tuple of processes $(y_\zahl,z_\zahl^1,\ldots,z_\zahl^\Jott)$.
		Now, let us define a  process $\bar{ \mathfrak{w}}_\zahl $
		being identical to ${ \mathfrak{w}}_\zahl$  on the time interval $[0,\tau^\ast_\zahl({ \mathfrak{w}_\zahl}))$ and
		following the  linear part with noise, i.e.,   $$\mathfrak{y}_\zahl(\tau^\ast_1( \mathfrak{w}_\zahl)):=(y_1(\tau^\ast_1( \mathfrak{w}_\zahl)), z_1^1(\tau^\ast_1( \mathfrak{w}_\zahl)),\cdots,z_1^J( \mathfrak{w}_\zahl)),$$ afterwards.
		In particular, let
		$$
		\bar{ \mathfrak{w}}_\zahl  (t) = \bcase { \mathfrak{w}}_\zahl(t) & \mbox{ for } 0\le t< \tau^\ast_\zahl(\mathfrak{w}_1),\\
		\mathfrak{y}_\zahl(\tau^\ast_1( \mathfrak{w}_\zahl))& \mbox{ for } \tau^\ast_\zahl(\mathfrak{w}_\zahl)\le  t \le T.
		\ecase
		$$
We also define $$\bar{\tau}_1(\bar{\mathfrak{w}}_\zahl)=\tau^\ast_\zahl(\mathfrak{w}_\zahl).$$
		Let us now construct the probability space and the processes for the next time interval. First, let $\bar{\mathfrak{w}}_\zahl(\tau^\ast_\zahl(\mathfrak{w}_\zahl))$ have probability law $\mu_\zahl$ on $U_0\times U_1\times\cdots\times U_\Jott$.
		\renewcommand{\zahl}{2}
		Again, from the first part, we know
		there is a martingale solution consisting of a probability space $\mathfrak{A}_\zahl=(\Omega_\zahl ,\CF_\zahl ,\mathbb{F}_\zahl ,\PP_\zahl )$, an independent Wiener processes $\mathbb{W}_\zahl=(W^\zahl_0, W^\zahl_1 ,\ldots,W^\zahl_\Jott)$,
		a  process $\mathfrak{w}_\zahl $ solving $\PP_\zahl $-a.s.\ the system
		%\bar{ \mathfrak{w}}
		%
		\renewcommand{\zahl}{2}
		\DEQSZ\label{e:u1nn}
		\lk\{\barray
		dw_\zahl(t) &=&\lk(\DeltaA _0w_\zahl(t)+  F_0(w_\zahl (t),v^1_\zahl(t),\cdots,v^\Jott_\zahl (t)) %F_0(\xi(t),w_\zahl(t))
		\cdot \Xi^0_\zahl(t) \,\rk)\, dt +\Sigma_0(w_\zahl(t))\,dW^2_0(t),
		\\
		dv^\jott_\zahl(t) &=&\lk(\DeltaA_\jott   v^\jott_\zahl(t)+ F_\jott  (w_\zahl (t),v_\zahl^{\jott}(t),v_\zahl^{\jott+1}(t),\cdots,v_\zahl^\Jott (t))\cdot  \Xi^\jott_\zahl(t)\rk)\, dt +\Sigma_\jott(v_\zahl^\jott(t))\,dW^2_\jott(t),
		\del{ \\
			&&\quad \jott=1,\ldots ,\Jott ,
			\\
			w(0)&=&w_0,
			\\
			v^\jott(0)&=&v^\jott_0,\quad \jott=1,\ldots ,\Jott .}
		%w(0)&=&w_0,
		\earray\rk.
		\EEQSZ
		with initial condition
		$$\mathfrak{w}_\zahl(0)=(w_0(\tau_1^\ast(\mathfrak{w}_1 ),v_1(\tau_1^\ast(\mathfrak{w}_1)),\ldots, v_1(\tau_1^\ast(\mathfrak{w}_1) )
		$$
		having law $\mu_1$.
		Let us define now the stopping times
		%$$
		%\tau_\zahl^0(w_\zahl):=\inf \{s\ge 0\;\colon\; h_0(w_\zahl,s) \ge \zahl \}
		%$$
		%and
		%\quad\mbox{and}\quad
		$$\tau_\zahl^\ast(\mathfrak{w}_\zahl):=\inf \{s\ge 0\;\colon\; \max_{1\le j\le J}h_j(v^j_\zahl,s) \ge \zahl \}\wedge T,
		$$
		on $\mathfrak{A}_\zahl$.
		Let $\overline{\mathfrak{A}}_1:=(\overline{\Omega}_1,\overline{\CF}_1,\overline{\mathbb{F}}_1,\overline{\PP}_1):=\mathfrak{A}_1$, with $\overline{\mathbb{F}}_1:=(\overline{\CF}^1_t)_{t\in [0,T]}$.
		Let $\overline{\Omega}_2:=\overline{\Omega}_1\times\Omega_2$, $\overline{\CF}_2:=\overline{\CF}_1\otimes \CF_2$, $\overline{\PP}_2:=\overline{\PP}_1\otimes\PP_2$ and let
		$\overline{\mathbb{F}}_2:=(\overline{\CF}^2_t)_{t\in [0,T]}$, where
		$$\overline{\CF}^2_t:=\bcase \overline{\CF}^1_t, & \mbox{if} \quad t<\bar{\tau}_1(\bar{ \mathfrak{w}}_1),
		\\ \CF^2_{t-\bar{\tau}_1(\bar{ \mathfrak{w}}_1)}, & \mbox{if}\quad  t\ge \bar{\tau}_1(\bar{ \mathfrak{w}}_1).
		\ecase
		$$
		Let $\overline{\mathfrak{A}}_2:=(\overline{\Omega}_2,\overline{\CF}_2,\overline{\mathbb{F}}_2,\overline{\PP}_2)$.
		Finally, let us set for $j=0, 1, 2,\cdots J,$
		$$\bar{W}_j^2(t):=\bcase \bar{W}_j^1(t), & \mbox{if} \quad t<\bar{\tau}_1(\bar{ \mathfrak{w}}_1),
		\\   W_j^2({t-\bar{\tau}_1(\bar{ \mathfrak{w}}_1)})+\bar{W}_j^1(\bar{\tau}_1(\bar{ \mathfrak{w}}_1)), & \mbox{if}\quad  t\ge \bar{\tau}_1(\bar{ \mathfrak{w}}_1).
		\ecase
		$$
		which gives a Wiener process  w.r.t.\ the filtration $\overline{\mathbb{F}}_2$.
		
		\medskip
		Now, let us define two  processes $\bar{ \mathfrak{w}}_2 $
		being identical to $\bar{ \mathfrak{w}}_1$  on the time interval $[0,\bar{\tau}_1(\bar{ \mathfrak{w}}_1))$, being identical to $\mathfrak{w}_2$ on the time interval $[ \bar{\tau}_1(\bar{ \mathfrak{w}}_1), \bar{\tau}_1(\bar{ \mathfrak{w}}_1)+ \tau_2^\ast(\bar{ \mathfrak{w}}_2))$ and
		following the tuple  of processes  $$	\mathfrak{y}_2(t)=(y_2(t),z_2^1(t),\ldots,z_2^\Jott(t)), \ t\geq 0,$$
		 solving the following system
		\DEQSZ\label{eq11}
		\lk\{\barray
		dy_\zahl(t) &=&\DeltaA _0y_\zahl(t) +\Sigma_0(y_\zahl(t))\,d(\theta_{\bar{\tau}_2(\bar{ \mathfrak{w}}_2)}  \bar{W}^2_0)(t),\quad t\ge 0
		\\
		dz^\jott_\zahl(t) &=&\DeltaA_\jott   z^\jott_\zahl(t)\, dt +\Sigma_j(z_\zahl^\jott(t))\,d(\theta_{\bar{\tau}_2(\bar{ \mathfrak{w}}_2)}\bar{W}^2_\jott)(t),
		\quad \jott=1,\ldots ,\Jott ,\quad t\ge 0,
		\\
		y_\zahl(0)&=&w( \tau^\ast_\zahl( \mathfrak{w}_\zahl)),
		\quad
		z^\jott_\zahl(0)=v_\zahl^\jott(\tau^\ast_2( \mathfrak{w}_\zahl)),\quad \jott=1,\ldots ,\Jott .
		\earray\rk.
		\EEQSZ
		on $\overline{\mathfrak{A}}_2$ with $$\bar{\tau}_2(\bar{ \mathfrak{w}}_2)= \bar{\tau}_1(\bar{ \mathfrak{w}}_1)+\tau^\ast_2( \mathfrak{w}_2).$$
		Let for $t\in[0,T]$
		afterwards.
		In particular, we define
		$$
		\bar{ \mathfrak{w}}_2  (t) = \bcase { \bar{\mathfrak{w}}}_1(t) & \mbox{ for } 0\le t< \bar{\tau}_1(\bar{\mathfrak{w}}_1),
		\\
		{ \mathfrak{w}}_2(t-\bar{\tau}_1(\bar{\mathfrak{w}}_1)) & \mbox{ for } \bar{\tau}_1(\bar{\mathfrak{w}}_1)\le t<\bar{\tau}_2(\bar{\mathfrak{w}}_2),
		\\
		\mathfrak{y}_2(\bar{\tau}_2(\bar{\mathfrak{w}}_2)) & \mbox{ for } \bar{\tau}_2(\bar{\mathfrak{w}}_2)\le  t \le T.\ecase
		$$
		% \red{Finetuning: Definition von $\tau_\kappa^*$}.
By construction, we observe that  $$
\bar{ \mathfrak{w}}_{1}\big|_{\big[0,{\bar \tau}_1(\bar{ \mathfrak{w}}_1)\big)}=\bar{ \mathfrak{w}}_{2}\big|_{\big[0,{\bar \tau}_1(\bar{ \mathfrak{w}}_1)\big )},
$$
and $\bar{ \mathfrak{w}}_{1}$, $\bar{ \mathfrak{w}}_{2}$ solve the original problem (\ref{spdes1}) until stopping times ${\bar \tau}_1(\bar{ \mathfrak{w}}_{1})$ and ${\bar \tau}_2(\bar{ \mathfrak{w}}_{2})$ in the filtered  probability space $\overline{\mathfrak{A}}_1$ and $\overline{\mathfrak{A}}_2$ respectively with respective associated Wiener processes $\bar{W}_j^1$ and $\bar{W}_j^2$, $j=0,1,\cdots,J$.

		\renewcommand{\zahl}{\kappa}
		\newcommand{\zahlbefore}{{\kappa-1}}
		
		In the same way we will construct for any $\zahl \in\NN$ a
		probability space $\mathfrak{A}_\zahl=(\Omega_\zahl ,\CF_\zahl ,\mathbb{F}_\zahl ,\PP_\zahl )$, an independent Wiener processes $\mathbb{W}_\zahl=(W^\zahl_0, W^\zahl_1 ,\ldots,W^\zahl_\Jott)$,
		a  process $\mathfrak{w}_\zahl $ solving $\PP_\zahl $-a.s.
		\DEQSZ\label{e:u1nn2}
	\lk\{\barray
		dw_\zahl(t) &=&\lk(\DeltaA_0 w_\zahl(t)+  F_0(w_\zahl(t),v^1(t)_\zahl,\cdots,v^\Jott_\zahl (t)) %F_0(\xi(t),w_\zahl(t))
		\cdot \Xi^0_\zahl(t) \,\rk)\, dt +\Sigma_0(w_\zahl(t))\,dW^\kk_0(t),
		\\
		dv^\jott_\zahl(t) &=&\lk(\DeltaA_\jott   v^\jott_\zahl(t)+ F_\jott  (w_\zahl(t),v_\zahl^{\jott}(t),v_\zahl^{\jott+1}(t),\cdots,v_\zahl^\Jott (t))\cdot  \Xi^\jott_\zahl(t)\rk)\, dt +\Sigma_\jott(v_\zahl^\jott(t))\,dW^\kk_\jott(t),
		\del{ \\
			&&\quad \jott=1,\ldots ,\Jott ,
			\\
			w(0)&=&w_0,
			\\
			v^\jott(0)&=&v^\jott_0,\quad \jott=1,\ldots ,\Jott .}
		%w(0)&=&w_0,
		\earray\rk.
		\EEQSZ
		with initial condition
		$$\mathfrak{w}_\zahl(0)=(w_0(\tau_\zahlbefore^\ast(\mathfrak{w}_\zahlbefore ),v_\zahlbefore(\tau_\zahlbefore^\ast(\mathfrak{w}_\zahlbefore)),\ldots, v_\zahlbefore(\tau_\zahlbefore^\ast(\mathfrak{w}_\zahlbefore) )
		$$
		having law $\mu_\zahlbefore$, with
		$$\tau_{\kk-1}^\ast(\mathfrak{w}_{\kk-1}):=\inf \{s\ge 0\;\colon\; \max_{1\le j\le J}h_j(v^j_{\kk-1},s) \ge \zahl -1\}\wedge T.
		$$
 By induction, similarly, as done for $\kappa=2$ we extend  the filtered probability space and the Wienner processes as follows: We set
$\overline{\Omega}_\kk:=\overline{\Omega}_{\kk-1}\times\Omega_\kk$, $\overline{\CF}_\kk:=\overline{\CF}_{\kk-1}\otimes \CF_\kk$, $\overline{\PP}_\kk:=\overline{\PP}_{\kk-1}\otimes\PP_\kk$,
$\overline{\mathbb{F}}_\kk:=(\overline{\CF}^\kk_t)_{t\in [0,T]}$, where
$$\overline{\CF}^\kk_t:=\bcase \overline{\CF}^{\kk-1}_t, & \mbox{if} \quad t<\bar{\tau}_1(\bar{ \mathfrak{w}}_{\kk-1}),
\\ \CF^\kk_{t-\bar{\tau}_{\kk-1}(\bar{ \mathfrak{w}}_{\kk-1})}, & \mbox{if}\quad  t\ge \bar{\tau}_{\kk-1}(\bar{ \mathfrak{w}}_{\kk-1}).
\ecase
$$
 and let
 $\overline{\mathfrak{A}}_\kk:=(\overline{\Omega}_\kk,\overline{\CF}_\kk,\overline{\mathbb{F}}_\kk,\overline{\PP}_\kk)$ and for $j=0, 1, 2,\cdots J,$
$$\bar{W}_j^\kk(t):=\bcase \bar{W}_j^{\kk-1}(t), & \mbox{if} \quad t<\bar{\tau}_{\kk-1}(\bar{ \mathfrak{w}}_{\kk-1}),
\\   W_j^\kk({t-\bar{\tau}_{\kk-1}(\bar{ \mathfrak{w}}_{\kk-1})})+\bar{W}_j^{\kk-1}(\bar{\tau}_{\kk-1}(\bar{ \mathfrak{w}}_{\kk-1})), & \mbox{if}\quad  t\ge \bar{\tau}_{\kk-1}(\bar{ \mathfrak{w}}_{\kk-1}).
\ecase
$$
which defines a Wiener process  w.r.t.\ the filtration $\overline{\mathbb{F}}_\kk$.

		\del{\DEQSZ\label{eq12}
			\\
			\nonumber\lk\{\barray
			dy_\zahl(t,y(0),\sigma_\zahl) &=&\DeltaA_0 y_\zahl(t) +\Sigma_0(y_\zahl(t))\,d\theta_{\sigma_\zahl}W_0(t),\quad t\ge 0
			\\
			dz^\jott_\zahl(t, y(0),\sigma_\zahl) &=&\DeltaA_\jott   z^\jott_\zahl(t)\, dt +\Sigma(z_\zahl^\jott(t))\,d\theta_{\sigma_\zahl}dW_\jott(t),
			\quad \jott=1,\ldots ,\Jott ,\quad t\ge 0,
			\\
			y_\zahl(0)&=&w( \tau^\ast_\zahl( \mathfrak{w}_\zahl)),
			\quad
			z^\jott_\zahl(0)=v_\zahl^\jott(\tau^\ast_1( \mathfrak{w}_\zahl)),\quad \jott=1,\ldots ,\Jott .
			\earray\rk.
			\EEQSZ}
We also consider the following  tuple  of processes  $$	\mathfrak{y}_\kk(t)=(y_\kk(t),z_\kk^1(t),\ldots,z_\kk^\Jott(t)), \ t\geq 0,$$
solving the following system 		
		and following  the linearized system
		\DEQSZ\label{eq13}
		\\
		\nonumber\lk\{\barray
		dy_\zahl(t) &=&\DeltaA y_\zahl(t) +\Sigma_0(y_\zahl(t))\,d(\theta_{\bar{\tau}_\kk( \mathfrak{w}_\kk)}\bar W^\kk_0)(t),\quad t\ge 0
		\\
		dz^\jott_\zahl(t) &=&\DeltaA_\jott   z^\jott_\zahl(t)\, dt +\Sigma(z_\zahl^\jott(t))\,d(\theta_{\bar \tau_\kk(\bar{ \mathfrak{w}}_\kk)}\bar W^\kk_\jott)(t),
		\quad \jott=1,\ldots ,\Jott ,\quad t\ge 0,
		\\
		y_\zahl(0)&=&w( \tau^\ast_\zahl( \mathfrak{w}_\zahl)),
		\quad
		z^\jott_\zahl(0)=v_\zahl^\jott(\tau^\ast_\zahl( \mathfrak{w}_\zahl)),\quad \jott=1,\ldots ,\Jott .
		%w(0)&=&w_0,
		\earray\rk.
		\EEQSZ
		afterwards on $\overline{\mathfrak{A}}_\kk$ where
		 \begin{equation}
\bar{\tau}_\kk(\bar{ \mathfrak{w}}_\kk)= \bar{\tau}_{\kk-1}(\bar{ \mathfrak{w}}_{\kk-1})+\tau^\ast_\kk( \mathfrak{w}_\kk).\label{stopp_time}
		\end{equation}
Finally, we construct
$$
\bar{ \mathfrak{w}}_\kk  (t) := \bcase { \bar{\mathfrak{w}}}_{\kk-1}(t) & \mbox{ for } 0\le t< \bar{\tau}_{\kk-1}(\bar{\mathfrak{w}}_{\kk-1}),
\\
{ \mathfrak{w}}_\kk(t-\bar{\tau}_{\kk-1}(\bar{\mathfrak{w}}_{\kk-1})) & \mbox{ for } \bar{\tau}_{\kk-1}(\bar{\mathfrak{w}}_{\kk-1})\le t<\bar{\tau}_\kk(\bar{\mathfrak{w}}_\kk),
\\
\mathfrak{y}_\kk(\bar{\tau}_\kk(\bar{\mathfrak{w}}_\kk)) & \mbox{ for } \bar{\tau}_\kk(\bar{\mathfrak{w}}_\kk)\le  t \le T.\ecase
$$
% \red{Finetuning: Definition von $\tau_\kappa^*$}.
Besides, we know  that  $$
\bar{ \mathfrak{w}}_{\kk-1}\big|_{\big[0,{\bar \tau}_{\kk-1}(\bar{ \mathfrak{w}}_{\kk-1})\big)}=\bar{ \mathfrak{w}}_{\kk}\big|_{\big[0,{\bar \tau}_{\kk-1}(\bar{ \mathfrak{w}}_{\kk-1})\big )},
$$
		%
		%$So far the space and the process  $(\bar{\mathfrak{w}}_\zahl)$
		%were defined with the help of the parameters providing  a local solution.

		%\item
	%	\label{proofsteppneun}
		
		%{\bf Extension to a global  solution: }
		%{\bf obtaining  uniform bound independent of $\kappa$ on $(\bar{\mathfrak{w}}_\zahl)$}

		%
		\renewcommand{\buk}{\bar{\mathfrak{w}}}
		Note that, for any $\kk\in \mathbb{N}$, the processes $\buk_\zahl$  solves system \eqref{spdes1} on the time interval $[0,\bar \tau_\zahl ^\ast(\bar{\mathfrak{w}}_\zahl)]$.
	By the uniform bound, we will show that for all $T>0$ with probability one we have
	$$\lim\limits_{\zahl \to\infty}\bar \tau_\zahl ^\ast(\bar{\mathfrak{w}}_\zahl)\ge T.
	$$
		Now, due to assumption \eqref{metauniform}, we know that there exists a constant $C(T)>0$ such that all cut-off functions are uniformly bounded
		independent of $\kappa\geq1$. Due to this property we will be able to extend our local solution to a global  solution:
		Let us define %the infinite product space (Theorem 1.10 in \cite{})
		%parathasarathy
		$$
		\bar \Omega := \prod_{\kk\in\mathbb{N}} \bar \Omega_\zahl , \quad 	\bar {\mathcal{F}} := \bigotimes_{\kk\in\mathbb{N}} \bar {\mathcal{F}}_\zahl, \quad \bar{\mathcal{F}}_t := \bigcap_{\zahl \in \mathbb{N}} \bar{\mathcal{F}}^\zahl _{t}, \ t\in [0, T], \quad  \overline{\mathbb{F}}:=(\overline{\CF}_t)_{t\in [0,T]}, \quad \bar\PP := \bigotimes_{\kk\in\mathbb{N}}\bar \PP_\zahl, \quad \bar{\mathfrak{A}} = (\bar{\Omega}, \bar{\mathcal{F}}, \bar{\BF}, \bar\PP),
		$$
		$$  \bar{\mathfrak{w}}(t):=(\bar w, \bar v_1, \bar v_2,\cdots,\bar v_J):=\bcase \bar{\mathfrak{w}}_1(t), & \mbox{if} \quad 0\leq t<\bar{\tau}_1(\bar{ \mathfrak{w}}_1),
		\\   \bar{\mathfrak{w}}_\kk(t), & \mbox{if}\quad  \bar{\tau}_{\kk}(\bar{ \mathfrak{w}}_\kk)\leq t< \bar{\tau}_{\kk+1}(\bar{ \mathfrak{w}}_{\kk+1}),\quad \kk\in \mathbb{N}.
		\ecase
		$$
		and
		$$   \bar{W}_j(t):=\bcase \bar{W}_j^1(t), & \mbox{if} \quad 0\leq t<\bar{\tau}_1(\bar{ \mathfrak{w}}_1),
		\\   \bar{W}_j^\kk(t), & \mbox{if}\quad  \bar{\tau}_{\kk}(\bar{ \mathfrak{w}}_\kk)\leq t< \bar{\tau}_{\kk+1}(\bar{ \mathfrak{w}}_{\kk+1}),\quad \kk\in \mathbb{N}.
		\ecase
		\qquad\text{for } j=0,1,\cdots,J,
		$$
		and observe that $$(\bar{\mathbb{W}}(t):=(\bar{W}_0(t),\bar{W}_1(t),\cdots,\bar{W}_J(t)))_{t\in [0, T]},$$ is a  Wiener processes with
		respect to the filtration $\bar{\BF}$
		over $\bar{\mathfrak{A}} $.
		
		Next we will prove that the tuple  $	\left(\bar{\mathfrak{A}},
		\bar{\mathbb{W}} %=(W_0,W_1,\ldots, W_\Jott )
		, \bar{\mathfrak{w}}%=(w,v^1,\ldots,v^\Jott )
		\right)$  is in fact a global solution of the original system (\ref{spdes1}). For this aims, for any $\zahl \geq 1$, we define the set
	$$\bar{\Omega}_\zahl  :=
		\lk\{ \omega\in \bar{\Omega}\;\colon\;\bar\tau _\kappa(\bar{\mathfrak{w}} _\zahl)\ge T\rk\}.
		$$
		By construction, we know that the  progressively measurable process $\buk$,  over $\bar{\mathfrak{A}}$  solves  the system \eqref{spdes1} up to time $T$ on the set $\bar{\Omega}_\zahl $. In particular, we have for the conditional probability
		\begin{equation}\label{e:conditionalsol}
		\bar{	\PP}\lk( \{ \omega\in \bar{\Omega}:  \mbox{ $\buk(\omega)$ solves systems  \eqref{spdes1} up to times $T$} \} \mid \bar{\Omega}_\zahl \rk)=1.
		\end{equation}
		let us set $$\bar \Omega_0=\bigcup_{\zahl =1}^{\infty} \bar{\Omega}_\zahl.$$
		Then, since by construction we have  $\bar{\Omega}_\zahl  \subset \bar{\Omega}_{\zahl +1}$, we infer that
		\DEQS\lqq{
			\qquad\bar\PP\lk( \lk\{ \mbox{$\omega\in \bar{\Omega}$:  \mbox{ $\buk(\omega)$ solves systems  \eqref{spdes1} up to times $T$} }
			\rk\}\cap\bar{\Omega}_0\rk) }\\
		&=& \lim_{\zahl \to \infty} \bar \PP\lk(  \{ \mbox{$\omega\in \bar{\Omega}$:  \mbox{ $\buk(\omega)$ solves systems  \eqref{spdes1} up to times $T$} }\} \cap \bar{\Omega}_\zahl  \rk)\\
		&=& \lim_{\zahl \to \infty} \bar \PP\lk(  \{ \mbox{$\omega\in \bar{\Omega}$:  \mbox{ $\buk(\omega)$ solves systems  \eqref{spdes1} up to times $T$}  } \} \mid \bar{\Omega}_\zahl \rk)\cdot \bar \PP\lk( \bar{\Omega}_\zahl \rk).
		\EEQS
		Due to \eqref{e:conditionalsol}
		% 		Since $ \PP\lk(  \{ \mbox{there exists a solution $(u,v)$ to \eqref{e:u1}-\eqref{e:v1}} \} \mid A_\zahl \rk)=1$,
		it remains to show that
		${\lim\limits_{\zahl \to\infty}}$ $\bar{\PP}( \bar{\Omega}_\zahl)=1$.
		However, we note that by construction of the positive stopping  times $\bar\tau _\kappa(\bar{\mathfrak{w}} _\zahl)$ in (\ref{stopp_time}), for any $\kk\geq 1$, the following injection holds
		$$\left\{\omega\in \bar{\Omega}:\bar\tau _\kappa(\bar{\mathfrak{w}} _\zahl)< T \right\}\subset \left\{\omega\in \bar{\Omega}:\tau^\ast _\kappa(\mathfrak{w} _\zahl)< T \right\},$$
		and therefore by definition of stopping times $\tau^\ast _\kappa(\mathfrak{w} _\zahl)$, for any $\kk\geq 1$,  we derive that
		\begin{align*}
		\PP\lk\{\omega\in\bar \Omega\setminus  \bar \Omega_\zahl \rk\}&=\PP\left\{\omega\in \bar{\Omega}:\bar\tau _\kappa(\bar{\mathfrak{w}} _\zahl)< T \right\}\\
		&\leq \PP \left\{\omega\in \bar{\Omega}:\tau^\ast _\kappa(\mathfrak{w} _\zahl)< T \right\}\\
		&\leq \PP \left\{\omega\in \bar{\Omega}:h_\jott(v^\jott_\kappa,\tau^\ast _\kappa(\mathfrak{w} _\zahl ))\geq \kk \right\}.
		\end{align*}
		From assumption \eqref{metauniform} we know there exists a ($\zahl $-independent) constant $C(T)>0$ and ${m_j}>0$, $j=1,2,\cdots,J$ such that we have
		$$
	 \max_{\jott =1,\ldots,J}\bar{\EE}\left[h^{m_\jott }_\jott(v^\jott_\kappa,t )\right]\le C(T),\quad t\in[0,T],\quad \zahl \in\NN.
		$$
		Thus we have by the Markov inequality
		$$\PP\lk\{\omega\in\bar \Omega\setminus  \bar \Omega_\zahl \rk\}\le \frac {C(T)}{\zahl ^{m_j} }\to 0,\qquad \mbox{ as }\zahl  \to \infty.
		$$
Hence the solution process is well defined on $\bar \Omega_0=\bigcup_{\zahl =1}^{\infty} \bar \Omega_\zahl $ with $\PP(\bar \Omega_0)=1$.
		This finishes the proof. $\square$

	\end{step}
	
\end{proof}

\medskip

\section{Application to stochastic chemotaxis on fluid model}

Chemotaxis is defined as the oriented movement of cells (or an organism) in response to a chemical gradient. Many different kinds of motile cells undergo chemotaxis. For example, bacteria and many
amoeboid cells move toward a food source; immune cells like macrophages and neutrophils move towards invading cells in our bodies. Other cells, connected with the immune response and wound healing, are attracted to areas of inflammation by chemical signals.
In particular, there are many processes in nature governed by chemotaxis. Besides Pfeffer, who described already chemotaxis $1884$ by sperm offern and mosses \cite{Pfeffer1884},
 Raper studied in $1935$ chemotaxis at the cellular slime mold {\sl Dictyostelium discoideuum}, a simple model of morphogenesis \cite{Raper1935}.
During its life cycle, a population of `Dicto' grows by cell division as long as there is sufficient nourishment. If food is rare and the environment living
conditions are getting poor; then the individual amoeba starts to  emit CAM
(Cyclic Adenosine Monophosphate).
Amoeba having sufficient nutriment do not strongly react on CAM. While suffering from hunger, the reaction sensitivity
to CAM increases, and with a small delay, the CAM concentration
is amplified by famished cells that, additionally by being hungry, scent CAM. By this stimulus, the amoeba enters the aggregation phase, which is characterised by all hungry amoeba coming
together, form a slug, and  ends with building a fruiting body
which finally releases the spores. Theoretical and mathematical modeling of chemotaxis
dates to the works of Patlak \cite{Patlak1953} in the $1950$s and Keller and Segel \cite{KSS} in the 1970s.
The Keller-Segel model describes the aggregation phase
which models the  directed movement of microorganisms and cells stimulated by
a chemical produced by themselves.
For more details, we refer to the
 surveys about chemotaxis by the compelling articles of Horstmann \cite{Horstmann2003}, the  summary of  Hillen et al. \cite{Hillen2009} and  the more recently
   summaries of Bellomo et  al. \cite{bellomo2015} and of Perthame \cite{Perthame2004}.
\del{A fourth
work is the book of Triebel \cite{triebel1}.
Here, he treats the underlying Keller--Segel equations using general function spaces and applying similar methods as he has applied to the nonlinear heat equation, see \cite{triebel2}.
Finally, he considers the Keller-Segel model combined with the  Navier Stokes equations.
In \cite{hillen2} hyperbolic models of chemotaxis are introduced.
Chemotaxis is a typical example of cross-diffusion. Here, it is important to note that these systems are usually not of monotone type
or do not satisfy any maximum principle. On the contrary, in certain situations, a blow-up may occur.}

Aerobic bacteria often live in thin fluid layers near solid-air-water contact lines. In the absence of fluid, cells can move in a straight line or exhibit random motion, while in the presence of fluid, cells may experience hydrodynamic interactions with the surrounding fluid and may undergo fluid-mediated transport, such as diffusion, advection, or convective mixing. Fluid flow can also affect the formation and stability of chemical gradients, which are essential for chemotaxis. Here, the chemo--attractant diffuses linearly, is transported in the fluid flow direction and is consumed by the cells. On the other hand, the cell's motion is
influenced by linear or nonlinear diffusion,  transported along the fluid velocity, and partially directed in the direction of the chemoattractant
gradient. %oxygen, see e.g.\ \cite{dombrowski,Lorz,tuval}.
The chemotaxis system describes a biological movement in which cells (e.g., bacteria) accumulate by preferentially moving towards higher concentrations of chemotactic substances. Experiments have shown that the mechanism is a chemotactic movement of bacteria, often towards a higher concentration of oxygen, which they consume, a gravitational effect on the motion of the fluid by the heavier bacteria, and a convective transport of both cells and oxygen through the water. We refer to \cite{Lorz,Tuval2005} for more details about its physical background.

%\subsection{The mathematical setting:}
%
We consider an initial–boundary value problem for the incompressible stochastic chemotaxis–Stokes equations with the porous-media-type diffusion in  a   complete probability space $\mathfrak{A}=(\Omega, \CF,\BF,\PP)$
with filtration $\BF=(\CF_t)_{t\ge 0}$  satisfying the usual conditions i.e.,
$\mathbb{P}$ is complete on $(\Omega, \CF)$,
for each $t\geq 0$, $\CF_t$ contains all $(\CF,\mathbb{P})$-null sets, and
the filtration $(\CF_t)_{t\ge 0}$ is right-continuous.
Here, the underlying spatial domain $\CO$ is a bounded domain of $\mathbb{R}^2$. Let  $W_1$, $W_2$ and $W_3$  be  three  independent cylindrical Wiener processes on $L^2(\CO)$ defined over the probability space
$\mathfrak{A}$  by 
\DEQSZ
W_j(t) &=&\sum_{k=1}^\infty \varphi_k\,\mathbf{w}^j_k(t),\quad t\in [0,T],\,\quad j=1,2,3,
\EEQSZ
where $\{\mathbf{w}^j_k:k\in\NN\}$, $j=1,2,3$, are two mutually independent families of i.i.d. standard scalar Wiener processes and  $\{ \varphi_k:k\in\NN\}$ the eigenfunction of the Laplace operator $(-\Delta)$ on $L^2(\CO)$ corresponding to the associated eigenvalues  $\{\lambda_k:k\in\NN\}$. We investigate the  following coupled system
\begin{align}\label{1.1*}
\lk\{
\barray d {n} +\delta _n \bu\cdot \nabla ndt& =& \lk[ r_n\Delta   |n|^{q-1}n+\theta n- \chi \Div( n\nabla c) \rk]\, dt+g_{\gamma_1}(n) \circ dW_1,\vspace{0.2cm}
\\
d{c}+\delta _c \bu\cdot\nabla cdt & =&\lk[r_c \Delta c  -\zeta c+ \beta n \rk]\, dt+g_{\gamma_2}(c) \circ dW_2,\vspace{0.2cm}
\\
 d \bu +\nabla Pdt&=&\left[r_\bu\Delta \bu
 +n\star\Phi \right]dt+ \sigma_{\gamma_3}\, dW_3\vspace{0.2cm} %\mbox{and Navier Stokes (Dirchlet boundary condition}
 \\
  \nabla \cdot \bu& =&0
\earray \rk.
\end{align}
with boundary and initial condition conditions
\begin{equation}\label{init}
\begin{split}
	&\nabla c\cdot \nu=0,\qquad \nabla n\cdot \nu=0, \qquad\bu=0, \quad\text{ on } \partial\bo,\\
	&c(0)=c_0,\qquad n(0)=n_0,\qquad \bu(0)=\bu_0,
\end{split}
\end{equation}
 on a filtered probability space  $\mathfrak{A}$ where the noise intensities $\gamma_j$, $j=1,2,3,$ are given positive real numbers and  the linear diffusion coefficients $g_{\gamma_1}$, $g_{\gamma_2}$ and $\sigma_{\gamma_3}$ are defined respectively   by 
\DEQSZ\label{g_sr}
g_{\gamma _j}(\psi) h &:=&  \psi \cdot (-\Delta)^{-\gamma_j/2} h=\sum_{k=1}^\infty \lambda _k^{-\gamma_j/2} u\cdot \varphi_k \la h,\varphi_k\ra, \quad \psi\in L^2(\CO),\ h\in H ,\ \ j=1,2,
\EEQSZ
and 
\DEQSZ\label{sigma_sr}
\sigma _{\gamma_3}  h &:=&  (-\Delta)^{-\gamma_3/2} h=\sum_{k=1}^\infty \lambda _k^{-\gamma_3/2} \varphi_k \la h,\varphi_k\ra,\quad  h\in H.
\EEQSZ

In system (\ref{1.1*}),  the unknowns are  the cell density $n$,  the concentration of the chemical signal $c$, the pressure $P$ and velocity  field  $\bu$ a  over the domain $\mathcal{O}$.
The positive  terms $r_u$ and $r_v$  are the diffusivity of the  cells and chemo-attractant respectively, the positive value   $\chi$ is the chemotactic sensitivity,
 and  the positive constant $\zeta$ is the so-called damping constant.  The given function $\Phi=(\phi_1,\phi_2)$  represents the kernel of  gravitational potential.  We  then  recall that the system (\red{1.1}) is equivalent to the following It\^o system
  \begin{align}\label{1.1}
  	\lk\{
  	\barray d {n} +\delta _n \bu\cdot \nabla ndt& =& \lk[ r_n\Delta   |n|^{q-1}n+\theta n- \chi \Div( n\nabla c) \rk]\, dt+g_{\gamma_1}(n)  dW_1,\vspace{0.2cm}
  	\\
  	d{c}+\delta _c \bu\cdot\nabla cdt & =&\lk[r_c \Delta c  -\alpha c+ \beta n \rk]\, dt+g_{\gamma_2}(c) dW_2,\vspace{0.2cm}
  	\\
  	d \bu +\nabla Pdt&=&\left[r_\bu\Delta \bu
  	+n\star\Phi \right]dt+ \sigma_{\gamma_3}\, dW_3 ,\vspace{0.2cm}%\mbox{and Navier Stokes (Dirchlet boundary condition}
  	\\
  	\nabla \cdot \bu& =&0,
  	\earray \rk.
  \end{align}
where $\alpha=\zeta-\frac{1}{2}\gamma^2_2$ and  $\theta=\frac{1}{2}\sum_{k=1}^{\infty}\lambda_k^{-\gamma_1}$.

We now introduce the following spaces
\begin{align*}
	&\mathcal{V} =\left\{ \bu\in \mathcal{C}_{c}^{\infty }(\bo,\mathbb{R}^2)\,\,\text{such that}%
	\,\,\nabla\cdot \bu=0\right\}, \\
		&\mathbf{L}^2(\bo)=\,\,\text{closure of $\mathcal{V}$ in }\,\,(L^{2}(\bo))^2 ,\\
&	\mathbf{H} =\,\,\text{closure of $\mathcal{V}$ in }\,\,(H_0^{1}(\bo))^2 ,\qquad\text{ and }\qquad\mathbf{V}=	\mathbf{H}\cap (H^{2}(\bo))^2.
\end{align*}
We endow $\mathbf{L}^2(\bo)$ with the scalar product and norm of $(L^2(\bo))^2$. As
usual we equip the space $\mathbf{H}$ with the the scalar product
$(\nabla \bu, \nabla \bv)_{\mathbf{L}^2}$ which, owing to the Poincar\'e inequality, is equivalent to the
$(H^1(\bo))^2$-scalar product. Let $\Pi: (L^2(\bo))^2 \rightarrow \mathbf{L}^2(\bo)$ be the Helmholtz-Leray projection. We denote by $$A=-\Pi\Delta$$ the
Stokes operator with domain $D(A)=\mathbf{V}$.

In case of our setting, we define the notion of martingale solution to our problem as follows.
\begin{definition}\label{Def:mart-sol:cns}
Fix $T>0$. A \textnormal{martingale solution} to the problem
\eqref{1.1}  is given by the tuple
$$
\left(\Omega ,{{\mathcal{F}}},\mathbb{P},{\mathbb{F}},
(W_1,W_2,W_3),n,c,\bu\right),
$$
such that
\begin{itemize}
	\item  $\mathfrak{A}:=(\Omega ,{{\mathcal{F}}},{\mathbb{F}},\mathbb{P})$ is a complete filtered
	probability space with a filtration ${\mathbb{F}}=\{{{\mathcal{F}}}_t:t\in [0,T]\}$ satisfying the usual conditions,
	\item $W_1$, $W_2$ and $W_3$ are three cylindrical Wiener processes on $H$ over the probability space
	$\mathfrak{A}$, and  %(\Omega ,{{\mathcal{F}}},{\mathbb{F}},\mathbb{P})$,
	\item $n:[0,T] \times \Omega \rightarrow H^{-1}_2(\CO)$, and $c:[0,T] \times \Omega \rightarrow
	L^2(\CO)$, and $\bu:[0,T] \times \Omega \rightarrow\nsH$
	are $\BF$--progressively  measurable processes  such that $\PP$-a.s.\ $$(n,c,\bu)\in C([0,T];H^{-1}_{2}(\CO))\times  C([0,T];L^2(\CO))\times  C([0,T];\mathbf{L}^2(\CO)),$$
	and satisfy for all $t\in [0,T]$ the following stochastic integral equations
	$\PP$--a.s.,
	\begin{align}\label{eq:nStratonovich:app}
		n(t)+\int_0^ t\bu(s)\cdot\nabla n(s)\, ds&=n_0+\int_0^t (r_u\Delta  \abs{n(s)}^{q-1}n(s) +\theta n(s)-\chi\mbox{div}( n(s) \nabla c(s) )\,ds\notag\\
		&\qquad+\int_0^t g_{\gamma_1}(n(s)) dW_1(s),
	\end{align}
	%where $\tilde{u}$ is any $L^{\gamma+1}(\Ocal)$-valued progressively measurable $\P\otimes\operatorname{Leb}$-version of the equivalence class of $u$
	\begin{equation}\label{eq:cStratonovich:app}
		c(t)+\int_0^ t\bu(s)\cdot\nabla c(s)\, ds=c_0+\int_0^t (r_c\Delta c(s)-\alpha c(s)+\beta n(s))\,ds+
		\int_0^t g_{\gamma_2}(c(s))dW_2(s),
	\end{equation}
	and
	\begin{equation} \label{eq:buStratonovich:app}
		\bu(t) -r_\bu\int_0^ t A \bu(s)ds=\int_0^ t n(s)\star\Phi ds+  \int_0^ t\sigma_{\gamma_3} %\int_0^ t\circ
		dW_3(s),
	\end{equation}
\end{itemize}
where the equations (\ref{eq:nStratonovich:app}), (\ref{eq:cStratonovich:app}) and (\ref{eq:buStratonovich:app}) hold in $H^{-1}_{2}(\bo)$, $L^2(\CO)$ and $\mathbf{L}^2(\bo)$ respectively.
\end{definition}
In what follows, we gather   assumptions that we will need for the proof of the main result of this section.
\begin{assumption}\label{assum}
We will assume that 
\begin{description}
	\item[(A1)] The noise  intensity $\gamma_j$, $j=1,2,3$, satisfy $\gamma_1>2$, $\gamma_2>2$ and $\gamma_3>1$.
	\item[(A2)] $q>4$ and the kernel of gravitational potential $\Phi=(\phi_1,\phi_2)$ where $\phi_i=e^{-\delta_i\Delta}$, with some fixed  real numbers $\delta_i>0$, $i=1,2$.  
	\item[(A3)] The initial condition $(n_0,c_0,\bu_0)$ is  a $H^{-1}_2(\bo)\times H^1_2(\bo)\times\nsH$-valued   $\CF_0$-measurable random variable   satisfying 
	\begin{equation}
		\EE|n_0|_{H^{-1}_2}^\frac{4m^*}{r^*}<\infty,\  \EE\abs{c_0}^\frac{8m^*}{r^*}_{H_2^1}<\infty,\  \EE\abs{\bu_0}^{8\frac{m^*}{r^*}}_\nsH<\infty,\label{3.7}
	\end{equation}
where $r^\ast$ and $m^\ast$ are real numbers  such that 
\begin{align*}
	&q\leq r^*<q+1\quad \mbox{and}\quad   m^*\geq 2q+2.
\end{align*}
\end{description}
\end{assumption}
\begin{remark}
Assumption (A1) is needed to insure that  $|\sigma_{\gamma_3}|_{L_{HS}(L^2,\nsH)}<\infty$ and also the existence of  a constant $C>0$ such that 
\begin{align}
	&
	|g_{\gamma_1}(\psi)|_{L_{HS}(L^2,H^{-1}_2)}^2\le  C|\psi|_{H^{-1}_2} ^2 , \quad \psi\in L^2(\CO), \label{eq:H-1-2q}\\
	&	|g_{\gamma_2 }(\psi)|_{L_{HS}(L^2,L^2)}^2\le C |\psi|^2_{L^2}, \quad \psi\in L^2(\CO),	\label{eq:Hil-Schi2}
\end{align}
where $L_{HS}(L^2,E)$ denote the space of Hilbert-Schmit operators defined from $L^2(\bo)$ into $E$, with $E$ either $\nsH$, $L^2(\bo)$ or  $H^{-1}_2(\bo)$. 
We refer to  Appendix \ref{Appendix_A_noise} for more explanation.
\end{remark}

%In case of our setting, the definition \ref{Def:mart-sol} of the martingale solution reads
%\begin{assumption}\label{asswiener}
%There exist constants $C>0$ and $\beta\in(0,1]$ such that
%the following boundedness conditions hold:
%Following growth condition are satisfied
%\begin{equation*}
%	|g_{\gamma_1}(\nk)|_{L_{\text{HS}}(H_1,H_2^{-1})}^2\leq C\abs{\nk}_{H^{-1}_2}^2.
%\end{equation*}
%\begin{align*}
%	& \sum_{k=1}^\infty |b^1_k(\xi)\phi_k|^2_{L^2}
%	\le C |\xi|_{H^{\beta}_2}^2,
%	\quad \xi\in H^{\beta}_2(\CO),
%	\\ &
 %\sum_{k=1}^\infty |b^1_k(\xi)\phi_k|^2_{H^1_2}
%	\le C|\xi|_{H^{\beta+1}_2}^2,
%	\quad \xi\in H^{\beta+1}_2(\CO),
%\end{align*}
%and
%\begin{align*}
%	&\sum_{k=1}^\infty |b^2_k(\xi)\phi_k|^2_{L^2}
%	\le C|\xi|_{L^2}^2,
%	\quad
%	\xi\in L^2(\CO),
%	\\ &
%	\sum_{k=1}^\infty |b^2_k(\xi)\phi_k|^2_{H^1_2}
%	\le C|\xi|_{H^{1}_2}^2,
%	\quad
%	\xi\in H^{1}_2(\CO).
%\end{align*}

%Furthermore, the following (Lipschitz)
%continuity conditions hold:
%$$
%\sum_{k=1}^\infty \Big|\big( b^1_k(\xi_1)
%-b^1_k(\xi_2)\big)\phi_k\Big|^2_{L^2}
%\le C|\xi_1-\xi_2|_{H^{\beta}_2}^2,
%\quad \xi_1,\xi_2\in H^{\beta}_2(\CO),
%$$
%and
%$$
%\sum_{k=1}^\infty
%\Big|\big( b^2_k(\xi_1)-b^2_k(\xi_2)\big)\phi_k\Big|^2_{L^2}
%\le C|\xi_1-\xi_2|_{L^2}^2,\quad \xi_1,\xi_2\in L^2(\CO).
%$$
%\end{assumption}

%\todo{Do you really only need this moment?}

%\begin{assumption}\label{assinit}
%Let the initial condition $(n_0,c_0,\bu_0)$  be $\CF_0$ measurable random variable, such that 
%$\EE\abs{\bu_0}^{2}_\nsH, \EE\abs{c_0}^2_{H_2^1}, \EE |n_0|_{H^{-1}_2}^2<\infty$.
%\end{assumption}
%
The main result of this section is stated in the following theorem.

\begin{theorem}\label{main}
%Let be given a  complete filtered probability space $\mathfrak{A}=(\Omega, \CF,\BF,\PP)$ on which are    three  independent Wiener processes $W_1$, $W_2$ and $W_3$  cylindrical on $L^2(\CO)$ are defined and satisfy  the Assumptions in \ref{noise_sr}.
%\todo{summerise the assumption before stating the theorem and not in the appendix}
%
Let  Assumption \ref{assum} holds. Then 
the system \eqref{1.1}-\eqref{init} has a martingale solution in the sense of Definition \ref{Def:mart-sol:cns}.
In addition, for all $p\in [1,\frac{2m^*}{r^*}]$ there exists a constant $C>0$ such that 
%\todo{estimates}
\begin{align}\label{3.8}
&	\EE	\sup_{0\le s\le  {T} }	\abs{\ukk(s)}^{2p}_\nsH+\EE\sup_{0\le s\le T}	\abs{c_k(s)}^{2p}_{L^2}+\EE\sup_{0\le s\le  {T}  }|\nk(s)|_{H^{-1}_2}^{2p}\notag\\
&+\EE\left(\int_0^{T} \abs{\nk(s)}^{q+1}_{L^{q+1}} ds\right)^p+\EE\left(\int_0^T\abs{ \ckk(s)}^2_{H^1_2}ds\right)^p+\EE\left(\int_0^T\abs{\ukk(s)}^2_\nsV ds\right)^p\notag
\\
&	\leq e^{CT}\left(1+ \EE\abs{\bu_0}^{2p}_\nsH+\EE\abs{c_0}^{2p}_{H_2^1}+\EE |n_0|_{H^{-1}_2}^{2p}\right).
\end{align}

\end{theorem}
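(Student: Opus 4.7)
The strategy is to cast the chemotaxis--Stokes--porous media system \eqref{1.1}--\eqref{init} into the abstract framework of Theorem \ref{meta} and verify each of the hypotheses \ref{metawellposed}--\ref{metacontinuityw}, \eqref{metauniform}. We identify $w=n$, $v^1=c$, $v^2=\bu$ with $U=H^{-1}_2(\CO)$, $U_1=L^2(\CO)$, $U_2=\mathbf{L}^2(\CO)$, and take $E_0=L^{q+1}(\CO)$, $E_1=H^1_2(\CO)$, $E_2=\nsV$. The operator $\DeltaA_0$ is the nonlinear porous-media operator $n\mapsto r_n\Delta|n|^{q-1}n$, the operators $\DeltaA_1,\DeltaA_2$ are the linear Laplacian and Stokes operator, while the nonlinear couplings $F_0,F_1,F_2$ collect the transport term $\bu\cdot\nabla\,\cdot$, the cross-diffusion $-\chi\Div(n\nabla c)$ and the gravity term $n\star\Phi$. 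For the auxiliary spaces we choose $\YY_1=L^2(0,T;H^1_2(\CO))$ and $\YY_2=L^2(0,T;\nsV)$, as these are the natural norms against which the nonlinear couplings into the $n$-equation and the $\bu$-equation can be estimated, and $\X=L^{m}(0,T;H^{-1}_2(\CO))$.

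The first step is to introduce the cut-off system \eqref{spdeskappa} and prove pathwise existence and uniqueness of $(w_\kappa,v^1_\kappa,v^2_\kappa)$ for given $\xi\in\Xcal^\kappa_{\Afrak}(R)$. Because the cut-off $\Xi^j_\kappa$ truncates $\|c\|_{\YY_1}$ and $\|\bu\|_{\YY_2}$, the linear $c$- and $\bu$-equations become standard linear SPDEs (the gravity term $n\star\Phi$ is smooth in $n$ by assumption (A2)), yielding $v^1_\kappa\in L^2(\Omega;C([0,T];H^1_2)\cap L^2(0,T;H^2_2))$ and $v^2_\kappa\in L^2(\Omega;C([0,T];\nsH)\cap L^2(0,T;\nsV))$ via classical variational methods. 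The $n$-equation is then a quasi-linear porous-media-type SPDE with bounded Lipschitz drift (after cut-off), for which existence and uniqueness in $H^{-1}_2(\CO)$ follows from the variational framework of Liu--R\"ockner in the spirit of \cite{BDPR2016} and \cite{weiroeckner}; this gives \ref{metawellposed} and \ref{metacontinuityw}.

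The second step is to choose $\Phi_\kappa(\xi):=\|\xi\|_{\BX}^m$ (so that $\Xcal^\kappa_{\Afrak}(R)$ is a ball in $\CM^m_\MA(\BX)$) and to find $R_\kappa$ so that \ref{metainvariant} holds, using the It\^o formula in $H^{-1}_2$ for $|n_\kappa|^2_{H^{-1}_2}$ together with the monotonicity inequality
\begin{equation*}
\langle \Delta |n|^{q-1}n,n\rangle_{H^{-1}_2\times H^1_2}=-\|\,|n|^{\tfrac{q+1}{2}}\,\|_{L^2}^2,
\end{equation*}
the bound \eqref{eq:H-1-2q}, and the fact that the cut-off makes the cross terms $\Div(\xi\nabla v^1_\kappa)$ and $\bu\cdot\nabla\xi$ controllable by $\|\xi\|_{H^{-1}_2}\|v^1_\kappa\|_{H^2_2}$ and similar. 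Continuity \ref{metacontinuity} follows from the stability of the linear equations combined with the monotonicity of the porous-media operator; compactness \ref{metacompact} follows from a uniform bound on $w_\kappa$ in $L^2(\Omega;L^{q+1}(0,T;L^{q+1}))$ plus a fractional time regularity estimate derived from the equation, via the Aubin--Dubinsky--Lions compactness lemma embedding this into $\BX$ compactly. The higher integrability \ref{metauniformintegrability} (with some $m_1>m$) is obtained by raising the energy inequality to a power, using the Burkholder--Davis--Gundy inequality.

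The third and key step is to establish the Lyapunov-type estimate \eqref{metauniform} for the fixed point $w_\kappa$ of $\Vcal^\kappa$. Here we test the $c$-equation by $-\Delta c_\kappa$ to obtain control of $\|c_\kappa\|_{H^1_2}$, the Stokes equation by $\bu_\kappa$ to control $\|\bu_\kappa\|_{\nsH}$ and $\|\bu_\kappa\|_{\nsV}$, and the $n$-equation in the $H^{-1}_2$ duality to control $|n_\kappa|_{H^{-1}_2}$ and the porous-media term $\|n_\kappa\|_{L^{q+1}}^{q+1}$. The cross-diffusion term must be absorbed by the nonlinear diffusion: writing
\begin{equation*}
\chi\int_{\CO}(n_\kappa\nabla c_\kappa)\cdot\nabla(-\Delta)^{-1}n_\kappa\,dx,
\end{equation*}
we use H\"older, Gagliardo--Nirenberg and Young with the bound $\|n_\kappa\|_{L^{q+1}}$, exploiting $q>4$ in assumption (A2) so that the chemotaxis term is dominated by $\|n_\kappa\|_{L^{q+1}}^{q+1}$ plus lower order. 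The transport terms $\bu_\kappa\cdot\nabla n_\kappa$ and $\bu_\kappa\cdot\nabla c_\kappa$ vanish or are absorbed thanks to $\Div\bu_\kappa=0$ and integration by parts. Summing the three estimates and applying Gronwall's lemma yields
\begin{equation*}
\EE\Bigl[\sup_{0\le t\le T}(|n_\kappa|_{H^{-1}_2}^{2p}+\|c_\kappa\|_{H^1_2}^{2p}+\|\bu_\kappa\|_{\nsH}^{2p})+\bigl(\!\int_0^T(\|c_\kappa\|_{H^1_2}^2+\|\bu_\kappa\|_{\nsV}^2+\|n_\kappa\|_{L^{q+1}}^{q+1})ds\bigr)^{p}\Bigr]\le C_T,
\end{equation*}
uniformly in $\kappa$. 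This gives \eqref{metauniform} with $m_1=m_2=2$, and simultaneously the a priori bound \eqref{3.8}. The main obstacle is precisely closing this energy estimate: balancing the cross-diffusion against the porous-media dissipation in a stochastic framework, where the Stratonovich-to-It\^o conversion produces the correction terms $\theta n$ and $-\tfrac12\gamma_2^2 c$ that fortunately have favorable signs for $\zeta$ large enough (encoded in $\alpha=\zeta-\tfrac12\gamma_2^2$). Once \eqref{metauniform} is in force, the hypotheses of Theorem \ref{meta} are verified, the gluing procedure of Step \ref{ss: glueing} produces a global martingale solution in the sense of Definition \ref{Def:mart-sol:cns}, and passing the a priori estimate through the Skorohod representation and Fatou's lemma yields \eqref{3.8}.
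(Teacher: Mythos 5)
Your proposal has the right skeleton — cast the system into the abstract framework of Theorem \ref{meta}, construct the cut-off system, establish well-posedness via the variational framework of Liu--R\"ockner, derive a Lyapunov-type a priori bound, and conclude via the gluing step of the Meta Theorem. The splitting so that $c_\kappa,\bu_\kappa$ depend only on $\xi$ and not on $n_\kappa$, and the use of the $H^{-1}_2$ duality together with the monotonicity of the porous-media operator and divergence-free cancellations, are also identified correctly. However, three of your concrete technical choices deviate from what the paper does, and two of them would cause the argument to fail.

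First, you set $\X=L^m(0,T;H^{-1}_2(\CO))$, whereas the paper uses $\X=L^{m^*}(0,T;H^{-s^{**}}_{r^*}(\CO))$ with $s^{**}=\tfrac{2}{q+1}$ and $r^*\in[q,q+1)$. This is not cosmetic: the continuity estimate in Proposition \ref{continuity} needs to bound products like $(\xi_1-\xi_2)\nabla c_\kappa^{(1)}$ in negative-order Sobolev spaces via the Runst--Sickel multiplication theorems (Theorems \ref{RS2} and \ref{RS2a}), which requires the fractional exponent $-s^{**}$ with $r^*$ close to $q+1$, and these exponents are exactly produced by interpolation (Proposition \ref{interoplation_11}) between $L^\infty(0,T;H^{-1}_2)$ and $L^{q+1}(0,T;L^{q+1})$. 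With your $\X$ the continuity proof would not close. Second, and more decisively, you propose $\Phi_\kappa(\xi)=\|\xi\|_{\BX}^m$ so that $\Xcal^\kappa_\Afrak(R)$ is a ball in $\CM^m_\MA(\BX)$. The paper's $\Xcal^\kappa_\Afrak(R)$ is \emph{not} an $\X$-ball; it requires $\EE\sup_{s}|\xi(s)|_{H^{-1}_2}^{4m^*/r^*}+\EE(\int_0^T|\xi|_{L^{q+1}}^{q+1}ds)^{2m^*/r^*}\le R$. This extra structure is essential: the a priori estimates for $\bu_\kappa,c_\kappa,n_\kappa$ in Claim \ref{existencev} repeatedly use $\|\xi\|_{L^{q+1}}$ (e.g. to control $\Div(\xi\nabla c_\kappa)$ and $\bu_\kappa\cdot\nabla\xi$ via $\abs{\xi}_{L^4}\le C\abs{\xi}_{L^{q+1}}$), and membership in an $L^m(0,T;H^{-1}_2)$ ball gives no such control. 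With your $\Phi_\kappa$, the verification of \ref{metawellposed}, \ref{metainvariant} and of (H3) coercivity would fail. Third, the paper's cut-off is only $h(\bu,t)=\sup_{0\le s\le t}|\bu(s)|_{\nsH}$, inserted solely into the transport term $\Theta_\kappa(\bu_\kappa,t)\bu_\kappa\cdot\nabla c_\kappa$ of the $c$-equation; there is no cut-off in the $n$- or $\bu$-equations. Your choice of cutting off $\|c\|_{L^2(0,T;H^1_2)}$ and $\|\bu\|_{L^2(0,T;\nsV)}$ is not what the verification of \eqref{metauniform} is calibrated to, and it is also mathematically unnecessary: once $\bu_\kappa$ depends only on $\xi$ and $\sup_t|\bu_\kappa(t)|_\nsH$ is truncated, the $c$-equation is already coercive without a cut-off on $c$, and the $n$-equation needs no cut-off at all because the porous-media dissipation absorbs the cross terms.

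In short: the architecture is right, but you would need to replace $\X$ by $L^{m^*}(0,T;H^{-s^{**}}_{r^*})$, replace $\Phi_\kappa$ by the combination of the $L^\infty(H^{-1}_2)$ and $L^{q+1}(L^{q+1})$ bounds, adopt the single $\sup|\bu_\kappa|_\nsH$ cut-off in the transport term, and carry out the fractional-Sobolev product estimates to get the continuity of $\Vcal^\kappa_{\Afrak,\mathbb W}$.
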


\begin{proof}
The proof will use the Meta Theorem \ref{meta}. Here, we will only prove that the assumptions  \eqref{metawellposed} %(\ref{boundedset})
 to (\ref{metacontinuityw}) and the bound in \eqref{metauniform} are satisfied.

The proof consists of three steps. In the first step, we will define the underlying spaces and the operator. 
 In the second step, we will
show that the assumptions  \eqref{metawellposed} %(\ref{boundedset})
 to (\ref{metacontinuityw}) and the bound in \ref{metauniform}  of Theorem \ref{meta} are satisfied. 
 In the third step, we will apply the Theorem \ref{meta}.
 The estimates regarded in \eqref{metauniform} will give the regularity property of the solution.
\begin{step}
\item
In the first step, we will define the underlying spaces, the argument of the cut-off function, the operator, and the subspace on which the operator acts.
Let us denote by $\X$ the  space $$\X= L^{m^\ast}(0,T;H^{-s^{**}}_{r^*}(\CO)), \text{ where }  s^{**}=\frac{2}{q+1},$$  and by $\CM^{m^\ast}_\MA(\X)$ the space  being defined by
\DEQS
\CM^{m^\ast}_\MA(\X)&:=&\Big\{ \xi:\CO\times [0,T]\times \Omega\to\RR, \,\,
\\
&&\quad  \mbox{ $\xi$ is $(\CF_t)_{t\ge 0}$-progressively measurable and }\, \EE\|\xi\|_{\BX}^{m^\ast}<\infty
\Big\}.
\EEQS
Let us define the arguments  of the cut-off  function by
{\DEQS
h_1(\bu,t)=h(\bu,t)=\sup_{0\le s\le t}|\bu(s)|_{\nsH}.
\EEQS}
The cut-off function is given by the functions $\phi_\kk$, defined by the relation (\ref{2.5})
\DEQS
\Theta_\kappa(\bu,t)&=&\phi_\kappa(h(\bu,t)),
%\cdot \phi_\kappa(h_3(\bu,t)) ),
\quad t\in[0,T].
\EEQS
%Moreover, fix $0<\delta<\frac 1{q-1}$.
 Define for some real number $R>0$,  the set $\Xcal_\kappa^{\MA}=\Xcal_\kappa^\MA(R)$, where
\begin{equation*}
\Xcal^\kappa_{\Afrak}(R):=\lk\{\xi\in \CM^{m^\ast}_\MA(\X):\
 \EE\sup_{0\le s\le T}|\xi(s)|_{H^{-1}_2}^{4\frac{m^*}{r^*}}+ \EE\left(\int_0^T |\xi(s)|_{L^{q+1}}^{q+1}\, ds \right)^{2\frac{m^*}{r^*}}\le R
%sup_{0\le s\le T}|\xi(t)|_{H^{-1}_2}^2\le R_2
\rk\}.
\end{equation*}
We recall that  since $m^\ast>q+1$, $2<r^\ast<q+1$ and $s^{\ast\ast}\in (0,1)$, we have $\frac{m^\ast}{r^\ast}>1$. By Proposition \ref{interoplation_11},  we can derive that for all $R>0$, the set $\Xcal^\kappa_{\Afrak}(R)$ is a bounded convex set in $\CM^{m^\ast}_\MA(\X)$. In fact, for $\xi\in \Xcal^\kappa_{\Afrak}(R)$ it follows by  Proposition \ref{interoplation_11}, that we have
\begin{align}
\EE\|\xi\|_{\BX}^{m^\ast}&\leq C \EE\sup_{0\le s\le T}|\xi(s)|_{H^{-1}_2}^{2\frac{m^*}{r^*}}+ C\EE\left(\int_0^T |\xi(s)|_{L^{q+1}}^{q+1}\, ds \right)^{\frac{m^*}{r^*}}\notag\\
&\leq C\left(\EE\sup_{0\le s\le T}|\xi(s)|_{H^{-1}_2}^{4\frac{m^*}{r^*}}\right)^\frac{1}{2}+ C\left(\EE\left(\int_0^T |\xi(s)|_{L^{q+1}}^{q+1}\, ds \right)^{2\frac{m^*}{r^*}}\right)^\frac{1}{2}\notag\\
&\leq CR^\frac{1}{2}\notag.
\end{align}

\item
We will show that the framework satisfies the assumptions of Theorem \ref{meta}.  For this aim,  let us consider for any integer $\kk\geq 1$, the mapping $$\CV_{\MA,\mathbb{W}}^\kappa:\CM^{m^\ast}_\MA(\X)\to \CM^{m^\ast}_\MA(\X),$$ defined by for $\xi \in \CM^{m^\ast}_\MA(\X)$,  $\CV_{\MA,\mathbb{W}}^\kappa(\xi):=n_\kappa$, where the triple $(n_\kappa,\ckk,\bu_\kappa)$ solves the following system.% (compare system \eqref{spdeskappa})
\DEQSZ %\begin{align}
\label{a1}
\lk\{\barray
d{n}_\kappa(t) & =& \lk(r_n\Delta   |n_\kappa(t)|^{q-1}n_\kappa(t)+\theta \xi(t)- \chi \Div( \xi(t) \nabla \ckk(t))\rk)\, dt
\\ && {}+ %\Theta^1_\kappa(\xi,\ckk,\bu_\kappa,t)
 \bu_\kappa (t) \cdot\nabla \xi(t)\, dt+g_{\gamma_1}(n_\kappa(t)) \, dW_1(t),\phantom{\Big|}
\\ %\label{a2}
d{\ckk}(t)& =&\lk( r_c \Delta \ckk(t)  -\alpha \ckk(t)+ \beta %\Theta^1_\kappa(\xi,\ckk,\bu_\kappa,t)
\xi(t) \rk)\, dt
\\ &&{}+ \Theta_\kappa(\bu_\kappa,t)\bu_\kappa (t) \cdot\nabla \ckk(t)\, dt+ g_{\gamma_2}(\ckk(t))\, dW_2(t),\phantom{\Big|}
\\ %\label{a3}
 d \bu _\kappa(t)&=&r_{\bu} A \bu_\kappa(t)
 dt+\xi(t)\star\Phi dt+ \sigma _{\gamma_3} dW_3(t),\earray\rk.
%\mbox{and Navier Stokes (Dirchlet boundary condition}
\EEQSZ %\end{align}
%
%}
with initial condition $n_\kappa(0)=n_0$, $\ckk(0)=c_0$, and $\bu_\kappa(0)=\bu_0$.
\begin{remark}
Note, that $\ckk$ and $\bu_\kappa$ does not depend on $\nk$ but on $\xi$. In particular, the system \eqref{a1} is split.
\end{remark}

\textbf{Verification of conditions \eqref{metawellposed}  and \eqref{metainvariant} of Theorem \ref{meta}:} Here, we will show that the mapping $\CV_{\MA,\mathbb{W}}^\kappa$ is well defined and maps element of $ \CX_\MA(R)$ into itself for a certain $R>0$. %, i.e., \  that assumption \eqref{metawellposed} and \eqref{metainvariant}   of Theorem \ref{meta} are satisfied. 
 In particular, to verify assumption \eqref{metawellposed},  we will show that for any $R>0$, any $\kappa\in\NN$, any initial conditions $(n_0,c_0,\bu_0)$ satisfying estimates \eqref{3.7}
 and any $\xi\in  \CM^{m^\ast}_\MA(\X)\cap\CX_\kappa(\MA)$, there exists a triple of processes $(n_\kappa,\ckk,\bu_\kappa)$  solving system
 \eqref{a1}. %--(\ref{a2}).     %$\CV_\MA^\kappa(\xi)=n$
By the a priori estimates, which we will prove in the following claim,  condition \eqref{metainvariant}  of Theorem \ref{meta} follows.

\begin{claim}\label{existencev}
Let Assumption \ref{assum} be satisfied and let  $R>0$ be arbitrary. For % any initial conditions $(n_0,c_0,\bu_0)$ satisfying \eqref{3.7}, and
  any $\xi\in \CM^{m^\ast}_{\mathfrak{A}}(\mathbb{X})\cap\CX(R)$, there exists a unique triple of processes $(n_\kappa,\ckk,\bu_\kappa)$  solving system \eqref{a1} such that $n_\kappa\in \CM^{m^\ast}_{\mathfrak{A}}(\mathbb{X})$. %--\eqref{a3}.
In addition, for any $\kappa\in\NN$, there exists some constant $C=C(\kappa,m^*,r^*,T,q)$ such that the following estimates are satisfied:
%\todo{initial condition does not fit}
\begin{equation}
	\EE	\sup_{0\le s\le  {T} }	\abs{\ukk(s)}^{8\frac{m^*}{r^*}}_\nsH+r_u^{4\frac{m^*}{r^*}}\EE\left(\int_0^T\abs{\ukk(s)}^2_\nsV ds\right)^{4\frac{m^*}{r^*}}\leq C\EE\abs{\bu_0}^{8\frac{m^*}{r^*}}_\nsH+ CR^\frac{4}{q+1}+C,\label{3.14*}
\end{equation}
\begin{equation}
	\EE\sup_{0\le t\le T}\abs{c_k(t)}^\frac{8m^*}{r^*}_{H_2^1}+C^\frac{4m^*}{r^*}_1\EE\left(\int_0^T\abs{ \ck(s)}^2_{H^2_2}ds\right)^\frac{4m^*}{r^*}\leq C\EE\abs{c_0}^\frac{8m^*}{r^*}_{H_2^1}+CR^\frac{4}{q+1},\label{3.12}
\end{equation}
 \begin{align}
	&\frac{1}{2}\EE\sup_{0\le t\le T}\	|\nk(t)|_{H^{-1}_2}^\frac{4m^*}{r^*}+(2r_n)^\frac{2m^*}{r^*}\EE\left(\int_0^{T} \abs{\nk(s)}^{q+1}_{L^{q+1}} ds\right)^\frac{2m^*}{r^*}\label{3.12*}\\
	&\leq C\EE|n_0|_{H^{-1}_2}^\frac{4m^*}{r^*}+C\EE\abs{\bu_0}^{8\frac{m^*}{r^*}}_\nsH+C\EE\abs{c_0}^\frac{8m^*}{r^*}_{H_2^1}+C+CR^\frac{2}{q+1}+CR^\frac{4}{q+1}.\notag
\end{align}

\del{and
\DEQS
\EE \|\nabla \ckk\|_{L^{r^\ast}(0,T;H^{1-s^{\ast\ast}}_{r^\ast})}^{m^\ast}\le C_5\lk(\EE|c_0|_{H^{2(1-\frac 1{m^\ast})-s^{\ast\ast}}_{r^\ast}}^{m^\ast}
+\EE \|\xi\|_{\BX}^{m^\ast}\rk),
\EEQS
\DEQS
\EE \|\ckk\|_{C([0,T];H^{\frac {2 q}{q+1}} _{q+1})}^{q+1},\EE \|\nabla \ckk\|_{L^{q+1}(0,T;H^1_{q+1})}^{q+1}\le C_6\lk(\EE|c_0|_{H^{\frac {2 q}{q+1}}_2} %\frac q{q+1}_{q+1}}
^{q+1}+\EE \|\xi\|_{L^{q+1}(0,T;L^{q+1})}^{q+1}\rk),
\EEQS
and %for any $\delta<1-\frac 1p$ there exists a constant $C>0$ such that
\DEQS
\EE\sup_{0\le s\le T} \|\ckk\|_{L^2}^2\le  C_7\lk(\EE|c_0|_{H^{-1}_2}
^{2}+\EE \|\xi \|_{H^{-1}_2}^{2}\rk),
\EEQS}
where $C_1$ is a constant to be given below.
\end{claim}

From the estimates in Claim \ref{existencev} follows assumption \eqref{metainvariant} of Theorem \ref{meta}.
To see this, let us for the time being  assume that Claim \ref{existencev} is true. Then, from the estimate (\ref{3.12*}), for all $\kappa\in\NN$ and  for all $R>0$
such that
\DEQS
R&\ge &
 C\EE|n_0|_{H^{-1}_2}^\frac{4m^*}{r^*}+C\EE\abs{\bu_0}^{8\frac{m^*}{r^*}}_\nsH+C\EE\abs{c_0}^\frac{8m^*}{r^*}_{H_2^1}+C+CR^\frac{2}{q+1}+CR^\frac{4}{q+1},
\EEQS
we know that for any $\xi\in \CX_\MA(R)$ we have  $\CV_\MA^\kappa(\xi)\in  \CX_\MA(R)$ (since $\frac{2}{q+1}<1$ and $\frac{4}{q+1}<1$ ).
In particular, the operator $\CV_\MA^\kappa$ maps element of $  \CX_\MA(R)$ into itself.
%condition (ii) of Theorem \ref{meta} is shown.

\begin{proof}[Proof of Claim \ref{existencev}]
Let us fix $R>0$ and $\xi\in \CX_\MA(R)$.
We will start with the last estimate and will end with the first estimate.
In the first step we consider the system
\begin{align}
%\end{align}
\label{a3}
& d \bu _\kappa(t) - r_{\bu}A \bu_\kappa(t)dt
=\xi(t)\star\Phi dt+  \sigma_{\gamma_3} \, dW_3(t),
\end{align}
  with initial condition $ \bu(0)=\bu_0$.
Here, the existence and uniqueness of a solution are given by standard methods; see, e.g., \cite[Section 13.11.1]{DaPrZa:2nd}.

By the It\^o formula to the process $t\mapsto\abs{\ukk}^2_\nsH$, we infer that
\begin{equation*}
\abs{\ukk(t)}^2_\nsH+2r_u\int_0^t\abs{\ukk(s)}^2_\nsV ds=\abs{\bu_0}^2_\nsH+2\int_0^t(\nabla(\xi(s)\star\Phi) ,\nabla \ukk(s))ds+t\abs{\sigma_{\gamma_3}}^2_{L_{HS}(H,\nsH)}+2\int_0^t(\nabla\ukk(s),\nabla\sigma_{\gamma_3} ) dW_3(s).
\end{equation*}
We note that
\begin{align}
(\nabla(\xi\star\Phi) ,\nabla \ukk)&\leq \abs{\xi \star\Phi}_{\mathbf{L}^2}\abs{\nabla \ukk}_{\mathbf{L}^2}\notag
\leq C\abs{ \ukk}^2_{\nsH}+\abs{\nabla(\xi\star\Phi)}^2_{\mathbf{L}^2}\notag\\
&= C\abs{ \ukk}^2_{\nsH}+\sum_{i=1}^2	|\nabla(\xi\star \phi_i)|^2_{L^2}.\notag
\end{align}
This implies that
\begin{align*}
\sup_{0\le s\le  {T} }	\abs{\ukk(s)}^2_\nsH+r_u\int_0^T\abs{\ukk(s)}^2_\nsV ds&\leq\abs{\bu_0}^2_\nsH+ 2\int_0^T|\nabla(\xi(s)\star \Phi)|^2_{\mathbf{L}^2}ds+C\int_0^T\abs{\ukk(s)}_\nsH^2ds\\
&\qquad+T\abs{\sigma_{\gamma_3}}^2_{L_{HS}(H,\nsH)}+2\sup_{0\le t\le T}\abs{\int_0^t(\nabla\ukk(s),\nabla\sigma_{\gamma_3} ) dW_3(s)}.
\end{align*}
From the point (A1) of Assumption \ref{assum} and some properties of  semigroup theory (see, e.g. \cite[Chap. I]{pazy}), we know that
%\todo{Has to be written in more details}
%
\begin{align}
|\nabla(\xi\star \Phi)|^2_{\mathbf{L}^2}=\sum_{i=1}^2	|\nabla(\xi\star \phi_i)|^2_{L^2}&\leq \sum_{i=1}^2	|\xi\star \phi_i|^2_{H^1_2}= \sum_{i=1}^2	\abs{e^{-\delta_i\Delta}\xi}_{H^1_2}^2\notag\\
	&\leq C \sum_{i=1}^2\abs{e^{-\delta_i\Delta}}_{L(H^{-1}_2;H^1_2)}^2\abs{\xi}^2_{H^{-1}_2}\notag\\
	&\leq  C\abs{\xi}^2_{H^{-1}_2}\sum_{i=1}^2\abs{-\Delta e^{-\delta_i\Delta}}^2_{L(H^{-1}_2;H^{-1}_2)}\notag\\	
	&\leq C\abs{\xi}^2_{H^{-1}_2}\sum_{i=1}^2\frac{C}{\delta_i^{1/2}} \notag.
\end{align}
We then infer that
\begin{align*}
&\EE	\sup_{0\le s\le  {T} }	\abs{\ukk(s)}^{8\frac{m^*}{r^*}}_\nsH+r_u^{4\frac{m^*}{r^*}}\EE\left(\int_0^T\abs{\ukk(s)}^2_\nsV ds\right)^{4\frac{m^*}{r^*}}\\
&\leq\EE\abs{\bu_0}^{8\frac{m^*}{r^*}}_\nsH+ C(\delta_1,\delta_2)\EE\left(\int_0^T\abs{\xi(s)}^2_{H^{-1}_2}ds\right)^{4\frac{m^*}{r^*}}+C\\
&\qquad+ C\EE\left(\int_0^T\abs{\ukk(s)}^2_{\nsH}ds\right)^{4\frac{m^*}{r^*}}+C\EE\sup_{0\le t\le T}\abs{\int_0^t(\nabla\ukk(s),\nabla\sigma_{\gamma_3} ) \, dW_3(s)}^{4\frac{m^*}{r^*}}.
\end{align*}
Since $q>3$, we know that  $\frac{4}{q+1}<1$. Therefore, by the H\"older inequality  and the embedding  of $L^{q+1}(\CO)$ into $H^{-1}_2(\CO)$,  we derive that
\begin{align}
C\EE\left(\int_0^T\abs{\xi(s)}^2_{H^{-1}_2}ds\right)^{4\frac{m^*}{r^*}}&\leq C(T)C\EE\left(\int_0^T\abs{\xi(s)}^{q+1}_{L^{q+1}}ds\right)^{\frac{8m^*}{(q+1)r^*}}\notag\\
&\leq C(T)\left(\EE\left(\int_0^T\abs{\xi(s)}^{q+1}_{L^{q+1}}ds\right)^{2\frac{m^*}{r^*}}\right)^{\frac{4}{q+1}}\notag\\
&\leq C(T)R^\frac{4}{q+1}.\notag
\end{align}
By the Burkholder-Gundy-Davis inequality and Young inequality, we obtain
\DEQS
\lqq{\EE\sup_{0\le t\le T}\abs{\int_0^t(\nabla\ukk(s),\nabla\sigma_{\gamma_3} ) dW_3(s)}^{4\frac{m^*}{r^*}}
}
\\
&\le&
\EE\left(\int_0^T\abs{\ukk(s)}^2_\nsH\abs{\sigma_{\gamma_3}}^2_{L_{HS}(H,\nsH)} ds\right)^{2\frac{m^*}{r^*}}
=
\abs{\sigma_{\gamma_3}}^{4\frac{m^*}{r^*}}_{L_{HS}(H,\nsH)} \EE\left(\int_0^T\abs{\ukk(s)}^2_\nsH ds\right)^{2\frac{m^*}{r^*}}\notag
\\
&\leq& C\EE\int_0^T\abs{\ukk(s)}^{4\frac{m^*}{r^*}}_\nsH ds\leq C\EE\int_0^T\abs{\ukk(s)}^{8\frac{m^*}{r^*}}_\nsH ds+C.\notag
\EEQS 
In addition, the H\"older inequality gives
$$\EE\left(\int_0^T\abs{\ukk(s)}^2_\nsH ds\right)^{4\frac{m^*}{r^*}}\leq \EE\int_0^T\abs{\ukk(s)}^{8{\frac{m^*}{r^*}}}_\nsH ds.$$
Combining these four  last inequalities we arrive at
\begin{equation*}
	\EE	\sup_{0\le s\le  {T} }	\abs{\ukk(s)}^{8\frac{m^*}{r^*}}_\nsH+r_u^{4\frac{m^*}{r^*}}\EE\left(\int_0^T\abs{\ukk(s)}^2_\nsV ds\right)^{4\frac{m^*}{r^*}}\leq\EE\abs{\bu_0}^{8\frac{m^*}{r^*}}_\nsH+ CR^\frac{4}{q+1}+C+C\EE \int_0^T\abs{\ukk(s)}^{8\frac{m^*}{r^*}}_\nsH ds.
\end{equation*}
Applying the Gronwall Lemma, we obtain the inequality (\ref{3.14*}).

As next, we consider the equation
\begin{align}
%\end{align}
\label{sysv}
d{\ck }(t)& =\lk( r_c  \Delta \ck (t)  -\alpha \ck (t)+ \beta %\Theta^1_\kappa(\ckk,t)
\xi(t)\rk)\, dt
\\\notag
&{} + \Theta_\kappa (\bu,t)\bu_\kappa(t)\cdot \nabla c_\kappa(t) \, dt +\ g_{\gamma_2}(\ck (t)) dW_2(t),\quad \ck (0)=c_0.
\end{align}
To show existence and uniqueness of a solution, we will verify that   the assumptions (H1), (H2$^{\prime}$), (H3), and  (H4$^{\prime}$) of Theorem 5.1.3 in \cite[p.\ 125] {weiroeckner} are satisfied. For this aim, let us consider the Gelfand triple
$$
H^1_2(\CO)\subset  L^2(\CO)\subset H^{-1}_2(\CO),
$$
and recall that we have
\begin{equation}
	_{H^{-1}_2}\langle c,w \rangle_{H^1_2}=(c,w), \ \ \text{ for all } \  \ c\in   H^{-1}_2(\CO) \text{ and } \ w\in H^{1}_2(\CO).\notag
\end{equation}
For $t\in [0,T]$, we define an operator $A_1(t,\cdot):H^{1}_2(\CO)\to H^{-1}_2(\CO)$ by
\DEQS
A_1(t,c):
=r_c  \Delta c  -\alpha c+ \beta
\xi(t) + \Theta_\kappa (\bu,t)\bu_\kappa(t)\cdot \nabla c,\quad c\in H^{1}_2(\CO),
\EEQS
where $\bu_\kappa$ solves \eqref{a3}.

Since $A_1(t,\cdot)$ is affine, then for all $c_1$, $c_2$ and  $w$ in $H^1_2(\CO)$,  the application
$$\lambda\longmapsto(A_1(t,c_1+\lambda c_2),w)$$
is continue on $\mathbb{R}$. Hence the assumption (H1) is satisfy.

Now we prove the assumption (H2$^{\prime}$). We fix $c_1$ and $c_2$ in $H^1_2(\CO)$ and note that
\begin{align}
(A_1(t,c_1)-A_1(t,c_2),c_1-c_2)+\abs{g_{\gamma_2}(c_1-c_2)}^2_{L_{HS}(H_1,L^2)}&\leq-(r_c+\alpha)\abs{c_1-c_2}_{H^1_2}^2+C\abs{c_1-c_2}_{L_2}^2\notag\\
&\leq C\abs{c_1-c_2}_{L_2}^2\notag,
\end{align}
and the local monotonicity assumption  (H2$^{\prime}$) follows. We note that we have used the fact that after an integration-by-part $$\Theta_\kappa (\bu,t)(\bu_\kappa(t)\cdot \nabla (c_1-c_2),c_1-c_2)=\frac{1}{2}\Theta_\kappa (\bu,t)(\bu_\kappa(t)\cdot \nabla (c_1-c_2)^2,1)=0.$$
To prove assumption (H3), we fix $c\in H^1(\CO)$ and use the fact that $\Theta_\kappa (\bu,t)(\bu_\kappa(t)\cdot \nabla c,c)=0$ to  note that
\begin{align}
2(A_1(t,c),c)+\abs{g_{\gamma_2}(c)}^2_{L_{HS}(H_1,L^2)}&\leq-(r_c+\alpha)\abs{c}_{H^1_2}^2+4\beta(\xi(t),c)+C\abs{c}_{L_2}^2\notag\\
&\leq C\abs{c}_{L_2}^2-(r_c+\alpha)\abs{c}_{H^1_2}^2+C(\beta)\abs{\xi(t)}^2_{L^2}\notag\\
&\leq C\abs{c}_{L_2}^2-\theta_1\abs{c}_{H^1_2}^{\alpha_1}+f_1(t),\notag
\end{align}
with $\theta_1=r_c+\alpha$ and  $f_1(t)=C(\beta)\abs{\xi(t)}^2_{L^2}.$ Since $f_1\in L^1(\Omega\times[0,T];\mathbb{R})$, the coercivity assumption (H3) follow with $\alpha_1=2$.

For the assumption (H4$^{\prime}$),  we fix  $c\in H^1(\CO)$ and note that for all $w\in  H^1(\CO)$, we have
\begin{align}
(A_1(t,c),w)&=(r_c  \Delta c  -\alpha c+ \beta
\xi(t) + \Theta_\kappa (\ukk,t) \bu_\kappa(t)\cdot \nabla c,
,w)\notag\\
&\leq -\theta_1\abs{c}_{H^1_2}\abs{w}_{H^1_2}+\beta\abs{\xi}_{L^2}\abs{w}_{L^2}+\Theta_\kappa (\ukk,t)\abs{\ukk(t)}_{\mathbf{L}^4}\abs{\nabla c}_{L^2}\abs{w}_{L^4}\notag\\
&\leq -\theta_1\abs{c}_{H^1_2}\abs{w}_{H^1_2}+\beta\abs{\xi}_{L^2}\abs{w}_{L^2}+\Theta_\kappa (\ukk,t)\abs{\ukk(t)}_{\mathbf{L}^4}\abs{\nabla c}_{L^2}\abs{w}_{L^4}\notag\\
&\leq \beta\abs{\xi}_{L^2}\abs{w}_{L^2}+\Theta_\kappa (\ukk,t)\abs{\ukk(t)}_{\nsH}\abs{\nabla c}_{L^2}\abs{w}_{H^1_2}\notag\\
&\leq (\beta\abs{\xi}_{L^2}+C(\kappa)\abs{\nabla c}_{L^2})\abs{w}_{H^1_2}\notag.
\end{align}
This implies that  for $\alpha_1=2$,
\begin{align}
\abs{A_1(t,c)}^{\frac{\alpha_1}{\alpha_1-1}}_{H^{-1}_2}&\leq C(\beta)\abs{\xi}^2_{L^2}+C(\kappa)\abs{c}^2_{H^1_2}\notag\\
&\leq f_1(t)+C(\kappa)\abs{c}^2_{H^1_2}\notag\\
&\leq (f_1(t)+C(\kappa)\abs{c}^2_{H^1_2})(1+\abs{c}_{L^2}^2)\notag,
\end{align}
 and the growth condition  (H4$^{\prime}$) follows.
 Since the above conditions are satisfy, we invoke Theorem 5.1.3 in  \cite{weiroeckner} to derive that  equation (\ref{sysv}) has a unique solution such that  for all $p\geq 2$ we have
 $$\EE \sup_{0\le s\le  {T}  }\abs{c_\kappa(s)}_{L^2}^p<\infty.$$

Now we prove the a'priori estimate for $c_\kappa$. For this purpose,   we apply the It\^ o formula to the function $\phi(c_k):=\abs{c_k(t)}^2_{H_2^1}$ to derive an estimate for 
$\abs{c_k(t)}^2_{H_2^1}$. Doing so, we obtain
\begin{align}
\abs{c_k(t)}^2_{H_2^1}&=\abs{c_0}^2_{H_2^1}+2\int_0^t( r_c \Delta \ckk(s)  -\alpha \ckk(s), \ckk(s))_{H_2^1}ds+ 2\beta
\int_0^t( \xi(s) ,c_k(s))_{H_2^1}ds\notag\\
&\quad+2\delta_c\int_0^t( \Theta_\kappa(\bu_\kappa,s)\bu_\kappa (s) \cdot\nabla \ckk(s),c_k(s))_{H_2^1}ds\label{3.14}\\
&\quad+\int_0^t\abs{g_{\gamma_2}(\ckk(s)) }^2_{L_{HS}(H_1;H_2^1)}ds+2\int_0^t(c_k(s),g_{\gamma_2}(\ckk(s)) dW_2(s))_{H_2^1}\notag
\end{align}
Using  integration-by-part and the Young inequality we derive that
\begin{align}
2( r_c \Delta \ckk  -\alpha \ckk, \ckk)_{H_2^1}&=-2(r_c+\alpha)\abs{\nabla \ck}^2_{L^2}-2\alpha\abs{ \ck}^2_{L^2}-2r_c\abs{\Delta \ck}^2_{L^2}\notag\\
&\leq-2\alpha\abs{ \ck}^2_{H^1_2}-2r_c\abs{\Delta \ck}^2_{L^2},\notag
\end{align}
and
\begin{align}
2\beta( \xi ,c_k)_{H_2^1}&=2\beta( \xi ,c_k)_{L^2}-2\beta( \xi ,\Delta c_k)_{L^2}\notag\\
&\leq C\abs{ \xi}^2_{L^2}+\eps_1\abs{ \ck}^2_{H^1_2}+\eps_1\abs{ \Delta\ck}^2_{L^2}\notag\\
&\leq C\abs{ \xi}^2_{L^2}+\eps_1\abs{ \ck}^2_{H^2_2}.\notag
\end{align}
In a similar way, using in addition the fact that $( \Theta_\kappa(\bu_\kappa)\bu_\kappa  \cdot\nabla \ckk,c_k)_{L^2}=0$ and the embedding of $\nsH$ into $\mathbf{L}^4(\CO)$,  we have
\begin{align}
2( \Theta_\kappa(\bu_\kappa)\bu_\kappa  \cdot\nabla \ckk,c_k)_{H_2^1}&=2\delta_c( \Theta_\kappa(\bu_\kappa)\bu_\kappa  \cdot\nabla \ckk,c_k)_{L^2}-2\delta_c( \Theta_\kappa(\bu_\kappa)\bu_\kappa  \cdot\nabla \ckk,\Delta c_k)_{L^2}\notag\\
&\leq 2 \Theta_\kappa(\bu_\kappa)\abs{ \bu_\kappa}_{\mathbf{L}^4}\abs{ \nabla \ck}_{L^4}\abs{ \Delta\ck}_{L^2}\notag\\
&\leq 2 \Theta_\kappa(\bu_\kappa)\abs{ \bu_\kappa}_{\nsH}\abs{ \ck}^\frac{1}{2}_{H_2^1}\abs{ \ck}^\frac{1}{2}_{H_2^2}\abs{ \Delta\ck}_{L^2}\notag\\
&\leq C(\kappa)\abs{ \ck}^\frac{1}{2}_{H_2^1}\abs{ \ck}^\frac{3}{2}_{H_2^2}\notag\\
&\leq C(\kappa)\abs{ \ck}_{H_2^1}^2+\eps_1\abs{ \ck}^2_{H_2^2}\notag.
\end{align}
Since $\gamma_2>1$,  by a simple calculation, we derive that
\begin{align}
\abs{g_{\gamma_2}(\ckk) }^2_{L_{HS}(H_1;H_2^1)}&\leq C(\abs{ \ck}_{L^4}+\abs{\nabla \ck}_{L^4}^2)\notag\\
&\leq C(\abs{ \ck}^2_{H^1_2}+\abs{ \ck}^\frac{1}{2}_{H_2^1}\abs{ \ck}^\frac{1}{2}_{H_2^2})
\leq C\abs{ \ck}^2_{H^1_2}+\eps_1\abs{ \ck}^2_{H_2^2}.\notag
\end{align}
By using these estimates we infer from the equality (\ref{3.14}) that
\begin{align}
&\abs{c_k(t)}^2_{H_2^1}+2\int_0^t(\alpha\abs{ \ck(s)}^2_{H^1_2}+r_c\abs{\Delta \ck(s)}^2_{L^2})ds\notag\\
&\leq\abs{c_0}^2_{H_2^1}+3\eps_1\int_0^t\abs{ \ck(s)}^2_{H^2_2}ds+C\int_0^t\abs{ \xi(s)}^2_{L^2}ds+C(\kappa)\int_0^t\abs{ \ck(s)}^2_{H^1_2}ds\notag\\
&\quad+2\int_0^t(c_k(s),g_{\gamma_2}(\ckk(s)) dW_2(s))_{H_2^1}\notag.
\end{align}
There exists a constant $C_0>0$ such that $ \abs{ \ck}^2_{H^2_2}\leq C_0(\abs{ \ck(s)}^2_{H^1_2}+\abs{ \Delta\ck(s)}^2_{L^2})$. Therefore taking $\eps_1=C_1:=\frac{\min(\alpha,r_c)}{3}$ in the previous inequality, we derive that
\begin{align}
	\abs{c_k(t)}^2_{H_2^1}+C_1\int_0^t\abs{ \ck(s)}^2_{H^2_2}ds
	&\leq\abs{c_0}^2_{H_2^1}+C\int_0^t\abs{ \xi(s)}^2_{L^2}ds\notag\\
	&\quad+C(\kappa)\int_0^t\abs{ \ck(s)}^2_{H^1_2}ds+2\int_0^t(c_k(s),g_{\gamma_2}(\ckk(s)) dW_2(s))_{H_2^1}\notag.
\end{align}
Taking the supremum over $[0,T]$, the  exponent $\frac{4m^*}{r^*}$ and the expectation, we arrive at
\begin{align}
&	\EE\sup_{0\le t\le T}\abs{c_k(t)}^\frac{8m^*}{r^*}_{H_2^1}+C^\frac{4m^*}{r^*}_1\EE\left(\int_0^T\abs{ \ck(s)}^2_{H^2_2}ds\right)^\frac{4m^*}{r^*}\notag\\
	&\leq\EE\abs{c_0}^\frac{8m^*}{r^*}_{H_2^1}+C(m^*,r^*)\EE\left(\int_0^T\abs{ \xi(s)}^2_{L^2}ds\right)^\frac{4m^*}{r^*}+C(\kappa,m^*,r^*)\EE\left(\int_0^T\abs{ \ck(s)}^2_{H^1_2}ds\right)^\frac{4m^*}{r^*}\label{3.15}\\
	&\quad+C(m^*,r^*)\EE\sup_{0\le t\le T}\abs{\int_0^t(c_k(s),g_{\gamma_2}(\ckk(s)) dW_2(s))_{H_2^1}}^\frac{4m^*}{r^*}\notag.
\end{align}
We note that since $q>3$ then $\frac{4}{q+1}<1$ and therefore we obtain by  the H\"older inequality,
\begin{align}
\EE\left(\int_0^T\abs{ \xi(s)}^2_{L^2}ds\right)^\frac{4m^*}{r^*}%&\leq \EE\left(\int_0^T\abs{ \xi(s)}^2_{L^{q+1}}ds\right)^\frac{2m^*}{r^*}\notag\\
&\leq C(T)\EE\left(\int_0^T\abs{ \xi(s)}^{q+1}_{L^{q+1}}ds\right)^\frac{8m^*}{r^*(q+1)}\notag\\
&\leq C(T)\left(\EE\left(\int_0^T\abs{ \xi(s)}^{q+1}_{L^{q+1}}ds\right)^{2\frac{m^*}{r^*}}\right)^\frac{4}{q+1}\notag.
%\\
%&=C(T)R^\frac{4}{q+1}.\notag
\end{align}
Since $\xi\in \CX_\MA^\kappa$, we get 
$$
\EE\left(\int_0^T\abs{ \xi(s)}^2_{L^2}ds\right)^\frac{4m^*}{r^*}
\le C(T)R^\frac{4}{q+1}.
$$
By the  H\"older inequality we obtain
\begin{equation}
	\EE\left(\int_0^T\abs{ \ck(s)}^2_{H^1_2}ds\right)^\frac{4m^*}{r^*}\leq C(T)\EE\int_0^T\abs{ \ck(s)}^\frac{8m^*}{r^*}_{H^1_2}ds.
\end{equation}
By the Burkholder-Gundy-Davis inequality, we have
\begin{align}
&C(m^*,r^*)\EE\sup_{0\le t\le T}\abs{\int_0^t(c_k(s),g_{\gamma_2}(\ckk(s)) dW_2(s))_{H_2^1}}^\frac{4m^*}{r^*}\notag\\
&\leq C\EE\left(\int_0^T\abs{\ck(s)}^2_{H^1_2}\abs{g_{\gamma_2}(\ckk(s))}^2_{L_{HS}(H_1;H_2^1)}ds\right)^\frac{2m^*}{r^*}\notag\\
&\leq C\EE\sup_{0\le t\le T}\abs{\ck(s)}^\frac{4m^*}{r^*}_{H^1_2}\left(\int_0^T\abs{g_{\gamma_2}(\ckk(s))}^2_{L_{HS}(H_1;H_2^1)}ds\right)^\frac{2m^*}{r^*}\notag\\
&\leq\frac{1}{2}\EE\sup_{0\le t\le T}\abs{\ck(s)}^\frac{8m^*}{r^*}_{H^1_2}+C\EE\left(\int_0^T\abs{g_{\gamma_2}(\ckk(s))}^2_{L_{HS}(H_1;H_2^1)}ds\right)^\frac{4m^*}{r^*}\notag.
\end{align}
By using the definition of $g_{\gamma_2}$, we note that
%\todo{Explain waht we are using}
%
\begin{align}
&C\EE\left(\int_0^T\abs{g_{\gamma_2}(\ckk(s))}^2_{L_{HS}(H_1;H_2^1)}ds\right)^\frac{4m^*}{r^*}\notag\\
&\leq C\EE\left(\int_0^T(\abs{\ckk(s)}^2_{L^4}+\abs{\nabla\ckk(s)}^2_{L^4})ds\right)^\frac{4m^*}{r^*}\notag\\
&\leq C\EE\left(\int_0^T\abs{\ckk(s)}^2_{H^1_2}ds\right)^\frac{2m^*}{r^*}+C\EE\left(\int_0^T\abs{\ckk(s)}_{H^1_2}\abs{\ckk(s)}_{H^2_2}ds\right)^\frac{4m^*}{r^*}\notag\\
&\leq C\EE\left(\int_0^T\abs{\ckk(s)}^2_{H^1_2}ds\right)^\frac{2m^*}{r^*}+C\EE\left(\int_0^T(C\abs{\ckk(s)}^2_{H^1_2}+\frac{C_1}{(2C)^{\frac{r^*}{2m^*}}}\abs{\ckk(s)}^2_{H^2_2})ds\right)^\frac{4m^*}{r^*}\notag\\
&\leq C\EE\left(\int_0^T\abs{\ckk(s)}^2_{H^1_2}ds\right)^\frac{4m^*}{r^*}+\frac{C_1^\frac{2m^*}{r^*}}{2}\EE\left(\int_0^T\abs{\ckk(s)}^2_{H^2_2}ds\right)^\frac{4m^*}{r^*}\notag\\
&\leq C\EE\int_0^T\abs{ \ck(s)}^\frac{8m^*}{r^*}_{H^1_2}ds+\frac{C_1^\frac{2m^*}{r^*}}{2}\EE\left(\int_0^T\abs{\ckk(s)}^2_{H^2_2}ds\right)^\frac{4m^*}{r^*}\notag,
\end{align}
and therefore
\begin{align}
&C(m^*,r^*)\EE\sup_{0\le t\le T}\abs{\int_0^t(c_k(s),g_{\gamma_2}(\ckk(s)) dW_2(s))_{H_2^1}}^\frac{2m^*}{r^*}\notag\\
&\leq\frac{1}{2}\EE\sup_{0\le t\le T}\abs{\ck(s)}^\frac{8m^*}{r^*}_{H^1_2}+\frac{C_1^\frac{2m^*}{r^*}}{2}\EE\left(\int_0^T\abs{\ckk(s)}^2_{H^2_2}ds\right)^\frac{4m^*}{r^*}+ C\EE\int_0^T\abs{ \ck(s)}^\frac{8m^*}{r^*}_{H^1_2}ds.\notag
\end{align}
Combining these last  inequalities we infer from the inequality (\ref{3.15}) that
\begin{equation}
\EE\sup_{0\le t\le T}\abs{c_k(t)}^\frac{8m^*}{r^*}_{H_2^1}+C^\frac{4m^*}{r^*}_1\EE\left(\int_0^T\abs{ \ck(s)}^2_{H^2_2}ds\right)^\frac{4m^*}{r^*}\leq\EE\abs{c_0}^\frac{8m^*}{r^*}_{H_2^1}+CR^\frac{4}{q+1}+C\EE\int_0^T\abs{ \ck(s)}^\frac{4m^*}{r^*}_{H^1_2}ds,\notag
\end{equation}
and the Gronwall lemma implies (\ref{3.12}).
\medskip

Finally, we consider the system
\begin{align}
\label{sysn}
 d{n}_\kappa(t)  = (r_n\Delta   |n_\kappa(t)|^{q-1}n_\kappa(t)+\theta\xi(t)
- \chi \Div( \xi(t) \nabla \ckk(t))) dt +\ukk(t)\cdot\nabla \xi(t)+g_{\gamma_3}(\nk(t))  dW_1(t).
\end{align}
 We now show that there exists a
unique solution $\nk$ to the system \eqref{sysn} by
verifying  the assumptions (H1), (H2$^{\prime}$), (H3), and  (H4$^{\prime}$) of Theorem 5.1.3 in \cite[p.\ 125]{weiroeckner}. Let us consider the Gelfand triple
\begin{equation}
V\subset \CH\cong \CH^{\ast}\subset V^{\ast},\label{3.1}
\end{equation}
with $\CH:=H_{2}^{-1}(\CO)$,  $V:=L^{q+1}(\CO)$ and $V^{\ast}=L^\frac{{q+1}}{q}(\CO)$.
The duality $_{V^{\ast}}\langle \cdot, \cdot \rangle_V$ is defined as
\begin{equation}
_{V^{\ast}}\langle n,w \rangle_V
=\int_{\CO} ((-\Delta)^{-1}n)(x)\, w(x)\, dx
.\label{3.21}
 \end{equation}
We set for $t\in [0,T]$,
\DEQS
A(t,n):
=r_n\Delta n^{[q]}+\theta\xi(t)-\chi
\di(\xi(t) \nabla c_\kappa(t))+\ukk(t)\cdot\nabla \xi(t),
\EEQS
where $c_\kappa$ solves \eqref{sysv} and $\bu_\kappa$ solves \eqref{a3}.

 First let us consider (H1).
 Let $\lambda\in\RR$. We need to show that
$$
\lambda\mapsto {_{V^{\ast}}\langle  A(t,n_{1}+\lambda n_{2}),w\rangle_{V}}
$$
 is continuous on $\RR$, for any $n_{1},n_{2},w\in V$ and  $t\in[0,T]$.
As the map $\lambda \mapsto (n_{1}+\lambda n_{2})^{[q]}$ is continuous, it follows by the dominated convergence Theorem
that the mapping $\RR_0^+\ni
\lambda\mapsto \int_{\CO}(n_{1}+\lambda n_{2})^{[q]}w dx $
is continuous. Hence, the mapping $
\lambda\mapsto {_{V^{\ast}}\langle  A(t,n_{1}+\lambda n_{2}),w\rangle_{V}}$ is also continuous.

To show the local monotonicity, i.e. (H2$^{\prime}$),
let us fix $n_1,n_2\in V$ and  $t\in[0,T]$. Then,  using the fact that (see \cite[Page 87-88]{weiroeckner}),
$$ -\int_{\CO}\left[r_n(n_1^{[q]}-n_2^{[q]})(n_1-n_2)\right]\,dx
\leq0,$$
we have
\begin{align}\label{wasist}
&  \mathbin{_{V^{\ast}}\langle  A(t,n_1)- A(t, n_2),n_1-n_2\rangle_{V}}+|g_{\gamma_1}(n_1)-g_{\gamma_1}(n_2)|^2_{L_{HS}(H_1,\CH)}
\notag\\
&=-\int_{\CO}\left[r_n(n_1^{[q]}-n_2^{[q]})(n_1-n_2)\right]\,dx+|g_{\gamma_1}(n_1)-g_{\gamma_1}(n_2)|^2_{\CL_{2}(H_1,\CH)}\notag\\
&\le |g_{\gamma_1}(n_1)-g_{\gamma_1}(n_2)|^2_{L_{HS}(H_1,\CH)} \\
&\le  C
|n_1-n_2|_{\CH}^{2}\notag\\
&\leq(f(t)+C)
|n_1-n_2|_{\CH}^{2},\notag
\end{align}
where $f(t)$ will be given later.

To verify the coercivity, i.e.\  (H3)
let us fix $n\in V$ and  $t\in[0,T]$. Then,
%substituting $\xi$ and taking $\ep_1=\frac 14$ and $\ep_2=\frac {r_n}4$
\begin{align*}
2	\mathbin{_{V^{\ast}}\langle A(t,n),n \rangle_{V}}+|g_{\gamma_1}(n)|^2_{L_{HS}(H_1,\CH)}
&= - 2r_n |n|_{L^{q+1}}^{q+1}
-2\chi\mathbin{_{V^{\ast}}}\langle
\di(\xi(t) \nabla c_\kappa(t)),n \rangle_V+2\mathbin{_{V^{\ast}}}\langle\nabla \xi(t)\cdot\bu_\kappa(t),n \rangle_V \notag\\
&\quad+2\mathbin{_{V^{\ast}}}\langle\theta\xi(t),n \rangle_V
+|g_{\gamma_1}(n)|^2_{L_{HS}(H_1,\CH)}
\\
& \leq -2r_n |n|_V^{q+1}+C(\theta) |n|_{\CH}^{2}+|\xi|_{\CH}^{2}+2\mathbin{_{V^{\ast}}}\langle\nabla \xi(t)\cdot\bu_\kappa(t),n \rangle_V\\
&\quad-2 \chi\mathbin{_{V^{\ast}}}\langle
\di(\xi(t) \nabla c_\kappa(t)),n \rangle_V.
% \ep_1 \lk|\xi(t,\omega)\rk|_{L^{q+1}}^{q+1} +
\end{align*}
Since $q>3$ then by the embedding of $V$ into $\CH$ and the Young inequality we have
\begin{align}
C(\theta)\abs{n}^2_\CH&\leq C(\theta)\abs{n}^{2}_V\leq r_n\abs{n}^{q+1}_V+C(\theta,q),\label{3.18*}
\end{align}
and
\begin{align}
\abs{\xi(t)}^2_\CH&\leq C\abs{\xi(t)}^{2}_V\leq \frac{1}{3}\abs{\xi(t)}^{q+1}_V+C(q).\label{3.18**}
\end{align}
Similarly, since $q>3$, by the H\"older inequality and Sobolev embedding,   we note that
\begin{align}\label{3.18}
-2\mathbin{_{V^{\ast}}}\langle \chi
\di(\xi(t) \nabla c_\kappa(t)),n \rangle_V&\leq 2\abs{\di(\xi(t) \nabla c_\kappa(t))}_{H^{-1}_2}\abs{n}_{H^{-1}_2}\notag\\
&\leq 2\abs{\xi(t)}_{L^4}\abs{\nabla c_\kappa(t)}_{L^4}\abs{n}_{H^{-1}_2}\notag\\
&\leq C \abs{\xi(t)}_{L^{q+1}}^2\abs{\nabla c_\kappa(t)}^2_{L^{q+1}}+\frac{1}{2}\abs{n}_{H^{-1}_2}^2\\
&\leq  \abs{\xi(t)}^{4}_{L^{4}}+C\abs{\nabla c_\kappa(t)}^{4}_{L^{4}}+\frac{1}{2}\abs{n}_{H^{-1}_2}^2\notag\\
&\leq \frac{1}{3} \abs{\xi(t)}^{q+1}_{L^{q+1}}+C\abs{\nabla c_\kappa(t)}^{4}_{L^{4}}+\frac{1}{2}\abs{n}_{H^{-1}_2}^2+C(q).\notag
\end{align}
By an integration by parts, we note that for all $w\in H^1_2(\bo)$,
$$\mathbin{_{H^{-1}_2}}\langle \xi(t)\cdot\nabla \bu_\kappa(t) ,w \rangle_{H^1_2}=\int_\bo\nabla\xi(t)\cdot \bu_\kappa (t) wdx=-\int_\bo\xi(t) \bu_\kappa (t) \cdot\nabla wdx\leq \abs{\xi(t) }_{L^4}\abs{\bu_\kappa(t) }_{\mathbf{L}^4} \abs{\nabla w}_{L^2}.$$
This implies that $\abs{\xi(t)\cdot\nabla \bu_\kappa(t)}_{H^{-1}_2}\leq \abs{\xi(t) }_{L^4}\abs{\bu_\kappa(t) }_{\nsH}$  and therefore,
 \begin{align}
2 \mathbin{_{V^{\ast}}}\langle\nabla \xi(t)\cdot\bu_\kappa(t),n \rangle_V&\leq 2\abs{\nabla \xi(t)\cdot\bu_\kappa(t)}_{H^{-1}_2}\abs{n}_{H^{-1}_2}\notag\\
 &\leq 2\abs{\bu_\kappa(t) }_{\nsH}\abs{\xi(t) }_{L^4}\abs{n}_{H^{-1}_2}\notag\\
  &\leq C\abs{\bu_\kappa(t) }_{\nsH}^2\abs{\xi(t) }^2_{L^{q+1}}+\frac{1}{2}\abs{n}_{H^{-1}_2}^2\label{3.19}\\
   &\leq C\abs{\xi(t) }^4_{L^{q+1}}+\frac{1}{2}\abs{n}_{H^{-1}_2}^2+C\abs{\bu_\kappa(t) }_{\nsH}^4\notag\\
    &\leq \frac{1}{3}\abs{\xi(t) }^{q+1}_{L^{q+1}}+\frac{1}{2}\abs{n}_{H^{-1}_2}^2+C(q)+C\abs{\bu_\kappa(t) }_{\nsH}^4\notag.
 \end{align}
By using (\ref{3.18}) and (\ref{3.19}) we derive that
\begin{equation}
	2\mathbin{_{V^{\ast}}\langle A(t,n),n \rangle_{V}}+|g_{\gamma_1}(n)|^2_{L_{HS}(H_1,\CH)}
	\leq C_0 |n|_{\CH}^{2}-r_n |n|_V^{\alpha^*}+f(t),
\end{equation}
where $\alpha^*=q+1$, $C_0=\max(1,C(q)r_n^{\frac{q+1}{q}})$ and  $$f(t)=\abs{\xi(t)}^{4}_{L^{q+1}}+C\abs{\nabla c_\kappa(t)}^{4}_{L^{4}}+C\abs{\bu_\kappa(t) }_{\nsH}^4+C(\theta,q).$$

Note that, since $\frac{m^*}{r^*}>1$, due to the estimates (\ref{3.12}) and (\ref{3.14*}), there exists a constant $C(R)>0$ such that
\begin{align}
\EE\int_0^T\abs{\nabla c_\kappa(s)}^{4}_{L^{4}}ds&\leq C\EE \int_0^T \abs{\nabla c_\kappa(s)}^{2}_{L^{2}}\abs{ \nabla c_\kappa(s)}^{2}_{H^1_{2}}ds\notag\\
&\leq C\EE \sup_{0\le s\le  {T}  } \abs{c_\kappa(s)}^{2}_{H^1_{2}}\int_0^T\abs{ c_\kappa(s)}^{2}_{H^2_{2}}ds\notag\\
&\leq C\left(\EE \sup_{0\le s\le  {T}  } \abs{ c_\kappa(s)}^{4}_{H^1_{2}}\right)^\frac{1}{2}\left(\EE\left(\int_0^T\abs{ c_\kappa(s)}^{2}_{H^2_{2}}ds\right)^2\right)^\frac{1}{2}\notag\\
&\leq C\left(\EE \sup_{0\le s\le  {T}  } \abs{ c_\kappa(s)}^{8\frac{m^*}{r^*}}_{H^1_{2}}\right)^\frac{r^*}{4m^*}\left(\EE\left(\int_0^T\abs{ c_\kappa(s)}^{2}_{H^2_{2}}ds\right)^{8\frac{m^*}{r^*}}\right)^\frac{r^*}{4m^*}\notag\\
&\leq C(R)\notag.
\end{align}
By the  H\"older inequality, we obtain
\begin{equation}
\EE\int_0^T\abs{\ukk(s)}^{4}_{\nsH}ds\leq T\EE \sup_{0\le s\le  {T}  }\abs{\ukk(s)}^{4}_{\nsH}\leq T\left(\EE \sup_{0\le s\le  {T}  }\abs{\ukk(s)}^{8^\frac{m^*}{r^*}}_{\nsH}\right)^\frac{r^*}{2m^*}\leq C(R).\notag
\end{equation}
Observe, due to the fact that   $\xi\in\mathcal{X}_\MA^\kappa$, it follows that $f$ belongs to  $L^1([0,T]\times \Omega;dt\otimes\PP)$.
 This proves (H3).

\medskip

The growth condition, i.e.\  (H4$^{\prime}$), can be verified similarly to the boundedness.
Let $n\in V$ and  $t\in[0,T]$. Then for all $w\in V$, by the H\"older inequality we have
\begin{align*}
\mathbin{_{V^{\ast}}\langle A(t,n),w\rangle_{V}}& \leq r_n\abs{n}^q_V\abs{w}_V +\mathbin{_{V^{\ast}}}\langle\theta\xi(t),w \rangle_V
-\mathbin{_{V^{\ast}}}\langle \chi
\di(\xi(t) \nabla c_\kappa(t)),w \rangle_V\\
&\quad+\mathbin{_{V^{\ast}}}\langle\nabla \xi(t)\cdot\bu_\kappa(t),w \rangle_V \notag\\
&\leq r_n\abs{n}^q_V\abs{w}_V+\theta\abs{\xi(t)}_{H^{-1}_2}\abs{w}_{H^{-1}_2}+\abs{\di(\xi(t) \nabla c_\kappa(t))}_{H^{-1}_2}\abs{w}_{H^{-1}_2}\notag\\
&\quad+\abs{\nabla \xi(t)\cdot\bu_\kappa(t)}_{H^{-1}_2}\abs{w}_{H^{-1}_2}.
\end{align*}
Using the continuous embedding of $V$ into $H^{-1}_2(\CO)$ we infer that
\begin{align*}
\abs{ A(t,n)}_{V^\star}&\leq r_n\abs{n}^q_V+C\theta\abs{\xi(t)}_{H^{-1}_2}+C\abs{\di(\xi(t) \nabla c_\kappa(t))}_{H^{-1}_2}+C\abs{\nabla \xi(t)\cdot\bu_\kappa(t)}_{H^{-1}_2}\\
&\leq r_n\abs{n}^q_V+C\theta\abs{\xi(t)}_{H^{-1}_2}+C\abs{\xi(t)}_{L^4}\abs{\nabla c_\kappa(t)}_{L^4}+C\abs{\bu_\kappa(t) }_{\nsH}\abs{\xi(t) }_{L^4}\\
&\leq r_n\abs{n}^q_V+C\theta\abs{\xi(t)}_{V}+C\abs{\xi(t)}_{L^{q+1}}\abs{\nabla c_\kappa(t)}_{L^{4}}+C\abs{\bu_\kappa(t) }_{\nsH}\abs{\xi(t) }_{L^{q+1}}.
\end{align*}
Since $\frac{q+1}{q-1}<4$, we deduce  that
\begin{align*}
	\abs{ A(t,n)}^{\frac{\alpha^*}{\alpha^*-1}}_{V^\star}&\leq C(q)r^{\frac{q+1}{q}}_n\abs{n}^{q+1}_V+C(\theta,q)\abs{\xi(t)}^{\frac{q+1}{q}}_{V}+C(q)\abs{\xi(t)}^{\frac{q+1}{q}}_{L^{q+1}}\abs{\nabla c_\kappa(t)}^{\frac{q+1}{q}}_{L^{4}}+C(q)\abs{\bu_\kappa(t) }_{\nsH}^{\frac{q+1}{q}}\abs{\xi(t) }^{\frac{q+1}{q}}_{L^{q+1}}\\
	&\leq C(q)r^{\frac{q+1}{q}}_n\abs{n}^{q+1}_V+\abs{\xi(t)}^{q+1}_{L^{q+1}}+C(q)\abs{\nabla c_\kappa(t)}^{\frac{q+1}{q-1}}_{L^{4}}+C(q)\abs{\bu_\kappa(t) }_{\nsH}^{\frac{q+1}{q-1}}+C(\theta,q)\\
	&\leq C(q)r^{\frac{q+1}{q}}_n\abs{n}^{q+1}_V+\abs{\xi(t)}^{q+1}_{L^{q+1}}+C\abs{\nabla c_\kappa(t)}^{4}_{L^{4}}+C\abs{\bu_\kappa(t) }_{\nsH}^{4}+C(\theta,q)\\
	&\leq C_0\abs{n}^{\alpha^*}_V+f(t)\\
	&\leq (C_0\abs{n}^{\alpha^*}_V+f(t))(1+\abs{n}^2_\CH).
\end{align*}
Observe, as before,
that $f\in  L^1([0,T]\times \Omega;dt\otimes\PP)$.
%
%\end{enumerate}

\medskip
In this way, we have proved that the Hypothesis (H1), (H2$^{\prime}$), (H3), and  (H4$^{\prime}$) are satisfied.
The existence and uniqueness of $n_\kappa$ such that  for all $p\geq 2$,
$$\EE\sup_{0\le s\le  {T} }\abs{\nk(s)}_{H^{-1}_2}^p<\infty,$$
 follows by an application of Theorem 5.1.3 in \cite[p.125]{weiroeckner}.

To calculate the bound,
we apply the It\^o formula to the function $\phi(\nk)=  |\nk|_{H^{-1}_2}^2$ and obtain
\begin{align*}
&|\nk(t)|_{H^{-1}_2}^2+2r_n\int_0^{t} \abs{\nk(s)}^{q+1}_{L^{q+1}} ds\notag\\
&=|n_0|_{H^{-1}_2}^2+2\int_0^t(\nabla^{-1}\xi(s),\nabla^{-1}\nk(s))ds\\
	&\quad-2\chi\int_0^{t}(\nabla^{-1} \nk (s),\nabla^{-1} (\Div (\xi(s)\nabla \ckk(s))))ds
-2\int_0^{t}(\nabla^{-1} \nk(s),
\nabla^{-1} \lk( {\bf u}_\kappa(s) \cdot\nabla \xi(s)
\rk))ds
	\\
	&\quad{}+2\int_0^{t}\left(\nk (s),g_{\gamma_1}(\nk(s))\,dW_1(s)\right)_{H^{-1}_2}+ \int_0^t|g_{\gamma_1}(\nk(s))|_{L_{\text{HS}}(H_1,H_2^{-1})}^2\,ds,
\end{align*}
where $\nabla^{-1}:=-(-\Delta)^{-1/2}$.
Using the  the fact that $L^{q+1}(\CO)\hookrightarrow H^{-1}_2(\CO)$ we obtain
\begin{align}
(\nabla^{-1}\xi,\nabla^{-1}\nk)\leq\abs{\xi}_{H^{-1}_2} \abs{\nk}_{H^{-1}_2}\leq C\abs{\xi}_{L^{q+1}} \abs{\nk}_{H^{-1}_2}\leq \frac{1}{2}\abs{\xi}^2_{L^{q+1}} +C\abs{\nk}^2_{H^{-1}_2}\notag.
\end{align}
By the Young inequality, we have
\begin{align*}
2\chi(\nabla^{-1} \nk ,\nabla^{-1} (\Div (\xi\nabla \ckk)))&\leq C\abs{\nk}_{H^{-1}_2} \abs{\Div (\xi\nabla \ckk)}_{H^{-1}_2}\\
&\leq C\abs{\xi}_{L^4}\abs{\nabla\ck}_{L^4}\abs{\nk}_{H^{-1}_2} \\
&\leq C\abs{\nk}_{H^{-1}_2}\abs{\xi}_{L^4}^2+C\abs{\nk}_{H^{-1}_2}\abs{\nabla\ck}_{L^4}^2 \\
&\leq C\abs{\nk}_{H^{-1}_2}\abs{\xi}_{L^{q+1}}^2+C\abs{\nk}_{H^{-1}_2}\abs{\ck}_{H_2^2}^2
\end{align*}
and
\begin{align}
2(\nabla^{-1} \nk,
\nabla^{-1} \lk( {\bf u}_\kappa \cdot\nabla \xi
\rk))&\leq2\abs{\nabla \xi\cdot\bu_\kappa}_{H^{-1}_2}\abs{\nk}_{H^{-1}_2}\notag\\
&\leq C\abs{\bu_\kappa}_{\nsH}\abs{\xi }_{L^4}\abs{\nk}_{H^{-1}_2}
\leq \abs{\bu_\kappa}_{\nsH}^2\abs{\xi }^2_{L^{q+1}}+C\abs{\nk}_{H^{-1}_2}^2\notag\\
&\leq \frac{1}{2}\abs{\xi }^2_{L^{q+1}}+\frac{1}{2}\abs{\bu_\kappa}_{\nsH}^4+C\abs{\nk}_{H^{-1}_2}^2.\notag
\end{align}
Using the inequality \ref{eq:H-1-2q},  we obtain
%\todo{Should be formulated as an assumption and put in front of the Theorem.}
\begin{equation*}
	|g_{\gamma_1}(\nk)|_{L_{\text{HS}}(H_1,H_2^{-1})}^2\leq C\abs{\nk}_{H^{-1}_2}^2.
\end{equation*}
Combining these inequalities, we derive  that
\begin{align}
&|\nk(t)|_{H^{-1}_2}^2+2r_n\int_0^{t} \abs{\nk(s)}^{q+1}_{L^{q+1}} ds\notag\\
&\leq|n_0|_{H^{-1}_2}^2+\int_0^t\abs{\xi(s)}^2_{L^{q+1}}ds+ C\int_0^t\abs{\nk(s)}_{H^{-1}_2}^2ds+\frac{1}{2}\int_0^t\abs{\bu_\kappa(s)}_{\nsH}^4ds\notag\\
&\quad+C\int_0^t\abs{\nk(s)}_{H^{-1}_2}\abs{\xi(s)}_{L^{q+1}}^2ds+C\int_0^t\abs{\nk(s)}_{H^{-1}_2}\abs{\ck(s)}_{H_2^2}^2ds\notag\\
&\quad +2\int_0^{t}\left(\nk (s),g_{\gamma_1}(\nk(s))\,dW_1(s)\right)_{H^{-1}_2}.\notag
\end{align}
This implies that
\begin{align}
&\EE\sup_{0\le t\le T}\	|\nk(t)|_{H^{-1}_2}^\frac{4m^*}{r^*}+(2r_n)^\frac{2m^*}{r^*}\EE\left(\int_0^{T} \abs{\nk(s)}^{q+1}_{L^{q+1}} ds\right)^\frac{2m^*}{r^*}\notag\\
&\leq\EE|n_0|_{H^{-1}_2}^\frac{4m^*}{r^*}+C\EE\left(\int_0^T\abs{\bu_\kappa(s)}_{\nsH}^4ds\right)^\frac{2m^*}{r^*}+C\EE\left(\int_0^T\abs{\xi(s)}^2_{L^{q+1}}ds\right)^\frac{2m^*}{r^*}\notag\\
&\quad+ C\EE\left(\int_0^T\abs{\nk(s)}_{H^{-1}_2}^2ds\right)^\frac{2m^*}{r^*}+C\EE\left(\int_0^T\abs{\nk(s)}_{H^{-1}_2}\abs{\xi(s)}_{L^{q+1}}^2ds\right)^\frac{2m^*}{r^*}\label{3.27}\\
	&\quad+C\EE\left(\int_0^T\abs{\nk(s)}_{H^{-1}_2}\abs{\ck(s)}_{H_2^2}^2ds\right)^\frac{2m^*}{r^*}+C\EE\sup_{0\le t\le T}\abs{\int_0^{t}\left(\nk (s),g_{\gamma_1}(\nk(s))\,dW_1(s)\right)_{H^{-1}_2}}^\frac{2m^*}{r^*}.\notag
\end{align}
We remark that we get  by estimate (\ref{3.14*}) 
\begin{align}
\EE\left(\int_0^T\abs{\bu_\kappa(s)}_{\nsH}^4ds\right)^\frac{2m^*}{r^*}&\leq C(T)\EE\sup_{0\le s\le  {T} }\abs{\bu_\kappa(s)}_{\nsH}^\frac{8m^*}{r^*}\notag\\
&\leq C\EE\abs{\bu_0}^{8\frac{m^*}{r^*}}_\nsH+ CR^\frac{4}{q+1}+C.\notag
\end{align}
We note that a two times application of  the H\"older inequality  gives
\begin{align}
	\EE\left(\int_0^T\abs{ \xi(s)}^2_{L^2}ds\right)^\frac{2m^*}{r^*}\leq &C\EE\left(\int_0^T\abs{ \xi(s)}^{q+1}_{L^{q+1}}ds\right)^\frac{4m^*}{r^*(q+1)}\notag\\
	&\leq C(T)\left(\EE\left(\int_0^T\abs{ \xi(s)}^{q+1}_{L^{q+1}}ds\right)^\frac{2m^*}{r^*}\right)^\frac{2}{q+1}\notag\\
	&\leq CR^\frac{2}{q+1},\notag
\end{align}
and
\begin{equation}
\EE\left(\int_0^T\abs{\nk(s)}_{H^{-1}_2}^2ds\right)^\frac{2m^*}{r^*}\leq C(T,m^*,r^*)\EE\int_0^T\abs{\nk(s)}_{H^{-1}_2}^\frac{4m^*}{r^*}ds\notag,
\end{equation}
%\todo{Some more explanation what is used}
%
as well as
\begin{align}
	C\EE\left(\int_0^T\abs{\nk(s)}_{H^{-1}_2}\abs{\xi(s)}_{L^{q+1}}^2ds\right)^\frac{2m^*}{r^*}	&\leq C\EE\sup_{0\le t\le T}\	|\nk(t)|_{H^{-1}_2}^\frac{2m^*}{r^*}\left(\int_0^T\abs{\xi(s)}_{L^{q+1}}^2ds\right)^\frac{2m^*}{r^*}\notag\\
	&\leq \frac{1}{6}\EE\sup_{0\le t\le T}\	|\nk(t)|_{H^{-1}_2}^\frac{4m^*}{r^*}+C\EE\left(\int_0^T\abs{\xi(s)}_{L^{q+1}}^2ds\right)^\frac{4m^*}{r^*}\notag\\
	&\leq \frac{1}{6}\EE\sup_{0\le t\le T}\	|\nk(t)|_{H^{-1}_2}^\frac{4m^*}{r^*}+C\EE\left(\int_0^T\abs{\xi(s)}_{L^{q+1}}^{q+1}ds\right)^\frac{8m^*}{r^*(q+1)}\notag\\
	&\leq \frac{1}{6}\EE\sup_{0\le t\le T}\	|\nk(t)|_{H^{-1}_2}^\frac{2m^*}{r^*}+C\left(\EE\left(\int_0^T\abs{\xi(s)}_{L^{q+1}}^{q+1}ds\right)^\frac{2m^*}{r^*}\right)^\frac{4}{q+1}\notag\\
	&\leq \frac{1}{6}\EE\sup_{0\le t\le T}\	|\nk(t)|_{H^{-1}_2}^\frac{2m^*}{r^*}+CR^\frac{4}{q+1}\notag.
\end{align}
In a similar way we get
\begin{align}
C\EE\left(\int_0^T\abs{\nk(s)}_{H^{-1}_2}\abs{\ck(s)}_{H^{2}_2}^2ds\right)^\frac{2m^*}{r^*}
&\leq C\EE\sup_{0\le t\le T}\	|\nk(t)|_{H^{-1}_2}^\frac{2m^*}{r^*}\left(\int_0^T\abs{\ck(s)}_{H^{2}_2}^2ds\right)^\frac{2m^*}{r^*}\notag\\
&\leq \frac{1}{6}\EE\sup_{0\le t\le T}\	|\nk(t)|_{H^{-1}_2}^\frac{2m^*}{r^*}+C\EE\left(\int_0^T\abs{\ck(s)}_{H_2^{2}}^2ds\right)^\frac{4m^*}{r^*}\notag.
\end{align}
Using the estimate (\ref{3.12}), we derive that
\begin{equation}
\EE\left(\int_0^T\abs{ \ck(s)}^2_{H^2_2}ds\right)^\frac{4m^*}{r^*}\leq C\EE\abs{c_0}^\frac{4m^*}{r^*}_{H_2^1}+CR^\frac{4}{q+1}.\notag
\end{equation}
By the Burkholder-Gundy-Davis inequality and the inequality \ref{eq:H-1-2q}, we have
\begin{align}
	&C\EE\sup_{0\le t\le T}\abs{\int_0^t(\nk(s),g_{\gamma_1}(\nk(s)) dW_1(s))_{H_2^{-1}}}^\frac{2m^*}{r^*}\notag\\
	&\leq C\EE\left(\int_0^T\abs{\nk}^2_{H^{-1}_2}\abs{g_{\gamma_1}(\nk(s))}^2_{L_{\text{HS}}(H_1,H_2^{-1})}ds\right)^\frac{m^*}{r^*}\notag\\
	&\leq C\EE\sup_{0\le t\le T}\abs{\nk(t)}^\frac{2m^*}{r^*}_{H^{-1}_2}\left(\int_0^T\abs{g_{\gamma_1}(\nk(s))}^2_{L_{\text{HS}}(H_1,H_2^{-1})}ds\right)^\frac{m^*}{2r^*}\notag\\
	&\leq\frac{1}{6}\EE\sup_{0\le t\le T}\abs{\nk(T)}^\frac{4m^*}{r^*}_{H^{-1}_2}+C\EE\left(\int_0^T\abs{g_{\gamma_1}(\nk(s))}^2_{L_{\text{HS}}(H_1,H_2^{-1})}ds\right)^\frac{2m^*}{r^*}\notag\\
	&\leq\frac{1}{6}\EE\sup_{0\le t\le T}\abs{\nk(t)}^\frac{4m^*}{r^*}_{H^{-1}_2}+C(T)\EE\int_0^T\abs{\nk(s)}_{H^{-1}_2}^\frac{4m^*}{r^*}ds.\notag
\end{align}
From these a priori estimates, we infer from (\ref{3.27}) that
\begin{align}
&\frac{1}{2}\EE\sup_{0\le t\le T}\	|\nk(t)|_{H^{-1}_2}^\frac{4m^*}{r^*}+(2r_n)^\frac{2m^*}{r^*}\EE\left(\int_0^{T} \abs{\nk(s)}^{q+1}_{L^{q+1}} ds\right)^\frac{2m^*}{r^*}\notag\\
&\leq\EE|n_0|_{H^{-1}_2}^\frac{4m^*}{r^*}+C\EE\abs{\bu_0}^{8\frac{m^*}{r^*}}_\nsH+C\EE\abs{c_0}^\frac{4m^*}{r^*}_{H_2^1}+C+CR^\frac{2}{q+1}+CR^\frac{4}{q+1}+C\EE\int_0^T\abs{\nk(s)}_{H^{-1}_2}^\frac{4m^*}{r^*}ds\notag.
\end{align}
The Gronwall lemma implies (\ref{3.12*}) and ends the proof of Claim  \ref{existencev}.
\end{proof}

\textbf{Verification of condition \eqref{metacontinuity} of Theorem \ref{meta}:}
Now, we shall tackle the continuity of the operator $\CV_\MA^\kappa$ in the norm induced by $\CM^{m^\ast}_\MA(\X)$. In fact, in the next proposition we show that  assumption \eqref{metacontinuity} of Theorem \ref{meta} is satisfied.

\begin{proposition}\label{continuity}
Let us assume that the assumptions of Theorem \ref{main} are valid.
Let  $\xi_1,\xi_2\in \CX_\MA(R)\subset\CM^{m^\ast}_\MA(\BX)$ be arbitrary. % and let
%$$\tau_{max}^\kappa:=\sup_{t\ge 0} \lk\{ h_k (\xi_1,c^{(1)},\bu^{(1)},t),h_k (\xi_2,c^{(2)},\bu^{(2)},t)\ge 2\kappa: k=1,2,3,4\rk\}.
%$$
Then the following statements are valid.
%Let $\Omega_\kappa:=\lk\{\omega \in \Omega:\tau_{max}\le T\rk\}$.
\begin{enumerate}
  \item Let $\uk^{(1)}$ and $\uk^{(2)}$ be the solutions to system
  \DEQS
  d{n}^{(j)}_\kappa(t) & =& \lk(r_n\Delta   |n^{(j)}_\kappa(t)|^{q-1}n^{(j)}_\kappa(t)+\theta\xi(t)-\chi \Div( \xi_j(t) \nabla \ckk^{(j)}(t))\rk)\, dt
\\ && {}+  \nabla \xi_j(t)\bu^{(j)}_\kappa (t)dt +g_{\gamma_1}(\nk^{(j)}(s))dW_1,\phantom{\Big|}
\EEQS
with  $\uk ^{(j)}(0)=n_0, j=1,2$. Then,
%under the assumptions of Theorem %\ref{mainresult}
for any $R>0$, for any $\kappa\in\NN$,
there exist  constants $C=C(R,\kappa)>0$ such that  for any $\xi\in \CM^{m^\ast}_{\mathfrak{A}}(\mathbb{X})\cap\CX(R)$,
we have
\begin{align}
	&\EE\sup_{0\le t\le T}|\nk^{(1)}(t)-\nk ^{(2)}(t) |_{H^{-1}_2}^\frac{2m^*}{r^*}+\EE\left(\int_0^{T}\abs{\nk^{(1)}(s)-\nk ^{(2)}(s)}^{q+1}_{L^{q+1}}ds\right)^\frac{m^*}{r^*}\notag\\
	&\le
	C\left(\EE\|\xi_1-\xi_2\|_{\mathbb{X}}^{m^*}\right)^\frac{2}{r^*}+C(R)\left(\EE\lk\|\xi_1-\xi_2\rk\|_{\mathbb{X}}^{m^*}\right)^\frac{r'}{r^*}\notag\\
	&\qquad+
	C(R)	\	\left(\EE \sup_{0\leq s\leq T}\abs{\ckk^{(1)}(s)-\ckk^{(2)}(s)}_{L^{2}}^{2\frac{m^*}{r^*}}\right)^{\frac{r'}{4(q+1)}}+
	C(R)	\left(\EE \sup_{0\leq s\leq T}\abs{\bu_\kappa^{(1)}(s)-\bu_\kappa^{(2)}(s)}_{\nsH}^{4\frac{m^*}{r^*}}\right)^{\frac{r'}{4q+4}}\notag.
\end{align}
  \item Let $\ck^{(1)}$ and $\ck^{(2)}$ be the solutions to system
  $$
  d{\ckk}^{(j)}(t) +\Theta_\kappa(\ukk^{(j)},t)\ukk^{(j)}(t)\nabla c^{(j)}(t)\, dt=\lk( r_c \Delta \ckk^{(j)}(t)  -\alpha \ckk^{(j)}(t)+ \beta \xi_j(t) \rk)\, dt +g_{\gamma_2}(\ck^{(j)}(s)) dW_2,
$$
%\eqref{a2},
where $j=1,2$,
and   $\ck ^{(j)}(0)=c_0, j=1,2$. Then,
%under the assumptions of Theorem %\ref{mainresult}
for any $R>0$, for any $\kappa\in\NN$,
there exist  constants $C=C(R,\kappa)$ such that for any $\xi\in \CM^{m^\ast}_{\mathfrak{A}}(\mathbb{X})\cap\CX(R)$
we have
\begin{align}
	&\EE\sup_{0\le s\le  {T}  }\abs{{\ckk^{(1)}-\ckk^{(2)}}(s) }^\frac{2m^*}{r^*}_{L^2}+\alpha_0^\frac{m^*}{r^*}\EE\left(\int_0^T\abs{{\ckk^{(1)}(s)-\ckk^{(2)}}(s) }^2_{H^1_2}ds\right)^\frac{m^*}{r^*}\notag\\
	&\leq C\left(\EE\abss{\xi_1-\xi_2}^{m^*}_{\mathbb{X}}\right)^\frac{2}{r^*}+	C(\kappa,R)\left(\EE\sup_{0\le s\le  {T}  }\abs{\ukk^{(1)}(s)-\ukk^{(2)}(s)}_{\nsH}^{4\frac{m^*}{r^*}}\right)^\frac{1}{2} \notag.
\end{align}

  \item Let $\ukk^{(1)}$ and $\ukk^{(2)}$ be the solutions to system
\DEQS
 d \bu^{(j)} _\kappa(t)-\Delta \bu^{(j)}_\kappa(t)dt=\xi_j(t)\star\Phi \, dt+ \sigma_3 (t) dW_3(t),\phantom{\Big|}
\EEQS
with  $\ukk ^{(j)}(0)=\bu_0, j=1,2$.  Then,
%under the assumptions of Theorem %\ref{mainresult}
for any $R>0$, for any $\kappa\in\NN$,
there exist  constants $C=C(R,\kappa)>0$ such that for any $\xi\in \CM^{m^\ast}_{\mathfrak{A}}(\mathbb{X})\cap\CX(R)$,
we have
\begin{equation*}
	\frac{1}{2}\EE\sup_{0\le s\le  {T}  }|\bu_\kappa^{(1)}(s)-\bu_\kappa^{(2)}(s)|_{\nsH}^{2\frac{m^*}{r^*}}+(2r)^\frac{m^*}{r^*}_u \EE\left(\int_0^{T}|\bu_\kappa^{(1)}(s)-\bu_\kappa^{(2)}(s)|_{\nsV}^2\, ds\right)^\frac{m^*}{r^*}\leq C(\delta_1,\delta_2)\left(\EE\abss{\xi_1-\xi_2}^{m^*}_{\mathbb{X}}\right)^\frac{2}{r^*}.
\end{equation*}
\end{enumerate}

\end{proposition}

The continuity of the operator $\CV_\MA^\kappa: \CX_\MA(R)\to \CX_\MA(R)$  follows by an interpolation argument.
\begin{corollary}
Let us assume that the assumptions of Theorem \ref{main} are valid.
Let  $\xi_1,\xi_2\in\mathcal{X}_\MA(R)\subset\CM^{m^\ast}_\MA(\BX)$ be arbitrary.
Then there exists some constant $C>0$ such that 
$$
\EE \abss{ \CV_\MA^\kappa(\xi_1)-\CV_\MA^\kappa(\xi_2)}^{m^\ast}_\BX
\le C\EE \abss{\xi_1-\xi_2}^{m^\ast}_\BX.
$$
\end{corollary}
\begin{proof}
The proof is a combination of points (i), (ii) and (iii) of Proposition \ref{continuity} with an application of Proposition \ref{interoplation_11}. We have to verify if the parameter $r=r^\ast$, $s=s^{\ast\ast}$, and $m=m^\ast$,
satisfy the constrains
 $r\in(2,q+1)$, $m\in(q+1,\infty)$, $s\in(0,1)$, with  $\frac 1r\ge\frac 1m+\frac s2$. However, since $q>4$, $s^{**}=\frac{2}{q+1}$,
 and  the  numbers $r^\ast$ and $m^\ast$ satisfy
$q\leq r^*<q+1$ and $ m^*\geq 2q+2$, the constrains are satisfied and we can apply Proposition \ref{interoplation_11}.

\end{proof}

\begin{proof}[Proof of Proposition \ref{continuity}]
Firstly, we consider the difference $\bu_\kappa^{(1)}-\bu_\kappa^{(2)}$ which is the solution of the following random equation
\begin{equation}
d(\bu_\kappa^{(1)}-\bu_\kappa^{(2)})(t)-\Delta (\bu_\kappa^{(1)}-\bu_\kappa^{(2)})(t)dt=(\xi_1(t)-\xi_2(t))\star\Phi dt, \ \text{ with } \  (\bu_\kappa^{(1)}-\bu_\kappa^{(2)})(0)=0.\notag
\end{equation}
Testing this equation by $-\Delta(\bu_\kappa^{(1)}-\bu_\kappa^{(2)})$, we arrive at
\begin{align}
&\sup_{0\le s\le  {T}  }|\bu_\kappa^{(1)}(s)-\bu_\kappa^{(2)}(s)|_{\nsH}^2+2r_u \int_0^{t}|\bu_\kappa^{(1)}(s)-\bu_\kappa^{(2)}(s)|_{\nsV}^2 ds\notag\\
&= -2\int_0^{T} \left(\Delta(\bu_\kappa^{(1)}(s)-\bu_\kappa^{(2)}(s)),(\xi_1(s)-\xi_2(s))\star \Phi\right) ds\notag\\
&\leq \frac{1 }{2}\sup_{0\le s\le  {T}  }|\bu_\kappa^{(1)}(s)-\bu_\kappa^{(2)}(s)|_{\nsH}^2+C\sum_{i=1}^2 \int_0^{T}|\nabla((\xi_1(s)-\xi_2(s))\star \phi_i)|_{\mathbf{L}^2}^2 ds.\notag
\end{align}
This implies that
\begin{align}
&\f{1}{2}\EE\sup_{0\le s\le  {T}  }|\bu_\kappa^{(1)}(s)-\bu_\kappa^{(2)}(s)|_{L^2}^{2\frac{m^*}{r^*}}+(2r)^\frac{m^*}{r^*}_u \EE\left(\int_0^{T}|\bu_\kappa^{(1)}(s)-\bu_\kappa^{(2)}(s)|_{H^1_2}^2\, ds\right)^\frac{m^*}{r^*}\notag\\
&\leq C\sum_{i=1}^2\EE\left( \int_0^{T}|\nabla((\xi_1(s)-\xi_2(s))\star \phi_i)|_{\mathbf{L}^2}^2 ds\right)^\frac{m^*}{r^*}.\label{3.29}
\end{align}
From semi-group theory (see, e.g. \cite[Chap. I]{pazy}), we obtain that for $i=1,2$,
%\todo{better formulation}
\begin{align}
|\nabla((\xi_1-\xi_2)\star \phi_i)|_{\mathbf{L}^2}&\leq \sum_{i=1}^2	|(\xi_1-\xi_2)\star \phi_i|^2_{H^1_2}= \sum_{i=1}^2	\abs{e^{-\delta_i\Delta}(\xi_1-\xi_2)}_{H^1_2}^2\notag\\
&\leq C \sum_{i=1}^2\abs{e^{-\delta_i\Delta}}_{L(H^{-1}_2;H^1_2)}^2\abs{(\xi_1-\xi_2)}^2_{H^{-1}_2}\notag\\
&\leq  C\abs{(\xi_1-\xi_2)}^2_{H^{-1}_2}\sum_{i=1}^2\abs{-\Delta e^{-\delta_i\Delta}}^2_{L(H^{-1}_2;H^{-1}_2)}\notag\\	
&\leq C\abs{(\xi_1-\xi_2)}^2_{H^{-1}_2}\sum_{i=1}^2\frac{C}{\delta_i^{1/2}}\notag\\
&\leq C(\delta_i)\abs{\xi_{1}-\xi_{2}}_{H^{-1}_2}\notag.
\end{align}
Using the embedding of $H^{-s^{**}}_2(\CO)$ into $H^{-1}_2(\CO)$, we infer that  for $i=1,2$,
\begin{align*}
\EE\left( \int_0^{T}|\nabla((\xi_1(s)-\xi_2(s))\star \phi_i)|_{\mathbf{L}^2}^2 ds\right)^\frac{m^*}{r^*}&\leq C(\delta_i)\EE\left( \int_0^{T}|(\xi_1(s)-\xi_2(s))|_{H^{-s^{**}}_2}^2 ds\right)^\frac{m^*}{r^*}\\
&=C(\delta_i)\EE\abs{\xi_1-\xi_2}^{2\frac{m^*}{r^*}}_{L^2(0,T;H_2^{-s^{**}})}.
\end{align*}
Since $m^*>2$ and $2<r^*$ we have
\begin{equation}
\EE\abs{\xi_1-\xi_2}^{2\frac{m^*}{r^*}}_{L^2(0,T;H_2^{-s^{**}})}\leq C\EE\abs{\xi_1-\xi_2}^{2\frac{m^*}{r^*}}_{L^{m^*}(0,T;H_{r^*}^{-s^{**}})}\leq C\EE\abs{\xi_1-\xi_2}^{2\frac{m^*}{r^*}}_{\mathbb{X}}\leq C\left(\EE\abs{\xi_1-\xi_2}^{m^*}_{\mathbb{X}}\right)^\frac{2}{r^*}.\notag
\end{equation}
We then infer from the inequality (\ref{3.29}) that
\begin{equation*}
\frac{1}{2}\EE\sup_{0\le s\le  {T}  }|\bu_\kappa^{(1)}(s)-\bu_\kappa^{(2)}(s)|_{\nsH}^{2\frac{m^*}{r^*}}+(2r)^\frac{m^*}{r^*}_u \EE\left(\int_0^{T}|\bu_\kappa^{(1)}(s)-\bu_\kappa^{(2)}(s)|_{\nsV}^2\, ds\right)^\frac{m^*}{r^*}\leq C(\delta_1,\delta_2)\left(\EE\abs{\xi_1-\xi_2}^{m^*}_{\mathbb{X}}\right)^\frac{2}{r^*}.
\end{equation*}
and the estimate for the difference  $\bu_\kappa^{(1)}-\bu_\kappa^{(2)}$ follows.

\medskip

Secondly, we consider the difference $\ckk^{(1)}-\ckk^{(2)}$ which satisfies the following equation
\begin{align*}
&d({\ckk^{(1)}-\ckk^{(2)}})(t) +\left(\Theta_\kappa(\ukk^{(1)},t)\ukk^{(1)}(t)\nabla c^{(1)}(t)-\Theta_\kappa(\ukk^{(2)},t)\ukk^{(2)}(t)\nabla c^{(2)}(t)\right) dt\\
&=\lk( r_c \Delta (\ckk^{(1)}-\ckk^{(2)})(t)  -\alpha(\ckk^{(1)}-\ckk^{(2)})(t)+ \beta \xi_1(t)-\xi_2(t) \rk)\, dt +g_{\gamma_2}(\ckk^{(1)}(t)-\ckk^{(2)}(t)) dW^t_2.
\end{align*}
Setting $\alpha_0=\min (r_c,\alpha)$ and applying the It\^o formula we arrive at
\begin{align}
&\sup_{0\le s\le  {T}  }\abs{{\ckk^{(1)}(s)-\ckk^{(2)}}(s) }^2_{L^2}+2\alpha_0\int_0^T\abs{{\ckk^{(1)}(s)-\ckk^{(2)}}(s) }^2_{H^1_2}ds\notag\\
&\leq2\int_0^T(\xi_1(s)-\xi_2(s),  \ckk^{(1)}(s)-\ckk^{(2)}(s))ds\notag\\
&\qquad+2\int_0^T\left(\Theta_\kappa(\ukk^{(1)},s)\ukk^{(1)}(s)\nabla \ckk^{(1)}(s)-\Theta_\kappa(\ukk^{(2)},s)\ukk^{(2)}(s)\nabla c^{(2)}(s), \ckk^{(1)}-\ckk^{(2)}(s)\right) ds\notag\\
&\qquad+\int_0^T\abs{g_{\gamma_2}(\ckk^{(1)}-\ckk^{(2)}(s))}^2_{L_{\text{HS}}(H_1,L^{2})}ds+2\sup_{0\le t\le  {T}  }\int_0^t(\ckk^{(1)}(s)-\ckk^{(2)}(s),g_{\gamma_2}(\ckk^{(1)}(s)-\ckk^{(2)}(s)) dW_2(s)).\notag
\end{align}
Since $s^{**}<1$, we have
\begin{align}
(\xi_1-\xi_2,  \ckk^{(1)}-\ckk^{(2)})&\leq \abs{\xi_1-\xi_2}_{H_2^{-s^{**}}} \abs{\ckk^{(1)}-\ckk^{(2)}}_{H_2^{s^{**}}}\notag\\
&\leq \abs{\xi_1-\xi_2}_{H_2^{-s^{**}}} \abs{\ckk^{(1)}-\ckk^{(2)}}_{H_2^{1}}\notag\\
&\leq \frac{\alpha_0}{3}\abs{\ckk^{(1)}-\ckk^{(2)}}^2_{H_2^{1}}+C\abs{\xi_1-\xi_2}_{H_2^{-s^{**}}} ^2.\notag
\end{align}
For the second integrand, we have
\begin{align}
&\left(\Theta_\kappa(\ukk^{(1)})\ukk^{(1)}\cdot\nabla \ckk^{(1)}-\Theta_\kappa(\ukk^{(2)})\ukk^{(2)}\cdot\nabla c^{(2)}, \ckk^{(1)}-\ckk^{(2)}\right)\notag\\ &=\left(\Theta_\kappa(\ukk^{(1)})(\ukk^{(1)}-\ukk^{(2)})\nabla \ckk^{(1)}, \ckk^{(1)}-\ckk^{(2)}\right) +\left(\Theta_\kappa(\ukk^{(1)})\ukk^{(2)}\cdot\nabla (\ckk^{(1)}-\ckk^{(2)}), \ckk^{(1)}-\ckk^{(2)}\right) \notag\\
&\qquad+\left((\Theta_\kappa(\ukk^{(1)})-\Theta_\kappa(\ukk^{(2)}))\ukk^{(2)}\cdot\nabla\ckk^{(2)}, \ckk^{(1)}-\ckk^{(2)}\right).\notag
\end{align}
We note that
\begin{align}
&\left(\Theta_\kappa(\ukk^{(1)})(\ukk^{(1)}-\ukk^{(2)})\nabla \ckk^{(1)}, \ckk^{(1)}-\ckk^{(2)}\right) \notag\\
&\leq \Theta_\kappa(\ukk^{(1)})\abs{\ukk^{(1)}-\ukk^{(2)}}_{L^4}\abs{\nabla\ckk^{(1)}}_{L^2}\abs{\ckk^{(1)}-\ckk^{(2)}}_{L^4}\notag\\
&\leq C\abs{\ukk^{(1)}-\ukk^{(2)}}_\nsH\abs{\nabla\ckk^{(1)}}_{L^2}\abs{\ckk^{(1)}-\ckk^{(2)}}^\frac{1}{2}_{L^2}\abs{\ckk^{(1)}-\ckk^{(2)}}^\frac{1}{2}_{H^1_2}\notag\\
&\leq C\abs{\ukk^{(1)}-\ukk^{(2)}}_{\nsH}^2\abs{\nabla\ckk^{(1)}}_{L^2}^2+C\abs{\ckk^{(1)}-\ckk^{(2)}}_{L^2}\abs{\ckk^{(1)}-\ckk^{(2)}}_{H^1_2}\notag.
\end{align}
This implies that
\begin{align}
&\int_0^T\left(\Theta_\kappa(\ukk^{(1)},s)(\ukk^{(1)}(s)-\ukk^{(2)}(s))\nabla \ckk^{(1)}(s), \ckk^{(1)}(s)-\ckk^{(2)}(s)\right) ds\notag\\
&\leq C(T)\sup_{0\le s\le  {T}  }\abs{\ukk^{(1)}(s)-\ukk^{(2)}(s)}_{\nsH}^2\sup_{0\le s\le  {T}  }\abs{\nabla\ckk^{(1)}(s)}_{L^2}^2\\
&\qquad	+C\int_0^T\abs{\ckk^{(1)}(s)-\ckk^{(2)}(s)}^2_{L^2}ds+\frac{\alpha_0}{3}\int_0^T\abs{\ckk^{(1)}(s)-\ckk^{(2)}(s)}^2_{H^1_2}ds\notag.
\end{align}
By an integration-by-part and the free divergence property of $\ukk^{(2)}$, we obtain
$$\left(\Theta_\kappa(\ukk^{(1)})\ukk^{(2)}\cdot\nabla (\ckk^{(1)}-\ckk^{(2)}), \ckk^{(1)}-\ckk^{(2)}\right)=\frac{1}{2}\Theta_\kappa(\ukk^{(1)})\left(\ukk^{(2)}\cdot\nabla (\ckk^{(1)}-\ckk^{(2)})^2, 1\right)=0.$$
For the control of the term $\left((\Theta_\kappa(\ukk^{(1)})-\Theta_\kappa(\ukk^{(2)}))\ukk^{(2)}\cdot\nabla\ckk^{(2)}, \ckk^{(1)}-\ckk^{(2)}\right)$, we introduce  the stopping times $\tau_i$, $i=1,2$ defined by  for $i=1,2$,
\begin{equation*}
\tau_i=\inf\left\{ t\in [0,T]: \ \sup_{0\le s\le  {t} }\abs{\ukk^{(i)}(s)}_\nsH\geq 2\kappa \right\},
\end{equation*}
and $\tau_i=T$ if the set on the right hand side  above is empty.

Without loss of generality we may assume that $$\tau_1\leq \tau_2, \ \ \mathbb{P}\text{-a.s.}$$ and therefore, since for $i=1,2$  $$\Theta_\kappa(\ukk^{(i)},t)=0 \ \ \ \text{for} \ \ t\geq \tau_2,$$
 and $\Theta_\kappa(\cdot,t)$ is Lipschitz  with Lipschitz constant $\kappa^{-1}$, we have
\begin{align}
&\int_0^{T}\left((\Theta_\kappa(\ukk^{(1)},s)-\Theta_\kappa(\ukk^{(2)},s))\ukk^{(2)}(s)\cdot\nabla\ckk^{(2)}(s), \ckk^{(1)}(s)-\ckk^{(2)}(s)\right)ds\notag\\
&\leq\int_0^{T}\abs{(\Theta_\kappa(\ukk^{(1)},s)-\Theta_\kappa(\ukk^{(2)},s))}\abs{\ukk^{(2)}(s)}_{L^4}\abs{\nabla\ckk^{(2)}(s)}_{L^2}\abs{ \ckk^{(1)}(s)-\ckk^{(2)}(s)}_{L^4}ds\notag\\
&\leq\int_0^{T\wedge \tau_2}\abs{(\Theta_\kappa(\ukk^{(1)},s)-\Theta_\kappa(\ukk^{(2)},s))}\abs{\ukk^{(2)}(s)}_{L^4}\abs{\nabla\ckk^{(2)}(s)}_{L^2}\abs{ \ckk^{(1)}(s)-\ckk^{(2)}(s)}_{L^4}ds\notag\\
&\leq C\kappa^{-2}\int_0^{T\wedge \tau_2}\abs{\sup_{0\le s\le  {t}  }\abs{\ukk^{(1)}(s)}_\nsH-\sup_{0\le s\le  {t}  }\abs{\ukk^{(2)}(s)}_\nsH}^2\abs{\ukk^{(2)}(s)}^2_{\nsH}\abs{\nabla\ckk^{(2)}(s)}_{L^2}^2ds\notag\\
&\qquad+\int_0^{T\wedge \tau_2}\abs{ \ckk^{(1)}(s)-\ckk^{(2)}(s)}_{L^2}\abs{ \ckk^{(1)}(s)-\ckk^{(2)}(s)}_{H^1_2}ds\notag\\
&\leq C(\kappa)\sup_{0\le s\le  {T\wedge \tau_2}  }\abs{\ukk^{(1)}(s)-\ukk^{(1)}(s)}_\nsH^2\sup_{0\le s\le  {T\wedge \tau_2}  }\abs{\nabla\ckk^{(2)}(s)}_{L^2}^2\notag\\
&\qquad+\int_0^{T\wedge \tau_2}\abs{ \ckk^{(1)}(s)-\ckk^{(2)}(s)}_{L^2}\abs{ \ckk^{(1)}(s)-\ckk^{(2)}(s)}_{H^1_2}ds\notag.
\end{align}
In the last line, we have used the fact that for all $s\in[0,T\wedge \tau_2)$, $$\sup_{0\le s\le  {T\wedge \tau_2}  }\abs{\ukk^{(2)}}_\nsH<2\kappa.$$
Using the fact that $T\wedge \tau_2\leq T$, we derive that
\begin{align}
	&\int_0^{T}\left((\Theta_\kappa(\ukk^{(1)},s)-\Theta_\kappa(\ukk^{(2)},s))\ukk^{(2)}(s)\cdot\nabla\ckk^{(2)}(s), \ckk^{(1)}(s)-\ckk^{(2)}(s)\right)ds\notag\\
	&\leq C(\kappa)\sup_{0\le s\le  {T}  }\abs{\ukk^{(1)}(s)-\ukk^{(1)}(s)}_\nsH^2\sup_{0\le s\le  {T}  }\abs{\nabla\ckk^{(2)}(s)}_{L^2}^2\notag\\
	&\qquad+C\int_0^T\abs{\ckk^{(1)}(s)-\ckk^{(2)}(s)}^2_{L^2}ds+\frac{\alpha_0}{3}\int_0^T\abs{\ckk^{(1)}(s)-\ckk^{(2)}(s)}^2_{H^1_2}ds\notag.
\end{align}
We can easily see that
\begin{equation}
\int_0^T\abs{g_{\gamma_2}(\ckk^{(1)}(s)-\ckk^{(2)}(s))}^2_{L_{\text{HS}}(H_1,L^{2})}ds\leq C\int_0^T\abs{\ckk^{(1)}(s)-\ckk^{(2)}(s)}^2_{L^2}ds.\notag
\end{equation}
From these last inequalities, we derive that
\begin{align}
	&\sup_{0\le s\le  {T}  }\abs{{\ckk^{(1)}(s)-\ckk^{(2)}}(s) }^2_{L^2}+\alpha_0\int_0^T\abs{{\ckk^{(1)}(s)-\ckk^{(2)}}(s) }^2_{H^1_2}ds\notag\\
	&\leq C\int_0^T\abs{\xi_1(s)-\xi_2(s)}_{H_2^{-s^{**}}} ^2ds+ C\int_0^T\abs{\ckk^{(1)}(s)-\ckk^{(2)}(s)}^2_{L^2}ds\notag\\
	&\qquad+C(T)\sup_{0\le s\le  {T}  }\abs{\ukk^{(1)}(s)-\ukk^{(2)}(s)}_{\nsH}^2\sup_{0\le s\le  {T} }\abs{\nabla\ckk^{(1)}(s)}_{L^2}^2\label{3.31}\\
	&\qquad+C(\kappa)\sup_{0\le s\le  {T}  }\abs{\ukk^{(1)}(s)-\ukk^{(2)}(s)}_{\nsH}^2\sup_{0\le s\le  {T} }\abs{\nabla\ckk^{(2)}(s)}_{L^2}^2\notag\\
	&\qquad+2\sup_{0\le t\le  {T}  }\abs{\int_0^t(\ckk^{(1)}(s)-\ckk^{(2)}(s),g_{\gamma_2}(\ckk^{(1)}(s)-\ckk^{(2)}(s)) dW_2(s))}.\notag
\end{align}
We note that
\begin{align}
&\EE\sup_{0\le s\le  {T}  }\abs{\ukk^{(1)}(s)-\ukk^{(2)}(s)}_{\nsH}^{2\frac{m^*}{r^*}}\sup_{0\le s\le  {T} }\abs{\nabla\ckk^{(1)}(s)}_{L^2}^{2\frac{m^*}{r^*}}\notag\\
&\leq\left(\EE\sup_{0\le s\le  {T}  }\abs{\ukk^{(1)}(s)-\ukk^{(2)}(s)}_{\nsH}^{4\frac{m^*}{r^*}}\right)^\frac{1}{2}\left(\EE\sup_{0\le s\le  {T} }\abs{\nabla\ckk^{(1)}(s)}_{L^2}^{4\frac{m^*}{r^*}}\right)^\frac{1}{2}\notag\\
&\leq C(R)\left(\EE\sup_{0\le s\le  {T}  }\abs{\ukk^{(1)}(s)-\ukk^{(2)}(s)}_{\nsH}^{4\frac{m^*}{r^*}}\right)^\frac{1}{2}\notag,
\end{align}
and
\begin{equation}
\EE\sup_{0\le s\le  {T}  }\abs{\ukk^{(1)}(s)-\ukk^{(2)}(s)}_{\nsH}^{2\frac{m^*}{r^*}}\sup_{0\le s\le  {T} }\abs{\nabla\ckk^{(2)}(s)}_{L^2}^{2\frac{m^*}{r^*}}\leq C(R)\left(\EE\sup_{0\le s\le  {T}  }\abs{\ukk^{(1)}(s)-\ukk^{(2)}(s)}_{\nsH}^{4\frac{m^*}{r^*}}\right)^\frac{1}{2}\notag.
\end{equation}
By the Burkholder-Gundy-Davis inequality, we have
\begin{align}
& C\EE\sup_{0\le t\le T}\abs{\int_0^t(\ckk^{(1)}(s)-\ckk^{(2)}(s),g_{\gamma_2}(\ckk^{(1)}(s)-\ckk^{(2)}(s)) dW_2(s))}^\frac{m^*}{r^*}\notag\\
	&\leq C\EE\left(\int_0^T\abs{\ckk^{(1)}-\ckk^{(2)}}^2_{L^2}\abs{g_{\gamma_2}(\ckk^{(1)}(s)-\ckk^{(2)}(s))}^2_{L_{\text{HS}}(H_1,L^2)}ds\right)^\frac{m^*}{2r^*}\notag\\
	&\leq\frac{1}{2} \EE\sup_{0\le t\le T}\abs{\ckk^{(1)}-\ckk^{(2)}}^\frac{2m^*}{r^*}_{L^2}+C\EE\left(\int_0^T\abs{g_{\gamma_2}(\ckk(s))}^2_{L_{\text{HS}}(H_1,L^2)}ds\right)^\frac{m^*}{r^*}\notag\\
	&\leq\frac{1}{2} \EE\sup_{0\le t\le T}\abs{\ckk^{(1)}-\ckk^{(2)}}^\frac{2m^*}{r^*}_{L^2}+C\EE\int_0^T\abs{\ckk^{(1)}(s)-\ckk^{(2)}(s)}_{L^2}^\frac{2m^*}{r^*}ds.\notag
\end{align}
We note that
\begin{align}
&\left(\int_0^T\abs{\xi_1(s)-\xi_2(s)}_{H_2^{-s^{**}}} ^2ds\right)^\frac{m^*}{r^*}+\left(\int_0^T\abs{\ckk^{(1)}(s)-\ckk^{(2)}(s)}^2_{L^2}ds\right)^\frac{m^*}{r^*}\notag\\
&\leq C\left(\int_0^T\abs{\xi_1(s)-\xi_2(s)}_{H_2^{-s^{**}}} ^{m^*}ds\right)^\frac{2}{r^*}+C\int_0^T\abs{\ckk^{(1)}(s)-\ckk^{(2)}(s)}^\frac{2m^*}{r^*}_{L^2}ds\notag\\
&=C\left(\EE\abs{\xi_1-\xi_2}^{m^*}_{\mathbb{X}}\right)^\frac{2}{r^*}+C\int_0^T\abs{\ckk^{(1)}(s)-\ckk^{(2)}(s)}^\frac{2m^*}{r^*}_{L^2}ds\notag.
\end{align}
Taking the power $\frac{m^*}{r^*}$ and the expectation in the inequality (\ref{3.31}) and using these last estimate, we arrive at
\begin{align}
	&\EE\sup_{0\le s\le  {T}  }\abs{{\ckk^{(1)}-\ckk^{(2)}}(s) }^\frac{2m^*}{r^*}_{L^2}+\alpha_0^\frac{m^*}{r^*}\EE\left(\int_0^T\abs{{\ckk^{(1)}(s)-\ckk^{(2)}}(s) }^2_{H^1_2}ds\right)^\frac{m^*}{r^*}\notag\\
	&\leq C\left(\EE\abs{\xi_1-\xi_2}^{m^*}_{\mathbb{X}}\right)^\frac{2}{r^*}+	C(\kappa,R)\left(\EE\sup_{0\le s\le  {T}  }\abs{\ukk^{(1)}(s)-\ukk^{(2)}(s)}_{\nsH}^{4\frac{m^*}{r^*}}\right)^\frac{1}{2} \notag\\
&\qquad+C\EE\int_0^T\abs{\ckk^{(1)}(s)-\ckk^{(2)}(s)}^\frac{2m^*}{r^*}_{L^2}ds\notag.
\end{align}
By the Gronwall lemma we obtain the assertion.

\medskip

Next, we consider the difference $\nk^{(1)}-\nk ^{(2)}$, i.e.\ part (iii).
Applying the It\^o formula to the function $\phi(n):=|n|_{H^{-1}_2}^{2}$
and using   integration by parts gives
\begin{align*}
&\sup_{0\le t\le T}|\nk^{(1)}(t)-\nk ^{(2)}(t) |_{H^{-1}_2}^2+\frac{r_n}{2^{q-1}}\int_0^{T}\abs{\nk^{(1)}(s)-\nk ^{(2)}(s)}^{q+1}_{L^{q+1}}ds\\
&\leq2\theta\int_0^{T}(\nabla^{-1}(\xi_{1}(s)-\xi _{2}(s)),\nabla^{-1}(\nk^{(1)}(s)-\nk ^{(2)}(s)))ds\\
&\quad-2\chi\int_0^{T}(\nabla^{-1}(\nk^{(1)}(s)-\nk ^{(2)}(s)),\nabla^{-1} (\Div (\xi_1(s)\nabla \ckk ^{(1)}(s)-\xi_2(s)\nabla \ckk ^{(2)}(s)))ds\\
&\quad+2\int_0^{T}(\nabla^{-1} (\nk^{(1)}(s)-\nk ^{(2)}(s)),\nabla^{-1} ({\bf u}^{(1)} (s)\cdot \nabla \xi_1(s) -{\bf u}^{(2)} (s)\cdot \nabla \xi_2(s))) ds\\
&\quad+2\sup_{0\le t\le T}\abs{\int_0^{t}\left(\nabla^{-1} (\nk^{(1)}(s)-\nk ^{(2)}(s)),\nabla^{-1}g_{\gamma_1}(\nk^{(1)}(s)-\nk ^{(2)}(s))\,dW_1(s)\right)}\\
&\quad+ \int_0^T\abs{g_{\gamma_1}( \nk ^{(1)}(s)-\nk ^{(2)}(s))}^2_{L_{\text{HS}}(H_1,H_2^{-1})}
ds
\\
&=I_0+ I_1+I_2+I_3+I_4.
\end{align*}
%where $\nabla^{-1}:=-(-\Delta)^{1/2}$.

%\hrule
Next, we calculate $I_0$. Here, we use the embedding of $H^{-s^{**}}_{r^*}(\CO)$  into $H^{-1}_2(\CO)$.  we note that for any $\eps>0$,
\begin{align}
I_0^{\frac {m^*}{r^*}}&\leq\left( \int_0^T\abs{ \nk ^{(1)}(s)-\nk ^{(2)}(s)}_{H^{-1}_2}\abs{ \xi _{1}(s)-\xi _{2}(s)}_{H^{-1}_2}ds\right)^{\frac {m^*}{r^*}}\notag\\
&\leq C\sup_{0\le s\le T}\abs{ \nk ^{(1)}(s)-\nk ^{(2)}(s)}_{H^{-1}_2}^{\frac {m^*}{r^*}}\left( \int_0^T\abs{ \xi _{1}(s)-\xi_{2}(s)}_{H^{-s^{**}}_{r^*}}ds\right)^{\frac {m^*}{r^*}}\notag\\
&\notag\leq C(T)\sup_{0\le s\le T}\abs{ \nk ^{(1)}(s)-\nk ^{(2)}(s)}_{H^{-1}_2}^{\frac {m^*}{r^*}}\left( \int_0^T\abs{ \xi _{1}(s)-\xi_{2}(s)}^{m^*}_{H^{-s^{**}}_{r^*}}ds\right)^{\frac {1}{r^*}}\notag\\
&\notag\leq \eps\sup_{0\le s\le T}\abs{ \nk ^{(1)}(s)-\nk ^{(2)}(s)}_{H^{-1}_2}^{\frac {2m^*}{r^*}}+C(\eps,T)\left( \int_0^T\abs{ \xi _{1}(s)-\xi_{2}(s)}^{m^*}_{H^{-s^{**}}_{r^*}}ds\right)^{\frac {2}{r^*}}\notag\\
&\notag\leq \eps\sup_{0\le s\le T}\abs{ \nk ^{(1)}(s)-\nk ^{(2)}(s)}_{H^{-1}_2}^{\frac {2m^*}{r^*}}+C(\eps,T)\|\xi_1-\xi_2\|_{\mathbb{X}}^{\frac {2m^*}{r^*}}\notag.
\end{align}
This implies that
\begin{align}
\EE I_0^{\frac {m^*}{r^*}}&\leq \eps\EE\sup_{0\le s\le T}\abs{ \nk ^{(1)}(s)-\nk ^{(2)}(s)}_{H^{-1}_2}^{\frac {2m^*}{r^*}}+C(\eps,T)\EE\|\xi_1-\xi_2\|_{\mathbb{X}}^{\frac {2m^*}{r^*}}\notag\\
&\leq \eps\EE\sup_{0\le s\le T}\abs{ \nk ^{(1)}(s)-\nk ^{(2)}(s)}_{H^{-1}_2}^{\frac {2m^*}{r^*}}+C(\eps,T)\left(\EE\|\xi_1-\xi_2\|_{\mathbb{X}}^{m^*}\right)^\frac{2}{r^*}\notag.
\end{align}
Let us start with $I_1$.
In this we can write
\begin{align*}
I_1&=-2\chi\int_0^{T}(\nabla^{-1}(\nk^{(1)}(s)-\nk ^{(2)}(s)),\nabla^{-1} (\Div ((\xi_1(s)-\xi_2(s))\nabla \ckk ^{(1)}(s)))ds\\
&\qquad-2\chi\int_0^{T}(\nabla^{-1}(\nk^{(1)}(s)-\nk ^{(2)}(s)),\nabla^{-1} (\Div (\xi_2(s)\nabla (\ck^{(1)}(s)-\ck ^{(2)}(s))))ds\\
&=I_{1,1}+I_{1,2}.
\end{align*}

Let us set  for $q>3$, $s^\ast=\frac{4}{q+1}=2s^{**}$, and
$$
r=\frac{2q+2}{q},\quad r'=\frac {2(q+1)}{q+2},\quad  m=\frac {2q+2}{3}, \quad m'=\frac{2q+2}{2q-1},\quad p=2q-1,\quad p'=\frac {2q-1}{2q-2},
$$
% r'=\frac {2(q+q^2)}{4q-1}
Using duality  with $m\ge q+1$, $2\le r<q+1$, $\frac 1m+\frac 1{m'}\le 1$ and $\frac 1r+\frac 1{r'}\le 1$,
we know that

%that for any $\ep>0$ there exists a constant $C(\ep)>0$ such that
\begin{align*}
&-2\chi(\nabla^{-1}(\nk^{(1)}(s)-\nk ^{(2)}(s)),\nabla^{-1} (\Div ((\xi_1(s)-\xi_2(s))\nabla \ckk ^{(1)}(s))))\\
&=-2\chi\lk|(\Delta^{-\frac 12+\frac {s^\ast}2} (\nk^{(1)}(s)-\nk^{(2)}(s)),\Delta ^{-\frac 12-\frac {s^\ast}2}(\div(\xi_1(s)\nabla \ckk^{(1)}(s))-\div(\xi_2(s)\nabla \ckk^{(1)}(s))))\rk|\\
&\leq -2\chi\lk| \Delta^{-\frac 12+\frac {s^\ast}2} (\nk^{(1)}(s)-\nk^{(2)}(s))\rk|_{L^r}\times \lk|\Delta ^{-\frac 12-\frac {s^\ast}2}(\div(\xi_1(s)\nabla \ckk^{(1)}(s))-\div(\xi_2(s)\nabla \ckk^{(1)}(s)))\rk|_{L^{r'}}.
\end{align*}
\del{Observe, that
	$$
	\lk\| \Delta^{-\frac 12+\frac {s^\ast}2} (\nk^{(1)}-\nk^{(2)})\rk\|_{L^m(0,T;{L^r})} \le \lk\| \nk^{(1)}-\nk^{(2)}\rk\|_{L^m(0,T;H^{s^\ast-1}_r)}.
	$$}
Integration in time and the H\"older inequality gives
for $\frac 1m+\frac 1{m'}\le 1$
\begin{equation*}
I_{1,1}^{\frac{m^*}{r^*}}\le
\lk(\int_0^ {T} \lk|  \nk^{(1)}(s)-\nk^{(2)}(s)\rk|^m_{H^{{s^\ast}-1}_r}\, ds\rk)^\frac {m^*}{mr^*}
\lk(\int_0^ {T}\lk|(\xi_1(s)-\xi_2(s))\nabla \ckk^{(1)}(s)\rk|^{m'}_{H^{-{s^\ast}}_{r'}}\, ds\rk)^\frac {m^*}{m'r^*}.
\end{equation*}
In the next step we put
$s_1=-s^\ast/2$ and $s_2=\frac 2{r'}-\frac{s^\ast}2:=\frac{q}{q+1}$. We note that $s_1<s_2$, $s_2<\frac{2}{r'}$ and $s_1+s_2>\frac{4}{r'}-2$. So, we invoke \cite[P. 190, Theorem 1 (iii)]{runst} to get % Note, this gives us an estimate for $s^\ast$.
$$
\lk|(\xi_1(s)-\xi_2(s))\nabla \ckk^{(1)}(s)\rk|^{m'}_{H^{-{s^\ast}}_{r'}}\le C\, \lk|\xi_1(s)-\xi_2(s)\rk|_{H^{s_1}_{r'}}^{m'}
\lk|\nabla \ckk^{(1)}(s)\rk|^{m'}_{H^{s_2}_{r'}}.
$$

Then, we know by the Young inequality that for any $\ep>0$ there exists a constant $C(\ep)>0$ such that
\begin{equation*}
	I_{1,1}^{\frac{m^*}{r^*}}\le
	\ep\lk\|  \nk^{(1)}-\nk^{(2)}\rk\|^{\frac{rm^*}{r^*}}_{L^m(0, {T};{H^{{s^\ast}-1}_r})}+
	C(\ep)\lk(\int_0^ {T} \lk|\xi_1(s)-\xi_2(s)\rk|_{H^{s_1}_{r'}}^{m'}
	\lk|\nabla \ckk^{(1)}(s)\rk|^{m'}_{H^{s_2}_{r'}}\, ds\rk)^\frac {r'm^*}{m'r^*}.
\end{equation*}%
{Secondly, we apply Proposition \ref{interoplation_11}
	to tackle the term
	$$
	\lk\|  \nk^{(1)}-\nk^{(2)}\rk\|^{r}_{L^m(0, {\tau_{2\kappa}^1(\xi_2)};{H^{{s^\ast}-1}_r})}.
	$$
	In particular,
	since
	$$(i)\quad
	\frac 1r\ge \frac 1m+\frac{1-{s^\ast}}2
	$$
	is satisfied,
	there exists a constant $C>0$ such that
	$$
	\big\| \nk^{(1)}-\nk^{(2)}\big\|^r_{L^m(0,T;H^{s^\ast-1}_r)}\le C\lk(  \|\nk^{(1)}-\nk^{(2)}\|^2_{L^\infty(0,T;H^{-1}_2)}+ \|\nk^{(1)}-\nk^{(2)}\|^{q+1}_{L^{q+1}(0,T;L^{q+1})}\rk).
	$$
	Note, that both terms will appear at the left hand side. Thus,
	if  $\ep$ is chosen small enough, we can cancel the terms on the right hand side on the left. % by the left hand side.
}
%Take $s_1=-s^\ast$, $s_2=s^\ast+\ep$.
To tackle the  second term, i.e.\
$$
\lk(\int_0^ {T} \lk|\xi_1(s)-\xi_2(s)\rk|_{H^{s_1}_{r'}}^{m'}
\lk|\nabla \ckk^{(1)}(s)\rk|^{m'}_{H^{s_2}_{r'}}\, ds\rk)^\frac {r'}{m'}
$$
we apply first the H\"older inequality with
$\frac 1p+\frac1{p'}\le 1$ and obtain
%. The  H\"older inequality gives
\begin{align*}
I_{1,1}^{\frac{m^*}{r^*}}
&\le
\ep C \|\nk^{(1)}-\nk^{(2)}\|^\frac{2m^*}{r^*}_{L^\infty(0,T;H^{-1}_2)}+ \ep C\|\nk^{(1)}-\nk^{(2)}\|^{\frac{(q+1)m^*}{r^*}}_{L^{q+1}(0,T;L^{q+1})}\\
&\qquad+
C(\ep)\lk\|\xi_1-\xi_2\rk\|_{L^{m'p}(0,T;H^{-{s^{**}}}_{r'})}^{\frac{r'm^*}{r^*}}
\| \nabla \ckk^{(1)}\|_{L^{m'p'}(0,T;H^{s_2}_{r'})}^{\frac{r'm^*}{r^*}}.
\end{align*}

Note, that to get the stated continuity of item (i) in the norm of $\CM^{m^\ast}(0,T; \BX)$ we obtain the necessary condition
$$(ii)\quad r'\leq r^*\text{ and }
m'p=2q+2\leq m^*.
$$
However, due to our choice of $r',m',p$ and $s^\ast$, this estimate is satisfied.
Thus,  we have by the definition of $\mathbb{X}$
$$
\lk\|\xi_1-\xi_2\rk\|_{L^{m'p}(0,T;H^{-{s^{**}}/2}_{r'})}\le C\lk\|\xi_1-\xi_2\rk\|_\mathbb{X}.
$$

Since  $r'<2$ and $q>3$, we derive that  $m'p'=\frac{q+1}{q-1}<2$  as well as $L^2(0,T;H^1_2(\CO))\hookrightarrow L^{m'p'}(0,T;H^{s_2}_{2}(\CO))$ and therefore since $\frac{r'}{2-r'}=q+1$, by the H\"older inequality, we obtain
\begin{align*}
\EE I_{1,1}^{\frac{m^*}{r^*}}&\le
\ep \EE C \|\nk^{(1)}-\nk^{(2)}\|^\frac{2m^*}{r^*}_{L^\infty(0,T;H^{-1}_2)}+ \ep C\EE\|\nk^{(1)}-\nk^{(2)}\|^{\frac{(q+1)m^*}{r^*}}_{L^{q+1}(0,T;L^{q+1})}\\
&\qquad+
C(\ep)\EE\lk\|\xi_1-\xi_2\rk\|_{\mathbb{X}}^{\frac{r'm^*}{r^*}}
\|  \ckk^{(1)}\|_{L^{2}(0,T;H^{1+s_2}_2)}^{\frac{r'm^*}{r^*}}\\
&\le
\ep C \EE \|\nk^{(1)}-\nk^{(2)}\|^\frac{2m^*}{r^*}_{L^\infty(0,T;H^{-1}_2)}+ \ep C\EE\|\nk^{(1)}-\nk^{(2)}\|^{\frac{(q+1)m^*}{r^*}}_{L^{q+1}(0,T;L^{q+1})}\\
&\qquad+
C(\ep)\left(\EE\lk\|\xi_1-\xi_2\rk\|_{\mathbb{X}}^{\frac{2r'm^*}{r^*}}\right)^\frac{1}{2}
\left(\EE\|  \ckk^{(1)}\|_{L^{2}(0,T;H^{1+s_2}_2)}^{2r'\frac{m^*}{r^*}}\right)^\frac{1}{2}\\
&\le
\ep C \EE \|\nk^{(1)}-\nk^{(2)}\|^\frac{2m^*}{r^*}_{L^\infty(0,T;H^{-1}_2)}+ \ep C\EE\|\nk^{(1)}-\nk^{(2)}\|^{\frac{(q+1)m^*}{r^*}}_{L^{q+1}(0,T;L^{q+1})}\\
&\qquad+
C(\ep)\left(\EE\lk\|\xi_1-\xi_2\rk\|_{\mathbb{X}}^{m^*}\right)^\frac{r'}{r^*}
\left(\EE\|  \ckk^{(1)}\|_{L^{2}(0,T;H^{1+s_2}_2)}^{4\frac{m^*}{r^*}}\right)^\frac{r'}{4}.
\end{align*}
In the last line, we have used the fact that $r'<2$ and  $\frac{2r'}{r^*}=\frac{4(q+1)}{r^*(q+2)}<1$ since $r^*\geq q> 4$.

By the interpolation inequality, we derive that
\begin{equation}
\abs{\ckk}_{H_2^{1+s_2}}\leq C\abs{\ckk}_{L^2}^{\theta}\abs{\ckk}_{H_2^{2}}^{1-\theta}\notag,
\end{equation}
with $\theta:=\frac{1-\eps}{2}=\frac{1}{2q+2}$ and  $1-\theta=\frac{2q+1}{2q+2}$.
This implies that
\begin{align}
\EE\|  \ckk^{(1)}\|_{L^{2}(0,T;H^{1+s_2}_2)}^{4\frac{m^*}{r^*}}&\leq C\EE \sup_{0\leq s\leq T}\abs{\ckk^{(1)}(s)}_{L^2}^{\frac{2m^*}{(q+1)r^*}}\left(\int_0^T\abs{\ckk^{(1)}(s)}_{H_2^{2}}^{2}ds\right)^{\frac{(2q+1)m^*}{(q+1)r^*}}\notag\\
&\leq C\left(\EE \sup_{0\leq s\leq T}\abs{\ckk^{(1)}(s)}_{L^{2}}^{2\frac{m^*}{r^*}}\right)^{\frac{1}{q+1}}\left(\EE\left(\int_0^T\abs{\ckk^{(1)}(s)}_{H_2^{2}}^{2}ds\right)^{\frac{(2q+1)m^*}{qr^*}}\right)^{\frac{q}{q+1}}\notag\\
&\leq C\left(\EE \sup_{0\leq s\leq T}\abs{\ckk^{(1)}(s)}_{H_2^{1}}^{\frac{2m^*}{r^*}}\right)^{\frac{1}{q+1}}\left(\EE\left(\int_0^T\abs{\ckk^{(1)}(s)}_{H_2^{2}}^{2}ds\right)^{4\frac{m^*}{r^*}}\right)^{\frac{2q+1}{4q+4}}\notag\\
&\leq C(R).\notag
\end{align}
In the last line, we have used the fact that $\frac{2q+1}{q}<4$ and the Sobolev embedding $H^1_2(\CO)\hookrightarrow L^2(\CO)$.

Therefore, we can infer that
\begin{align*}
\EE I_{1,1}^{\frac{m^*}{r^*}}&\le
\ep C \EE \|\nk^{(1)}-\nk^{(2)}\|^\frac{2m^*}{r^*}_{L^\infty(0,T;H^{-1}_2)}+ \ep C\EE\|\nk^{(1)}-\nk^{(2)}\|^{\frac{(q+1)m^*}{r^*}}_{L^{q+1}(0,T;L^{q+1})}\\
&\qquad+
C(\ep,R)\left(\EE\lk\|\xi_1-\xi_2\rk\|_{\mathbb{X}}^{\frac{2m^*}{r^*}}\right)^\frac{r'}{2}\\
&\le
\ep C \EE \|\nk^{(1)}-\nk^{(2)}\|^\frac{2m^*}{r^*}_{L^\infty(0,T;H^{-1}_2)}+ \ep C\EE\|\nk^{(1)}-\nk^{(2)}\|^{\frac{(q+1)m^*}{r^*}}_{L^{q+1}(0,T;L^{q+1})}\\
&\qquad+
C(\ep,R)\left(\EE\lk\|\xi_1-\xi_2\rk\|_{\mathbb{X}}^{m^*}\right)^\frac{r'}{r^*}.
\end{align*}
\medskip
Let us analyse the term $I_{1,2}$. Reasoning in a very similar way like we have done for $I_{1,1}$, we derive that
\begin{align*}
\EE	I_{1,2}^{\frac{m^*}{r^*}}&\le
	\ep C \|\nk^{(1)}-\nk^{(2)}\|^\frac{2m^*}{r^*}_{L^\infty(0,T;H^{-1}_2)}+ \ep C\|\nk^{(1)}-\nk^{(2)}\|^{\frac{(q+1)m^*}{r^*}}_{L^{q+1}(0,T;L^{q+1})}\\
	&\qquad+
	C(\ep)\left(\EE\lk\|\xi_2\rk\|_{\mathbb{X}}^{m^*}\right)^\frac{r'}{r^*}
	\left(\EE\|  \ckk^{(1)}-\ckk^{(2)}\|_{L^{2}(0,T;H^{1+s_2}_2)}^{4\frac{m^*}{r^*}}\right)^\frac{r'}{4}\\
	&\le
	\ep C \|\nk^{(1)}-\nk^{(2)}\|^\frac{2m^*}{r^*}_{L^\infty(0,T;H^{-1}_2)}+ \ep C\|\nk^{(1)}-\nk^{(2)}\|^{\frac{(q+1)m^*}{r^*}}_{L^{q+1}(0,T;L^{q+1})}\\
	&+
	C(\ep)\left(\EE\lk\|\xi_2\rk\|_{\mathbb{X}}^{m^*}\right)^\frac{r'}{r^*}
\left(\EE \sup_{0\leq s\leq T}\abs{\ckk^{(1)}(s)-\ckk^{(2)}(s)}_{L^{2}}^{2\frac{m^*}{r^*}}\right)^{\frac{r'}{4(q+1)}}\left(\EE\left(\int_0^T\abs{\ckk^{(1)}(s)-\ckk^{(2)}(s)}_{H_2^{2}}^{2}ds\right)^{\frac{(2q+1)m^*}{qr^*}}\right)^{\frac{qr'}{4(q+1)}}\notag\\
	&\le
	\ep C \|\nk^{(1)}-\nk^{(2)}\|^\frac{2m^*}{r^*}_{L^\infty(0,T;H^{-1}_2)}+ \ep C\|\nk^{(1)}-\nk^{(2)}\|^{\frac{(q+1)m^*}{r^*}}_{L^{q+1}(0,T;L^{q+1})}\\
	&\qquad+
	C(\ep,R)	\left(\EE \sup_{0\leq s\leq T}\abs{\ckk^{(1)}(s)-\ckk^{(2)}(s)}_{L^{2}}^{2\frac{m^*}{r^*}}\right)^{\frac{qr'}{4(q+1)}}\notag.
\end{align*}
Here, we have used the fact that
\begin{align}
\EE\left(\int_0^T\abs{\ckk^{(1)}(s)-\ckk^{(2)}(s)}_{H_2^{2}}^{2}ds\right)^{\frac{(2q+1)m^*}{r^*}}\leq \left(\EE\left(\int_0^T\abs{\ckk^{(1)}(s)-\ckk^{(2)}(s)}_{H_2^{2}}^{2}ds\right)^{\frac{4m^*}{r^*}}\right)^\frac{2q+1}{4q+4}\notag,
\end{align}
and
\begin{equation}
	\EE\left(\int_0^T\abs{\ckk^{(1)}(s)-\ckk^{(2)}(s)}_{H_2^{2}}^{2}ds\right)^{\frac{4m^*}{r^*}}\leq
	C\EE\left(\int_0^T\abs{\ckk^{(1)}(s)}_{H_2^{2}}^{2}ds\right)^{4\frac{m^*}{r^*}}+C\EE\left(\int_0^T\abs{\ckk^{(2)}(s)}_{H_2^{2}}^{2}ds\right)^{4\frac{m^*}{r^*}}\leq C(R)\notag.
\end{equation}

Summarising,
we
have shown
\begin{align}
\EE	I_{1}^{\frac{m^*}{r^*}}&\le
	\ep C \|\nk^{(1)}-\nk^{(2)}\|^\frac{2m^*}{r^*}_{L^\infty(0,T;H^{-1}_2)}+ \ep C\|\nk^{(1)}-\nk^{(2)}\|^{\frac{(q+1)m^*}{r^*}}_{L^{q+1}(0,T;L^{q+1})}\notag\\
	&\qquad+C(\ep,R)\left(\EE\lk\|\xi_1-\xi_2\rk\|_{\mathbb{X}}^{m^*}\right)^\frac{r'}{r^*}+
	C(\ep,R)		\left(\EE \sup_{0\leq s\leq T}\abs{\ckk^{(1)}(s)-\ckk^{(2)}(s)}_{L^{2}}^{2\frac{m^*}{r^*}}\right)^{\frac{r'}{4(q+1)}}.
\end{align}

\medskip

%\hrule

Next, let us calculate the  difference $I_2$. In particular, the difference
can be split as follows
\DEQSZ\label{splitu}\notag
%\lqq{ \EE\lk( \int_0^ t  |\nabla^{-1} (\nk^{(1)}(s)-\nk^{(2)}(s)),\nabla^{-1}(\bu_1(s)\nabla \xi_2(s)-\bu_2(s)\nabla \xi_2(s))\ra\, ds\rk)^\frac {m^\ast}{r^\ast}}
%&&
\\
\EE I_2^{\frac{m^*}{r^*}}&\le &\notag
C\EE\lk( \int_0^ {T} |\la \nabla^{-1} (\nk^{(1)}(s)-\nk^{(2)}(s)),\nabla^{-1}(\bu^{(1)}_\kappa(s)(\nabla \xi_1(s)-\nabla \xi_2(s))\ra|\, ds\rk)^\frac {m^\ast}{r^\ast}
\\&&\notag
{}+ C
\EE\lk(\int_0^ {T}|\la \nabla^{-1} (\nk^{(1)}(s)-\nk^{(2)}(s)),\nabla^{-1}(\nabla \xi_2(s)(\bu^{(1)}_\kappa(s)-\bu^{(2)}_\kappa(s))\ra|\, ds\rk)^\frac {m^\ast}{r^\ast}
\notag
\\
&&=I_{2,1}+I_{2,2}\notag
.\EEQSZ
To tackle the first summand,  we write
\DEQS
 I_{2,1}
	&\le &
\EE  \, \lk( \int_0^{T}  |\nabla^{-1} (\nk^{(1)}(s)-\nk^{(2)}(s))|_{H^{s^\ast}_r}^m\, ds\rk)^{\frac 1m\frac {m^\ast}{r^\ast}}\lk( \int_0^{T}  |\nabla^{-1}(\bu^{(1)}_\kappa(s)(\nabla \xi_1(s)-\nabla \xi_2(s))|_{H^{-s^\ast}_{r'}}^{m'}\, ds\rk)^{\frac 1{m'}\frac {m^\ast}{r^\ast}}
\\
&\le &
\ep\EE\| \nk^{(1)}-\nk^{(2)}\|_{L^m(0,T;{H^{-(1-s^\ast )}_r})}^{r\frac {m^\ast}{r^\ast}}
\\
&&\qquad {} +C(\ep)\EE\lk( \int_0^{T}  |\bu^{(1)}_\kappa(s)(\nabla \xi_1(s)-\nabla \xi_2(s))|_{H^{-s^\ast-1}_{r'}}^{m'}\, ds\rk)^{\frac {r'}{m'}\frac {m^\ast}{r^\ast}}
.
\EEQS
Now, we set $s_1=-1-s^\ast/2$ and $s_2=1+\delta$, with $\delta:=\frac{2}{r'}-\frac{s^*}{2}=\frac{q}{q+1}$ such that $s_2<2$. Since $-1-\frac{s^*}{2}>-1-s^*$, the following embedding holds $H^{s_1}_{r'}(\CO)\hookrightarrow H^{-1-s^*}_{r'}(\CO)$. By the fact that $q>4$, we have $s_1+s_2>\frac{4}{r'}-2$, $s_2>\frac{2}{r'}$ and therefore,  applying   \cite[p. 190, Theorem 1 (i)]{runst} with $s_1$ and $s_2$ we obtain
\begin{align*}
|(\bu_1(s)\nabla \xi_2(s)-\bu_2(s)\nabla \xi_2(s))|_{H^{-s^\ast-1}_{r'}}
&\le C|(\bu_1(s)\nabla \xi_2(s)-\bu_2(s)\nabla \xi_2(s))|_{H^{s_1}_{r'}}\\
&
\le  C|\nabla \xi_1(s)-\nabla \xi_2(s)|_{H^{s_1}_{r'}} |\bu_1(s)|_{H^{s_2}_{r'}}\\
&\le  C| \xi_1(s)- \xi_2(s)|_{H^{-s^*/2}_{r'}} |\bu_1(s)|_{H^{s_2}_{r'}}
.
\end{align*}
Hence, we get  by the H\"older inequality for $p$ and $p'$  that %k=\frac 4{s^\ast}$
\DEQS
I_{2,1}&\le&
\ep C \EE\|\nk^{(1)}-\nk^{(2)}\|^\frac{2m^*}{r^*}_{L^\infty(0,T;H^{-1}_2)}+ \ep C\EE\|\nk^{(1)}-\nk^{(2)}\|^{\frac{(q+1)m^*}{r^*}}_{L^{q+1}(0,T;L^{q+1})}\\
&&\qquad {}+C(\ep)\EE
\lk\|  \bu^{(1)}_\kappa\rk\|_{L^{m'p'}(0,T;H^{s_2}_{2})}^{\frac {r'm^\ast}{r^\ast}}
\| \xi_1- \xi_2\|_{L^{m'p}(0,T;H^{-s^{**}}_{r'})}^{m^\ast\frac{r'}{r^\ast}}.
\EEQS
By the fact that  $m'p'<2$, $m'p\leq m^*$, $r'<2$,  $\frac{2r'}{r^*}=\frac{4(q+1)}{r^*(q+2)}<1$ (since $r^*\geq q\geq 4$) and $s_2<2$, we have
\begin{align}
\EE\lk\|  \bu^{(1)}_\kappa\rk\|_{L^{m'p'}(0,T;H^{s_2}_{2})}^{\frac {r'm^\ast}{r^\ast}}
\| \xi_1- \xi_2\|_{L^{m'p}(0,T;H^{-s^{**}}_{r'})}^{m^\ast\frac{r'}{r^\ast}}&\leq C\EE
\lk\|  \bu^{(1)}_\kappa\rk\|_{L^{2}(0,T;H^{2}_{2})}^{\frac {r'm^\ast}{r^\ast}}
\| \xi_1- \xi_2\|_{L^{m^*}(0,T;H^{-s^{**}}_{r'})}^{m^\ast\frac{r'}{r^\ast}}\notag\\
&\leq C\left(\EE
\lk\|  \bu^{(1)}_\kappa\rk\|_{L^{2}(0,T;H^{2}_{2})}^{\frac {2r'm^\ast}{r^\ast}}\right)^\frac{1}{2}\left(\EE
\| \xi_1- \xi_2\|_{\mathbb{X}}^{m^\ast\frac{2r'}{r^\ast}}\right)^\frac{1}{2}\notag\\
&\leq C\left(\EE
\lk\|  \bu^{(1)}_\kappa\rk\|_{L^{2}(0,T;H^{2}_{2})}^{\frac {4m^\ast}{r^\ast}}\right)^\frac{r'}{4}\left(\EE
\| \xi_1- \xi_2\|_{\mathbb{X}}^{m^\ast\frac{2r'}{r^\ast}}\right)^\frac{1}{2}\notag\\
&\leq C(R)\left(\EE
\| \xi_1- \xi_2\|_{\mathbb{X}}^{m^\ast}\right)^\frac{r'}{r^*}\notag.
\end{align}
This implies that
\begin{equation}
I_{2,1}\le
\ep C \|\nk^{(1)}-\nk^{(2)}\|^\frac{2m^*}{r^*}_{L^\infty(0,T;H^{-1}_2)}+ \ep C\|\nk^{(1)}-\nk^{(2)}\|^{\frac{(q+1)m^*}{r^*}}_{L^{q+1}(0,T;L^{q+1})}+C(\ep,R)\left(\EE
\| \xi_1- \xi_2\|_{\mathbb{X}}^{m^\ast}\right)^\frac{r'}{r^*}\notag.
\end{equation}
In a similar way like for $I_{2,1}$, we have
\DEQS
I_{2,2}&\le&
\ep C \EE\|\nk^{(1)}-\nk^{(2)}\|^\frac{2m^*}{r^*}_{L^\infty(0,T;H^{-1}_2)}+ \ep C\EE\|\nk^{(1)}-\nk^{(2)}\|^{\frac{(q+1)m^*}{r^*}}_{L^{q+1}(0,T;L^{q+1})}\\
&&\qquad {}+C(\ep)\EE
\lk\|  \bu_\kappa^{(1)}-\bu_\kappa^{(2)}\rk\|_{L^{m'p'}(0,T;H^{s_2}_{2})}^{\frac {r'm^\ast}{r^\ast}}
\|\xi_2\|_{L^{m'p}(0,T;H^{-s^{**}}_{r'})}^{m^\ast\frac{r'}{r^\ast}}\notag\\
&\le&
\ep C \EE\|\nk^{(1)}-\nk^{(2)}\|^\frac{2m^*}{r^*}_{L^\infty(0,T;H^{-1}_2)}+ \ep C\EE\|\nk^{(1)}-\nk^{(2)}\|^{\frac{(q+1)m^*}{r^*}}_{L^{q+1}(0,T;L^{q+1})}\\
&&\qquad {}+C(\ep)\left(\EE
\lk\|  \bu^{(1)}_\kappa-\bu^{(2)}_\kappa\rk\|_{L^{2}(0,T;H^{1+\delta}_{2})}^{\frac {4m^\ast}{r^\ast}}\right)^\frac{r'}{4}\left(\EE
\| \xi_2\|_{\mathbb{X}}^{m^\ast\frac{2r'}{r^\ast}}\right)^\frac{1}{2}\notag\\
&\le&
\ep C \EE\|\nk^{(1)}-\nk^{(2)}\|^\frac{2m^*}{r^*}_{L^\infty(0,T;H^{-1}_2)}+ \ep C\EE\|\nk^{(1)}-\nk^{(2)}\|^{\frac{(q+1)m^*}{r^*}}_{L^{q+1}(0,T;L^{q+1})}\\
&&\qquad {}+	C(\ep)\left(\EE\lk\|\xi_2\rk\|_{\mathbb{X}}^{m^*}\right)^\frac{r'}{r^*}
\left(\EE \abs{\bu_\kappa^{(1)}(s)-\bu_\kappa^{(2)}(s)}_{L^{2}(0,T;H^{1+\delta}_{2})}^{\frac {4m^\ast}{r^\ast}}\right)^{\frac{r'}{4}}\notag.\EEQS
By an interpolation inequality (see, e.g. \cite{runst}), we have
%\todo{reference}
\begin{align}
\int_0^T\abs{\bu_\kappa^{(1)}(s)-\bu_\kappa^{(2)}(s)}^2_{H_1^{1+\delta}}ds&\leq C\sup_{0\le s\le  {T} }\abs{\bu_\kappa^{(1)}(s)-\bu_\kappa^{(2)}(s)}_{H^{1}_2}^{2-2\delta}\int_0^T\abs{\bu_\kappa^{(1)}(s)-\bu_\kappa^{(2)}(s)}_{H^{2}_2}^{2\delta}ds\notag\\
&\leq C(T)\sup_{0\le s\le  {T} }\abs{\bu_\kappa^{(1)}(s)-\bu_\kappa^{(2)}(s)}_{\nsH}^{2-2\delta}\left(\int_0^T\abs{\bu_\kappa^{(1)}(s)-\bu_\kappa^{(2)}(s)}_{\nsV}^{2}ds\right)^\delta\notag.
\end{align}
This implies that
\begin{align}
&\EE \abs{\bu_\kappa^{(1)}(s)-\bu_\kappa^{(2)}(s)}_{L^{2}(0,T;H^{1+\delta}_{2})}^{\frac {4m^\ast}{r^\ast}}\notag\\
&\leq C \EE\sup_{0\le s\le  {T} }\abs{\bu_\kappa^{(1)}(s)-\bu_\kappa^{(2)}(s)}_{\nsH}^{\frac {4m^\ast}{(q+1)r^\ast}}\left(\int_0^T\abs{\bu_\kappa^{(1)}(s)-\bu_\kappa^{(2)}(s)}_{\nsV}^2ds\right)^{\frac {2qm^\ast}{(q+1)r^\ast}}\notag\\
&\leq C \left(\EE\sup_{0\le s\le  {T} }\abs{\bu_\kappa^{(1)}(s)-\bu_\kappa^{(2)}(s)}_{\nsH}^{\frac {4m^\ast}{r^\ast}}\right)^\frac{1}{q+1}\left(\EE\left(\int_0^T\abs{\bu_\kappa^{(1)}(s)-\bu_\kappa^{(2)}(s)}_{\nsV}^2ds\right)^{\frac {2m^\ast}{r^\ast}}\right)^\frac{q}{q+1}\notag\\
&\leq C\left(\EE\sup_{0\le s\le  {T} }\abs{\bu_\kappa^{(1)}(s)-\bu_\kappa^{(2)}(s)}_{\nsH}^{\frac {4m^\ast}{r^\ast}}\right)^\frac{1}{q+1}\left(\EE\left(\int_0^T\abs{\bu_\kappa^{(1)}(s)}_{\nsV}^2ds\right)^{\frac {2m^\ast}{r^\ast}}+\EE\left(\int_0^T\abs{\bu_\kappa^{(1)}(s)}_{\nsV}^2ds\right)^{\frac {2m^\ast}{r^\ast}}\right)^\frac{q}{q+1}\notag\\
&\leq C(R)\left(\EE\sup_{0\le s\le  {T} }\abs{\bu_\kappa^{(1)}(s)-\bu_\kappa^{(2)}(s)}_{\nsH}^{\frac {4m^\ast}{r^\ast}}\right)^\frac{1}{q+1}\notag.
\end{align}
From this, we deduce that
\begin{align}
I_{2,2}\le&
\ep C \EE\|\nk^{(1)}-\nk^{(2)}\|^\frac{2m^*}{r^*}_{L^\infty(0,T;H^{-1}_2)}+ \ep C\EE\|\nk^{(1)}-\nk^{(2)}\|^{\frac{(q+1)m^*}{r^*}}_{L^{q+1}(0,T;L^{q+1})}\\
&\qquad {}+	C(\ep,R)\left(\EE\sup_{0\le s\le  {T} }\abs{\bu_\kappa^{(1)}(s)-\bu_\kappa^{(2)}(s)}_{\nsH}^{\frac {4m^\ast}{r^\ast}}\right)^\frac{r'}{4q+4}\notag.
\end{align}
In sum, we have shown that
\begin{align}
	\EE	I_{2}^{\frac{m^*}{r^*}}&\le
	\ep C \|\nk^{(1)}-\nk^{(2)}\|^\frac{2m^*}{r^*}_{L^\infty(0,T;H^{-1}_2)}+ \ep C\|\nk^{(1)}-\nk^{(2)}\|^{\frac{(q+1)m^*}{r^*}}_{L^{q+1}(0,T;L^{q+1})}\notag\\
	&\qquad+C(\ep,R)\left(\EE\lk\|\xi_1-\xi_2\rk\|_{\mathbb{X}}^{m^*}\right)^\frac{r'}{r^*}+
	C(\ep,R)	\left(\EE \sup_{0\leq s\leq T}\abs{\bu_\kappa^{(1)}(s)-\bu_\kappa^{(2)}(s)}_{\nsH}^{4\frac{m^*}{r^*}}\right)^{\frac{r'}{4q+4}}.
\end{align}
By the Burkholder-Gundy-Davis inequality, we have
\begin{align}
		\EE	I_{3}^{\frac{m^*}{r^*}}&\leq C\EE\sup_{0\le t\le T}\abs{\int_0^t(\nk^{(1)}(s)-\nk^{(2)}(s),g_{\gamma_1}(\nk^{(1)}(s)-\nk^{(2)}(s)) dW_1(s))_{H_2^{-1}}}^\frac{m^*}{r^*}\notag\\
	&\leq C\EE\left(\int_0^T\abs{\nk^{(1)}-\nk^{(2)}}^2_{H^{-1}_2}\abs{g_{\gamma_1}(\nk^{(1)}(s)-\nk^{(2)}(s))}^2_{L_{\text{HS}}(H_1,H_2^{-1})}ds\right)^\frac{m^*}{2r^*}\notag\\
		&\leq\ep \EE\sup_{0\le t\le T}\abs{\nk^{(1)}-\nk^{(2)}}^\frac{2m^*}{r^*}_{H^{-1}_2}+C(\ep)\EE\left(\int_0^T\abs{g_{\gamma_1}(\nk(s))}^2_{L_{\text{HS}}(H_1,H_2^{-1})}ds\right)^\frac{m^*}{r^*}\notag\\
	&\leq\ep \EE\sup_{0\le t\le T}\abs{\nk^{(1)}-\nk^{(2)}}^\frac{2m^*}{r^*}_{H^{-1}_2}+C(\ep)\EE\int_0^T\abs{\nk^{(1)}(s)-\nk^{(2)}(s)}_{H^{-1}_2}^\frac{2m^*}{r^*}ds.\notag
\end{align}
By the H\"older inequality, we have
\begin{equation*}
\EE	I_{4}^{\frac{m^*}{r^*}}\leq C(m^*,r^*,T)\EE\int_0^T\abs{\nk^{(1)}(s)-\nk^{(2)}(s)}_{H^{-1}_2}^\frac{2m^*}{r^*}ds.
\end{equation*}

\medskip

Collecting altogether we get
\begin{align}
&\EE\sup_{0\le t\le T}|\nk^{(1)}(t)-\nk ^{(2)}(t) |_{H^{-1}_2}^\frac{2m^*}{r^*}+\left(\frac{r_n}{2^{q-1}}\right)^\frac{m^*}{r^*}\EE\left(\int_0^{T}\abs{\nk^{(1)}(s)-\nk ^{(2)}(s)}^{q+1}_{L^{q+1}}ds\right)^\frac{m^*}{r^*}\notag\\
&\le
\ep C \|\nk^{(1)}-\nk^{(2)}\|^\frac{2m^*}{r^*}_{L^\infty(0,T;H^{-1}_2)}+ \ep C\|\nk^{(1)}-\nk^{(2)}\|^{\frac{(q+1)m^*}{r^*}}_{L^{q+1}(0,T;L^{q+1})}\notag\\
&\qquad+C(\eps,T)\left(\EE\|\xi_1-\xi_2\|_{\mathbb{X}}^{m^*}\right)^\frac{2}{r^*}+C(\ep,R)\left(\EE\lk\|\xi_1-\xi_2\rk\|_{\mathbb{X}}^{m^*}\right)^\frac{r'}{r^*}\notag\\
&\qquad+
C(\ep,R)	\left(\EE \sup_{0\leq s\leq T}\abs{\ckk^{(1)}(s)-\ckk^{(2)}(s)}_{H_2^{1}}^{4\frac{m^*}{r^*}}\right)^{\frac{2-r'}{4}}+
C(\ep,R)	\left(\EE \sup_{0\leq s\leq T}\abs{\bu_\kappa^{(1)}(s)-\bu_\kappa^{(2)}(s)}_{\nsH}^{4\frac{m^*}{r^*}}\right)^{\frac{r'}{4q+4}}\notag\\
&\qquad+C(\ep,m^*,r^*,T)\EE\int_0^T\abs{\nk^{(1)}(s)-\nk^{(2)}(s)}_{H^{-1}_2}^\frac{2m^*}{r^*}ds\notag.
\end{align}
Taking $\ep<\mu:=\min(1, C^{-1}\left(\frac{r_n}{2^{q-1}}\right)^\frac{m^*}{r^*})$ the term can be cancelled by the left hand side and we obtain
\begin{align}
	&\EE\sup_{0\le t\le T}|\nk^{(1)}(t)-\nk ^{(2)}(t) |_{H^{-1}_2}^\frac{2m^*}{r^*}+\EE\left(\int_0^{T}\abs{\nk^{(1)}(s)-\nk ^{(2)}(s)}^{q+1}_{L^{q+1}}ds\right)^\frac{m^*}{r^*}\notag\\
	&\le
	C(\eps,\mu,T)\left(\EE\|\xi_1-\xi_2\|_{\mathbb{X}}^{m^*}\right)^\frac{2}{r^*}+C(\ep,\mu,R)\left(\EE\lk\|\xi_1-\xi_2\rk\|_{\mathbb{X}}^{m^*}\right)^\frac{r'}{r^*}\notag\\
	&\qquad+
	C(\ep,\mu,R)	\left(\EE \sup_{0\leq s\leq T}\abs{c_\kappa^{(1)}(s)-c_\kappa^{(2)}(s)}_{L^{2}}^{2\frac{m^*}{r^*}}\right)^{\frac{qr'}{4(q+1)}}+
	C(\ep,\mu,R)		\left(\EE \sup_{0\leq s\leq T}\abs{\bu_\kappa^{(1)}(s)-\bu_\kappa^{(2)}(s)}_{\nsH}^{4\frac{m^*}{r^*}}\right)^{\frac{r'}{4q+4}}\notag\\
	&\qquad+C(\ep,m^*,r^*,\mu,T)\EE\int_0^T\abs{\nk^{(1)}(s)-\nk^{(2)}(s)}_{H^{-1}_2}^\frac{2m^*}{r^*}ds\notag.
\end{align}
Applying the Gronwall inequality, we obtain
 the assertion.

%\hrulefill

\end{proof}
%\todo{Formulate it better}
%\begin{remark}\label{cutoffcontinuity}
%In this step, the continuity of the cut-off functions $h$ is essential.
%Here, the continuity of the cut-off function $h$ follows by the definition of $h$.
%\end{remark}

\textbf{Verification of condition  \eqref{metacompact} of Theorem \ref{meta}: }
In this paragraph, we will verify the compactness of the operator $\CV_{\MA}^\kappa $ restricted to $\CX_\MA(R)$, i.e.\ assumption \eqref{metacompact} of Theorem \ref{meta}.
 Here we use the characterisation from Dubinsky, see  Theorem IV.1 in Chapter  $4$  of \cite{Vishik1988}. For this aim, we first recall from Lemma 2.1 in \cite{Vishik1988} the following useful result where the proof is given in Appendix A of \cite{Braukhoff2023}.
 \begin{lemma}\label{lem3.8}
 Let $g\in L^1(0,T;\mathbb{R})$ and $\delta<2$, $\delta\neq 1$. Then
 \begin{equation*}
 \int_0^T\int_0^T\abs{t-s}^{-\delta}\int_{t\wedge s}^{t\vee s}g(r)\,dr\,dt\,ds<\infty,
 \end{equation*}
where $t\vee s=\max(t,s)$ and $t\wedge s=\min(t,s)$.
 \end{lemma}
Next we fix $0<s_0<s^{**}$. Applying Theorem 2 of \cite[p. 82]{runst}  with $\ep=s^{**}-s_0$, we derive that\footnote{The symbol $\dhookrightarrow$    means the compact embedding.}
$$
H_{r^*}^{-s_0}(\CO)\dhookrightarrow H_{r^*}^{-s^{**}}(\CO).
$$
Since  $H_{r^*}^{-s^{**}}(\CO)\hookrightarrow H_{2}^{-1}(\CO)$,  we infer that 
 $$
 H_{r^*}^{-s_0}(\CO)\dhookrightarrow H_{r^*}^{-s^{**}}(\CO)\hookrightarrow H_{2}^{-1}(\CO),
 $$
and   by the Dubinsky theorem, the following embedding holds,
$$
L^{m^*}(0,T;H_{r^*}^{-s_0}(\CO))\cap C^{0,\frac{1}{12}}(0,T;H_{2}^{-1}(\CO))\dhookrightarrow
 L^{m^*}(0,T;H_{r^*}^{-s^{**}}(\CO)).
$$
Now, we prove the following Claim and infer that condition  \eqref{metacompact} of Theorem \ref{meta}  holds by setting $\X'=L^{m^*}(0,T;H_{r^*}^{-s_0}(\CO))\cap C^{0,\frac{1}{12}}(0,T;H_{2}^{-1}(\CO))$ and $m_0=4$.
\begin{claim}\label{claim3.9}
 For  any $R>0$, $\kappa\in \mathbb{N}$ and $\xi\in  \mathcal{X}(R) $,  there exists a  constant $C(\kappa, R)>0$ such that
\DEQS
(i)\quad \EE\abss{n_\kappa}^{4}_{L^{m^\ast}(0,T;H_{r^*}^{-s_0})}\le C \quad \mbox{and}\quad (ii) \quad  \EE\|n_\kappa\|_{C^{0,\frac{1}{12}}(0,T;H_{2}^{-1})}^{4}\le C.
\EEQS
where $n_\kappa:=\mathcal{V}_\MA^\kappa(\xi)$.
\end{claim}
\begin{proof}
Note, that since $s_0<s^{**}$, we have
$$
\frac 1{r^*}\ge  \frac 1{m^\ast}+ \frac {s^{**}}2>\frac 1{m^\ast}+ \frac {s_0}2.
$$
Applying Proposition \ref{interoplation_11}, we derive  the existence of  a constant $C>0$ such that
$$\|n_\kappa\|^{r^*}_{L^{m^\ast}(0,T;H_{r^*}^{-s_0})}\le C\lk( \|n_\kappa\|^{2}_{L^{\infty}(0,T;H^{-1}_2)}+ \|n_\kappa\|^{q+1}_{L^{q+1}(0,T;L^{q+1})}\rk).
$$
In addition, since $m^*>2$, due to the estimate (\ref{3.12*}), there exists some $C(\kappa,R)>0$ such that
\begin{align}
\EE|n_\kappa|^{4}_{L^{m^\ast}(0,T;H_{r^*}^{-s_0})}&\leq C \EE\|n_\kappa\|^{\frac{8}{r^*}}_{L^{\infty}(0,T;H^{-1}_2)}+C \EE\|n_\kappa\|^{(q+1)\frac{4}{r^*}}_{L^{q+1}(0,T;L^{q+1})}\notag\\
&\leq C\left( \EE\|n_\kappa\|^{4\frac{m^*}{r^*}}_{L^{\infty}(0,T;H^{-1}_2)}\right)^\frac{2}{m^*}+C \left(\EE\|n_\kappa\|^{2(q+1)\frac{m^*}{r^*}}_{L^{q+1}(0,T;L^{q+1})}\right)^\frac{2}{m^*}\notag\\
&\leq C(\kappa,R).\notag
\end{align}
This gives the compact containment  condition (i).

To prove the  estimate (ii),  since $\frac{1}{3}\times 4>1$ and $\frac{1}{12}=\frac{1}{3}-\frac{1}{4}$, we derive from \cite[Page 372 and 376]{Flandoli} the following embedding
\begin{equation}
W^{\frac{1}{3},4}(0,T;H_{2}^{-1}(\CO))\hookrightarrow C^{0,\frac{1}{12}}(0,T;H_{2}^{-1}(\CO)).\label{3.34}
\end{equation}
%By the fact that $H^2_2(\CO)\hookrightarrow L^{q+1}(\CO)$, we infer that $L^\frac{q+1}{q}(\CO)\hookrightarrow H^{-2}_2(\CO)$ and from  embedding \eqref{3.34}, we get 
%\begin{equation}
%	W^{\frac{1}{3},4}(0,T;L^\frac{q+1}{q}(\CO))\hookrightarrow W^{\frac{1}{3},4}(0,T;H_{2}^{-2}(\CO))\hookrightarrow C^{0,\frac{1}{12}}(0,T;H_{2}^{-2}(\CO)).\notag
%\end{equation}
This implies that 
\begin{equation}
 \EE|n_\kappa|^{4}_{C^{0,\frac{1}{12}}(0,T;H_{2}^{-1})}\le C  \EE\lk\| \nk\rk\|^{4}_{ W^ {\frac{1}{3},4}(0,T ;H_{2}^{-1})}.
\end{equation}
Since $m^*>r^*$, due to the H\"older inequality and  the estimate (\ref{3.12*}), we note that
\begin{align}
 \EE\lk\| \nk\rk\|^{4}_{ W^ {\frac{1}{3},4}(0,T ;H_{2}^{-1})}&:=\EE\int_0^T\abs{\nk(s)}_{H_{2}^{-1}}^{4}ds+ \EE\int_0^T \int_0^T \,{|\nk(t)-\nk(s)|_{H_{2}^{-1}} ^ {4}\over |t-s| ^ {1+\frac{4}{3}}}\,ds\,dt\notag\\
&\leq CT\EE\sup_{0\le s\le  {T} }\abs{\nk(s)}_{H_{2}^{-1}}^{4}ds+ \EE\int_0^T \int_0^T \,{|\nk(t)-\nk(s)|_{H_{2}^{-1}} ^ {4}\over |t-s| ^ {\frac{7}{3}}}\,ds\,dt\notag\\
%&\leq C\EE\left(\int_0^T\abs{\nk(s)}_{H_{2}^{-1}}^{q+1}ds\right)^\frac{4}{q+1}+ \int_0^T \int_0^T \,{\EE|\nk(t)-\nk(s)|_{L^\frac{q+1}{q}} ^ {4}\over |t-s| ^ {\frac{7}{3}}}\,ds\,dt\notag\\
&\leq CT\left(\EE\sup_{0\le s\le  {T} }\int_0^T\abs{\nk(s)}_{H_{2}^{-1}}^{4\frac{m^*}{r^*}}ds\right)^\frac{r^*}{m^*}+ \int_0^T \int_0^T \,{\EE|\nk(t)-\nk(s)|_{H_{2}^{-1}} ^ {4}\over |t-s| ^ {\frac{7}{3}}}\,ds\,dt\notag\\
&\leq C(\kappa,R)+ J\notag.
\end{align}
From the equation (\ref{sysn}), we derive that for any $t, s\in [0,T]$,
\begin{align*}
\EE|\nk(t)-\nk(s)|_{H_{2}^{-1}} ^ {4}&\leq C\EE\left(\int_{t\wedge s}^{t\vee s}\abs{\Delta   |n_\kappa(r)|^{q-1}n_\kappa(r)}_{H_{2}^{-1}} dr\right)^4+C\EE\left(\int_{t\wedge s}^{t\vee s}\abs{\xi(r)}_{H_{2}^{-1}}dr\right)^4\\
&\quad+ C\EE \left(\int_{t\wedge s}^{t\vee s}\abs{\Div( \xi(r) \nabla \ckk(r))}_{H_{2}^{-1}} dr\right)^4 +C\EE\left(\int_{t\wedge s}^{t\vee s}\abs{\ukk(r)\cdot\nabla \xi(r)}_{H_{2}^{-1}}dr\right)^4\\
&\quad+C\EE\abs{\int_{t\wedge s}^{t\vee s}g_{\gamma_3}(\nk(r))  dW_1(r)}^4_{H_{2}^{-1}}.
\end{align*}
By using the fact that $H^1_2(\CO)\hookrightarrow L^{q+1}(\CO)$, we derive that $L^\frac{q+1}{q}(\CO)\hookrightarrow H^{-1}_2(\CO)$ and therefore, 
$$\abs{\Delta   |n_\kappa|^{q-1}n_\kappa}_{H_{2}^{-1}} \leq C \abs{\Delta   |n_\kappa|^{q-1}n_\kappa}_{L^\frac{q+1}{q}} .$$
We recall that  the  duality $_{L^{\frac{q+1}{q}}}\langle \cdot, \cdot \rangle_{L^{q+1}}$ is given by the equality (\ref{3.21}).  So, for any $w\in L^{q+1}(\CO)$,
\begin{align*}
_{L^{\frac{q+1}{q}}}\langle \Delta   |n_\kappa|^{q-1}n_\kappa, w \rangle_{L^{q+1}}&=\int_\CO|n_\kappa(x)|^{q-1}n_\kappa(x) w(x)dx\\
&\leq \int_\CO|n_\kappa(x)|^{q} \abs{w(x)}dx\\
&\leq \abs{\nk}_{L^{q+1}}\abs{w}_{L^{q+1}}.
\end{align*}
This implies that
\begin{align}
\EE\left(\int_{t\wedge s}^{t\vee s}\abs{\Delta   |n_\kappa(r)|^{q-1}n_\kappa(r)}_{H^{-1}_2} dr\right)^4&\leq C \EE\left(\int_{t\wedge s}^{t\vee s}\abs{n_\kappa(r)}_{L^{q+1}} dr\right)^4\notag\\
&\leq C\abs{t-s}^2\EE\left(\int_{t\wedge s}^{t\vee s}\abs{n_\kappa(r)}^2_{L^{q+1}} dr\right)^2\notag\\
&\leq C\abs{t-s}^2\EE\left(\int_{0}^{T}\abs{n_\kappa(s)}^{q+1}_{L^{q+1}} ds\right)^\frac{4}{q+1}\notag\\
&\leq C\abs{t-s}^2\left(\EE\left(\int_{0}^{T}\abs{n_\kappa(s)}^{q+1}_{L^{q+1}} ds\right)^\frac{2m^*}{r^*}\right)^\frac{2r^*}{(q+1)m^*}\notag\\
&\leq C(k,R)\abs{t-s}^2.\notag
\end{align}
For the two last lines, we have used  the H\"older inequality (since $\frac{(q+1)m^*}{2r^*}>1$)  and the estimate (\ref{3.12*}).
Using the embeddings  $L^{q+1}(\CO)\hookrightarrow L^\frac{q+1}{q}(\CO)\hookrightarrow H^{-1}_2(\CO)$ and the H\"older inequality, we derive that
\begin{align*}
\EE\left(\int_{t\wedge s}^{t\vee s}\abs{\xi(r)}_{H^{-1}_2}dr\right)^4&\leq C\abs{t-s}^2\EE\left(\int_{t\wedge s}^{t\vee s}\abs{\xi(r)}^2_{L^{q+1}}dr\right)^2\notag\\
&\leq C\abs{t-s}^2\EE\left(\int_{0}^{T}\abs{\xi(r)}^{q+1}_{L^{q+1}}dr\right)^\frac{4}{q+1}\\
&\leq C\abs{t-s}^2\left(\EE\left(\int_{0}^{T}\abs{\xi(s)}^{q+1}_{L^{q+1}} ds\right)^\frac{2m^*}{r^*}\right)^\frac{2r^*}{(q+1)m^*}\\
&\leq C(k,R)\abs{t-s}^2.\notag
\end{align*}
%\todo{Sobolev, then interpolation. Please, write it down. what is dimension}
Interpolating $L^4(\CO)$ between $H^1_2(\CO)$ and $H^2_2(\CO)$ in two dimensional cases, and using the embedding of $L^{q+1}(\CO)$ into $L^4(\CO)$, we infer that 
\begin{align*}
\abs{\Div( \xi \nabla \ckk)}_{H^{-1}_2} &\leq C \abs{\xi}_{L^4}\abs{\nabla c}_{L^4} \\
&\leq C \abs{\xi}_{L^{q+1}}\abs{\ck}_{H^1_2}^\frac{1}{2}\abs{\ck}^\frac{1}{2}_{H_2^2} \\
&\leq C \abs{\xi}_{L^{q+1}}^2+C\abs{\ck}\abs{\ck}_{H_2^2}.
\end{align*}
This implies that
\begin{align}
&\EE \left(\int_{t\wedge s}^{t\vee s}\abs{\Div( \xi(r) \nabla \ckk(r))}_{H^{-1}_2} dr\right)^4 \notag\\
&\leq C\EE \left(\int_{t\wedge s}^{t\vee s}\abs{\xi(r)}^2_{L^{q+1}}dr\right)^4 +C\EE \left(\int_{t\wedge s}^{t\vee s}\abs{\ck(r)}_{H^1_2}\abs{\ck(r)}_{H_2^2}dr\right)^4 \notag\\
&\leq C\abs{t-s}^2\EE \left(\int_{t\wedge s}^{t\vee s}\abs{\xi(r)}^4_{L^{q+1}}dr\right)^2 +C\abs{t-s}^2\EE \left(\int_{t\wedge s}^{t\vee s}\abs{\ck(r)}_{H^1_2}^2\abs{\ck(r)}_{H_2^2}^2dr\right)^2\notag\\
&\leq C\abs{t-s}^2\EE \left(\int_{0}^{T}\abs{\xi(r)}^{q+1}_{L^{q+1}}dr\right)^\frac{8 }{q+1}+C\abs{t-s}^2\EE \sup_{0\le s\le  {T}  }\abs{\ck(s)}_{H^1_2}^4\left(\int_{0}^{T}\abs{\ck(r)}^2_{H_2^2}dr\right)^2 \notag\\
&\leq C\abs{t-s}^2\left(\EE\left(\int_{0}^{T}\abs{\xi(s)}^{q+1}_{L^{q+1}} ds\right)^\frac{2m^*}{r^*}\right)^\frac{4r^*}{(q+1)m^*}+C\abs{t-s}^2\left(\EE \left(\int_{0}^{T}\abs{\ck(s)}^2_{H_2^2}dr\right)^{4\frac{m^*}{r^*}} \right)^\frac{r^*}{m^*}\notag\\
&\qquad+C\abs{t-s}^2\left(\EE \sup_{0\le s\le  {T}  }\abs{\ck(s)}_{H^1_2}^{8\frac{m^*}{r^*}} \right)^\frac{r^*}{m^*}\notag\\
&\leq C(\kappa, R)\abs{t-s}^2,\notag
\end{align}
where we have used the fact that $q+1>4$, the H\"older inequality and the estimates (\ref{3.12*}) and (\ref{3.12}).
In a similar way, after an integration-by-part, we arrive at
\begin{equation*}
\abs{\ukk\cdot\nabla \xi}_{H^{-1}_2}\leq C\abs{\ukk}_{\mathbf{L}^4}\abs{\xi}_{L^4}\leq C\abs{\ukk}_{\nsH}\abs{\xi}_{L^{q+1}}\leq C\abs{\ukk}_{\nsH}^2+C\abs{\xi}^2_{L^{q+1}},
\end{equation*}
which implies that 
\begin{align*}
&\EE\left(\int_{t\wedge s}^{t\vee s}\abs{\ukk(r)\cdot\nabla \xi(r)}_{H^{-1}_2}dr\right)^4\\
&\leq C\EE\left(\int_{t\wedge s}^{t\vee s}\abs{\ukk(r)}_{\nsH}^2dr\right)^4+C\EE\left(\int_{t\wedge s}^{t\vee s}C\abs{\xi(r)}^2_{L^{q+1}}dr\right)^4\notag\\
&\leq C\abs{t-s}^2\EE\left(\int_{t\wedge s}^{t\vee s}\abs{\ukk(r)}_{\nsH}^4dr\right)^2+C\abs{t-s}^2\EE\left(\int_{t\wedge s}^{t\vee s}C\abs{\xi(r)}^4_{L^{q+1}}dr\right)^2\notag\\
&\leq C\abs{t-s}^2\EE\sup_{0\le s\le  {T} } \abs{\ukk(r)}_{\nsH}^8+C\abs{t-s}^2\EE \left(\int_{0}^{T}\abs{\xi(r)}^{q+1}_{L^{q+1}}dr\right)^\frac{8 }{q+1}\\
&\leq C\abs{t-s}^2\left(\EE\sup_{0\le s\le  {T} } \abs{\ukk(r)}_{\nsH}^{8\frac{m^*}{r^*}}\right)^\frac{r^*}{m^*}+C\abs{t-s}^2\left(\EE\left(\int_{0}^{T}\abs{\xi(s)}^{q+1}_{L^{q+1}} ds\right)^\frac{2m^*}{r^*}\right)^\frac{4r^*}{(q+1)m^*}\\
&\leq C(\kappa, R)\abs{t-s}^2,
\end{align*}
where the estimates (\ref{3.12*}) and (\ref{3.14*}) have been used.
By the Burkholder-Gundy-Davis inequality and inequality \eqref{eq:H-1-2q}, we derive that
\begin{align*}
\EE\abs{\int_{t\wedge s}^{t\vee s}g_{\gamma_3}(\nk(r))  dW_1(r)}^4_{H^{-1}_2}
&\leq C\EE\left(\int_{t\wedge s}^{t\vee s}\abs{g_{\gamma_3}(\nk(r))}^2_{L_{HS}(H_1,H^{-1}_2)}dr\right)^2\\
&\leq C\abs{t-s}^2\EE\int_{t\wedge s}^{t\vee s}\abs{g_{\gamma_3}(\nk(r))}^4_{L_{HS}(H_1,H^{-1}_2)}dr\\
&\leq C\abs{t-s}^2\EE\sup_{0\le s\le  {T}  }\abs{\nk(r)}^4_{H^{-1}_2}\\
&\leq C\abs{t-s}^2\left(\EE\sup_{0\le s\le  {T}  }\abs{\nk(r)}^{4\frac{m^*}{r^*}}_{H^{-1}_2}\right)^\frac{r^*}{m^*}\\
&\leq C(\kappa, R)\abs{t-s}^2.
\end{align*}
From these inequalities, we infer that
\begin{equation}
\EE|\nk(t)-\nk(s)|_{H^{-1}_2} ^ {4}\leq C(\kappa, R)\abs{t-s}^2,\label{3.36}
\end{equation}
and therefore by the fact that $\abs{t-s}=\int_{t\wedge s}^{t\vee s}dr$, we arrive at
\begin{align*}
J&\leq C(\kappa, R)\int_0^T \int_0^T| t-s| ^ {-\frac{1}{3}}\,ds\,dt=C(\kappa, R)\int_0^T \int_0^T| t-s| ^ {-\frac{4}{3}}\int_{t\wedge s}^{t\vee s}drds\,dt<\infty.
\end{align*}
Here we have applied Lemma \ref{lem3.8} with $\delta=\frac{4}{3}$ and  $g(r)=1$ for all $r\in [0,T]$. Since $$ \EE\lk\| \nk\rk\|^{4}_{ W^ {\frac{1}{3},4}(0,T ;H^{-1}_2)}\leq C(\kappa, R)+J,$$ then the condition (ii) holds and the claim is proven showing that condition  \eqref{metacompact} is satisfied.
\end{proof}
%Now, the precompactness follows by the Prohorov Lemma and the Chebyscheff inequality.

\textbf{Verification of assumption \eqref{metauniformintegrability} of Theorem \ref{meta}:}
Here we will find $m_1>m^*$ such that 	$\EE\|\CV_{\MA}^\kappa (\xi)\|_{\BX}^{m_1}\le C(\kk,R)$ for all $\xi\in\Xcal^\kappa_{\Afrak}(R)$. For this aim, let us set $m_1=2m^*$.  Since $2m^\ast>q+1$, $2<r^\ast<q+1$ and $s^{\ast\ast}\in (0,1)$, we have $\frac{2m^\ast}{r^\ast}>1$. By applying Proposition \ref{interoplation_11} and  invoking the inequality \eqref{3.12*},  we get  for any $\xi\in\Xcal^\kappa_{\Afrak}(R)$
\begin{equation*}
	\EE\|\nk(\xi)\|_{\BX}^{2m^\ast}=	\EE\left(\|\nk(\xi)\|_{\BX}^{r^*}\right)^{2\frac{m^*}{r^*}}\leq C \EE\sup_{0\le s\le T}|\nk(\xi)(s)|_{H^{-1}_2}^{4\frac{m^*}{r^*}}+ C\EE\left(\int_0^T |\nk(\xi)(s)|_{L^{q+1}}^{q+1}\, ds \right)^{2\frac{m^*}{r^*}}\leq C(\kk,R).
\end{equation*}

It remains to verify assumption \eqref{metacontinuityw} which is the path-wise continuity of $\nk$, $c_\kappa$ and $\ukk$.

%\todo{It is not clear to me. We do not work here with the dual space. However, here I have to check if we need equaicontinuity with respect to $\kappa$.}
\textbf{Verification of condition \eqref{metacontinuityw} of Theorem \ref{meta}:}   Since the inequality (\ref{3.36})  holds, by the Kolmogorov Continuity Theorem,
 $\PP$-a.s.\  $\nk\in C(0,T;H^{-1}_2(\CO))$. In the same way we can also prove that   $\PP$-a.s.\  $\ck\in C(0,T;L^2(\CO))$ and   $\ukk\in C(0,T;\mathbf{L}^2(\CO))$ and the condition \eqref{metacontinuityw} follows.

\textbf{Verification of the  inequality \eqref{metauniform}:} Here, we will prove  that inequality \eqref{metauniform} holds. To be more precise, assuming that $(\nk,\ck,\ukk)$ is a fixed point  of system \eqref{a1}, we will  give a uniform bound independently on $\kappa$. Then, we will consider  the following system
\DEQSZ %\begin{align}
\label{a2}
\lk\{\barray
d{n}_\kappa(t) & =& \lk(r_n\Delta   |n_\kappa(t)|^{q-1}n_\kappa(t)+\theta \nk(t)- \chi \Div( \nk(t) \nabla \ckk(t))\rk)\, dt
\\ && {}+ %\Theta^1_\kappa(\xi,\ckk,\bu_\kappa,t)
\bu_\kappa (t) \cdot\nabla \nk(t)\, dt+g_{\gamma_1}(n_\kappa(t))  dW_1(t),\phantom{\Big|}
\\ %\label{a2}
d{\ckk}(t)& =&\lk( r_c \Delta \ckk(t)  -\alpha \ckk(t)+ \beta %\Theta^1_\kappa(\xi,\ckk,\bu_\kappa,t)
\nk(t) \rk)\, dt
\\ &&{}+ \Theta_\kappa(\bu_\kappa,t)\bu_\kappa (t) \cdot\nabla \ckk(t)\, dt+ g_{\gamma_2}(\ckk(t)) dW_2(t),\phantom{\Big|}
\\ %\label{a3}
d \bu _\kappa(t)&=&r_{\bu} \Delta \bu_\kappa(t)
dt+\nk(t)\star\Phi dt+ \sigma _{\gamma_3} dW_3(t),\earray\rk.
%\mbox{and Navier Stokes (Dirchlet boundary condition}
\EEQSZ
and prove the following claim.
\begin{claim}\label{cl1}
For any $p\in [1,\frac{2m^*}{r^*}]$, there exist a constant $C=C(p,q,T)$ such that for all $\kappa\in \mathbb{N}$,
\begin{align}
	&	\EE\sup_{0\le s\le  {T} }	\abs{\ukk(s)}^{2p}_\nsH+\frac{1}{2}\EE\sup_{0\le s\le T}	\abs{c_k(s)}^{2p}_{L^2}+\frac{1}{2}\EE\sup_{0\le s\le  {T}  }|\nk(s)|_{H^{-1}_2}^{2p}\notag\\
	&+r^{p}_n\left(\int_0^{T} \abs{\nk(s)}^{q+1}_{L^{q+1}} ds\right)^p+\min(r_c^p,\alpha^p)\EE\left(\int_0^T\abs{ \ckk(s)}^2_{H^1_2}ds\right)^p+r^p_u\EE\left(\int_0^T\abs{\ukk(s)}^2_\nsV ds\right)^p\notag\\
	&	\leq C \left(\EE\abs{\bu_0}^{2p}_\nsH+\EE\abs{c_0}^{2p}_{L^2}+\EE|n_0|_{H^{-1}_2}^{2p}+1\right).\notag
\end{align}
\end{claim}
\begin{proof}
By the  It\^o formula to the process $t\longmapsto\abs{\ukk(t)}^2_\nsH$ we can derive that
\begin{align}
		\sup_{0\le s\le  {T} }	\abs{\ukk(s)}^{2}_\nsH+r_u\int_0^T\abs{\ukk(s)}^2_\nsV ds
	&\leq\abs{\bu_0}^{2}_\nsH+ C(\delta_1,\delta_2)\int_0^T\abs{\nk(s)}^2_{H^{-1}_2}ds+T\abs{\sigma_{\gamma_3}}^2_{L_{HS}(H,\nsH)}\notag\\
	&+ C\int_0^T\abs{\ukk(s)}^2_{\nsH}ds+C\EE\sup_{0\le t\le T}\abs{\int_0^t(\nabla\ukk(s),\nabla\sigma_{\gamma_3} ) dW_3(s)}.\label{3.38}
\end{align}

Next, since $( \Theta_\kappa(\bu_\kappa,s)\bu_\kappa (s) \cdot\nabla \ckk(s),c_k(s))_{L^2}=0$,  by the  It\^o formula to the process $t\longmapsto\abs{\ck(t)}_{L^2}$, we have
\begin{align}
 &\sup_{0\le s\le T}	\abs{c_k(s)}^2_{L^2}+2\min(r_c,\alpha)\int_0^T\abs{ \ckk(s)}^2_{H^1_2}ds\notag\\
 &\leq\abs{c_0}^2_{L^2}+ 2\beta
	\int_0^T\abs{( \nk(s) ,c_k(s))_{L^2}}ds+\int_0^T\abs{g_{\gamma_2}(\ckk(s)) }^2_{L_{HS}(H_1;L^2)}ds\notag\\
	&\quad+2 \sup_{0\le t\le T}\abs{\int_0^t(c_k(s),g_{\gamma_2}(\ckk(s)) dW_2(s))_{L^2}}\notag
\end{align}
Note, by the Young inequality,  we know that
\begin{align*}
\beta(\nk,\ck)_{L^2} \le  |\nk|_{H^{-1}_2}|\ck|_{H^1_2}
\le \f{\min(r_c,\alpha)}{2} |\ck|^2_{H^1_2}+C |\nk|_{H^{-1}_2}^2.
\end{align*}
In addition, we have % for $\theta<1$
\begin{align*}
\abs{g_{\gamma_2}(\ckk) }^2_{L_{HS}(H_1;L^2)}\leq C|\ck|^2_{L^2}.
\end{align*}
This implies that
\begin{align}
\sup_{0\le s\le T}	\abs{c_k(s)}^2_{L^2}+\f{3\min(r_c,\alpha)}{2}\int_0^T\abs{ \ckk(s)}^2_{H^1_2}ds\notag&\leq\abs{c_0}^2_{L^2}+C\int_0^T\abs{\ck(s)}^2_{L^2}ds+C\int_0^T\abs{\nk(s)}^2_{H^{-1}_2}ds\label{3.39}\\
&\qquad+2 \sup_{0\le t\le T}\abs{\int_0^t(c_k(s),g_{\gamma_2}(\ckk(s)) dW_2(s))_{L^2}}.
\end{align}

Let us start with the next  entity given by $|\nk|_{H^{-1}_2}^2$. Here we get by the It\^o formula
\begin{align}\label{e45}
	&\sup_{0\le s\le  {T}  }|\nk(s)|_{H^{-1}_2}^2+2r_n\int_0^{T} \abs{\nk(s)}^{q+1}_{L^{q+1}} ds\notag\\
	&\leq |n_0|_{H^{-1}_2}^2+2\theta\int_0^T|\nk(s)|_{H^{-1}_2}^2ds+2\chi\int_0^{T}\abs{(\nabla^{-1} \nk (s),\nabla^{-1} (\Div (\nk(s)\nabla \ckk(s))))}ds\\
	&\quad
	+2\int_0^{T}\abs{(\nabla^{-1} \nk(s),
	\nabla^{-1} \lk( {\bf u}_\kappa(s) \cdot\nabla \nk(s)
	\rk))}ds+ \int_0^T|g_{\gamma_1}(\nk(s))|_{L_{\text{HS}}(H_1,H_2^{-1})}^2\,ds
	\notag\\
	&\quad{}+2\sup_{0\le t\le  {T}  }\int_0^{t}\left(\nk (s),g_{\gamma_1}(\nk(s))\,dW_1(s)\right)_{H^{-1}_2}.\notag
\end{align}
Let us analyse term by term.
By  the H\"older and Young inequality we know after an integration by-part  that
\begin{align}
2\chi\abs{(\nabla^{-1} \nk ,\nabla^{-1} (\Div (\nk\nabla \ckk)))}&=2\chi\abs{( \nk ,\nabla^{-1} (\nk\nabla \ckk))}\notag\\
&\leq 2\chi\abs{\nk}_{L^{q+1}}\abs{\nabla^{-1} (\nk\nabla \ckk)}_{L^\frac{q+1}{q}}\notag\\
&\leq\frac{r_n}{2} \abs{\nk}^{q+1}_{L^{q+1}}+C\abs{ (\nk\nabla \ckk)}^\frac{q+1}{q}_{H^{-1}_\frac{q+1}{q}}.\notag
\end{align}
Due to the embedding $L^1(\CO)\hookrightarrow H^{-1}_{\frac { q+1}{q}}(\CO)$ (see, \cite[P. 82, Theorem 1 (i)]{runst}), and the H\"older and Young  inequality, we infer that  that
 \begin{align}
 	\abs{(\nabla^{-1} \nk ,\nabla^{-1} (\Div (\nk\nabla \ckk)))}&\leq\frac{r_n}{2} \abs{\nk}^{q+1}_{L^{q+1}}+C\abs{ (\nk\nabla \ckk)}^\frac{q+1}{q}_{L^1}\notag\\
 	&\leq \frac{r_n}{4} \abs{\nk}^{q+1}_{L^{q+1}}+C\abs{\nk}^{\f{q+1}{q}}_{L^{q+1}}\abs{\nabla\ck}^{\f{q+1}{q}}_{L^{\f{q+1}{q-1}}}\notag\\
 	&\leq \frac{r_n}{2}\abs{\nk}^{q+1}_{L^{q+1}}+C\abs{\nabla\ck}^{\f{q+1}{q-1}}_{L^{\f{q+1}{q-1}}}\notag.
 \end{align}
Since $\f{q+1}{q-1}<2$, by the Sobolev embedding and the Young inequality we arrive at
\begin{equation}
\abs{(\nabla^{-1} \nk ,\nabla^{-1} (\Div (\nk\nabla \ckk)))}\leq \abs{\nk}^{q+1}_{L^{q+1}}+\f{\min(r_c,\alpha)}{2}\abs{\nabla\ck}^{2}_{L^2}+C(q)\notag.
\end{equation}
In a very similar way, since $L^{q+1}(\CO)\hookrightarrow H^{-1}_2(\CO)$, $L^{q+1}(\CO)\hookrightarrow L^4(\CO)$ and $\nsH\hookrightarrow\mathbf{L}^4(\CO)$,  we obtain
\begin{align}
\abs{(\nabla^{-1} \nk,
	\nabla^{-1} \lk( {\bf u}_\kappa \cdot\nabla \nk
	\rk))}&\leq \abs{\nk}_{H^{-1}_2}\abs{{\bf u}_\kappa \cdot\nabla \nk
	}_{H^{-1}_2}\notag\\
&\leq C\abs{\nk}_{H^{-1}_2}\abs{{\bf u}_\kappa }_{\mathbf{L}^4}\abs{\nk}_{L^4}\notag\\
&\leq C\abs{\nk}_{L^{q+1}}^2\abs{{\bf u}_\kappa }_{\nsH}\notag\\
&\leq C\abs{\nk}_{L^{q+1}}^4+C\abs{{\bf u}_\kappa }^2_{\nsH}\notag\\
&\leq \frac{r_n}{2}\abs{\nk}_{L^{q+1}}^{q+1}+C+C\abs{{\bf u}_\kappa }^2_{\nsH}\notag.
\end{align}
Using the inequality \eqref{eq:H-1-2q}, we get 
\begin{equation*}
|g_{\gamma_1}(\nk)|_{L_{\text{HS}}(H_1,H_2^{-1})}\leq C \abs{\nk}_{H^{-1}_2}.
\end{equation*}

Substituting the estimates from above into the inequality \eqref{e45}, we get
\begin{align}
&\sup_{0\le s\le  {T}  }|\nk(s)|_{H^{-1}_2}^2+r_n\int_0^{T} \abs{\nk(s)}^{q+1}_{L^{q+1}} ds\notag\\
	&\leq |n_0|_{H^{-1}_2}^2+C\int_0^T|\nk(s)|_{H^{-1}_2}^2ds+C+\f{\min(r_c,\alpha)}{2}\int_0^T|\ck(s)|_{H^{1}_2}^2ds\label{3.40}\\
	&+C\int_0^T|\ukk(s)|_{\nsH}^2ds+2\sup_{0\le t\le  {T}  }\abs{\int_0^{t}\left(\nk (s),g_{\gamma_1}(\nk(s))\,dW_1(s)\right)_{H^{-1}_2}}.\notag
	\end{align}
Adding up the inequalities \eqref{3.38}, \eqref{3.39} and \eqref{3.40} we get
\begin{align}
	&	\sup_{0\le s\le  {T} }	\abs{\ukk(s)}^{2}_\nsH+\sup_{0\le s\le T}	\abs{c_k(s)}^2_{L^2}+\sup_{0\le s\le  {T}  }|\nk(s)|_{H^{-1}_2}^2\notag\\
	&+r_n\int_0^{T} \abs{\nk(s)}^{q+1}_{L^{q+1}} ds+\min(r_c,\alpha)\int_0^T\abs{ \ckk(s)}^2_{H^1_2}ds+r_u\int_0^T\abs{\ukk(s)}^2_\nsV ds\notag\\
	&	\leq\abs{\bu_0}^{2}_\nsH+\abs{c_0}^2_{L^2}+|n_0|_{H^{-1}_2}^2+C+ C\int_0^T\left(\abs{\ukk(s)}^2_{\nsH}+\abs{\ck(s)}^2_{L^2}+ \abs{\nk(s)}^2_{H^{-1}_2}\right)ds	\label{e47}\\
&\qquad+C\sup_{0\le t\le T}\abs{\int_0^t(\nabla\ukk(s),\nabla\sigma_{\gamma_3} ) dW_3(s)}+2\sup_{0\le t\le  {T}  }\abs{\int_0^{t}\left(\nk (s),g_{\gamma_1}(\nk(s))\,dW_1(s)\right)_{H^{-1}_2}}\notag\\
&\qquad+2 \sup_{0\le t\le T}\abs{\int_0^t(c_k(s),g_{\gamma_2}(\ckk(s)) dW_2(s))_{L^2}}.\notag
\end{align}
Let $p\in [1,\frac{2m^*}{r^*}]$ be fixed.  By taking  the exponent $p$ and the expectation in \eqref{e47}, we derive that
\begin{align}
	&	\EE\sup_{0\le s\le  {T} }	\abs{\ukk(s)}^{2p}_\nsH+\EE\sup_{0\le s\le T}	\abs{c_k(s)}^{2p}_{L^2}+\EE\sup_{0\le s\le  {T}  }|\nk(s)|_{H^{-1}_2}^{2p}\notag\\
	&+r^{p}_n\left(\int_0^{T} \abs{\nk(s)}^{q+1}_{L^{q+1}} ds\right)^p+\min(r_c^p,\alpha^p)\EE\left(\int_0^T\abs{ \ckk(s)}^2_{H^1_2}ds\right)^p+r^p_u\EE\left(\int_0^T\abs{\ukk(s)}^2_\nsV ds\right)^p\notag\\
	&	\leq C\EE\abs{\bu_0}^{2p}_\nsH+C\EE\abs{c_0}^{2p}_{L^2}+C\EE|n_0|_{H^{-1}_2}^{2p}+C+ C\EE\left(\int_0^T\left(\abs{\ukk(s)}^2_{\nsH}+\abs{\ck(s)}^2_{L^2}+ \abs{\nk(s)}^2_{H^{-1}_2}\right)ds\right)^p	\label{e48}\\
	&\qquad+C\EE\sup_{0\le t\le T}\abs{\int_0^t(\nabla\ukk(s),\nabla\sigma_{\gamma_3} ) dW_3(s)}^p+C\EE\sup_{0\le t\le  {T}  }\abs{\int_0^{t}\left(\nk (s),g_{\gamma_1}(\nk(s))\,dW_1(s)\right)_{H^{-1}_2}}^p\notag\\
	&\qquad+C\EE\sup_{0\le t\le T}\abs{\int_0^t(c_k(s),g_{\gamma_2}(\ckk(s)) dW_2(s))_{L^2}}^p.\notag
\end{align}
By the H\"older inequality, we get 
\begin{align*}
C\EE\left(\int_0^T\left(\abs{\ukk(s)}^2_{\nsH}+\abs{\ck(s)}^2_{L^2}+ \abs{\nk(s)}^2_{H^{-1}_2}\right)ds\right)^p	&\leq C\EE\int_0^T\left(\abs{\ukk(s)}^2_{\nsH}+\abs{\ck(s)}^2_{L^2}+ \abs{\nk(s)}^2_{H^{-1}_2}\right)^pds\\
&\leq C\EE\int_0^T\left(\abs{\ukk(s)}^{2p}_{\nsH}+\abs{\ck(s)}^{2p}_{L^2}+ \abs{\nk(s)}^{2p}_{H^{-1}_2}\right)ds.
\end{align*}

By the Burkholder-Gundy-Davis inequality, the H\"older inequality and Young inequality, we obtain
\begin{align}
C	\EE\sup_{0\le t\le T}\abs{\int_0^t(\nabla\ukk(s),\nabla\sigma_{\gamma_3} ) dW_3(s)}^p&\leq \abs{\sigma_{\gamma_3}}^p_{L_{HS}(H,\nsH)}	\EE\left(\int_0^T\abs{\nabla\ukk(s)}_{\mathbf{L}^2}^2 ds\right)^\frac{p}{2}\notag\\
&\leq C\abs{\sigma_{\gamma_3}}^p_{L_{HS}(H,\nsH)}	\EE\left(\int_0^T\abs{\nabla\ukk(s)}_{\mathbf{L}^2}^{2p} ds\right)^\frac{1}{2}\notag\\
&\leq C\abs{\sigma_{\gamma_3}}^p_{L_{HS}(H,\nsH)}	\left[\EE\left(\int_0^T\abs{\nabla\ukk(s)}_{\mathbf{L}^2}^{2p} ds\right)\right]^\frac{1}{2}\notag\\
	&\leq C\EE\int_0^T\abs{\ukk(s)}^{2p}_\nsH ds+C\abs{\sigma_{\gamma_3}}^{2p}_{L_{HS}(H,\nsH)},\notag
\end{align}
and 
\begin{align}
	C \EE\sup_{0\le t\le T}\abs{\int_0^t(c_k(s),g_{\gamma_2}(\ckk(s)) dW_2(s))_{L^2}}^p
	&\leq C\EE\left(\int_0^T\abs{\ck(s)}^2_{L^2}\abs{g_{\gamma_2}(\ckk(s))}^2_{L_{HS}(H_1;L^2)}ds\right)^\frac{p}{2}\notag\\
	&\leq\frac{1}{2}\EE\sup_{0\le t\le T}\abs{\ck(s)}^{2p}_{L^2}+C\EE\left(\int_0^T\abs{g_{\gamma_2}(\ckk(s))}^2_{L_{HS}(H_1;L^2)}ds\right)^p\notag\\
	&\leq\frac{1}{2}\EE\sup_{0\le t\le T}\abs{\ck(s)}^{2p}_{L^2}+C\EE\int_0^T\abs{\ck(s)}^{2p}_{L^2}ds\notag.
\end{align}
In a very similar way, we also get
\begin{align}
	C \EE\sup_{0\le t\le T}\abs{\int_0^t(n_k(s),g_{\gamma_1}(\nk(s)) dW_1(s))_{H^{-1}_2}}^p
	&\leq C\EE\left(\int_0^T\abs{\nk(s)}^2_{H^{-1}_2}\abs{g_{\gamma_1}(\nk(s))}^2_{L_{HS}(H_1;H^{-1}_2)}ds\right)^\frac{p}{2}\notag\\
	&\leq\frac{1}{2}\EE\sup_{0\le t\le T}\abs{\nk(s)}^{2p}_{H^{-1}_2}+C\EE\left(\int_0^T\abs{g_{\gamma_1}(\nk(s))}^2_{L_{HS}(H_1;H^{-1}_2)}ds\right)^p\notag\\
	&\leq\frac{1}{2}\EE\sup_{0\le t\le T}\abs{\nk(s)}^{2p}_{H^{-1}_2}+C\EE\int_0^T\abs{\nk(s)}^{2p}_{H^{-1}_2}ds\notag.
\end{align}
Using the inequality above, we infer from inequality \eqref{e48} that 
\begin{align}
	&	\EE\sup_{0\le s\le  {T} }	\abs{\ukk(s)}^{2p}_\nsH+\frac{1}{2}\EE\sup_{0\le s\le T}	\abs{c_k(s)}^{2p}_{L^2}+\frac{1}{2}\EE\sup_{0\le s\le  {T}  }|\nk(s)|_{H^{-1}_2}^{2p}\notag\\
	&+r^{p}_n\left(\int_0^{T} \abs{\nk(s)}^{q+1}_{L^{q+1}} ds\right)^p+\min(r_c^p,\alpha^p)\EE\left(\int_0^T\abs{ \ckk(s)}^2_{H^1_2}ds\right)^p+r^p_u\EE\left(\int_0^T\abs{\ukk(s)}^2_\nsV ds\right)^p\notag\\
	&	\leq C\EE\abs{\bu_0}^{2p}_\nsH+C\EE\abs{c_0}^{2p}_{L^2}+C\EE|n_0|_{H^{-1}_2}^{2p}+C+ C\EE\int_0^T\left(\abs{\ukk(s)}^{2p}_{\nsH}+\abs{\ck(s)}^{2p}_{L^2}+ \abs{\nk(s)}^{2p}_{H^{-1}_2}\right)ds,	\notag
\end{align}
and applying the Gronwall Lemma to that term, we end the proof of the claim.
\end{proof}
 In particular, the inequality  \eqref{metauniform}  comes from Claim \ref{cl1} and the definition of cut-off functions given in Step (I).

%
%\begin{proof}[Proof of Claim \ref{claim esti}]

%\end{stepinner}
\item
Having verified the Assumptions of the Theorem \ref{meta} we know, there exists a martingale solution to system  \eqref{1.1}. Since we have higher order moments of the initial conditions,  we can obtain the estimate \eqref{3.8} using very similar argument like in Claim \ref{cl1} and end the proof of Theorem \ref{main}.

\end{step}

\end{proof}

\del{
In case the initial conditions are of higher regularity and have higher order moments, the stochastic processes have also a higher regularity.
This is shown in the next corollary.
\begin{corollary}\label{U_BOUND}
In case  $\frac 12  {r_n}<r_c$, and
$\EE |n_0|_{L^{q-1}}^{q-1}<\infty$,
then, there exists a constant $C(T)>0$ such that
	\DEQS
\EE \lk[ |n(t)|_{L^{q-1}}^{q-1}+(q-1)r_n\int_0^ t |n^{q/2}(s)\nabla n|_{L^2}^{q}\, ds
	\rk]&\le & \EE |n_0|_{L^{q-1}}^{q-1} +
\EE |c_0|_{L^2}^2.
\EEQS

\end{corollary}
\begin{proof}
Let
\DEQS %Z\label{wdef}
\Lambda_2 (n,c,\bu) &:= &|n|_{L^{q-1}}^{q-1} +
C_1  %|\nabla c|^2_{L^2} + C_2
|\nabla c|_{L^2}^2+ C_2|\bu|_{L^2}^2,\quad
\EEQS
where $ n\in L^p(\CO)$, $c\in L^2(\CO)$, $\bu\in L^2(\CO)$.

To calculate the second uniform bound let us fix $p=q-1$.
By the It\^o formula we obtain
\DEQS
\lqq{ |n(t)|_{L^p}^p
\le\int_0^ t \int_\CO n^{p-1}(s,x)\Delta n^q(s,x)\, dx \, ds} &&
\\
&&{} +\int_0^ t \int_\CO n^{p-1}\div(n(s,x)\nabla c(s,x))\, dx \, ds
 +\int_0^ t \int_\CO n^{p-1}(s,x)\nabla n(s,x) \bu(s,x)\, dx \, ds.
\EEQS
Let us look on the first summand. Here, we obtain by integration by parts
%\hrule
%
\DEQS
\lqq{ \int_0^ t \int_\CO n^{p-1}(s,x)\Delta n^q(s,x)\, dx \, ds}
&&
\\
&=& -q(p-1)\int_0^ t \int_\CO (\nabla n(s,x))^2n^{p-2}(s,x)n^{q-1}(s,x)\, dx  \, ds
\\
&=&{} -q(p-1)\int_0^ t | (\nabla n(s))n^{\frac {p+q-3}2}(s)|_{L^2}^2  \, ds.
\EEQS
To handle the second term, we  apply integration by parts
and the Young inequality. In this way we get
\DEQS
\lqq{\lk|\int_0^ t \int_\CO n^{p-1}(s,x)\div(n(s,x)\nabla c(s,x))\, dx \, ds\rk|}
&&
\\
&\le&\lk|(p-2) \int_0^ t (\nabla n(s,x)) n^{p-2}(s,x)\,(n(s,x)\nabla c(s,x))\, dx \, ds\rk|
\\
&\le&\lk|(p-2) \int_0^ t (\nabla n(s,x)) n^{p-1}(s,x)\,\nabla c(s,x)\, dx \, ds\rk|
\\
&\le&
\frac 12  {r_n} \int_0^ t \lk|(\nabla n(s)) n^{p-1}(s)\rk|_{L^2}^2 + \frac 12  {r_n} \int_0^ t |\nabla c(s)|_{L^2}^2 ds.
\EEQS
Similarly, the third term can be handled.
 In this way we get
\DEQS
\lqq{\lk|\int_0^ t \int_\CO n^{p-1}(s,x)\bu(s,x)\nabla n(s,x))\, dx \, ds\rk|}
&&
\\
&\le&\lk|(p-2) \int_0^ t \nabla (n^p(s,x)) \, \bu(s,x)\, dx \, ds\rk|\le (p-2) \int_0^ t |n^p(s)| |\bu(s)|_\nsH \, ds
\\
&\le&
 C(\ep_1) \int_0^ t \lk|n(s)\rk|_{L^{2p}}^{2p} \, ds+ \ep_1 \int_0^ t |\bu(s)|_{\nsH}^2 ds
\\
&\le&
c( \ep_1,\ep_2) \int_0^ t \lk|n(s)\rk|_{L^{q+1}}^{q+1} \, ds +C(\ep_2,\ep_1) +\ep_1 \int_0^ t |\bu(s)|_{\nsH}^2 ds.
\EEQS
In case $q\ge3$, $2p\le q$.

Again, the Burkolder-Davis-Gundy inequality gives
\DEQS
\lqq{ \EE\Big[\sup_{0\le s\le t\wedge \tau_m} \,\Big| \int_0 ^{s} \la  \ck  (s), (\ck  (s)\phi_k)\ra d\beta_k(s)
 \Big|\Big] }
&&
\\ &\le & \EE\,\Big( \sum_{k=1}^ \infty \lambda_k\int_0^ {t\wedge \tau_m} \la \ck  (s) ,  \ck  (s)\phi_k\ra^2 ds\Big)^\frac 12  \le  C_Q ({t\wedge \tau_m})^ \frac 12 \,\EE\Big[\sup_{ 0\le s \le t\wedge \tau_m} | \ck  (s)|_{L^2}^2\Big],
\EEQS

\end{proof}
}

\appendix

\section{Technical Preliminaries of the noise and the stochastic integral}\label{Appendix_A_noise}

\medskip

%\paragraph{\textbf{The Wiener processes $W_i$: }}
%

%

Let $\mathfrak{A}:=(\Omega ,{{\mathcal{F}}},{\mathbb{F}},\mathbb{P})$ be  a complete filtered
probability space with a filtration ${\mathbb{F}}=\{{{\mathcal{F}}}_t:t\in [0,T]\}$ satisfying the usual conditions and  Let $H=L^2(\bo)$.
For a separable Hilbert space $E$,
let $L_{HS}(H,E)$ be the space of all Hilbert-Schmidt operators from $H$ to $E$.
%Recall that this space is the closure of the space of finite rank operators from $H$ to $E$ with respect to the norm
%$$
%\Big|\sum_{n=1}^N h_n\otimes x_n\Big|^2_{\Upsilon(H,E)}:= \EE \Big|\sum_{n=1}^N \gamma_n x_n\Big|^2_E,
%$$
%where $\{h_n:n=1,\ldots ,N\}$ is a family of  orthonormal functions in $H$ and  $\{x_n:n=1,\ldots,N\}$ is a subset of $ E$. Finally,  $\{\gamma_n:n=1,\ldots,N\}$ is a set of independent standard Gaussian random variables.
%\noindent
%Let  $E$ be a complete Banach space of $M$--type $2$. %=L^p(\CO)$ for some $p\ge 2$.
Let us denote by $\CM^2_\MA(0,T;E)$ %\Upsilon(H,E))$
 the space of all $\mathbb{F}$-progressively measurable  stochastic processes $\xi:[0,T]\times \Omega\to E$ %\Upsilon(H,E)$
with
$$
\mathbb{E}\lk( \int_0^T|\xi(s)|^2_{E} {}ds\rk)< \infty.
$$
{For any $\xi \in  \CM^2_\MA(0,T;E)$ and $\gamma_1$, $\gamma_2>0$, let us  define the  process $M_{\gamma_j}=\{M_{\gamma_j}(t):t\in [0,T]\}$,  given by
\DEQS
M_{\gamma_j}(t)=\int_0^tg_{\gamma_j}(\xi(s))dW_j(s), \qquad t\in [0,T], \ j=1,2 .
\EEQS
%If $E$ is a Hilbert space, %In case $E=L^p(\CO)$ and $p=2$,
For $j=1,2$, the It\^{o} isometry reads %and Burkholder-Davis-Gundy inequality give us
\DEQSZ\label{ito_isometry}
\mathbb{E}\lk(\lk|\int_0^tg_{\gamma_j}(\xi(s)){} dW_j(s) \rk|_E^2 \rk) =  \mathbb{E}\lk( \int_0^t|g_{\gamma_j}(\xi(s))|^2_{L_{HS}(H,E)} {}d s\rk) , \qquad t\in[0,T].
\EEQSZ
%If $E$ is a Banach space of $M$--type $2$, then we get only the following inequality
%\DEQSZ\label{mtypep}
%\mathbb{E}\lk(\lk|\int_0^tg_\gamma(\xi(s)){} d W(s) \rk|_E^2 \rk) \le  C(\gamma) \mathbb{E}\lk( \int_0^t|g(\xi(s))|^2_{\Upsilon(H,E)} {}d s\rk) , \qquad t\in[0,T].
%\EEQSZ
%
%\begin{remark}\label{ref:radonifying}
Note that for $q\ge 1$  we have the Burkholder--Gundy--Davis inequality (see, e.g,  \cite{DaPrZa:2nd}) %, arxiv: 1304.7575.pdf, )
\DEQSZ \label{burkholder}
\mathbb{E} \lk(\sup_{0\le s\le t}\lk| \int_0^sg_{\gamma_j}(\xi (r)){}d W(r)\rk|_E^q \rk)
\leq C_{q} \mathbb{E}\lk( \int_0^t |g_{\gamma_j}(\xi(s))|_{L_{HS}(H, E)}^2 ds \rk)^\frac{q}{2}, \quad j=1,2.
\EEQSZ
%In particular, to  estimate the stochastic integral, it is sufficient to calculate the $\gamma$--radonifying norm of the process $[0,T]\ni s\mapsto g_\gamma(\xi(s))$, i.e. to calculate $|g_\gamma(u)|_{\Upsilon(H, E)}$

\medskip

To show existence and boundedness for the process $\nk$ or $\ck$, we estimate the $L_{HS}(H; L^2(\CO))$-norm  and $L_{HS}(H; H^{-1}_2(\CO))$-norm of $g_{\gamma_j}$ %and
depending on the choice of $\gamma_j$, $j=2,1$ respectively. For this aim we recall from \cite{BDPR2016} that  the eigenfunctions    $\{ \varphi_k:k\in\NN\}$ of the Laplace operator $(-\Delta)$ on $L^2(\CO)$ are constructed such that  the following estimate of the asymptotic behaviour of the eigenvalues   $\{ \lambda_k:k\in\NN\}$ holds:
There exist two numbers $0 < c <C$ such that
\DEQSZ\label{EVsup}
c \cdot k^\frac 2d \le \lambda_k\le C\cdot  k^\frac 2d, \quad  \mbox{ for all }k\in\NN.
\EEQSZ
% \cite{Ref?}
In addition, there exists some constant $c>0$ such that
\DEQSZ\label{EFsup}
\sup_{x\in\CO} |\varphi_k(x)|\le c\,\lambda _k^\frac {d-1}2,\quad k\in\NN.
\EEQSZ
%Secondly, we can interpolate the noise between $(H^{-1}_2(\CO)$ and $L^{q+1}(\CO)$, i.e.\ within $H^{-s}_l(\CO)$ with
%$\frac 1l=\frac s2+\frac {1-s}{q+1}$.
%In the special case of $E$ being a Hilbert space, %=L^2(\CO)$
%the $\gamma$--norm coincides with the Hilbert-Schmidt norm.
%

To start with, let $E=L^2(\CO)$.
Due to the inequalities \eqref{EVsup} and \eqref{EFsup}, we can write for $u\in L^2(\bo)$ and $\gamma_2>0$,
\begin{align}
% \label{eq: Hil-Schi}
	|g_{\gamma _2}(u)|_{L_{HS}(H,L^2)}^2 %&=\sum_{l=1}^\infty | g_{\gamma }(u)\varphi_l	|^2_{L^2}
%  =  \sum_{l,k=1}^\infty \lambda _k^{-\gamma } |u \varphi_k \la \varphi_l, \varphi_k\ra |^2_{L^2}\notag  \\&
=
 \sum_{k}^\infty \lambda _k^{-\gamma _2} | u \varphi_k |_{L^2}^2 %\nonumber\\
&\le
C |u|_{L^2} ^2 \, \sum_{k}^\infty |\varphi_k |_{L^\infty}^2 \lambda _k^{ -\gamma_2}
\nonumber  \\
&\le
C |u|_{L^2} ^2 \, \sum_{k}^\infty  \lambda _k^{d-1-\gamma _2}\notag\\
&\le
C\,|u|_{L^2} ^2 \, \sum_{k}^\infty k^{\frac 2d( d-1-\gamma _2)}
,\nonumber
\end{align}
which is finite for $\gamma_2 >\frac{3d}{2}-1$. 
%Using interpolation and the Young inequality we know that for any $\delta_1\in(0,1)$
%and any $\ep>0$ there exists a positive constant $C(\ep, \delta_1)$ such that
%$$|u|_{H^{\delta_1}_2} ^2 \le \ep |u|_{H^{0}_2} ^2 + C(\ep, \delta_1)|u|_{H^{1}_2} ^2 .
%$$
Summarising, we can write for any $\gamma_2 >\frac{3d}{2}-1$, there exists some $C(\gamma _2)>0$  such that 
\begin{align}
	|g_{\gamma _2}(u)|_{L_{HS}(H,L^2)}^2&\le C |u|^2_{L^2} .
\end{align}
%Note, that if $\delta_1<1$ it follows that $\delta_2>\frac d2-1$ and  $\gamma >d-1$.

\del{Our aim is to give an estimate of
$$
 \sup_{0\le s\le t}\lk|\int_0^sg_{\gamma}(\xi (r)){}\ud W(r))\rk|_{L^2}
 .$$
%However, applying  \eqref{EVsup} and \eqref{EFsup}, we obtain an estimate of the term above.
%If $\CO$ is the torus, $\gamma_1>\frac d2$ is sufficient.
Fix $\varsigma\ge 1$.
By the Burkholder-Davis-Gundy inequality \eqref{burkholder} combined with \eqref{eq: Hil-Schi2}
it follows that  for any $\eps>0$ there exists $\tilde C( \eps)>0$ such that we have for all $ t\in[0,T]$
%it follows that  for any $\tilde \eps>0$ there exists $\tilde C(\tilde \eps)>0$ such that
\begin{align}
&\mathbb{E} \lk(\sup_{0\le s\le t}\lk| \int_0^sg_{\gamma_1}(\xi (r)){}\ud W(r)\rk|_{L^2}^\varsigma  \rk)
 \leq C_\varsigma  \EE\bigg[\Big(\eps\int_0^t |\xi(s)|_{{H^{1}_2}}^{2}ds  + C(\eps) \int_0^t |\xi(s)|_{{L^2}}^2 ds \Big)^\frac{\varsigma }{2}\bigg]\nonumber ,
% C_{p,q} S(\gamma) \mathbb{E}\lk(\int_0^t|g(\xi(s))|_{E}^2  \ud s \rk)^\frac q2, \qquad t\in[0,T].
\end{align}
and by straightforward calculations, i.e.\ interpolation and again the Young inequality,  that  for any $\tilde \eps>0$ there exists $\tilde C(\tilde \eps)>0$ such that we have for all $ t\in[0,T]$
\begin{align}
&\mathbb{E} \lk(\sup_{0\le s\le t}\lk| \int_0^sg_{\gamma_1}(\xi (r)){}\ud W(r)\rk|_{L^2}^\varsigma  \rk)
 \leq \tilde \eps \, \EE\bigg[\Big(\int_0^t |\xi(s)|_{H_2^{1}}^{2}ds\Big)^{\frac{\varsigma }{2}}\bigg]   + \tilde C(\tilde \eps) \EE\bigg[\Big( \int_0^t |\xi(s)|_{L^2}^2 ds \Big)^\frac{\varsigma }{2}\bigg]\label{Hrhoburkholderl2}
.
\end{align}}

Next, let $E=H^{-1}_2(\CO)$,  $u\in L^2(\bo)$ and $\gamma_1>0$. Then we know by the first point of Theorem \ref{RS2a} that for $\delta>1$
\begin{align}
% \label{eq:H-1-2}
	|g_{\gamma _1}(u)|_{L_{HS}(H,H^{-1}_2)}^2% &=\sum_{l=1}^\infty | g_{\gamma }(u)\varphi_l|^2_{L^2}
 % &=  \sum_{l,k=1}^\infty \lambda _k^{-\gamma } |u \varphi_k \la \varphi_l, \varphi_k\ra |^2_{H^{-1}_2}\notag  \\
&\le
 \sum_{k}^\infty \lambda _k^{-\gamma _1} | u \varphi_k |_{H^{-1}_2}^2 \nonumber\\
&\le
C^2 |u|_{H^{-1}_2} ^2 \, \sum_{k}^\infty |\varphi_k |_{H^{\delta}_2}^2 \lambda _k^{ -\gamma_1 }\notag
  \\
&\le
C^2 |u|_{H^{-1}_2} ^2 \, \sum_{k}^\infty  \lambda _k^{ \delta-\gamma _1}\notag\\
&\le
C\,|u|_{H^{-1}_2} ^2 \, \sum_{k}^\infty k^{\frac 2d(\delta-\gamma _1)}
,\nonumber
\end{align}
which is finite for $\gamma_1 >1+\frac{d}{2}$.
Summerising, we know, for any $\gamma_1 >1+\frac{d}{2}$ there exists some $C(\gamma_1 )>0$ with
\begin{align}
	|g_{\gamma _1}(u)|_{L_{HS}(H,H^{-1}_2)}^2&\le  C|u|_{H^{-1}_2} ^2  .
\end{align}

\section{Some technical Proposition}

In this section we have collected several technical inequalities, which we need through our proof.

\begin{proposition}\label{interoplation_11}
For any $r\in(2,q+1)$, $m\in(q+1,\infty)$, $s\in(0,1)$, with  $\frac 1r\ge\frac 1m+\frac s2$, there exists a constant $C>0$ such that
\DEQSZ\label{ineq001_1}
\|\xi\|_{L^m(0,T;H^{-s}_r)}^r\le C\lk( \|\xi\|^2_{L^\infty(0,T;H_2^{-1})}+\|\xi\|^{q+1}_{L^{q+1}(0,T;L^{q+1})}\rk).
\EEQSZ
%where $z^{[\gamma-1]}:=|z|^{\gamma-2}z$ for $z\in\R$.
\end{proposition}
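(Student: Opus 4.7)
The plan is to prove the estimate via a three-step chain of classical tools: a pointwise-in-time Gagliardo--Nirenberg interpolation in space, H\"older's inequality in time, and a final Young's inequality to split the product into the required sum.

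First, for a.e. $t\in[0,T]$, I would establish the spatial interpolation
$$
\|\xi(t)\|_{H^{-s}_r(\CO)}\le C\,\|\xi(t)\|_{H^{-1}_2(\CO)}^{\theta}\,\|\xi(t)\|_{L^{q+1}(\CO)}^{1-\theta}
$$
for an appropriate $\theta\in(0,1)$ determined by $r$, $s$, and $q$. The natural (``critical'') choice is $\theta=s$: complex interpolation between $H^{-1}_2$ and $L^{q+1}$ then fixes the spatial integrability at $\frac{1}{r_0(s)}=\frac{s}{2}+\frac{1-s}{q+1}$. For a general $r$ in the hypothesis, I would combine this critical interpolation with the Sobolev embedding $H^{-s}_{r_0}(\CO)\hookrightarrow H^{-s}_r(\CO)$, valid on a bounded two-dimensional domain whenever $r\le r_0$, in order to bring the spatial integrability into the correct range. (When the hypothesis instead forces $r>r_0$, the role of $\theta$ is adjusted so that a smaller $\theta\le s$ is used together with a compatible Sobolev embedding; the scaling $-\theta-2/r_\theta\ge -s-2/r$ is exactly what makes this step work.)

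Second, I raise the pointwise inequality to the $m$-th power, integrate over $(0,T)$, and dominate $\|\xi(t)\|_{H^{-1}_2}^{\theta m}$ by $\|\xi\|_{L^\infty(0,T;H^{-1}_2)}^{\theta m}$. The hypothesis $\frac{1}{r}\ge\frac{1}{m}+\frac{s}{2}$ is designed precisely so that the remaining time exponent $(1-\theta)m$ does not exceed $q+1$, so H\"older's inequality in $t$ yields
$$
\int_0^T\|\xi(t)\|_{L^{q+1}}^{(1-\theta)m}\,dt\le C(T)\,\|\xi\|_{L^{q+1}(0,T;L^{q+1})}^{(1-\theta)m},
$$
with the $T^{\alpha}$-factor absorbed into $C$. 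Taking an $(r/m)$-th power then gives a bound of the form
$$
\|\xi\|_{L^m(0,T;H^{-s}_r)}^r\le C\,\|\xi\|_{L^\infty(0,T;H^{-1}_2)}^{\theta r}\,\|\xi\|_{L^{q+1}(0,T;L^{q+1})}^{(1-\theta)r}.
$$

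Third, I finish with Young's inequality using the conjugate pair $p=\tfrac{2}{\theta r}$ and $p'=\tfrac{q+1}{(1-\theta)r}$, so that the two factors receive the exponents $2$ and $q+1$ respectively. The identity $\frac{1}{p}+\frac{1}{p'}=1$ reduces algebraically to the interpolation scaling $\frac{\theta}{2}+\frac{1-\theta}{q+1}=\frac{1}{r}$, which is exactly what the choice of $\theta$ (and the embedding step of Step~1) arranges. The scaling sanity-check $\lambda^r\le\lambda^2+\lambda^{q+1}$ for $\lambda\ge 0$ and $r\in(2,q+1)$ also confirms that this type of additive splitting is the right form.

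The main obstacle is the bookkeeping of exponents: the hypothesis leaves a two-parameter family of admissible $(r,m)$, while strict Gagliardo--Nirenberg is scale-rigid. One must verify simultaneously (i) that some $\theta$ yields the pointwise spatial interpolation (possibly after composing with a Sobolev embedding), (ii) that $(1-\theta)m\le q+1$ so that H\"older passes to the target time-norm, and (iii) that the resulting exponents on the two factors make the Young pair conjugate. All of this reduces to a single scaling computation encoded by $\frac{1}{r}\ge\frac{1}{m}+\frac{s}{2}$, and this is the only non-routine step of the proof.
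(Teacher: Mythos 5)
Your proposal is correct and follows essentially the same interpolation-plus-Young argument as the paper: both derive the multiplicative bound $\|\xi\|_{L^m(0,T;H^{-s}_r)}\lesssim\|\xi\|_{L^\infty(0,T;H^{-1}_2)}^{\theta}\,\|\xi\|_{L^{q+1}(0,T;L^{q+1})}^{1-\theta}$ from interpolation of the Bochner spaces and then convert it into the additive form via Young's inequality with the conjugate pair fixed by $\tfrac{\theta}{2}+\tfrac{1-\theta}{q+1}=\tfrac1r$. If anything you are more explicit than the paper, which writes down the interpolation exponent relations and leaves both the Sobolev-embedding reduction for the strict inequality $\tfrac1r>\tfrac1m+\tfrac s2$ and the concluding Young step implicit.
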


\begin{proof}
To be for
$L^m(0,T;H^{-s}_r(\CO))$ an
interpolation space between $$
L^\infty(0,T;H_2^{-1}(\CO))\cap L^{q+1}(0,T;L^{q+1}(\CO))
$$
we need, that the parameters $m,r,s$ satisfies the following inequalities:
\DEQS
-s=-\theta +(1-\theta)0,
\\
\frac 1m =\frac \theta \infty +\frac{1-\theta}{q+1},
\\
\frac 1r =\frac \theta 2 + \frac{1-\theta}{q+1}.
\EEQS
Now, if  $\frac 1r\ge\frac 1m+\frac s2$ is satisfied for  $r\in(2,q+1)$, $m\in(q+1,\infty)$, $s\in(0,1)$,
then the set of inequalities are satisfied and
we obtain \eqref{ineq001_1}.
\del{$$
\|\xi\|_{L^m(0,T;H^s_r)}^r\le C\lk( \|\xi\|^2_{L^\infty(0,T;H_2^{-1})}+\|\xi\|^{q+1}_{L^{q+1}(0,T;L^{q+1})}\rk).
$$}
\end{proof}

Next, we present some results about the multiplication of functions, respective, the multiplication of a function with a distribution.
%Let $\sigma_p:= d \max(0,\frac 1p-1)$ and $\sigma_{p,q}:= d \max(0,\frac 1p-1,\frac 1q-1)$, where $d$ is the dimension.
The notations $F^s_{p,q}(\mathbb{R}^d)$ and $B^{s}_{p,q}(\mathbb{R}^d)$, $s\in \mathbb{R}, $ $0<q\leq \infty$, $0<p<\infty$, stand respectively for the Triebel-Lizorkin spaces and Besov spaces.
For the definition of these spaces, we refer to \cite[P. 8]{runst}. They translate to classical function spaces
as in e.g. \cite[P. 14]{runst}, in particular, 
\begin{equation}\label{B2}
\begin{split}
&F_{p,2}^{0}(\mathbb{R}^d)=L_p(\mathbb{R}^d),\  1<p<\infty \ \text{ (Lebesgue spaces)},
\\
& F_{p,2}^{m}(\mathbb{R}^d)=W^m_p(\mathbb{R}^d),\  m\in\N, \  <p<\infty \  \text{(Sobolev spaces)},\\ 
& F_{p,2}^{s}(\mathbb{R}^d)=H^s_p(\mathbb{R}^d),\  s\in\R, \ 1<p<\infty \ \text{(fractional Sobolev spaces)}.
\end{split}
\end{equation}

Assume that  $0<q_1, q_2\leq \infty$ and $0<p<\infty$ as well as  $s_1$ and $s_2$ are real numbers such that  $s_1\leq s_2$.

\begin{theorem}\label{RS2}
Assume $s_1+s_2>d\cdot\max(0,\frac 1p-1)$, and $q\ge \max(q_1,q_2)$. Then
\begin{itemize}
  \item if $s_2>s_1$, then $F_{p,q_1}^{s_1}(\RR^d) \cdot B_{\infty,q_2}^{s_2} (\RR^d)\hookrightarrow F_{p,q_1}^{s_1}(\RR^d)$;
  \item and, if $s_1=s_2$ then $F_{p,q_1}^{s_1} (\RR^d)\cdot B_{\infty,q_2}^{s_1}(\RR^d) \hookrightarrow F_{p,q}^{s_1}(\RR^d)$.
\end{itemize}
\end{theorem}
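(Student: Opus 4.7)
The plan is to use the paraproduct decomposition à la Bony. Fix a Littlewood–Paley resolution $\{\Delta_j\}_{j\geq -1}$ on $\mathbb{R}^d$ with partial sums $S_j = \sum_{k\leq j-1}\Delta_k$. For $f\in F^{s_1}_{p,q_1}$ and $g\in B^{s_2}_{\infty,q_2}$ I would decompose the product into three paraproducts
\[
fg = \Pi_1(f,g) + \Pi_2(f,g) + \Pi_3(f,g),
\]
where $\Pi_1(f,g)=\sum_j S_{j-2}f\,\Delta_j g$ (low–high), $\Pi_2(f,g)=\sum_j \Delta_j f\,S_{j-2}g$ (high–low), and $\Pi_3(f,g)=\sum_{|j-k|\leq 1}\Delta_j f\,\Delta_k g$ (diagonal/resonant).

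For $\Pi_2(f,g)$, note that each summand $\Delta_j f\, S_{j-2}g$ has Fourier support essentially in an annulus of size $2^j$, so $\Pi_2$ lies in the same Triebel–Lizorkin class as $f$ after shifting by the decay of $\Delta_j f$. Using the $\ell^{q_1}(L^p)$-characterisation of $F^{s_1}_{p,q_1}$ (square-function style) together with $\|S_{j-2}g\|_{L^\infty}\lesssim \|g\|_{B^{s_2}_{\infty,q_2}}$ (valid because $s_2\geq s_1>-\infty$ ensures $S_{j-2}g$ is uniformly bounded up to a multiplicative factor $2^{-j s_2}$ in the positive case, or by direct summation in the $s_2>0$ regime), one obtains
\[
\|\Pi_2(f,g)\|_{F^{s_1}_{p,q_1}} \lesssim \|f\|_{F^{s_1}_{p,q_1}}\|g\|_{B^{s_2}_{\infty,q_2}}.
\]
The analogous estimate for $\Pi_1(f,g)$ is obtained symmetrically: here the relevant frequency annulus is still $2^j$, but now the multiplier is $S_{j-2}f\in L^p$ and the high-frequency factor is $\Delta_j g$, which loses $2^{-j s_2}$. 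Since $s_2>s_1$ (respectively $s_2=s_1$), the necessary summation in $j$ converges absolutely (respectively only after enlarging the micro-parameter to $q\geq\max(q_1,q_2)$, which accounts for the extra $q$ in the second assertion).

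The hard part is the resonant term $\Pi_3(f,g)=\sum_{j}\sum_{|k-j|\leq 1}\Delta_j f\,\Delta_k g$. Each summand has frequency support in a ball (not an annulus) of radius $\sim 2^j$, so there is no automatic localisation; one has to estimate
\[
\|\Delta_\ell \Pi_3(f,g)\|_{L^p} \lesssim \sum_{j\geq \ell - N}\|\Delta_j f\|_{L^p}\|\Delta_j g\|_{L^\infty},
\]
multiply by $2^{\ell s_1}$, and take $\ell^{q_1}$-norm in $\ell$. The condition $s_1+s_2> d\max(0,1/p-1)$ enters here: in the sub-Banach range $p<1$ it compensates for the failure of $L^p$-boundedness of the projections via embedding into a larger regularity space, while for $p\geq 1$ it merely ensures $s_1+s_2>0$ so that the double sum in $j,\ell$ converges geometrically. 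A Young-type inequality in $\ell^q$ then upgrades the convolution-in-frequency sum to an $\ell^q$-bound with $q\geq\max(q_1,q_2)$, producing $\|\Pi_3(f,g)\|_{F^{s_1}_{p,q}}\lesssim \|f\|_{F^{s_1}_{p,q_1}}\|g\|_{B^{s_2}_{\infty,q_2}}$. When $s_2>s_1$ the geometric decay is strict and one can absorb the extra micro-index back into $q_1$, recovering the sharper target space $F^{s_1}_{p,q_1}$ in the first item.

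The main obstacle is the resonant piece: the sub-Banach range $p<1$ is genuinely delicate because the $F^s_{p,q}$ norm is only a quasi-norm, and I expect to need the sequence-space embedding $\ell^{q_1}(\ell^p)\hookrightarrow \ell^{q_1}(\ell^{\min(p,1)})$ together with a careful bookkeeping of the lower bound $d(1/p-1)$ for $s_1+s_2$. Once that is done, combining the three paraproduct estimates via the quasi-triangle inequality in $F^{s_1}_{p,q}$ yields both assertions simultaneously, with $q=q_1$ admissible exactly when $s_2>s_1$.
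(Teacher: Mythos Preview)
The paper does not give a proof of this theorem at all: its entire proof reads ``See \cite[p.\ 229]{runst}.'' The result is quoted from Runst--Sickel as a known multiplication theorem, so there is nothing to compare against in the paper itself.

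Your paraproduct approach is exactly the method by which such results are established in that reference (and in the wider literature on pointwise multipliers in $F$- and $B$-spaces), so in spirit you are reproducing the cited proof rather than offering an alternative. Two small points of care: first, the Triebel--Lizorkin norm is $L^p(\ell^{q_1})$ (outer $L^p$, inner $\ell^{q_1}$), not $\ell^{q_1}(L^p)$ as you wrote when handling $\Pi_2$---the latter is the Besov norm, and the distinction matters precisely when one mixes $F$- and $B$-spaces as here. Second, your treatment of $\Pi_1$ is a bit loose: you need pointwise control of $|S_{j-2}f(x)|$ inside the $L^p(\ell^{q})$-norm, not just $\|S_{j-2}f\|_{L^p}$, and this is where the maximal-function machinery (Peetre's maximal inequality or the Fefferman--Stein vector-valued maximal theorem) typically enters. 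These are refinements rather than gaps; the skeleton of your argument is sound.
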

\begin{proof}
See \cite[p.\ 229]{runst}.\end{proof}

\begin{theorem}\label{RS2a}
Let  $s_1+s_2>d\cdot\max(0,\frac 1p-1)$.
\begin{itemize}
  \item Assume $s_2>\frac dp$ and $q\ge \max(q_1,q_2)$. Then, if $s_2>s_1$,
  $F_{p,q_1}^{s_1}(\RR^d) \cdot F_{p,q_2}^{s_2} (\RR^d)\hookrightarrow F_{p,q_1}^{s_1}(\RR^d)$;
  if $s_2=s_1$,
  $F_{p,q_1}^{s_1}(\RR^d) \cdot F_{p,q_2}^{s_2} (\RR^d)\hookrightarrow F_{p,q}^{s_1}(\RR^d)$;
  \item Let $s_1=s_2=\frac dp$ and $q\ge \max(q_1,q_2)$. If $0<p\le 1$, then  $F_{p,q_1}^{s_1}(\RR^d) \cdot F_{p,q_2}^{s_2} (\RR^d)\hookrightarrow F_{p,q}^{s_1}(\RR^d)$;
  \item If $s_2<\frac dp$, then
  $F_{p,q_1}^{s_1}(\RR^d) \cdot F_{p,q_2}^{s_2} (\RR^d)\hookrightarrow F_{p,q}^{s_1+s_2-\frac dp}(\RR^d)$.
\end{itemize}

\end{theorem}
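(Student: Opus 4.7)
The plan is to prove Theorem \ref{RS2a} using Bony's paraproduct decomposition in conjunction with the Littlewood--Paley characterization of the Triebel--Lizorkin spaces $F^s_{p,q}(\mathbb{R}^d)$. Fix a smooth dyadic resolution $\{\varphi_j\}_{j\ge 0}$ of unity supported in annuli $\{|\xi|\sim 2^j\}$ (with $\varphi_0$ supported near the origin), and write $\Delta_j u = \mathcal{F}^{-1}(\varphi_j \hat u)$, $S_j u = \sum_{k<j}\Delta_k u$. The starting point is the decomposition
\begin{equation*}
u\cdot v \;=\; \Pi_1(u,v) + \Pi_2(u,v) + \Pi_3(u,v),
\end{equation*}
where $\Pi_1(u,v)=\sum_j (S_{j-2}u)\,\Delta_j v$, $\Pi_2(u,v)=\sum_j (S_{j-2}v)\,\Delta_j u$, and $\Pi_3(u,v)=\sum_{|j-k|\le 1}\Delta_j u\,\Delta_k v$. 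The first two terms have nice spectral localization (the $j$th summand of $\Pi_1$ has Fourier support in $\{|\xi|\sim 2^j\}$) while the resonant term $\Pi_3$ has summands supported in balls $\{|\xi|\lesssim 2^j\}$, so the Triebel--Lizorkin norm of $\Pi_3$ is exactly where the condition $s_1+s_2>d\max(0,\frac1p-1)$ enters through the Plancherel--Polya--Nikol'skij majorant for low-frequency atoms.

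First I would treat $\Pi_1$. Using the $F^{s_1}_{p,q_1}$ square-function characterization, one bounds
\begin{equation*}
\Bigl\|\Bigl(\sum_j 2^{js_1 q_1}|\Delta_j \Pi_1(u,v)|^{q_1}\Bigr)^{1/q_1}\Bigr\|_{L^p} \lesssim \Bigl\|\sup_j |S_{j-2}u|\cdot\Bigl(\sum_j 2^{js_1 q_1}|\Delta_j v|^{q_1}\Bigr)^{1/q_1}\Bigr\|_{L^p}.
\end{equation*}
In the first case $s_2>d/p$ the Sobolev-type embedding $F^{s_2}_{p,q_2}\hookrightarrow L^\infty$ controls $\sup_j|S_{j-2}u|$ by $\|u\|_{F^{s_1}_{p,q_1}}$ (after swapping roles if $s_2>s_1$; otherwise use that $s_2\ge s_1$) and then by the maximal function bound. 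The symmetric term $\Pi_2$ is handled exactly the same way with the roles reversed, using $s_2>d/p$ to embed the $v$-factor into $L^\infty$; when $s_2>s_1$ the gained $s_2-s_1$ derivatives are absorbed by summation over $j$, and when $s_2=s_1$ one lands in $F^{s_1}_{p,q}$ with $q\ge \max(q_1,q_2)$ because of the $\ell^q$-triangle inequality on the two Littlewood--Paley sums.

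The resonant term $\Pi_3$ is the main obstacle and is where the three cases separate. Each block $\Delta_j u\cdot \Delta_k v$ with $|j-k|\le1$ is spectrally supported in a ball of radius $\lesssim 2^j$, so applying $\Delta_\ell$ gives nonzero contributions only for $\ell \le j+C$. Writing
\begin{equation*}
\Delta_\ell \Pi_3(u,v) = \sum_{j\ge \ell-C}\sum_{|j-k|\le 1}\Delta_\ell\bigl(\Delta_j u\,\Delta_k v\bigr),
\end{equation*}
one pulls out $2^{\ell s}$ with $s$ chosen as follows: in case (ii), $s=s_1+s_2-d/p$ when $s_2<d/p$, which is the regularity drop coming from applying the Nikol'skij inequality $\|\Delta_j u\,\Delta_j v\|_{L^p}\lesssim 2^{jd(\frac1{p'}-1)_+}\|\Delta_j u\|_{L^p}\|\Delta_j v\|_{L^\infty}$ together with $\|\Delta_j v\|_{L^\infty}\lesssim 2^{j(d/p-s_2)}\|v\|_{F^{s_2}_{p,q_2}}$; in case (i) one uses instead $\|\Delta_j v\|_{L^\infty}\lesssim \|v\|_{L^\infty}$; and in the endpoint case $s_1=s_2=d/p$, $p\le 1$, one exploits $p$-triangle inequalities for $L^p$-quasi-norms. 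In every case the $\ell^q$-summation in $\ell$ reduces to an $\ell^q$-summation in $j$ of products, which is estimated by H\"older's inequality provided $q\ge \max(q_1,q_2)$; the hypothesis $s_1+s_2>d\max(0,\frac1p-1)$ guarantees that the geometric factor picked up after the Nikol'skij step produces a summable sequence and that the shifted index $\ell$-to-$j$ summation converges. The hard part is precisely the bookkeeping of this double sum under the $F^s_{p,q}$ norm (not the individual block estimates), and it is what dictates the three regimes (ii)--(iv) in the statement; see \cite[Ch.\,4]{runst} for the detailed carrying-out of these summations.
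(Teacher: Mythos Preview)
Your sketch follows the standard paraproduct approach that underlies the result in Runst--Sickel. The paper itself does not supply an independent proof of this theorem: its entire proof reads ``See \cite[p.\ 190]{runst}.'' So there is nothing to compare against beyond noting that your outline is precisely the method used in that reference (Bony decomposition, spectral localization of $\Pi_1,\Pi_2$, and the resonant term $\Pi_3$ handled via Nikol'skij-type inequalities together with the summability condition $s_1+s_2>d\max(0,\tfrac1p-1)$).

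One small imprecision worth flagging: in your treatment of $\Pi_1(u,v)=\sum_j(S_{j-2}u)\Delta_jv$ you write that $\sup_j|S_{j-2}u|$ is controlled by $\|u\|_{F^{s_1}_{p,q_1}}$, but what is actually needed there is an $L^\infty$ bound on the low-frequency factor, which comes from the embedding $F^{s_2}_{p,q_2}\hookrightarrow L^\infty$ applied to whichever factor has regularity $s_2>d/p$. You acknowledge the role-swap parenthetically, but the bookkeeping of which factor sits in which paraproduct slot should be made explicit, since $\Pi_1$ and $\Pi_2$ are not symmetric in $u,v$ and the two factors live in different spaces. Apart from that, the plan is sound and matches the cited source.
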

\begin{proof}
See \cite[p.\ 190]{runst}.\end{proof}

To work on bounded domains let us introduce the extensions. For a bounded domain $\CO\subset \RR^d$  for $g\in \mathcal{S}'(\RR^d)$, the restriction of $g$ to $\CO$ is an element of $\mathcal{D}(\CO)$ and will be denoted by $g\Big|_{\CO}$.  For a given function $f:\CO\to\RR$, an element  $g\in \mathcal{S}'(\RR^d)$ is called  an extension of $f$ if $f=g\Big|_{\CO}$. We recall from \cite[Definition 5.3, P. 44]{Trieb} the following definition.

\begin{definition}
Let $s\in\RR$ and $0<q\le \infty$.
\begin{enumerate}
\item if $0<p<\infty$, then we put
$$
F_{p,q}^s(\CO)=\lk\{ f\in\mathcal{D}'(\CO):\exists \, g\in F^s_{p,q}(\RR^d)\,\mbox{ with } g\big|_{\CO}=f\rk\}
$$
and
$$
|f|_{F_{p,q}^s}:=|f|_{F_{p,q}^s(\CO)}=\inf |g|_{F_{p,q}^s(\RR^d)},
$$
where the infimum is taken over all $g\in B_{p,q}^s(\RR^d)$ such that $g\big|_{\CO}=f$.
\item if $0<p\le \infty$, then we put
$$
B_{p,q}^s(\CO)=\lk\{ f\in\mathcal{D}'(\CO):\exists \, g\in B^s_{p,q}(\RR^d)\,\mbox{ with } g\big|_{\CO}=f\rk\}
$$
and
$$
|f|_{B_{p,q}^s}:=|f|_{F_{p,q}^s(\CO)}=\inf |g|_{B_{p,q}^s(\RR^d)},
$$
where the infimum is taken over all $g\in B_{p,q}^s(\RR^d)$ such that $g\big|_{\CO}=f$.
\end{enumerate}
\end{definition}

In the case where  $\CO$ is a $C^\infty$ bounded domain, it is proved in \cite[P. 52]{Trieb}  that the relations given in \eqref{B2} are  still valid. In addition, it is showed in \cite[Theorem C.1]{Debbi} that Theorem \ref{RS2} and Theorem \ref{RS2a} are still valid for a  $C^\infty$ bounded domain $\CO\subset\mathbb{R}^d$ in place of $\mathbb{R}^d$.

\newcommand{\bn}{\boldsymbol{\nu}}

\renewcommand{\bv}{\mathbf{v}}
\newcommand{\w}{\mathbf{w}}
\newcommand{\el}{\mathbb{L}}
\newcommand{\elb}{\mathbf{L}}
\newcommand{\ve}{\mathrm{V}}
\newcommand{\h}{\mathbb{H}}
\newcommand{\rH}{\mathrm{ H}}
\newcommand{\bH}{\mathbf{H}}
\newcommand{\rK}{\mathrm{X}}
\newcommand{\rV}{\mathrm{ V}}
\newcommand{\divv}{\mathrm{div }\;}
\newcommand{\rve}{\rVert}
\newcommand{\lve}{\lVert}
\newcommand{\bd}{\mathbf{n}}
\renewcommand{\v}{\bv}
\newcommand{\rA}{\mathrm{A}}
\newcommand{\rrA}{\mathrm{A}_1}
\newcommand{\hrrA}{\hat{\mathrm{A}}_1}
\newcommand{\trrA}{\tilde{\mathrm{A}}_1}
\newcommand{\db}{\bar{\d}}
\renewcommand{\d}{\mathbf{n}}
\newcommand{\MO}{\mathcal{O}}

\end{document}

\bibitem{chemofluid2}
Collective Hydrodynamics of Swimming Microorganisms: Living Fluids
January 2011Annual Review of Fluid Mechanics 43(1):637-659
DOI: 10.1146/annurev-fluid-121108-145434
Donald L. KochGanesh SubramanianGanesh Subramanian

\bibitem{chemofluid3}
Dynamics of confined suspensions of swimming particles
May 2009Journal of Physics Condensed Matter 21(20):204107
DOI: 10.1088/0953-8984/21/20/204107

\end{document}

}

\end{document}